\theoremstyle{plain}
\newtheorem{lem}{Lemma}[section]
\newtheorem{cor}[lem]{Corollary}
\newtheorem{prop}[lem]{Proposition}
\newtheorem{thm}[lem]{Theorem}
\newtheorem{intthm}{Theorem}
\theoremstyle{definition}
\newtheorem{defn}[lem]{Definition}
\newtheorem{ex}[lem]{Example}
\newtheorem{disc}[lem]{Remark}
\newtheorem{fact}[lem]{Fact}
\newtheorem{para}[lem]{}
\newcommand{\cat}[1]{\mathcal{#1}}
\newcommand{\catc}{\cat{C}}
\newcommand{\ann}{\operatorname{Ann}}
\newcommand{\mspec}{\operatorname{m-Spec}}
\newcommand{\len}{\operatorname{len}}
\newcommand{\HH}{\operatorname{H}}
\newcommand{\Hom}{\operatorname{Hom}}	
\newcommand{\coker}{\operatorname{Coker}}
\newcommand{\spec}{\operatorname{Spec}}
\newcommand{\tor}{\operatorname{Tor}}
\newcommand{\Ker}{\operatorname{Ker}}
\newcommand{\ideal}[1]{\mathfrak{#1}}
\newcommand{\m}{\ideal{m}}
\newcommand{\n}{\ideal{n}}
\newcommand{\p}{\ideal{p}}
\newcommand{\fm}{\ideal{m}}
\newcommand{\fa}{\ideal{a}}
\newcommand{\fb}{\ideal{b}}
\newcommand{\fc}{\ideal{c}}
\newcommand{\comp}[1]{\widehat{#1}}
\newcommand{\ol}{\overline}
\newcommand{\ass}{\operatorname{Ass}}
\newcommand{\att}{\operatorname{Att}}
\newcommand{\supp}{\operatorname{Supp}}
\newcommand{\Min}{\operatorname{Min}}
\newcommand{\xra}{\xrightarrow}
\newcommand{\onto}{\twoheadrightarrow}
\newcommand{\into}{\hookrightarrow}
\newcommand{\ssm}{\smallsetminus}
\renewcommand{\geq}{\geqslant}
\renewcommand{\leq}{\leqslant}
\renewcommand{\ker}{\Ker}
\renewcommand{\hom}{\Hom}
\newcommand{\Ext}[4][R]{\operatorname{Ext}_{#1}^{#2}(#3,#4)}
\newcommand{\Otimes}[3][R]{#2\otimes_{#1}#3}
\renewcommand{\Hom}[3][R]{\operatorname{Hom}_{#1}(#2,#3)}	
\newcommand{\Tor}[4][R]{\operatorname{Tor}^{#1}_{#2}(#3,#4)}
\newcommand{\Comp}[2]{\widehat{#1}^{\ideal{#2}}}
\newcommand{\md}[1]{#1^{\vee}}
\newcommand{\mdp}[1]{(#1)^{\vee}}
\newcommand{\mdd}[1]{#1^{\vee \vee}}
\newcommand{\mddp}[1]{(#1)^{\vee \vee}}
\newcommand{\bidual}[1]{\delta_{#1}}
\newcommand{\MD}[2]{#1^{\vee(#2)}}
\newcommand{\mcf}{\mathcal{F}}
\newcommand{\mcg}{\mathcal{G}}
\newcommand{\Ga}[2]{\Gamma_{\ideal{#1}}(#2)}
\newcommand{\E}{\operatorname{E}}
\numberwithin{equation}{lem}
\begin{document}

\bibliographystyle{amsplain}

\author{Bethany Kubik}

\address{Bethany Kubik, Department of Mathematical Sciences,
601 Thayer Road \# 222,
West Point, NY 10996
USA}

\email{bethany.kubik@usma.edu}

\urladdr{http://www.dean.usma.edu/departments/math/people/kubik/}

\author{Micah Leamer}

\address{Micah Leamer}

\email{micahleamer@gmail.com}
\author{Sean Sather-Wagstaff}

\address{Sean Sather-Wagstaff, Department of Mathematics,
NDSU Dept \#2750,
PO Box 6050,
Fargo, ND 58108-6050
USA}

\email{Sean.Sather-Wagstaff@ndsu.edu}

\urladdr{http://www.ndsu.edu/pubweb/\~{}ssatherw/}

\thanks{This material is based on work supported by North Dakota EPSCoR and 
National Science Foundation Grant EPS-0814442.
Micah Leamer was supported by a GAANN grant from the Department of Education.
Sean Sather-Wagstaff was supported  by a grant from the NSA}

\title{Homology of artinian and mini-max modules, II}
\date{\today}

\keywords{Ext, Tor, Hom, tensor product, artinian, noetherian, mini-max, Matlis duality, Betti number, Bass number.}

\subjclass[2000]{Primary: 13D07; 13E10. Secondary: 13B35; 13E05}

\begin{abstract}
Let $R$ be a commutative  ring, and let $L$ and $L'$ be  $R$-modules.
We investigate finiteness conditions (e.g., noetherian, artinian, mini-max, Matlis reflexive)
of the modules $\Ext{i}{L}{L'}$ and $\Tor{i}{L}{L'}$ when $L$ and $L'$ satisfy
combinations of these finiteness conditions. For instance, if $R$ is noetherian,
then given  $R$-modules $M$ and $M'$ such that $M$ is Matlis reflexive and $M'$ is mini-max
(e.g., noetherian or artinian), we prove that
$\Ext{i}{M}{M'}$, $\Ext{i}{M'}{M}$, and $\Tor{i}{M}{M'}$ are Matlis reflexive over $R$ for all $i\geq 0$
and that
$\md{\Ext{i}{M}{M'}}\cong\Tor{i}{M}{M'^\vee}$
and
$\md{\Ext{i}{M'}{M}}\cong\Tor{i}{M'}{M^\vee}$.
\end{abstract}

\maketitle

\section*{Introduction} 
\label{xsec0}

Throughout this paper $R$  denotes a commutative  ring.

It is well-known that, given noetherian $R$-modules $N$ and $N'$, if $R$ is noetherian, 
then $\Ext iN{N'}$ and $\Tor iN{N'}$ are noetherian for all $i$. For other finiteness conditions
(e.g., artinian, mini-max, Matlis reflexive\footnote{See Section~\ref{xsec1} for definitions and 
background material. In particular, Fact~\ref{xpara1} explains the connection between mini-max
and Matlis reflexive modules that is important for our work.}) similar results are not so clear. For instance, 
given artinian $R$-modules $A$ and $A'$, what can one say about 
$\Ext iA{A'}$ and $\Tor iA{A'}$? For Matlis reflexive $R$-modules
$M$ and $M'$, the local case of
the analogous question is treated by Belshoff~\cite{belshoff:scrtmrm}:
if $R$ is local and noetherian, then $\Ext iM{M'}$ and $\Tor iM{M'}$ are Matlis reflexive.

In~\cite{kubik:hamm1} we establish much more general results, still working over a
local noetherian ring. The current paper treats the non-local case, and in some instances
extends results to the non-noetherian setting. 
For instance, the following result is proved in~\ref{para120807a},
\ref{para120807b}, \ref{para120807c}, \ref{para120807d}, and~\ref{para120807e}.

\begin{intthm}\label{intthm120807a}
Assume that $R$ is noetherian. 
Let $A$, $M$, and $M'$ be $R$-modules such that $A$ is artinian, $M$ is mini-max,  $M'$ is Matlis reflexive.
\begin{enumerate}[\rm(a)]
\item \label{intthm120807a1}
Let $\mathcal F$ be a finite subset of $\mspec(R)$ containing $\supp_R(A)\cap\supp_R(M)$,
and set $\fa=\cap_{\m\in \mathcal F}\m$. Then
$\Ext{i}{A}{M}$ is a noetherian $\comp R^{\fa}$-module for all $i\geq 0$. 
\item \label{intthm120807a2}
Let $\fb\subseteq\cap_{\m\in \supp_R(A)\cap\supp_R(L)}\m$. Then for all $i$, the module
$\Tor{i}{A}{M}$ is artinian over $R$ and $\fb$-torsion, hence it is an artinian $\Comp Rb$-module.
\item \label{intthm120807a3}
The $R$-modules $\Ext{i}{M}{M'}$, $\Ext{i}{M'}{M}$, and $\Tor{i}{M}{M'}$ are Matlis reflexive over $R$ for all $i\geq 0$.
\item \label{intthm120807a4}
There are natural $R$-module isomorphisms $\md{\Ext{i}{M}{M'}}\cong\Tor{i}{M}{M'^\vee}$
and
$\md{\Ext{i}{M'}{M}}\cong\Tor{i}{M'}{M^\vee}$
for all $i\geq 0$.
\end{enumerate}
\end{intthm}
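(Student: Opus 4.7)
The plan is to exploit two structural facts repeatedly. First, every mini-max module sits in a short exact sequence $0\to N\to M\to A'\to 0$ with $N$ noetherian and $A'$ artinian, reducing Ext and Tor questions involving a mini-max argument to ones whose arguments are either noetherian or artinian. Second, every Matlis reflexive module is in particular mini-max (see Fact~\ref{xpara1}), so the same reduction applies to $M'$. The proof then assembles the four base cases (noetherian or artinian in each slot) through long exact sequences.

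For part~(a), fix such a short exact sequence for $M$. The Ext long exact sequence
$$\cdots\to\Ext{i}{A}{N}\to\Ext{i}{A}{M}\to\Ext{i}{A}{A'}\to\Ext{i+1}{A}{N}\to\cdots$$
reduces the problem to showing that $\Ext{i}{A}{N}$ and $\Ext{i}{A}{A'}$ are noetherian over $\comp R^{\fa}$. Since $\supp_R(M)=\supp_R(N)\cup\supp_R(A')$, one has both $\supp_R(A)\cap\supp_R(N)\subseteq\mathcal F$ and $\supp_R(A)\cap\supp_R(A')\subseteq\mathcal F$, so the same ideal $\fa$ governs both base cases, which should be handled by results established earlier in the paper. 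Closure of noetherian $\comp R^{\fa}$-modules under extensions concludes~(a). Part~(b) is proved symmetrically, using the Tor long exact sequence and closure of artinian $\fb$-torsion modules under extensions.

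For part~(c), apply the technique in both arguments: choose short exact sequences for $M$ and for $M'$ as above. Two applications of the appropriate long exact sequences reduce each of $\Ext{i}{M}{M'}$, $\Ext{i}{M'}{M}$, and $\Tor{i}{M}{M'}$ to four base cases of types (noeth, noeth), (noeth, art), (art, noeth), and (art, art). The first is classical, and the remaining three should be covered by parts~(a) and~(b) together with the artinian-artinian results from earlier in the paper. Since Matlis reflexive $R$-modules are closed under extensions, the conclusion follows.

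For part~(d), invoke the standard adjunction $\md{\Tor{i}{X}{Y}}\cong\Ext{i}{X}{\md{Y}}$, valid for all $X$, $Y$ since $E$ is injective. Taking $X=M$ and $Y=M'^{\vee}$ gives $\md{\Tor{i}{M}{M'^{\vee}}}\cong\Ext{i}{M}{\mdd{M'}}\cong\Ext{i}{M}{M'}$, where the last step uses Matlis reflexivity of $M'$. Dualizing once more and invoking part~(c), which provides Matlis reflexivity of $\Tor{i}{M}{M'^{\vee}}$, yields the first isomorphism. The second isomorphism is obtained analogously starting from $\md{\Tor{i}{M'}{M^{\vee}}}\cong\Ext{i}{M'}{\mdd{M}}$ and then comparing $\Ext{i}{M'}{\mdd{M}}$ with $\Ext{i}{M'}{M}$; since $M$ is only mini-max rather than Matlis reflexive, this comparison is the main obstacle I anticipate, and should follow from an auxiliary result earlier in the paper on how the canonical biduality map $M\to\mdd{M}$ interacts with Ext from a Matlis reflexive module.
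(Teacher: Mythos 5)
Your short-exact-sequence reductions for parts (a)--(c) are sound in outline, though the paper is more direct for (a) and (b): mini-max modules have finite Bass and Betti numbers at every prime (Lemma~\ref{xfact100317b}), so Theorems~\ref{xthm100308a} and~\ref{xthm101104a} apply to $M$ itself without splitting it; either way the real engine is the decomposition isomorphism of Theorem~\ref{xlem100312a}. For part (c) your four base cases do work, but you gloss over the step that converts ``noetherian over $\comp R^{\fa}$'' or ``artinian over $\Comp Rb$'' into ``Matlis reflexive over $R$'': one must track annihilators to show $R/\ann_R(\Ext iAN')$ is semi-local and complete and that $\mspec(R)\cap\supp_R(\Ext iAN')$ is a finite set before invoking Lemma~\ref{xlem100409a'}. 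The proof of Theorem~\ref{xthm:ExtMR} is explicit about exactly this.

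The genuine gap is in part (d), in the second isomorphism. Your adjunction trick is a clean route to the first isomorphism (you are silently using that $M'$ Matlis reflexive implies $\md{M'}$ Matlis reflexive, which is true by the triangle identity, and then part (c) applies to $\Tor iM{\md{M'}}$). But the ``analogous'' argument for $\md{\Ext iM'M}\cong\Tor iM'{\md M}$ fails in two places. First, after $\md{\Tor iM'{\md M}}\cong\Ext iM'{\mdd M}$ you need $\Ext iM'{\bidual M}$ to be an isomorphism, and this is not a modest auxiliary lemma but the main technical content of Section~\ref{sec110519a}: Lemma~\ref{lem110405} (splitting injective summands off a module with finite first Bass number), Lemma~\ref{xlem100604b}, Theorem~\ref{xprop100601b}, and Theorem~\ref{xprop100601a} via the five lemma; Remark~\ref{disc120820a} flags that this machinery answers a question left open in the prequel. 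Second, even granting that step, your final dualization $\mdd{\Tor iM'{\md M}}\cong\md{\Ext iM'M}\Rightarrow\Tor iM'{\md M}\cong\md{\Ext iM'M}$ requires $\Tor iM'{\md M}$ to be Matlis reflexive, and part (c) does \emph{not} give this: that would require $\md M$ to be Matlis reflexive, but $\md M$ is Matlis reflexive iff $M$ is, and $M$ is only assumed mini-max. The paper avoids this circularity by constructing the Hom-evaluation morphism $\Theta^i_{M'M}$ directly and showing it is an isomorphism, rather than inferring the isomorphism abstractly from a chain of dualities.
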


One may be dismayed by the technical nature of parts~\eqref{intthm120807a1} and~\eqref{intthm120807a2}
of this result, especially the need to consider a non-canonical completion of $R$. 
However, straightforward examples show that $\Ext{i}{A}{M}$ is not usually noetherian over $R$
or over the completion of $R$ with respect to its Jacobson radical, so this technicality is unavoidable.

It should also be noted that, given the pathological localization behavior of $\Ext iAM$, one cannot simply
localize $\Ext iAM$ and apply the local results of~\cite{kubik:hamm1}.
One needs to apply a more subtle decomposition technique based on a result of Sharp~\cite{sharp:msamaab};
see Fact~\ref{fact120720a}. This result implies that an artinian $R$-module decomposes as a finite direct sum
of $\m$-torsion submodules where $\m$ ranges through the finite set $\supp_R(A)$. 
Such decompositions hold for any $\fb$-torsion module (even over a non-noetherian ring)
when $\fb$ is an intersection of finitely many maximal ideals of $\m$.
Thus, the following result (which is proved in~\ref{para120807f})
applies when $T$ is artinian; it is our substitute for localization that allows us to 
reduce the proof of Theorem~\ref{intthm120807a}\eqref{intthm120807a1} to the local case.

\begin{intthm} \label{xlem100312a}
Assume that $R$ is noetherian,
and let $\fc$ be a finite intersection of maximal ideals of $R$.
Let $T$ and $L$ be $R$-modules such that $T$ is $\fc$-torsion, 
and set $\mcf=\supp_R(T)\cap\supp_R(L)$. 
Let $\mcg$ be a finite set of maximal ideals of $R$ containing $\mcf$. 
Then for all $i\geq 0$ there are $R$-module isomorphisms
\begin{align*}
\textstyle\Ext{i}{T}{L}
& \textstyle
\cong\coprod_{\mathfrak{m}\in \mcg}\Ext[\comp R^{\m}]{i}{\Gamma_\m(T)}{\comp R^{\m}\otimes_{R}L}
\cong\coprod_{\mathfrak{m}\in \mcg}\Ext[R_\m]{i}{T_{\mathfrak{m}}}{L_{\mathfrak{m}}}.
\end{align*}
The second isomorphism is $\Comp Ra$-linear for each ideal 
$\fa\subseteq\cap_{\m\in\mcg}\m$.
Hence, $\Ext iTL$ has an $\Comp Ra$-module structure
that is compatible with its $R$-module structure.
\end{intthm}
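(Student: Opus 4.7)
The plan is to apply the Sharp-type decomposition of Fact~\ref{fact120720a} to split $T$ as a finite direct sum $T \cong \coprod_\m \Gamma_\m(T)$ over maximal ideals, to use additivity of $\Ext{i}{-}{L}$, to identify each surviving summand with an $\Ext$ over $R_\m$ or $\comp R^\m$ via flat base change, and finally to observe that the $\comp R^\m$-structures assemble into the advertised $\Comp Ra$-structure.

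Since $\fc$ is a finite intersection of maximal ideals and $T$ is $\fc$-torsion, Fact~\ref{fact120720a} yields a decomposition $T \cong \coprod_{\m \supseteq \fc} \Gamma_\m(T)$, a finite direct sum. Enlarging the index set harmlessly (by adjoining zero summands) to $\mcg' := \mcg \cup \{\m : \m \supseteq \fc\}$, additivity of $\Ext$ gives
$$\Ext{i}{T}{L} \cong \coprod_{\m \in \mcg'} \Ext{i}{\Gamma_\m(T)}{L}.$$
For each maximal $\m$, the $\m$-torsion module $\Gamma_\m(T)$ satisfies $\Gamma_\m(T) = \Gamma_\m(T)_\m = \comp R^\m \otimes_R \Gamma_\m(T)$ and carries a natural $\comp R^\m$-structure. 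From a free $R$-resolution $F_\bullet \to \Gamma_\m(T)$, flatness of $R \to R_\m$ and $R \to \comp R^\m$ produces flat resolutions $(F_\bullet)_\m$ and $\comp R^\m \otimes_R F_\bullet$ of $\Gamma_\m(T)$ over the respective rings, and Hom--tensor adjunction yields the base change isomorphisms
$$\Ext{i}{\Gamma_\m(T)}{L} \cong \Ext[\comp R^\m]{i}{\Gamma_\m(T)}{\comp R^\m \otimes_R L} \cong \Ext[R_\m]{i}{\Gamma_\m(T)}{L_\m}.$$
The first of these crucially uses that $\Gamma_\m(T)$ being $\m$-torsion forces every $R$-linear cocycle $F_k \to L$ to factor through $\Gamma_\m(L)$, on which the natural map $L \to \comp R^\m \otimes_R L$ restricts to an isomorphism.

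For $\m \in \mcg' \setminus \mcg$, one has $\m \notin \mcf$, so either $\m \notin \supp_R(T)$ (whence $\Gamma_\m(T) = 0$) or $\m \notin \supp_R(L)$ (whence $L_\m = 0$); either way the summand vanishes, collapsing the coproduct to one indexed by $\m \in \mcg$. Since $T_\m = \Gamma_\m(T)$ for each maximal $\m$ (the other $\Gamma_\n(T)$ are $\n$-torsion with $\n \neq \m$, hence localize to zero at $\m$), this yields both isomorphisms of the theorem. For the $\Comp Ra$-linearity assertion, when $\fa \subseteq \cap_{\m \in \mcg} \m$ the inclusions $\fa^n \subseteq \m^n$ induce compatible ring maps $\Comp Ra \to \comp R^\m$ for each $\m \in \mcg$, so each summand $\Ext[\comp R^\m]{i}{\Gamma_\m(T)}{\comp R^\m \otimes_R L}$ is naturally a $\Comp Ra$-module, and these combine on the coproduct into a single $\Comp Ra$-structure compatible with the $R$-action via $R \to \Comp Ra$. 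The main obstacle is the base change step: because $\Gamma_\m(T)$ is typically not finitely generated, the standard ``$\Ext$ commutes with flat base change for finitely presented modules'' shortcut is unavailable, and the $\m$-torsion property must be exploited carefully at the level of resolutions and Hom-complexes to obtain the $\comp R^\m$-version.
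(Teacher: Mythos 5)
Your strategy of decomposing $T$ into its $\m$-primary pieces, working summand by summand, and observing that the coproduct collapses to $\mcg$ is the same as the paper's. However, the crucial base-change step has a genuine gap. You propose to compute $\Ext{i}{\Gamma_\m(T)}{L}$ from a free $R$-resolution $F_\bullet\to\Gamma_\m(T)$ and to identify $H^i(\Hom{F_\bullet}{L})$ with $H^i(\Hom{F_\bullet}{\comp R^\m\otimes_R L})$ on the grounds that ``$\Gamma_\m(T)$ being $\m$-torsion forces every $R$-linear cocycle $F_k\to L$ to factor through $\Gamma_\m(L)$.'' That claim is false for $k\geq 1$: a degree-$k$ cocycle is a map $F_k\to L$ vanishing on $\im(\partial_{k+1})$, so it factors through the \emph{free} module $F_k/\im(\partial_{k+1})\cong\im(\partial_k)\subseteq F_{k-1}$, not through the $\m$-torsion module $\Gamma_\m(T)$; there is no reason for its image to lie in $\Gamma_\m(L)$. (The factoring is valid only in degree $0$, where a cocycle factors through $\Gamma_\m(T)$ itself.) Concretely, let $R=\mathbb{Z}_{(p)}$ with maximal ideal $\m=pR$, let $T=\mathbb{Z}/p$, and let $L=R$. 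Then $\Gamma_\m(L)=0$, so your reasoning would force $\Ext{i}{T}{L}=0$ for all $i$, yet $\Ext{1}{\mathbb{Z}/p}{R}\cong R/pR\cong\mathbb{Z}/p\neq 0$.

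The $\m$-torsion of $\Gamma_\m(T)$ must be exploited in the opposite direction: maps \emph{out of} an $\m$-torsion module always land in the $\m$-torsion submodule of the target, which is why the paper resolves $L$ by a minimal injective resolution $I$ instead of resolving $T$ by projectives. Termwise one has $\Hom{T_\m}{I}=\Hom{T_\m}{\Gamma_\m(I)}$ by Lemma~\ref{disc120720a}\eqref{xlem100206c2}, and Lemma~\ref{xlem101120a} (where minimality of $I$ is essential) together with Facts~\ref{xfact101221a}--\ref{fact110426a} and Lemma~\ref{lem120719b} shows this equals $\Hom[R_\m]{T_\m}{I_\m}$; since $I_\m$ is an $R_\m$-injective resolution of $L_\m$, this yields $\Ext{i}{T}{L}\cong\coprod_{\m\in\mcg}\Ext[R_\m]{i}{T_\m}{L_\m}$. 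The further passage to $\comp R^\m$ then cites the local result~\cite[Lemma~4.2]{kubik:hamm1}, which is exactly where the flat base change for non-finitely-presented torsion modules --- the obstacle you correctly flagged but did not overcome --- is handled. As a minor point of attribution, the splitting $T\cong\coprod_\m\Gamma_\m(T)$ for a $\fc$-torsion module $T$ comes from Lemma~\ref{lem120727a} with Lemma~\ref{lem110517a}; Fact~\ref{fact120720a} as stated is about artinian modules.
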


Since the decomposition result for artinian modules holds over noetherian and non-noetherian rings
alike, it is reasonable to ask what can be said about these Ext and Tor-modules when $R$ is not noetherian.
The proofs of Theorems~\ref{intthm120807a} and~\ref{xlem100312a} use some techniques that
are inherently noetherian in nature. However, in the case $i=0$ we have the following non-noetherian result,
which we prove in~\ref{para120807h}.
It compliments   
a result of Faith and Herbera~\cite[Proposition 6.1]{faith:ertplcm}  stating that the tensor product of two artinian modules
has finite length.
See also Corollary~\ref{para120807g}.

\begin{intthm} \label{intthm120807b}
Let $A$ and $N$ be  $R$-modules such that $A$ is artinian and $N$ is noetherian. Set $\mcg=\supp_R(A)\cap\ass_R(N)$.
For each $\m\in\mcg$, there is an integer $\alpha_{\m}\geq 0$ such that $\m^{\alpha_{\m}}A=\m^{\alpha_{\m}+1}A$
or $\m^{\alpha_{\m}}\Ga mN=0$; and there is an  isomorphism
$$\textstyle\Hom[R]{A}{N}\cong\coprod_{\m\in\mathcal{G}}\Hom[R]{A/\m^{\alpha_{\m}}A}{(0:_N\m^{\alpha_{\m}})}.$$
In particular, $\Hom AN$ is annihilated by $\cap_{\m\in\mcg}\m^{\alpha_{\m}}$ and has finite length.
\end{intthm}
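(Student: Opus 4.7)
The plan is to decompose the artinian module $A$ into its $\m$-torsion summands via Sharp's decomposition, and then reduce to a single-prime analysis for each $\m\in\mcg$.

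\textbf{Step 1: Decompose $\Hom[R]{A}{N}$.} By Fact~\ref{fact120720a}, $\supp_R(A)$ is a finite subset of $\mspec(R)$ and $A\cong\coprod_{\m\in\supp_R(A)}\Ga{m}{A}$. Since every element of $\Ga{m}{A}$ is killed by a power of $\m$, any $R$-linear $f\colon\Ga{m}{A}\to N$ satisfies $f(\Ga{m}{A})\subseteq\Ga{m}{N}$; hence
\[\Hom[R]{A}{N}\cong\coprod_{\m\in\supp_R(A)}\Hom[R]{\Ga{m}{A}}{\Ga{m}{N}}.\]
For a maximal ideal $\m$, if $0\ne x\in\Ga{m}{N}$ then $\ann_R(x)\supseteq\m^k$ for some $k$, so every prime containing $\ann_R(x)$ equals $\m$. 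Applying the usual associated-prime argument to the noetherian submodule $Rx$, some $0\ne y\in Rx$ has prime annihilator, necessarily $\m$, so $\m\in\ass_R(N)$. Hence the sum reduces to $\m\in\mcg=\supp_R(A)\cap\ass_R(N)$.

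\textbf{Step 2: Choose $\alpha_\m$ and build the key isomorphism.} For each $\m\in\mcg$, one candidate for $\alpha_\m$ is the stabilization index of the artinian chain $\m A\supseteq\m^2A\supseteq\cdots$; another is any power annihilating the finitely generated $\m$-torsion module $\Ga{m}{N}$. For any maximal $\m'\ne\m$, coprimality ($\m^{\alpha_\m}+(\m')^k=R$ for all $k\geq 1$) combined with the $\m'$-torsion of $\Gamma_{\m'}(A)$ gives $\Gamma_{\m'}(A)\subseteq\m^{\alpha_\m}A$, whence $A/\m^{\alpha_\m}A\cong\Ga{m}{A}/\m^{\alpha_\m}\Ga{m}{A}$. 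Given $f\in\Hom[R]{\Ga{m}{A}}{\Ga{m}{N}}$: if $\m^{\alpha_\m}\Ga{m}{N}=0$, the image of $f$ is automatically $\m^{\alpha_\m}$-annihilated and lands in $(0:_N\m^{\alpha_\m})$; if instead $\m^{\alpha_\m}A=\m^{\alpha_\m+1}A$, iterating yields $\m^{\alpha_\m}\Ga{m}{A}=\m^{\alpha_\m+k}\Ga{m}{A}$ for all $k\geq 0$, and since some $\m^L$ kills $\Ga{m}{N}$ (noetherian torsion), $f(\m^{\alpha_\m}\Ga{m}{A})=0$ as well. In either case $f$ factors through $\Ga{m}{A}/\m^{\alpha_\m}\Ga{m}{A}$ with image in $(0:_N\m^{\alpha_\m})$, and the obvious inverse yields
\[\Hom[R]{\Ga{m}{A}}{\Ga{m}{N}}\cong\Hom[R]{A/\m^{\alpha_\m}A}{(0:_N\m^{\alpha_\m})}.\]

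\textbf{Step 3: Finite length and annihilator.} Combining Steps 1 and 2 gives the stated decomposition. Each summand is killed by $\m^{\alpha_\m}$ (acting through the source), so the finite coproduct is killed by $\bigcap_{\m\in\mcg}\m^{\alpha_\m}$. For the length statement, $A/\m^{\alpha_\m}A$ is artinian and $\m^{\alpha_\m}$-annihilated, so its $\m^i$-filtration has successive artinian $R/\m$-vector-space quotients, which are finite-dimensional; likewise $(0:_N\m^{\alpha_\m})$ is noetherian and $\m^{\alpha_\m}$-annihilated, hence of finite length; and $\Hom$ of two finite-length modules embeds into a finite power of the target via any finite generating set of the source, so is of finite length.

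The main obstacle is Step 2 in the scenario where only $\m^{\alpha_\m}A=\m^{\alpha_\m+1}A$ is assumed: the artinian stabilization on $A$ does not by itself force the image of $f$ to be $\m^{\alpha_\m}$-annihilated, and one must iterate the stabilization to access arbitrarily high powers of $\m$ on $\Ga{m}{A}$ and then invoke the noetherian annihilator of $\Ga{m}{N}$ to close the argument; this is the only place where the two finiteness conditions interact nontrivially.
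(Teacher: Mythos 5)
Your proof is correct and follows essentially the same strategy as the paper: decompose $A$ into its $\m$-torsion summands via Sharp's theorem (Fact~\ref{fact120720a}), reduce $\Hom[R]{A}{N}$ to a finite coproduct of $\Hom$ modules between the $\m$-torsion pieces, and use the dichotomy on $\alpha_\m$ (artinian stabilization of $\m^kA$ versus annihilation of the finitely generated torsion module $\Gamma_\m(N)$) to factor through the truncations $A/\m^{\alpha_\m}A$ and $(0:_N\m^{\alpha_\m})$. The paper routes this through the more general Propositions~\ref{xprop110217a} and~\ref{xthm29} (working with $\Gamma_\fb$ for $\fb=\cap_{\m\in\mcg}\m$ and then decomposing), whereas you argue directly one prime at a time; the two presentations are equivalent, and your self-diagnosed ``main obstacle'' in the stabilization case is handled correctly by iterating $\m^{\alpha_\m}\Gamma_\m(A)=\m^{\alpha_\m+k}\Gamma_\m(A)$ until it falls past the annihilator exponent of $\Gamma_\m(N)$.
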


We summarize the sections of the paper.
Section~\ref{xsec1} contains definitions and background material.
Section~\ref{sec120713a}  consists of foundational material about torsion modules,
and Section~\ref{sec120713b}
does the same for artinian and mini-max modules.
Section~\ref{sec120807a} is devoted to the proof of Theorem~\ref{xlem100312a}
and other similar isomorphism results.
Sections~\ref{xsec2}--\ref{sec110519a} contain the proof of Theorem~\ref{intthm120807a}.
We conclude with Section~\ref{sec120807b} which includes the proof of Theorem~\ref{intthm120807b}
as well as vanishing results for Ext and Tor, including a description of the associated primes of certain Hom-modules.

To conclude this introduction, we mention our mnemonic
naming protocol for modules. It follows in the great tradition of using $I$ for injective modules,
$P$ for projective modules, and $F$ for free or flat modules. Artinian modules are usually named
$A$ or $A'$. Modules with finiteness assumptions on their Bass numbers or Betti numbers are denoted $B$ and $B'$.
We use $M$ and $M'$ for
mini-max (e.g., Matlis
reflexive) modules.
The symbols $N$ and $N'$ are reserved for noetherian modules.
Torsion modules are usually $T$ or $T'$. Modules with no specific properties are 
mostly denoted $L$ and $L'$.

\section{Foundations}\label{xsec1}

This section contains notations, definitions, and other  background material for use throughout the paper.

\begin{defn}
For each ideal $\fa\subseteq R$, let $\Comp Ra$ 
denote the $\fa$-adic completion of $R$,
and set $V(\fa)=\{\p\in\spec(R)\mid\fa\subseteq\p\}$.
Let $\mspec (R)$ denote the set of maximal ideals of $R$.
Given an $R$-module $L$,  let $\E_R(L)$ denote the injective hull of $L$. 
\end{defn}

\begin{fact}\label{fact120719a}
Assume that $R$ is noetherian, and let $\fa$ be an ideal of $R$.
Recall that $\fa\Comp Ra$ is contained in the Jacobson radical of $\Comp Ra$,
and that $\Comp Ra/\fa\Comp Ra\cong R/\fa$; see~\cite[Theorems~8.11 and~8.14]{matsumura:crt}.
From this, it is straightforward to show that there are inverse bijections
$$\xymatrix@R=1mm{
\mspec(R)\cap V(\fa)\ar@{<->}[r] &\mspec(\Comp Ra) \\
\m\ar@{|->}[r]&\m\Comp Ra \\
\n\cap R&\n\ar@{|->}[l]}$$
where $\n\cap R$ denotes the contraction of $\n$ along the natural map $R\to\Comp Ra$.
\end{fact}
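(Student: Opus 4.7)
The plan is to chain together three standard ideal correspondences, taking the two cited facts as black boxes. First I would observe that any maximal ideal $\n$ of $\Comp Ra$ necessarily contains every element of the Jacobson radical, hence contains $\fa\Comp Ra$ by the first cited fact. This ensures that the standard ideal correspondence for the surjection $\Comp Ra\onto\Comp Ra/\fa\Comp Ra$ restricts to a bijection
\[
\mspec(\Comp Ra)\xra{\cong}\mspec(\Comp Ra/\fa\Comp Ra),\qquad \n\longmapsto \n/\fa\Comp Ra.
\]

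Next I would transport this bijection along the isomorphism $\Comp Ra/\fa\Comp Ra\cong R/\fa$ from the second cited fact, and then use the standard correspondence $\mspec(R/\fa)\cong\mspec(R)\cap V(\fa)$. The key compatibility point to record here is that the isomorphism $R/\fa\xra{\cong}\Comp Ra/\fa\Comp Ra$ is precisely the map induced by the natural ring homomorphism $R\to\Comp Ra$, so the composite
\[
R\longrightarrow\Comp Ra\onto\Comp Ra/\fa\Comp Ra\xla{\cong} R/\fa
\]
is the ordinary quotient $R\onto R/\fa$. Composing all three correspondences gives an abstract bijection between $\mspec(R)\cap V(\fa)$ and $\mspec(\Comp Ra)$.

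Finally, I would trace an element through the chain to recover the explicit formulas. Starting from $\m\in\mspec(R)\cap V(\fa)$, the bijections send $\m\mapsto \m/\fa\mapsto (\m\Comp Ra+\fa\Comp Ra)/\fa\Comp Ra=\m\Comp Ra/\fa\Comp Ra\mapsto \m\Comp Ra$, confirming $\m\mapsto\m\Comp Ra$; in particular $\m\Comp Ra$ is maximal in $\Comp Ra$. Conversely, for $\n\in\mspec(\Comp Ra)$, the contraction $\n\cap R$ along $R\to\Comp Ra$ coincides with the preimage of $\n/\fa\Comp Ra$ under the composite displayed above, which is the maximal ideal of $R$ containing $\fa$ that corresponds to $\n$. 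This both verifies that $\n\mapsto\n\cap R$ lands in $\mspec(R)\cap V(\fa)$ and shows the two assignments are mutually inverse. The only subtle point is the compatibility of the isomorphism $R/\fa\cong\Comp Ra/\fa\Comp Ra$ with the natural map $R\to\Comp Ra$, and this is already built into the second cited fact.
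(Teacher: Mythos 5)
Your proof is correct and fills in exactly the "straightforward" step the paper leaves to the reader, using the same two ingredients the paper cites ($\fa\Comp Ra\subseteq\operatorname{J}(\Comp Ra)$ and $\Comp Ra/\fa\Comp Ra\cong R/\fa$) via the standard ideal correspondence for quotients. The one compatibility you flag — that the isomorphism $R/\fa\cong\Comp Ra/\fa\Comp Ra$ is induced by $R\to\Comp Ra$ — is indeed the only point that needs care, and you handle it correctly.
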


\begin{defn} \label{xnotn100204a}
Set
$E_R=\textstyle\coprod_{\m\in \mspec{(R)}}\E_R(R/\m)$.
Let $(-)^{\vee(R)}=\Hom[R]{-}{E_R}$ be the Matlis duality functor.  
We  set $E=E_R$ and $(-)^{\vee}=(-)^{\vee(R)}$
when the ring $R$ is understood.
Set $\mddp{-}=\mdp{\mdp{-}}$ and similarly for $(-)^{\vee(R)\vee(R)}$.
Given  an $R$-module $L$,
the \emph{natural biduality map} for $L$ is the map
$\bidual{L}\colon L\to\mdd{L}$
given by
$\bidual{L}(l)(\psi)=\psi(l)$,
and $L$ is said to be \emph{Matlis reflexive}
if $\bidual{L}$ is an isomorphism.
\end{defn}

\begin{fact}\label{xfact100909a}
Assume that $R$ is noetherian.
Then $E$ is a minimal injective cogenerator for $R$,
that is, for each $R$-module $L$, the natural biduality map
$\bidual{L}\colon L\to\mdd{L}$ is a monomorphism; see~\cite[Exercise 3.3.4]{enochs:rha}.
From this, we have $\ann_R(L)=\ann_R(\md L)$.  Indeed,
the biduality map explains the third containment in the next display; the remaining containments are standard
since $\md{(-)}=\Hom{-}{E}$:
\begin{align*}
\ann_R(L)
&\subseteq\ann_R(\md L)
\subseteq\ann_R(\mdd L)
\subseteq\ann_R(L).
\end{align*}
\end{fact}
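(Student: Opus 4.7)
The plan is to handle the two assertions separately, since the first is really a citation and the second is an elementary annihilator chase.

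For the first claim, that $E$ is a minimal injective cogenerator and hence that $\delta_L$ is a monomorphism, I would simply appeal to the cited reference~\cite[Exercise 3.3.4]{enochs:rha}. The content is the standard Matlis-theoretic statement that for every nonzero $x\in L$ there exists $\psi\in L^\vee$ with $\psi(x)\neq 0$: localize at a maximal ideal $\m$ in the support of $Rx$, pass to the residue field, and extend the resulting nonzero map $Rx\to E_R(R/\m)$ to a map $L\to E_R(R/\m)\hookrightarrow E$ using injectivity of $E$. Injectivity of $\delta_L$ is equivalent to the existence of such separating functionals.

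For the annihilator equality, the display in the statement already exhibits the three containments to verify. The first, $\ann_R(L)\subseteq \ann_R(L^\vee)$, is the functorial fact that $\Hom_R(-,E)$ sends multiplication by $r$ on $L$ to multiplication by $r$ on $L^\vee$, so if $r L=0$ then $r$ acts as zero on every $\psi\in L^\vee$. The second containment is the same argument applied to $L^\vee$ in place of $L$. The third containment $\ann_R(L^{\vee\vee})\subseteq\ann_R(L)$ is where the cogenerator/monomorphism property of $\delta_L$ is used: for $r\in\ann_R(L^{\vee\vee})$ and $x\in L$, naturality of $\delta$ gives $\delta_L(rx)=r\delta_L(x)=0$, and injectivity of $\delta_L$ then forces $rx=0$.

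There is no real obstacle here; the only nontrivial input is the cogenerator property, which is imported from~\cite{enochs:rha}. Note that the noetherian hypothesis enters precisely to ensure that $\coprod_\m E_R(R/\m)$ is injective (arbitrary direct sums of injectives are injective over noetherian rings), which is what makes the cogenerator argument go through.
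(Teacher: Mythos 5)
Your argument matches the paper's proof: the cogenerator property is imported from~\cite[Exercise 3.3.4]{enochs:rha}, the first two containments are the standard functoriality of $\Hom{-}{E}$, and the third is extracted from injectivity of $\bidual{L}$. The remark about the noetherian hypothesis ensuring $E$ is injective is a correct and helpful aside, but the approach is the same.
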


\begin{defn}\label{xdefn100317a}
An $R$-module $L$ is said to be \emph{mini-max} if there is a noetherian submodule
$N\subseteq L$ such that the quotient $L/N$ is artinian.
\end{defn}

\begin{fact}[\protect{\cite[Theorem~12]{belshoff:gmd}}]\label{xpara1}  
Assume that $R$ is noetherian.
An $R$-module $L$ is  Matlis reflexive
if and only if $L$ is mini-max and $R/\ann_R(L)$ is semi-local and complete, that is, complete with respect to its Jacobson radical. 
\end{fact}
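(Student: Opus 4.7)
The plan is to prove the two implications separately. The backward direction can be reduced to classical Matlis duality over a complete local ring. The forward direction is harder and requires establishing closure properties of the class of Matlis reflexive modules together with a careful analysis of the support.

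For sufficiency, assume $L$ is mini-max and $S := R/\ann_R(L)$ is semi-local and complete. By Fact~\ref{xfact100909a}, the modules $L$, $\md L$, and $\mdd L$ all have annihilator $\ann_R(L)$, so they are naturally $S$-modules and $\bidual L$ is $S$-linear. Setting $E_S := \Hom[R]{S}{E_R}$, Fact~\ref{fact120719a} identifies $E_S$ with $\coprod_{\fn \in \mspec(S)}\E_S(S/\fn)$, the minimal injective cogenerator for $S$; since $L$ is $\ann_R(L)$-torsion, every $R$-linear map $L \to E_R$ factors through $E_S$, so $\Hom[R]{L}{E_R} = \Hom[S]{L}{E_S}$, and Matlis reflexivity over $R$ is equivalent to Matlis reflexivity over $S$. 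Now a noetherian semi-local complete ring decomposes as a finite product of complete local noetherian rings $S \cong \prod_{i=1}^n S_i$, giving a direct-sum decomposition of $L$ into mini-max modules over complete local noetherian rings. Over each complete local ring, classical Matlis duality gives Matlis reflexivity of noetherian and artinian modules; the five-lemma applied to a defining sequence $0 \to N \to L \to A \to 0$ (with $N$ noetherian, $A$ artinian) extends this to all mini-max modules, completing this direction.

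For necessity, assume $L$ is Matlis reflexive. I would first establish that Matlis reflexivity is preserved by submodules and quotients: for a short exact sequence $0 \to L' \to L \to L'' \to 0$, exactness of $\md{(-)}$ yields a commutative diagram relating $\bidual{L'}$, $\bidual{L}$, $\bidual{L''}$ in which all three are injective by Fact~\ref{xfact100909a}; a diagram chase using that $\bidual L$ is an isomorphism forces the outer two to be isomorphisms, and the five-lemma handles extensions. Next, I would show that $\supp_R(L) \cap \mspec(R)$ is finite by exploiting the decomposition $E_R = \coprod_{\fm}\E_R(R/\fm)$ and arguing that an infinite collection of nonzero maximal components in $\md L$ would make $\mdd L$ too large to match $L$ via the biduality map. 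This finiteness gives that $S := R/\ann_R(L)$ is semi-local. Using the support restriction, I would reduce locally at each $\fm \in \supp_R(L)\cap\mspec(R)$ and invoke the classical local theorem (Matlis reflexive over a complete local ring equals mini-max) to produce, locally, noetherian submodules with artinian quotients; these assemble to exhibit $L$ as mini-max. Finally, $\bidual L$ naturally factors through $\mdd L$, which is a module over the Jacobson-radical completion $\comp{S}$ of $S$; since $L$ is faithful over $S$ and this completion acts compatibly, $S \to \comp{S}$ must be an isomorphism, giving completeness.

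I expect the main obstacle to be the necessity direction, specifically the finiteness of $\supp_R(L) \cap \mspec(R)$ and the extraction of mini-max structure. The closure of the class of Matlis reflexive modules under submodules and quotients is a routine diagram chase, but moving from Matlis reflexivity to a noetherian-by-artinian decomposition requires invoking the classical local case and carefully assembling local data globally, for which the semi-local completeness of $R/\ann_R(L)$ is essential.
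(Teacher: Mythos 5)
The paper does not prove this statement; it cites it directly as Theorem~12 of Belshoff, Enochs, and Garc\'ia~Rozas~\cite{belshoff:gmd}, so there is no in-paper proof to compare against. Your sufficiency argument is essentially correct: passing to $S = R/\ann_R(L)$, noting that $\ann_R(L) = \ann_R(\md L) = \ann_R(\mdd L)$ so that Matlis duality over $R$ restricted to $S$-modules agrees with Matlis duality over $S$ (via $\Hom[R]{S}{E_R}$, the minimal injective cogenerator of $S$), decomposing the complete semi-local noetherian ring $S$ as a finite product of complete local noetherian rings, invoking classical Matlis duality for noetherian and artinian modules in the local complete case, and then closing under extensions with the five lemma and exactness of $\md{(-)}$ is a clean and correct route for this direction.

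The necessity direction, however, contains gaps that would need to be filled, and the central one is that your proposed localization reduction does not work as stated. For a Matlis reflexive module, $\supp_R(L)$ is generally not contained in $\mspec(R)$ (for example a complete local noetherian ring $R$ is itself Matlis reflexive with $\supp_R(R) = \spec(R)$), so restricting attention to $\supp_R(L)\cap\mspec(R)$ loses information about the structure of $L$. Moreover, Matlis duality does not commute with localization --- the functor $(-)^{\vee(R_\m)}$ uses $E_{R_\m}(R_\m/\m R_\m)$, not a localization of $E_R$ --- and it is not automatic that $L$ Matlis reflexive over $R$ forces $L_\m$ Matlis reflexive over $R_\m$; worse, $R_\m$ need not be complete, so the ``classical local theorem'' you want to invoke (Matlis reflexive over a \emph{complete} local ring equals mini-max) does not apply to $L_\m$ over $R_\m$. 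Your finiteness-of-support argument is only gestured at (``too large to match $L$'' is not a proof), and the final step extracting completeness of $S$ from faithfulness of $L$ and an $\comp{S}$-action is left vague; one would need to make the $\comp{S}$-module structure on $\mdd L$ explicit and argue that the natural map $S\to\comp{S}$ is an isomorphism, which requires genuine work. As written, the necessity direction is not a proof but a plan with unresolved obstructions.
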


The proofs of the next three facts given in \cite{kubik:hamm1} also work over non-local rings.

\begin{fact}[\protect{\cite[Lemma 1.23]{kubik:hamm1}}] \label{xlem:extensions}
The class of noetherian (respectively,  artinian or finite length) $R$-modules
is closed under submodules, quotients, and extensions.

Assume that $R$ is noetherian.
The class of mini-max (respectively, Matlis reflexive) $R$-modules
is closed under submodules, quotients, and extensions.
It follows that
the class of mini-max $R$-modules is the smallest class of $R$ modules
containing the artinian and noetherian $R$-modules that is closed under extensions.
See, e.g., the proof of~\cite[Lemma 1.23]{kubik:hamm1}.
\end{fact}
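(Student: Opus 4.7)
The statement splits into ring-free assertions (noetherian, artinian, finite length) and the noetherian-$R$ assertions (mini-max, Matlis reflexive), followed by the characterization of mini-max modules as the smallest extension-closed class containing noetherian and artinian modules. For the first block, closure under submodules and quotients is immediate from the defining chain conditions (ACC and DCC) and from additivity of length; closure under extensions follows from the standard sandwich argument applied to a short exact sequence $0 \to L' \to L \to L'' \to 0$, where an ascending or descending chain in $L$ induces chains in both $L'$ and $L''$ that must each stabilize, forcing the chain in $L$ to stabilize.

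For mini-max modules, I fix a decomposition $N \subseteq L$ with $N$ noetherian and $L/N$ artinian. For a submodule $L_0 \subseteq L$, the intersection $L_0 \cap N$ is noetherian and $L_0/(L_0 \cap N) \hookrightarrow L/N$ is artinian. For a quotient $L \twoheadrightarrow L_0$, the image of $N$ is noetherian and the residue quotient is a quotient of $L/N$, hence artinian. The extension case is the main obstacle. Given $0 \to L' \to L \to L'' \to 0$ with noetherian submodules $N' \subseteq L'$ and $N'' \subseteq L''$ witnessing mini-maxness of the ends, I set $\pi\colon L \to L''$ and $K = \pi^{-1}(N'')$. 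Since $L/K \cong L''/N''$ is artinian, it suffices to exhibit $K$ as mini-max. Now $K/N'$ sits in a short exact sequence with artinian kernel $L'/N'$ and noetherian quotient $N''$, and the key auxiliary observation is that any such extension is itself mini-max: lifting finitely many generators of the noetherian quotient produces a finitely generated submodule of $K/N'$, which is noetherian since $R$ is noetherian, and whose residue quotient is a quotient of the artinian module $L'/N'$, hence artinian. Pulling this noetherian submodule back through $K \twoheadrightarrow K/N'$ and combining with $N'$ yields a noetherian submodule of $K$ with artinian quotient, completing the argument. The noetherian hypothesis on $R$ enters exactly at this lifting step, via the equivalence of finitely generated with noetherian.

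For Matlis reflexive modules, the mini-max closure is already in hand, so the work reduces entirely to the biduality map $\bidual{-}$. Given $0 \to L' \to L \to L'' \to 0$, injectivity of $E$ makes the dualized sequence exact, and a second application of $\md{-}$ produces a commutative ladder with exact rows whose vertical arrows are $\bidual{L'}$, $\bidual{L}$, $\bidual{L''}$. By Fact~\ref{xfact100909a} each of these is a monomorphism. A Snake Lemma computation then shows that if $\bidual{L}$ is an isomorphism, both $\bidual{L'}$ and $\bidual{L''}$ are surjective and hence isomorphisms, handling the submodule and quotient cases simultaneously; the Five Lemma disposes of the extension case. This approach bypasses any direct manipulation of annihilators or semi-local completeness, which would otherwise be friction points in the non-local setting.

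Finally, the concluding characterization of mini-max modules is a tautology given what has been done: artinian and noetherian modules are visibly mini-max (take $N = L$ or $N = 0$), the mini-max class has just been shown to be extension-closed, and every mini-max module is by definition an extension of an artinian by a noetherian, so any extension-closed class of $R$-modules containing both the artinian and the noetherian modules must contain every mini-max module.
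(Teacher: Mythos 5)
Your proof is correct. The first block (noetherian/artinian/finite length closure) and the mini-max argument are essentially the standard ones: the intersection/image trick for submodules and quotients, and the extension case reduced via $K=\pi^{-1}(N'')$ to the auxiliary observation that an extension of artinian by noetherian is mini-max over a noetherian ring (lift generators, use that finitely generated implies noetherian). One small presentational point: after showing $K$ is mini-max with noetherian submodule $\hat M$, you should note explicitly that $\hat M\subseteq L$ is noetherian and $L/\hat M$ is artinian because it is an extension of the artinian modules $K/\hat M$ and $L/K$ — this is the step that actually concludes $L$ is mini-max, and you leave it to the reader.

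The Matlis reflexive argument is where you genuinely depart from the expected route. The natural proof in this paper's framework goes through Fact~\ref{xpara1}: $L$ is Matlis reflexive iff $L$ is mini-max and $R/\ann_R(L)$ is semi-local and complete. That characterization handles submodules and quotients easily (since $\ann_R(L)\subseteq\ann_R(L')$ when $L'$ is a submodule or quotient of $L$), but the extension case then requires an argument that $R/\ann_R(L)$ is semi-local and complete given only that $R/\ann_R(L')$ and $R/\ann_R(L'')$ are, which takes some care. Your biduality-ladder argument sidesteps this entirely: apply the exact functor $\mddp{-}$ to the short exact sequence, use naturality of $\delta$ and the monomorphism fact (Fact~\ref{xfact100909a}) to get a ladder with injective vertical arrows, and then the Snake Lemma gives the submodule and quotient cases in one stroke while the Five Lemma gives extensions. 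This is cleaner and more self-contained; its only dependence on the noetherian hypothesis on $R$ is through Fact~\ref{xfact100909a}. What it does \emph{not} buy you is the mini-max characterization of Matlis reflexive modules, which the paper needs elsewhere, so Fact~\ref{xpara1} is not dispensable in the paper — but for this particular closure result your route is arguably the better one.

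The final characterization of mini-max modules as the smallest extension-closed class containing artinian and noetherian modules is handled correctly and is indeed essentially immediate given the closure result and the definition.
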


\begin{fact}[\protect{\cite[Lemma 1.24]{kubik:hamm1}}]\label{xlem2of3}
Let $\catc$ be a class  $R$-modules
that is closed under submodules, quotients, and extensions.
\begin{enumerate}[\rm(a)]
\item  \label{xlem100430a2}
Given an exact sequence $L'\xra{f} L\xra{g} L''$,
if $L',L''\in\catc$, then $L\in\catc$.
\item  \label{xlem100430a3}
Given an $R$-complex $X$ 
and  an integer $i$, if $X_i\in\catc$,
then $\HH_i(X)\in\catc$.
\item  \label{xlem100430a4}
Assume that $R$ is noetherian.
Given a noetherian $R$-module $N$, if $L\in\catc$, then 
$\Ext{i}{N}{L},\Tor iNL\in\catc$.
\end{enumerate}
\end{fact}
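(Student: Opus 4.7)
The plan is to handle the three parts in order, using each to support the next, and leaning on the closure axioms as the only input.

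For part~\eqref{xlem100430a2}, I would factor the exact sequence $L'\xra{f}L\xra{g}L''$ into short exact pieces. Closure under quotients gives $\im(f)\in\catc$, and closure under submodules gives $\im(g)\in\catc$. Exactness means $\ker(g)=\im(f)$, so there is a short exact sequence
\begin{equation*}
0\to \im(f)\to L\to \im(g)\to 0.
\end{equation*}
Closure under extensions then places $L$ in $\catc$. This is the routine part and I do not expect it to cause trouble.

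For part~\eqref{xlem100430a3}, I would note that $\HH_i(X)=\ker(\partial_i^X)/\im(\partial_{i+1}^X)$. Since $\ker(\partial_i^X)$ is a submodule of $X_i\in\catc$, closure under submodules places it in $\catc$, and $\HH_i(X)$ is a quotient of it, hence in $\catc$ by closure under quotients. This step is equally direct.

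Part~\eqref{xlem100430a4} is where I would do the actual work, although it is still short. First I would observe that $\catc$ is closed under finite direct sums: given $L_1,L_2\in\catc$, the split short exact sequence $0\to L_1\to L_1\oplus L_2\to L_2\to 0$ and closure under extensions yield $L_1\oplus L_2\in\catc$, and an easy induction gives $L^{\oplus n}\in\catc$ for every $n\geq 0$. Using that $R$ is noetherian and $N$ is finitely generated, choose a resolution $P_\bullet\to N$ by finitely generated free $R$-modules, say $P_j\cong R^{n_j}$. Then $\Hom{P_\bullet}{L}$ and $P_\bullet\otimes_R L$ are complexes whose modules are finite direct sums of copies of $L$, so each term lies in $\catc$ by the previous remark. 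Applying part~\eqref{xlem100430a3} to these complexes delivers $\Ext{i}{N}{L}\in\catc$ and $\Tor iNL\in\catc$.

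The main \emph{potential} obstacle is simply making sure that $\catc$ really is closed under finite direct sums from the three stated closure properties; once this is verified via the split short exact sequence above, the rest of~\eqref{xlem100430a4} is a formal combination of parts~\eqref{xlem100430a3} and the existence of a free resolution of $N$ with finitely generated terms.
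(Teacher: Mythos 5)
Your proof is correct, and since the paper states this result as a Fact cited from \cite[Lemma 1.24]{kubik:hamm1} without reproducing the argument, there is no in-paper proof to compare against; your argument is the standard one and is surely what the cited lemma does. The factorization through $\im(f)=\ker(g)$ in part (a), the subquotient observation in part (b), and the deduction in part (c) via closure under finite direct sums (using the split exact sequence) plus a resolution of $N$ by finite free modules are all exactly right and fit together without gaps.
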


\begin{fact}[\protect{\cite[Lemma 1.25]{kubik:hamm1}}] \label{xlem100614a}
Let $R\to S$ be a  ring homomorphism, and 
let $\catc$ be a class of $S$-modules
that is closed under submodules, quotients, and extensions.
Fix an $S$-module $L$, an $R$-module $L'$, an $R$-submodule $L''\subseteq L'$, and an index $i\geq 0$.
\begin{enumerate}[\rm(a)]
\item  \label{xlem100614a1}
If $\Ext{i}{L}{L''},\Ext{i}{L}{L'/L''}\in\catc$, then $\Ext{i}{L}{L'}\in\catc$.
\item  \label{xlem100614a2}
If $\Ext{i}{L''}{L},\Ext{i}{L'/L''}{L}\in\catc$, then $\Ext{i}{L'}{L}\in\catc$.
\item  \label{xlem100614a3}
If $\Tor{i}{L}{L''}, \Tor{i}{L}{L'/L''}\in\catc$, then $\Tor{i}{L}{L'}\in\catc$.
\end{enumerate}
\end{fact}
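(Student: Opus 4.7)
The plan is to reduce all three parts to the two-out-of-three property of Fact~\ref{xlem2of3}\eqref{xlem100430a2}, applied to the class $\catc$ of $S$-modules, using the long exact sequences in Ext and Tor arising from the short exact sequence of $R$-modules
$$0\to L''\to L'\to L'/L''\to 0.$$
For part~\eqref{xlem100614a1}, apply $\Ext[R]{*}{L}{-}$ to extract the three-term exact piece
$$\Ext[R]{i}{L}{L''}\to \Ext[R]{i}{L}{L'}\to \Ext[R]{i}{L}{L'/L''};$$
the outer terms lie in $\catc$ by hypothesis, so Fact~\ref{xlem2of3}\eqref{xlem100430a2} yields $\Ext[R]{i}{L}{L'}\in\catc$. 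Part~\eqref{xlem100614a2} is symmetric: apply $\Ext[R]{*}{-}{L}$ to the same short exact sequence to obtain
$$\Ext[R]{i}{L'/L''}{L}\to \Ext[R]{i}{L'}{L}\to \Ext[R]{i}{L''}{L},$$
and invoke Fact~\ref{xlem2of3}\eqref{xlem100430a2} once more. For part~\eqref{xlem100614a3}, apply $L\otimes^{\mathbf{L}}_R-$ to the same short exact sequence to obtain the three-term piece
$$\Tor[R]{i}{L}{L''}\to \Tor[R]{i}{L}{L'}\to \Tor[R]{i}{L}{L'/L''}$$
of the long exact sequence in Tor, and conclude as before.

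The only point requiring care is verifying that each of these three-term sequences is $S$-linear, not merely $R$-linear, so that the closure conditions defining the class $\catc$ of $S$-modules may legitimately be invoked. This $S$-linearity is standard: because $L$ is an $S$-module, the functors $\Hom[R]{L}{-}$, $\Hom[R]{-}{L}$, and $\Otimes[R]{L}{-}$ send $R$-module inputs to $S$-modules via the action on $L$, and every morphism induced from an $R$-module map in the other variable commutes with this $S$-action. Passing to derived functors preserves this structure, so the three-term pieces extracted above lie in the category of $S$-modules, where Fact~\ref{xlem2of3}\eqref{xlem100430a2} is available. With this observation in hand, the proof is essentially bookkeeping, and no substantive obstacle arises.
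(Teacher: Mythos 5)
Your proof is correct and takes essentially the same approach as the source: the paper itself supplies no argument for this Fact, only the citation to \cite[Lemma 1.25]{kubik:hamm1} together with the remark that the proof there carries over to non-local rings, and the long-exact-sequence/two-out-of-three argument you give is that standard proof. You also correctly flag and dispose of the only subtle point, namely that the three-term exact sequences must consist of $S$-modules and $S$-linear maps (coming from the $S$-structure on $L$ in the fixed variable) so that the closure hypotheses on $\catc$ and Fact~\ref{xlem2of3}\eqref{xlem100430a2} may be invoked.
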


\begin{defn}
\label{xdefn101025a}
A prime ideal $\p$ of $R$ is \emph{associated} to $L$ if there is an $R$-module monomorphism $R/\p\into L$;
the set of primes associated to $L$ is denoted $\ass_R(L)$.
The \emph{support} of an $R$-module $L$ is
$\supp_R(L)=\{\p\in\spec(R)\mid L_{\p}\neq 0\}$.
The set of minimal elements of $\supp_R(L)$ with respect to inclusion
is denoted $\Min_R(L)$.
\end{defn}

\begin{defn}\label{xdefn100207a}
Let $\fa$ be an ideal of $R$, and let $L$ be an $R$-module.
Set
$$\Gamma_{\fa}(L)=\{x\in L\mid\text{$\fa^nx=0$ for $n\gg 0$}\}.$$
We say that $L$ is \emph{$\fa$-torsion} if $L=\Gamma_{\fa}(L)$.
\end{defn}

Here is something elementary and useful.

\begin{lem}\label{lem120719b}
Let $U\subseteq R$ be multiplicatively closed. For all $U^{-1}R$-modules $M$ and $N$, one has
$\Hom[U^{-1}R]MN=\Hom MN$.
\end{lem}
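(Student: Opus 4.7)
My plan is to prove the two containments separately, with the nontrivial direction being $\Hom MN \subseteq \Hom[U^{-1}R]MN$. The reverse containment is immediate: any $U^{-1}R$-linear map is automatically $R$-linear via the structure map $R \to U^{-1}R$.

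For the forward direction, fix $f \in \Hom MN$ and I must show $f$ is $U^{-1}R$-linear. The key observation is that because $M$ and $N$ are $U^{-1}R$-modules, every $u \in U$ acts as an $R$-module automorphism on both $M$ and $N$ (with inverse given by multiplication by $1/u \in U^{-1}R$). So first I would show, for any $u \in U$ and $m \in M$, that $f((1/u)m) = (1/u)f(m)$; this follows by applying $f$ to $u \cdot (1/u)m = m$ and using $R$-linearity to get $u \cdot f((1/u)m) = f(m)$, then dividing by $u$ (which is legal since $u$ acts invertibly on $N$).

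Next I would bootstrap to an arbitrary element $r/u \in U^{-1}R$. Writing $(r/u)m = r \cdot (1/u)m$ and applying $R$-linearity followed by the previous step gives
\[
f((r/u)m) = r f((1/u)m) = r(1/u)f(m) = (r/u)f(m),
\]
as required.

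There is essentially no obstacle here; the lemma is a soft consequence of the universal property of localization, and the only thing one needs to be careful about is distinguishing the formal symbol $1/u \in U^{-1}R$ from its action on the $U^{-1}R$-modules, which is cleanly handled by the observation that $u$ acts as an automorphism on $M$ and $N$.
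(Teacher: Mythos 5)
Your proof is correct and follows essentially the same approach as the paper: both establish the nontrivial containment by showing that an $R$-linear map automatically commutes with the action of $\frac{1}{u}$ (because $u$ acts invertibly on the $U^{-1}R$-modules), and the paper's one-line chain of equalities is just your two steps compressed together.
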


\begin{proof}
Given the natural  inclusion
$\Hom[U^{-1}R]MN\subseteq\Hom MN$, it suffices to verify that each $f\in\Hom MN$ is 
$U^{-1}R$-linear, which we verify  next.
$$\textstyle
f(\frac rum)
=\frac 1uuf(\frac rum)
=\frac 1uf(u\frac rum)=\frac 1uf(rm)=\frac 1urf(m)=\frac ruf(m)
$$
\end{proof}

\begin{fact}\label{xfact101221a}
Assume that $R$ is noetherian, 
and let $\fb$ be an ideal of $R$. For each $\p\in\spec(R)$,
one has
$$\Gamma_{\fb}(\E_R(R/\p))=
\begin{cases}
\E_R(R/\p)&\text{if $\fb\subseteq\p$} \\
0&\text{if $\fb\not\subseteq\p$.}
\end{cases}$$
The point is that $\E_R(R/\p)$ is $\p$-torsion and multiplication by any 
element of $R\ssm \p$ describes an automorphism of $\E_R(R/\p)$.
\end{fact}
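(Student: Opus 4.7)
My plan is to split on the two cases of the statement, each reduced to one of the two standard properties of $\E_R(R/\p)$ the authors highlight: (i) $\E_R(R/\p)$ is $\p$-torsion, and (ii) multiplication by each $r \in R \ssm \p$ is an $R$-module automorphism of $\E_R(R/\p)$. Once (i) and (ii) are in hand, the two cases are short. If $\fb \subseteq \p$, then $\fb^n \subseteq \p^n$ for every $n$, and (i) gives $\fb$-torsion for free. If $\fb \not\subseteq \p$, pick $r \in \fb \ssm \p$; any $x \in \Gamma_\fb(\E_R(R/\p))$ satisfies $r^n x = 0$ for some $n$, and (ii) implies multiplication by $r^n$ is also an automorphism, forcing $x = 0$.

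To justify (ii), fix $r \in R \ssm \p$ and consider $\phi_r \colon \E_R(R/\p) \to \E_R(R/\p)$ given by $x \mapsto rx$. Injectivity uses the essentiality of $R/\p \subseteq \E_R(R/\p)$: a nonzero $x \in \ker \phi_r$ would generate a submodule meeting $R/\p$ in a nonzero $\bar s$ with $r \bar s = 0$, but $rs \in \p$ and $r \notin \p$ would force $s \in \p$ by primality, contradicting $\bar s \neq 0$. For surjectivity, $\phi_r(\E_R(R/\p))$ is a nonzero injective submodule of the indecomposable injective $\E_R(R/\p)$, hence a nonzero direct summand, hence everything.

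For (i), the cleanest route is to observe that $E' := \Gamma_\p(\E_R(R/\p))$ is an injective submodule of $\E_R(R/\p)$ (using that $\Gamma_\p$ preserves injectives over noetherian $R$) containing $R/\p$, and since $\E_R(R/\p)$ is the injective hull of $R/\p$, this forces $E' = \E_R(R/\p)$. A more hands-on variant shows the quotient $\E_R(R/\p)/E'$ has no associated primes: any $\fq \in \ass_R(\E_R(R/\p)/E')$ lies in $\supp_R(\E_R(R/\p)) = V(\p)$, and then either the automorphism property (ii) (in the case $\fq \supsetneq \p$, applied to an element of $\fq \ssm \p$) or the finite generation of $\p$ (in the case $\fq = \p$, yielding a uniform power of $\p$ that kills a lift) produces a lift already in $E'$, contradicting nontriviality in the quotient.

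The main mild obstacle is (i); it is the only step using noetherianness essentially, through either the injectivity of $\Gamma_\p(\E_R(R/\p))$ or the existence of associated primes together with the finite generation of $\p$. Everything else rests on the essentiality of $R/\p \subseteq \E_R(R/\p)$ and the indecomposability of this injective hull.
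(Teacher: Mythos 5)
Your proof is correct and follows exactly the route the paper sketches: reduce to the two standard properties of $\E_R(R/\p)$ (it is $\p$-torsion, and multiplication by elements of $R\ssm\p$ is an automorphism), then split on whether $\fb\subseteq\p$. Your fleshed-out justifications of those two properties --- essentiality plus primality for injectivity, indecomposability for surjectivity, and $\Gamma_\p$ preserving injectives (or the associated-primes variant) for $\p$-torsionness --- are sound, with the $\Gamma_\p$-preserves-injectives route being the cleaner of the two options you offer for (i).
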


\begin{fact}\label{fact110426a}
Assume that $R$ is noetherian, 
and let $U\subseteq R$ be  multiplicatively closed. For each $\p\in\spec(R)$,
one has
$$U^{-1}\E_R(R/\p)=
\begin{cases}
\E_R(R/\p)\cong \E_{U^{-1}R}(U^{-1}R/\p U^{-1}R)&\text{if $\p\cap U=\emptyset$} \\
0&\text{if $\p\cap U\neq \emptyset$.}
\end{cases}$$
See, e.g., \cite[Theorems~3.3.3 and~3.3.8(6)]{enochs:rha}
or~\cite[Theorem 18.4(vi)]{matsumura:crt} or the proof of Lemma~\ref{lem120719a}\eqref{lem120719a2}.
\end{fact}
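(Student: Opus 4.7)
The plan is to split the argument according to whether $\p\cap U$ is empty, relying on the two properties of $\E_R(R/\p)$ recorded in Fact~\ref{xfact101221a}: namely that $\E_R(R/\p)$ is $\p$-torsion, and that every element of $R\ssm\p$ acts on it as an automorphism. If $\p\cap U\neq\emptyset$, I would fix $u\in\p\cap U$; each $x\in\E_R(R/\p)$ satisfies $u^nx=0$ for some $n$, and since $u/1$ is a unit in $U^{-1}R$, this forces $x/1=0$ in the localization, whence $U^{-1}\E_R(R/\p)=0$.

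Now assume $\p\cap U=\emptyset$, so every $u\in U$ lies in $R\ssm\p$ and therefore acts invertibly on $\E_R(R/\p)$. I would first check that the natural localization map $\E_R(R/\p)\to U^{-1}\E_R(R/\p)$ is an $R$-module isomorphism: it is injective because no $u\in U$ kills a nonzero element, and surjective because each $u$ already acts invertibly, so $x/u=u^{-1}x/1$. This establishes the equality $U^{-1}\E_R(R/\p)=\E_R(R/\p)$ asserted in the statement.

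To upgrade this equality to an isomorphism with $\E_{U^{-1}R}(U^{-1}R/\p U^{-1}R)$ of $U^{-1}R$-modules, I would exploit the natural inclusion $U^{-1}R/\p U^{-1}R\cong U^{-1}(R/\p)\subseteq U^{-1}\E_R(R/\p)=\E_R(R/\p)$ and verify the two defining properties of an injective hull over $U^{-1}R$. For injectivity: given a $U^{-1}R$-submodule inclusion $M\into N$ and a $U^{-1}R$-linear map $f\colon M\to\E_R(R/\p)$, the $R$-injectivity of $\E_R(R/\p)$ produces an $R$-linear extension $\ti f\colon N\to\E_R(R/\p)$, and Lemma~\ref{lem120719b} forces $\ti f$ to be $U^{-1}R$-linear since both $N$ and $\E_R(R/\p)$ are $U^{-1}R$-modules. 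For essentiality: given any nonzero $x\in\E_R(R/\p)$, the $R$-essentiality of $\E_R(R/\p)$ over $R/\p$ yields $(Rx)\cap (R/\p)\neq 0$, and this nonzero intersection sits inside $(U^{-1}R)x\cap (U^{-1}R/\p U^{-1}R)$.

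The step I expect to be the main obstacle is the injectivity check over $U^{-1}R$, because $R$-injectivity does not automatically transfer to $U^{-1}R$-injectivity in general. The key observation that dissolves the obstacle is that when $U$ acts invertibly on an $R$-module $E$, Lemma~\ref{lem120719b} ensures every $R$-linear map from a $U^{-1}R$-module into $E$ is automatically $U^{-1}R$-linear, so the Baer-type extension criterion over $R$ collapses onto the one over $U^{-1}R$.
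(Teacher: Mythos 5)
Your proof is correct, and it is worth noting that the paper itself does not prove this statement: it is recorded as a Fact with references to Enochs--Jenda, Matsumura, and (for the dichotomy alone) the proof of Lemma~\ref{lem120719a}\eqref{lem120719a2}, which handles only the maximal-ideal case via Lemma~\ref{lem120818a}. Your first two paragraphs reproduce that dichotomy argument but in the correct generality for an arbitrary prime $\p$, using the automorphism property from Fact~\ref{xfact101221a} rather than Lemma~\ref{lem120818a} (whose proof requires $\m$ to be maximal to get $uR+\m=R$). The last part of your proof, identifying $\E_R(R/\p)$ with $\E_{U^{-1}R}(U^{-1}R/\pU^{-1}R)$, is exactly the content that the paper defers entirely to external references, and your argument is complete: the use of Lemma~\ref{lem120719b} to convert $R$-linear extensions into $U^{-1}R$-linear ones is precisely the mechanism that transfers the Baer criterion, and the essentiality check via $(Rx)\cap(R/\p)\subseteq(U^{-1}R)x\cap U^{-1}(R/\p)$ is sound because $R/\p$ embeds in $U^{-1}(R/\p)$ when $\p\cap U=\emptyset$. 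The only thing implicit is that $\E_R(R/\p)$ genuinely acquires a $U^{-1}R$-module structure from the fact that each $u\in U$ acts as an automorphism; that is harmless, but you might state it, since both applications of Lemma~\ref{lem120719b} depend on it.
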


\begin{defn}\label{xdefn100414a}
Let  $L$ be an $R$-module, $\mathfrak{p}\in\spec{R}$ and $\kappa(\p):=R_{\p}/\p R_{\p}$.
For each integer $i\geq 0$,
the $i$th \emph{Bass number} of $L$ with respect to $\p$ and the $i$th \emph{Betti number} of $L$ with respect to $\p$ are as follows:
\begin{align*}
\mu^i_R(\p,L)&=\dim_{\kappa(\p)}(\Ext[R_{\p}]{i}{\kappa(\p)}{L_{\p}})
&&&\beta_i^R(\p,L)&=\dim_{\kappa(\p)}(\Tor[R_{\p}]{i}{\kappa(\p)}{L_{\p}}).
\end{align*}
When $R$ is quasi-local with maximal ideal $\m$, we abbreviate
$\mu^i_R(L)=\mu^i_R(\m,L)$ and
$\beta_i^R(L)=\beta_i^R(\m,L)$.
\end{defn}

\begin{disc}\label{xdisc101110a}
Let $L$ be an $R$-module. For each $i$ and each $\p\in\spec(R)$,
we have 
\begin{align*}
\mu^i_R(\p,L)&=\mu^i_{R_{\p}}(L_{\p})
&\beta_i^R(\p,L)&= \beta _i^{R_{\p}}(L_{\p}).
\end{align*}
\end{disc}

\begin{disc}\label{xdisc100414a}
Assume that $R$ is noetherian, and let  $L$ be an $R$-module.
\begin{enumerate}[(a)]
\item\label{xdisc100414a2}
If $I$ is a minimal injective resolution of $L$,
then for each index $i\geq 0$ we have
$$\textstyle
I^{i}\cong \coprod_{\p\in\spec(R)}\E_R(R/\p)^{(\mu^i_R(\p,L))}\cong \coprod_{\p\in\supp_R(L)}\E_R(R/\p)^{(\mu^i_R(\p,L))}.$$
See, e.g., \cite[Theorem 18.7]{matsumura:crt}.
\item\label{xdisc100414a3}
For each $\p\in\spec{R}$, the quantity
$\mu^i_R(\p,L)$ is finite for all $i\geq 0$ if and only if $\beta_i^R(\p,L)$ is finite
for all $i\geq 0$; see~\cite[Proposition 1.1]{lescot:spmi} and the localization equalities in Remark~\ref{xdisc101110a}.
\end{enumerate}
\end{disc}

\section{Torsion Modules}\label{sec120713a}

This section  consists of foundational material about torsion modules.
For the next  fact, the proofs  in \cite{kubik:hamm1}  work over non-local non-noetherian rings.

\begin{fact}[\protect{\cite[1.2--1.4]{kubik:hamm1}}]\label{xpara0'''}
Let $\fa$ be an ideal of $R$, and let $L$,  $T$, and $T'$ be  
$R$-modules such that $T$ and $T'$ are $\fa$-torsion.
\begin{enumerate}[\rm(a)] 
\item \label{xitem101021a}
Then $T$ has an $\Comp Ra$-module structure that is compatible with its $R$-module structure
via the natural map $R\to\Comp Ra$.
\item \label{xpara0e}
The natural map $T\to \Comp Ra\otimes_RT$ is an isomorphism of $\Comp Ra$-modules.
\item \label{xpara0g}
The left and right $\Comp Ra$-module structures on $\Comp Ra\otimes_RT$ 
are the same.
\item \label{xpara0h} A set $Z\subseteq T$ is an $R$-submodule if and only if  it is an $\Comp Ra$-submodule.
\end{enumerate}
\end{fact}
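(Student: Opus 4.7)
The plan is to prove all four parts using the key observation that, for each $x\in T$, there is some $n$ with $\mathfrak{a}^n x=0$, and the composition $\Comp{R}{a}\to\Comp{R}{a}/\mathfrak{a}^n\Comp{R}{a}\to R/\mathfrak{a}^n$ (where the second map is the standard identification arising from the inverse-limit definition of $\Comp{R}{a}=\varprojlim R/\mathfrak{a}^n$) is surjective; this works without a noetherian hypothesis because surjectivity of $\Comp{R}{a}\to R/\mathfrak{a}^n$ is automatic.

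For (a), given $s\in\Comp{R}{a}$ and $x\in T$ with $\mathfrak{a}^n x=0$, I would define $s\cdot x := s_n x$, where $s_n\in R$ is any preimage of the image of $s$ in $R/\mathfrak{a}^n$. Two such lifts differ by an element of $\mathfrak{a}^n$, which kills $x$, so this is well-defined; the value is also independent of the choice of $n$ (larger $n$ refines the formula compatibly). Routine checks show this is a ring action extending the $R$-action along $R\to\Comp{R}{a}$.

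For (b), I would filter $T$ as $T=\bigcup_n T_n$ where $T_n=(0:_T\mathfrak{a}^n)$, and note that tensor products commute with direct limits, so it suffices to prove that $T_n\to\Comp{R}{a}\otimes_R T_n$ is an isomorphism. Since $\mathfrak{a}^n$ annihilates $T_n$, the tensor factors through $\Comp{R}{a}/\mathfrak{a}^n\Comp{R}{a}\otimes_R T_n$. Using the natural map $\Comp{R}{a}\to R/\mathfrak{a}^n$ and the fact that it factors an $R$-algebra surjection, one verifies $\Comp{R}{a}\otimes_R T_n\cong (R/\mathfrak{a}^n)\otimes_R T_n\cong T_n$, with the composite being the canonical map. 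The main technical obstacle is here: one must check the kernel of $\Comp{R}{a}\to R/\mathfrak{a}^n$ acts as zero on the tensor product, which follows because that kernel lies in $\mathfrak{a}^n\Comp{R}{a}$ (elements are limits of tails in $\mathfrak{a}^n$), and generators of $\mathfrak{a}^n\Comp{R}{a}$ of the form $at$ with $a\in\mathfrak{a}^n$ satisfy $at\otimes x=t\otimes(ax)=0$ by $R$-linearity.

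For (c), I would show both $\Comp{R}{a}$-structures on $\Comp{R}{a}\otimes_R T$ agree on simple tensors $r\otimes x$. Writing $s\in\Comp{R}{a}$, choosing $n$ with $\mathfrak{a}^n x=0$, and taking a lift $s_n\in R$ of the image of $s$ modulo $\mathfrak{a}^n$, the left action gives $s(r\otimes x)=sr\otimes x$; using $s-s_n\in\mathfrak{a}^n\Comp{R}{a}$ exactly as in the previous paragraph, $sr\otimes x=s_nr\otimes x=r\otimes s_n x=r\otimes sx$, which is the action induced from the right factor. Finally, (d) is immediate: any $R$-submodule $Z\subseteq T$ is closed under the $\Comp{R}{a}$-action because for $z\in Z$ annihilated by $\mathfrak{a}^n$ and $s\in\Comp{R}{a}$, the element $s\cdot z=s_n z$ lies in $Rz\subseteq Z$; the reverse containment is trivial via $R\to\Comp{R}{a}$.
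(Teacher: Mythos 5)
Parts (a) and (d) are fine: they use only the automatic surjectivity of $\Comp Ra\to R/\fa^n$, and your definition $s\cdot x := s_n x$ (for $s_n\in R$ a lift of the image of $s$ in $R/\fa^n$) is exactly the standard construction one expects here.

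For (b) and (c), though, there is a real gap. Both come down to the assertion that $K_n:=\ker\bigl(\Comp Ra\to R/\fa^n\bigr)$ is contained in $\fa^n\Comp Ra$, which you support only with the remark ``(elements are limits of tails in $\fa^n$).'' That is topological intuition, not an argument: $\fa^n\Comp Ra$ consists of \emph{finite} $\Comp Ra$-linear combinations of elements of $\fa^n$, whereas $K_n$ is cut out by a limit condition, and there is no general reason for the two to coincide. The equality $K_n=\fa^n\Comp Ra$ (equivalently, $\Comp Ra/\fa^n\Comp Ra\cong R/\fa^n$) is a genuine theorem that requires $\fa$ to be finitely generated --- automatic when $R$ is noetherian, see~\cite[Theorem~8.11]{matsumura:crt} --- and this is precisely the hypothesis under which the argument of~\cite{kubik:hamm1} goes through. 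Without it, (b) and (c) can fail outright. For instance, take $R=k[x_1,x_2,\ldots]$, $\fa=(x_1,x_2,\ldots)$, and $T=R/\fa$. The element $s=\sum_{k\geq 1}x_k^k$ of $\Comp Ra$ lies in $K_1$ but not in $\fa\Comp Ra$: any element of $\fa\Comp Ra$ is a finite sum $\sum_{j\in J}x_j u_j$ with $J$ finite, so every monomial appearing in it is divisible by some $x_j$ with $j\in J$, whereas $s$ contains $x_k^k$ for every $k$. Hence $\Comp Ra\otimes_R T\cong\Comp Ra/\fa\Comp Ra$ strictly contains $T$, the natural map in (b) is not surjective, and the two module structures in (c) disagree on $s\otimes 1$. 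You need to either prove, or explicitly invoke, the finite-generation fact $K_n=\fa^n\Comp Ra$ at this step rather than assert it.
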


The next result contains a non-local version of~\cite[Lemma 1.5]{kubik:hamm1}.

\begin{lem}\label{disc120720a}
Let $\fa$ be an ideal of $R$, and let   $T$ be an 
$\fa$-torsion $R$-module.
\begin{enumerate}[\rm(a)] 
\item \label{xlem100206c1}
If $L$ is an $\Comp Ra$-module (e.g., if $L$ is an $\fa$-torsion $R$-module), then
$\Hom{T}{L}=\Hom[\Comp{R}a]{T}{L}$.
\item \label{xlem100206c2}
If $L'$ is an $R$-module, then there is an $\Comp Ra$-module isomorphism
$\Hom T{L'}\cong\Hom{T}{\Gamma_{\fa}(L')}=\Hom[\Comp Ra]{T}{\Gamma_{\fa}(L')}$.
\end{enumerate}
\end{lem}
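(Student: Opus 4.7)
The plan is to handle~(a) using the tensor-product identification from Fact~\ref{xpara0'''}\eqref{xpara0e}, and then to deduce~(b) from~(a) together with the observation that any $R$-homomorphism out of $T$ automatically lands in the $\fa$-torsion submodule of its codomain.

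For~(a), the inclusion $\Hom[\Comp Ra]{T}{L}\subseteq\Hom{T}{L}$ is immediate from the natural ring map $R\to\Comp Ra$, so the task is the reverse inclusion. Fix an $R$-linear map $f\colon T\to L$; the claim is that $f$ is automatically $\Comp Ra$-linear. By Fact~\ref{xpara0'''}\eqref{xpara0e}, the natural assignment $\eta\colon T\to\Comp Ra\otimes_R T$ given by $t\mapsto 1\otimes t$ is an isomorphism of $\Comp Ra$-modules. Using the given $\Comp Ra$-action on $L$, the rule $\hat{r}\otimes t\mapsto \hat{r}\,f(t)$ defines, via the universal property of the tensor product, an $\Comp Ra$-linear map $\wti{f}\colon \Comp Ra\otimes_R T\to L$. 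One checks $\wti{f}\circ\eta=f$, and since both $\eta$ and $\wti{f}$ are $\Comp Ra$-linear, so is~$f$.

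For~(b), first observe that any $R$-homomorphism $g\colon T\to L'$ satisfies $g(T)\subseteq\Gamma_{\fa}(L')$: for each $t\in T$ there exists $n$ with $\fa^n t=0$, hence $\fa^n g(t)=g(\fa^n t)=0$. Thus restriction of codomain yields a natural isomorphism $\Hom{T}{L'}\cong\Hom{T}{\Gamma_{\fa}(L')}$. Since $\Gamma_{\fa}(L')$ is $\fa$-torsion, Fact~\ref{xpara0'''}\eqref{xitem101021a} equips it with a compatible $\Comp Ra$-module structure, so applying~(a) with $L=\Gamma_{\fa}(L')$ gives the equality $\Hom{T}{\Gamma_{\fa}(L')}=\Hom[\Comp Ra]{T}{\Gamma_{\fa}(L')}$. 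Composing the two identifications yields the desired chain and transports an $\Comp Ra$-module structure onto $\Hom{T}{L'}$.

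The main obstacle, if one arises, is conceptual rather than computational: one must resist the temptation to verify $\Comp Ra$-linearity of $f$ element-by-element via approximations $\hat{r}\equiv r\pmod{\fa^n\Comp Ra}$ (which also works, once one notes that $\im(f)\subseteq\Gamma_{\fa}(L)$ so that $\fa^n$ kills $f(t)$). Routing the argument through the tensor product and Fact~\ref{xpara0'''}\eqref{xpara0e} bypasses this bookkeeping entirely and makes the role of the $\Comp Ra$-structures on both $T$ and $L$ transparent.
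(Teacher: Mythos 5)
Your argument is correct and follows essentially the same route as the paper: part~(a) rests on Fact~\ref{xpara0'''}\eqref{xpara0e}, with your explicit construction of $\wti{f}$ and the check $\wti{f}\circ\eta=f$ being exactly the ``compatibility with the inclusion'' verification that the paper leaves to the reader after invoking Hom-cancellation and Hom-tensor adjointness. Part~(b) likewise matches: your observation that any $R$-homomorphism out of $T$ lands in $\Gamma_{\fa}(L')$ is precisely the content of the cited \cite[Lemma~1.5]{kubik:hamm1} step.
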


\begin{proof}
\eqref{xlem100206c1}
The first isomorphism in the following sequence is Hom-cancellation.
$$\Hom{T}{L}\cong\Hom{T}{\Hom[\Comp Ra]{\Comp Ra}{L}}\cong\Hom[\Comp{R}a]{\Otimes{\Comp Ra}T}{L}\cong\Hom[\Comp{R}a]{T}{L}$$
The second isomorphism is Hom-tensor adjointness, and the third one is from Fact~\ref{xpara0'''}\eqref{xpara0e}.
One checks that these isomorphisms are compatible with the inclusion
$\Hom{T}{L}\supseteq\Hom[\Comp{R}a]{T}{L}$, so this inclusion is an equality.

\eqref{xlem100206c2}
The desired equality follows from part~\eqref{xlem100206c1}.
For the isomorphism,
consider the map $i_*\colon\Hom T{\Gamma_{\fa}(L')}\to\Hom{T}{L'}$, which is induced by the inclusion
$i\colon\Ga a{L'}\into L'$. Since $T$ is $\fa$-torsion,  
it is an $\Comp Ra$-module by Fact~\ref{xpara0'''}\eqref{xitem101021a}.
Using this, it is straightforward to show that $i_*$ is $\Comp Ra$-linear.
The proof of~\cite[Lemma~1.5]{kubik:hamm1} shows that $i_*$ is bijective.
\end{proof}

\begin{lem}\label{lem120818a}
Let $\m\in\mspec(R)$, and let $T$ be an $\m$-torsion $R$-module. 
For each $u\in R\ssm\m$
multiplication by $u$ describes an automorphism of $T$.
\end{lem}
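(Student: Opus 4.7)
The plan is to show that for each element $t \in T$, multiplication by $u$ acts invertibly on the cyclic environment generated by $t$, and then assemble this into a global statement.

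First I would reduce to a finite-annihilator situation: since $T$ is $\m$-torsion, for any $t \in T$ there is an integer $n = n(t) \geq 1$ with $\m^n t = 0$. The key algebraic fact I need is that $R/\m^n$ is a (quasi-)local ring with maximal ideal $\m/\m^n$, and this holds without assuming $R$ is noetherian: any prime ideal $\fq$ of $R$ containing $\m^n$ must contain $\m$ (as $\fq$ is prime), and since $\m$ is maximal, $\fq = \m$. Hence every maximal ideal of $R/\m^n$ equals $\m/\m^n$. Since $u \notin \m$, the image of $u$ in $R/\m^n$ lies outside the unique maximal ideal, so it is a unit; choose $v \in R$ with $uv - 1 \in \m^n$.

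Next I would derive the two key halves. For injectivity of $\mu_u \colon T \to T$, suppose $ut = 0$; pick $n$ with $\m^n t = 0$ and $v$ as above, so $(uv-1)t \in \m^n t = 0$, giving $t = uvt = v(ut) = 0$. For surjectivity, given $t \in T$, the same choice of $n$ and $v$ yields $u(vt) = uvt = t + (uv-1)t = t$, so $vt$ is a preimage of $t$.

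There is essentially no obstacle; the only subtlety is noticing that although $R$ is not assumed noetherian in the hypothesis of this lemma, the quotient $R/\m^n$ is still forced to be local because $\m$ is a maximal (hence the unique prime minimal over $\m^n$, and the unique maximal ideal containing $\m^n$). Once that is observed, the rest is a one-line verification. I would write the proof as: fix $u \in R \ssm \m$, denote by $\mu_u$ multiplication by $u$ on $T$, and for each $t \in T$ choose $n$ and $v$ as above; then the identities $t = uvt$ and $vut = t$ immediately yield that $\mu_u$ is bijective.
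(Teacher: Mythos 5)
Your proof is correct, and it takes a genuinely different route from the paper's. You argue element-by-element: given $t\in T$, you pick $n$ with $\m^n t=0$, observe that $R/\m^n$ is quasi-local with maximal ideal $\m/\m^n$ (a fact you correctly justify without noetherian hypotheses), so the image of $u$ is a unit, and then you produce an explicit ``local inverse'' $v$ to verify injectivity and surjectivity of multiplication by $u$ directly. The paper's proof is a one-line module-theoretic argument: the kernel and cokernel of $T\xra{u}T$ are annihilated by $u$, hence $uR$-torsion, and they are also $\m$-torsion as subquotients of $T$; a module that is torsion with respect to two comaximal ideals $uR$ and $\m$ (so $uR+\m=R$) is $R$-torsion, hence zero. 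Both arguments hinge on the comaximality $uR+\m=R$, but yours unpacks it at the level of individual elements with an explicit inverse, while the paper's packages it once at the module level. Your version is more self-contained and constructive; the paper's is more compact and reusable as a torsion-vanishing principle. Either is acceptable here.
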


\begin{proof}
The kernel and cokernel of 
the map $T\xra uT$
are $u$-torsion and $\m$-torsion. Hence, they are torsion with respect to $uR+\m=R$, that is, they
are both 0.
\end{proof}

\begin{lem}\label{lem110517a}
Let $\mcf\subseteq\mspec(R)$.
For each $\m\in\mcf$, let $T(\m)$ be an $\m$-torsion $R$-module, and set $T=\coprod_{\m\in\mcf}T(\m)$.
Then 
we have the following.
\begin{enumerate}[\rm(a)]
\item \label{item120718c}
For each $\n\in\mspec(R)$, the composition of natural maps
$\Gamma_{\n}(T)\to T\to T_{\n}$ is an isomorphism.
If $\n\in\mcf$, then the natural map $T(\n)=\Ga n{T(\n)}\to\Ga nT$ is an isomorphism.
Each map is $R_\n$-linear and $\Comp Ra$-linear for any ideal $\fa\subseteq\n$.
\item\label{item120718d}
One has  
\begin{align*}
\Min_R(T)&=\ass_R(T)=\supp_R(T)=\supp_R(T)\cap\mcf\\
&=\{\m\in\mcf\mid T(\m)\neq 0\}=\{\m\in\mcf\mid T_\m\neq 0\}.
\end{align*}
\item \label{item120718e} 
The module $T$ is $\fa$-torsion  for each ideal $\fa\subseteq\cap_{\m\in\supp_R(T)}\m$, and
$$\textstyle\sum_{\mathfrak{m}\in\supp_R(T)}\Gamma_{\mathfrak{m}}(T)=\sum_{\mathfrak{m}\in\mathcal{F}}\Gamma_{\mathfrak{m}}(T)
=T\cong\coprod_{\m\in\mathcal{F}}T_{\m}\cong\coprod_{\m\in\supp_R(T)}T_{\m}.$$
The sum $\sum_{\mathfrak{m}\in\mathcal{F}}\Gamma_{\mathfrak{m}}(T)=\sum_{\mathfrak{m}\in\supp_R(T)}\Gamma_{\mathfrak{m}}(T)$
is a direct sum,
and the isomorphisms are $\Comp Ra$-linear  for each ideal $\fa\subseteq\cap_{\m\in\supp_R(T)}\m$.
\end{enumerate}
\end{lem}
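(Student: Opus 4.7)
The plan is to reduce everything to an analysis of how each summand $T(\m)$ behaves under $\Gamma_\n(-)$ and under localization at $\n$, for each $\n\in\mspec(R)$; parts~(b) and~(c) will then follow routinely once~(a) is in hand.

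For~(a), fix a summand $T(\m)$ and a maximal ideal $\n$. If $\m\neq\n$, then $\m+\n=R$, so any element killed simultaneously by a power of $\m$ and a power of $\n$ is zero, giving $\Gamma_\n(T(\m))=0$; and choosing $u\in\m\ssm\n$, each $t\in T(\m)$ satisfies $u^jt=0$ for some $j$ while $u^j$ is a unit of $R_\n$, so $T(\m)_\n=0$. If $\m=\n$, Lemma~\ref{lem120818a} says each $u\in R\ssm\n$ acts as an automorphism on $T(\n)$, so the natural map $T(\n)\to T(\n)_\n$ is bijective, while $\Gamma_\n(T(\n))=T(\n)$ holds by definition. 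Summing over $\mcf$ (using that both $\Gamma_\n(-)$ and localization commute with coproducts) yields the isomorphisms $T(\n)\cong\Gamma_\n(T)\cong T_\n$ when $\n\in\mcf$, and shows both targets to vanish otherwise. The $R_\n$-linearity is then automatic from Lemma~\ref{lem120818a}, and the $\Comp R^\fa$-linearity for $\fa\subseteq\n$ follows from Fact~\ref{xpara0'''}\eqref{xitem101021a}.

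For~(b), part~(a) identifies $\supp_R(T)$ with $\{\m\in\mcf:T(\m)\neq 0\}=\{\m\in\mcf:T_\m\neq 0\}$, so in particular $\supp_R(T)\subseteq\mcf$. Since $\supp_R(T)$ consists of pairwise incomparable maximal ideals, $\Min_R(T)=\supp_R(T)$. The inclusion $\ass_R(T)\subseteq\supp_R(T)$ is standard; for the reverse, given $\m\in\supp_R(T)$ and $0\neq t\in T(\m)$, the smallest $k\geq 1$ with $\m^kt=0$ produces a nonzero element $s\in\m^{k-1}t$ annihilated by the maximal ideal $\m$, so $R/\m\hookrightarrow T$. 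For~(c), part~(a) gives $\Gamma_\m(T)\cong T(\m)\cong T_\m$ for $\m\in\mcf$ (all three vanishing for $\m\notin\mcf$), so the hypothesis $T=\coprod_{\m\in\mcf}T(\m)$ translates directly into the two direct sum formulas, and one may pass freely between $\mcf$ and $\supp_R(T)$ by dropping or adjoining zero summands. Each nonzero $T(\m)$ is $\m$-torsion hence $\fa$-torsion whenever $\fa\subseteq\m$, yielding the $\fa$-torsion assertion for $\fa\subseteq\cap_{\m\in\supp_R(T)}\m$; the $\Comp R^\fa$-linearity of the isomorphisms is then automatic via Fact~\ref{xpara0'''}\eqref{xpara0h}, since all modules in sight are $\fa$-torsion.

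The main obstacle is purely bookkeeping: keeping track of the simultaneous $R$-, $R_\n$-, and $\Comp R^\fa$-module structures and verifying their compatibility under each natural map. No substantive difficulty arises once Lemma~\ref{lem120818a} is in hand to handle the diagonal case $\m=\n$.
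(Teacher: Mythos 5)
Your argument is correct and follows essentially the same strategy as the paper's proof: reduce to the behavior of each summand $T(\m)$ under $\Gamma_\n$ and localization (with Lemma~\ref{lem120818a} handling the diagonal case), then read off parts~(b) and~(c). Two small bookkeeping remarks: for the claim $\supp_R(T)\subseteq\mcf$ in part~(b) you need $T_\p=0$ for \emph{all} nonmaximal $\p\in\spec(R)$, not only for $\n\in\mspec(R)$ as stated in your part~(a) — the same $u\in\m\ssm\p$ argument gives this, and the paper's opening paragraph handles general $\p$ explicitly; and the $\Comp R{\fa}$-linearity of the maps in~(a) and~(c) is most directly Lemma~\ref{disc120720a}\eqref{xlem100206c1} (that $R$-linear maps into an $\Comp R{\fa}$-module from an $\fa$-torsion module are automatically $\Comp R{\fa}$-linear), rather than Fact~\ref{xpara0'''}\eqref{xitem101021a} or~\eqref{xpara0h}, which only supply the module structure and the submodule correspondence.
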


\begin{proof}
Let $\p\in\spec(R)$ and $\m\in\mcf$ and $\n\in\mspec(R)$.
Because each $T(\m)$ is  $\m$-torsion, if $\p\neq \m$ and $\n\neq \m$, then $T(\m)_{\p}=0=\Ga n{T(\m)}$.
(Lemma~\ref{lem120818a} may be helpful here.)
Also,  the natural maps $\Ga m{T(\m)}\to T(\m)\to T(\m)_{\m}$  are bijective.

\eqref{item120718c}
The bijectivity of the given maps (which are at least $R$-linear) follows readily from the previous paragraph.
Since $T(\n)\cong \Ga nT\cong T_\n$ is $\n$-torsion,
Fact~\ref{xpara0'''}\eqref{xitem101021a} implies that $T(\n)$ is an $\Comp Ra$-module for each ideal $\fa\subseteq\n$,
and Lemma~\ref{disc120720a}\eqref{xlem100206c1} tells us that any $R$-module homomorphism
$\Ga nT\to T_\n$ or $T(\n)\to \Ga nT$ is $\Comp Ra$-linear.
It follows that each such map is $\Comp Rn$-linear, so it is $R_\n$-linear by restriction of scalars
along the natural map $R_\n\to\Comp Rn$.

\eqref{item120718d}
The equality in the next sequence is from the previous two paragraphs:
$$\supp_R(T)=\{\m\in\mcf\mid\Ga mT\neq 0\}\subseteq\mcf\subseteq\mspec(R).$$
The containments are by definition.

From the containment $\supp_R(T)\subseteq\mspec(R)$, 
we conclude that each $\m\in\supp_R(T)$ is both maximal
and minimal in $\supp_R(T)$. This explains the equality $\supp_R(T)=\Min_R(T)$,
and the containment $\ass_R(T)\subseteq\supp_R(T)$ is standard.

It remains to show that $\supp_R(T)\subseteq\ass_R(T)$.
Let $\m\in\supp_R(T)$. Part~\eqref{item120718c} implies that $\Ga mT\cong T_{\m}\neq 0$,
so there is a non-zero element $x\in\Ga mT\subseteq T$. This element is $\m$-torsion, so there is an integer $n\geq 0$
such that $\m^{n+1} x=0\neq \m^{n} x$. Any non-zero element $y\in\m^{n} x$ therefore has $\ann_R(y)=\m$,
so $\m\in\ass_R(T)$, as desired.

\eqref{item120718e}
The containment $\supp_R(T)\subseteq\mcf$
from part~\eqref{item120718d}
implies that
$$\textstyle\sum_{\mathfrak{m}\in\supp_R(T)}\Gamma_{\mathfrak{m}}(T)\subseteq\sum_{\mathfrak{m}\in\mathcal{F}}\Gamma_{\mathfrak{m}}(T).$$
The reverse containment follows from the fact that if $\m\in\mcf\ssm\supp_R(T)$, then $\Ga mT\cong T_{\m}=0$
by part~\eqref{item120718c}.
The sum $\sum_{\mathfrak{m}\in\mathcal{F}}\Gamma_{\mathfrak{m}}(T)$
is a direct sum since distinct ideals in $\mcf$ are comaximal.
Since the natural map $T(\m)\to\Ga mT$ is an isomorphism for each $\m\in\mcf$, the equality
$\sum_{\mathfrak{m}\in\mathcal{F}}\Gamma_{\mathfrak{m}}(T)
=T$ now follows. 
The isomorphisms
$\textstyle\sum_{\mathfrak{m}\in\supp_R(T)}\Gamma_{\mathfrak{m}}(T)\cong\coprod_{\m\in\supp_R(T)}T_{\m}$
and
$\textstyle\sum_{\mathfrak{m}\in\mathcal{F}}\Gamma_{\mathfrak{m}}(T)\cong
\coprod_{\m\in\mathcal{F}}T_{\m}$
follow from the directness of the sums, using part~\eqref{item120718c}.

Fix an ideal $\fa\subseteq\cap_{\m\in\supp_R(T)}\m$.
The fact that $T$ is $\fa$-torsion follows readily from the equality $T=\sum_{\mathfrak{m}\in\supp_R(T)}\Gamma_{\mathfrak{m}}(T)$.
The $\Comp Ra$-linearity of each of the isomorphisms in the statement of the result is a consequence of
Lemma~\ref{disc120720a}\eqref{xlem100206c1}.
\end{proof}

The next result provides the prototypical example of a module $T$ as in the previous result.

\begin{lem}\label{lem120727a}
Let $\mcf$ be a finite subset of $\mspec(R)$, and set $\fb=\cap_{\m\in\mcf}\m$.
If $T$ is a $\fb$-torsion $R$-module, then for each $\m\in\mcf$ there is an $\m$-torsion
$R$-module $T(\m)$ such that there is an $\Comp Rb$-module isomorphism
$T\cong\coprod_{\m\in\mcf}T(\m)$.
\end{lem}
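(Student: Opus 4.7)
The plan is to take $T(\m):=\Gamma_{\m}(T)$ for each $\m\in\mcf$ and establish the decomposition $T=\bigoplus_{\m\in\mcf}T(\m)$ via a Chinese Remainder Theorem argument, then promote it to an $\Comp Rb$-linear decomposition using Fact~\ref{xpara0'''}.

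The first step is to exploit the fact that distinct ideals in $\mcf$ are maximal, hence pairwise comaximal, so the same is true of their $n$-th powers. CRT then yields $\fb^n=\prod_{\m\in\mcf}\m^n=\bigcap_{\m\in\mcf}\m^n$ and a ring isomorphism $R/\fb^n\cong\prod_{\m\in\mcf}R/\m^n$ for each $n\geq 1$. From this I would extract idempotent-like elements: for every $n$ and every $\m\in\mcf$ there is $e_{\m}^{(n)}\in R$ with $e_{\m}^{(n)}\equiv 1\pmod{\m^n}$, with $e_{\m}^{(n)}\equiv 0\pmod{\m'^n}$ for all $\m'\in\mcf\ssm\{\m\}$, and with $1-\sum_{\m\in\mcf}e_{\m}^{(n)}\in\fb^n$.

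With these in hand the two halves of the decomposition are routine. For $T=\sum_{\m}T(\m)$, given $x\in T$ I would pick $n$ with $\fb^n x=0$; then $x=\sum_{\m}e_{\m}^{(n)}x$, and each summand lies in $T(\m)$ because $\m^n e_{\m}^{(n)}\subseteq\fb^n$ annihilates $x$. For directness, given a relation $\sum_{\m}x_{\m}=0$ with $x_{\m}\in T(\m)$, I would choose $n$ large enough that $\m^n x_{\m}=0$ for every $\m\in\mcf$ (possible since $\mcf$ is finite); applying $e_{\m_0}^{(n)}$ to the relation then annihilates every term $x_{\m}$ with $\m\neq\m_0$ and fixes $x_{\m_0}$ (because $e_{\m_0}^{(n)}-1\in\m_0^n$), forcing $x_{\m_0}=0$.

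Finally, the upgrade to an $\Comp Rb$-module isomorphism should be automatic: $T$ already carries a canonical $\Comp Rb$-structure compatible with its $R$-structure by Fact~\ref{xpara0'''}\eqref{xitem101021a}, and Fact~\ref{xpara0'''}\eqref{xpara0h} guarantees that each $R$-submodule $T(\m)$ is simultaneously an $\Comp Rb$-submodule, so the decomposition obtained above is $\Comp Rb$-linear on the nose. I do not anticipate any genuine obstacle; the main bookkeeping lies in arranging the CRT idempotents correctly, but this is standard since $\mcf$ is finite and consists of pairwise comaximal ideals, and no noetherian hypothesis on $R$ is required.
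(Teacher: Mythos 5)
Your proof is correct, and it takes a genuinely different route from the paper's. The paper first passes to the completed ring $\Comp Rb$, identifies it (via CRT and inverse limits) with the finite product $\prod_{\m\in\mcf}\Comp Rm$, and then uses the honest idempotents of that product ring to split $T$, identifying the factors with localizations $T_{\m\Comp Rb}$ and finally recognizing these as $\m$-torsion. You instead stay entirely inside $R$: you produce, for each truncation level $n$, a system of ``approximate idempotents'' $e_\m^{(n)}\in R$ from the finite CRT isomorphism $R/\fb^n\cong\prod_{\m\in\mcf}R/\m^n$, and verify directly that $T=\bigoplus_{\m}\Gamma_{\m}(T)$ by applying the $e_\m^{(n)}$'s element-by-element to $\fb^n$-torsion elements. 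Both arguments hinge on CRT, on the pairwise comaximality of the $\m^n$'s, and on the finiteness of $\mcf$; and both invoke Fact~\ref{xpara0'''}\eqref{xpara0h} at the end to promote $R$-linearity to $\Comp Rb$-linearity. What your version buys is a more hands-on and elementary argument that avoids any discussion of the ring-theoretic structure of $\Comp Rb$ (its idempotents and its maximal ideals) and that pins down $T(\m)=\Gamma_{\m}(T)$ explicitly from the start, rather than as a localization over the completion. What the paper's version buys is brevity and a conceptual reduction to standard facts about modules over finite products of local rings. One small bookkeeping point in your write-up: the containment you need is $\m^n e_\m^{(n)}\subseteq\fb^n$, which follows from $e_\m^{(n)}\in\bigcap_{\m'\neq\m}(\m')^n=\prod_{\m'\neq\m}(\m')^n$ and hence $\m^n e_\m^{(n)}\subseteq\prod_{\m\in\mcf}\m^n=\fb^n$; you state it correctly but it is worth spelling out the comaximality identity $\bigcap_{\m}\m^n=\prod_{\m}\m^n=\fb^n$ since it is used twice.
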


\begin{proof}
Fact~\ref{xpara0'''}\eqref{xitem101021a} implies that $T$ is a module over the
product $\Comp Rb\cong\prod_{\m\in\mcf}\Comp Rm$;
this product decomposition comes from the fact that $\mcf$ is finite. 
Furthermore, $T$ is torsion with respect to the Jacobson radical 
$\fb\Comp Rb\subseteq\Comp Rb$.
Using the natural idempotent elements
of $\Comp Rb$, we know that $T$ decomposes as a coproduct
$T\cong\coprod_{\m\in\mcf}T_{\m\Comp Rb}$.
Since $\mcf$ is finite, we have $\fb\Comp Rb_{\m\Comp Rb}=\m\Comp Rb_{\m\Comp Rb}$
for each $\m\in\mcf$. The fact that $T$ is $\fb$-torsion implies that $T_{\m\Comp Rb}$ is
$\m\Comp Rb_{\m\Comp Rb}$-torsion, hence $\m$-torsion.
Thus, we have the desired decomposition with $T(\m)=T_{\m\Comp Rb}$.
\end{proof}

\begin{lem}\label{lem120719a}
Let $\mcf\subseteq\mspec(R)$.
For each $\m\in\mcf$, let $T(\m)$ be an $\m$-torsion $R$-module, and set $T=\coprod_{\m\in\mcf}T(\m)$.
Fix an ideal  $\fa\subseteq R$ and a multiplicatively closed subset $U\subseteq R$,
and set $\mcf_U=\{\m\in\mcf\mid U\cap\m=\emptyset\}$.
Then 
we have the following:
\begin{enumerate}[\rm(a)]
\item \label{lem120719a1}
One has an isomorphism
$\textstyle\Ga aT\cong\coprod_{\m\in\mcf\cap V(\fa)}T(\m)$,
which is $\Comp Rb$-linear for each ideal $\fb\subseteq \fa$,
and 
$$\supp_R(\Ga aT)=\supp_R(T)\cap V(\fa)=\{\m\in\mcf\cap V(\fa)\mid T(\m)\neq 0\}.$$
\item\label{lem120719a2}
One has an isomorphism
$\textstyle U^{-1}T\cong\coprod_{\m\in\mcf_U}T(\m)$.
This isomorphism is $V^{-1}R$-linear for each multiplicatively closed subset $V\subseteq U$.
Also, one has
$$\supp_R(U^{-1}T)=\supp_R(T)\cap \mcf_U=\{\m\in\mcf\mid \text{$U\cap\m=\emptyset$ and $T(\m)\neq 0$}\}.$$
\item \label{lem120719a1'}
If $R$ is noetherian, then one has an isomorphism
$\textstyle \Otimes{\Comp Ra}T\cong\coprod_{\m\in\mcf\cap V(\fa)}T(\m)$,
which is $\Comp Rb$-linear for each ideal $\fb\subseteq \fa$.
\end{enumerate}
\end{lem}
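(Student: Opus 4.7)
The plan is to exploit the fact that $\Gamma_{\fa}(-)$, $U^{-1}(-)$, and $\Comp Ra \otimes_R -$ each commute with arbitrary coproducts, which reduces every part of the lemma to analyzing a single summand $T(\m)$ and reassembling. In each case the computation splits according to whether $\m$ lies in $\mcf \cap V(\fa)$ (respectively, $\mcf_U$) or not.

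For part~\eqref{lem120719a1}, I would show $\Gamma_{\fa}(T(\m)) = T(\m)$ when $\fa \subseteq \m$, since any $x \in T(\m)$ satisfies $\m^n x = 0$ for some $n$ and hence $\fa^n x = 0$; and $\Gamma_{\fa}(T(\m)) = 0$ when $\fa \not\subseteq \m$, by choosing $a \in \fa \ssm \m$ and invoking Lemma~\ref{lem120818a} to conclude that $a$ acts as an automorphism on $T(\m)$, ruling out nonzero $\fa$-torsion elements. Part~\eqref{lem120719a2} is analogous: when $U \cap \m = \emptyset$, every $u \in U$ acts invertibly on $T(\m)$ by Lemma~\ref{lem120818a}, so the localization map $T(\m) \to U^{-1}T(\m)$ is an $R$-module isomorphism, promoted to $V^{-1}R$-linearity via Lemma~\ref{lem120719b}; when some $u \in U \cap \m$ exists, each element of $T(\m)$ is killed by a power of $u$, forcing $U^{-1}T(\m) = 0$. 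The claimed support formulas in both parts follow by applying Lemma~\ref{lem110517a}\eqref{item120718d} to the resulting coproduct decompositions of $\Gamma_{\fa}(T)$ and $U^{-1}T$.

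The main obstacle is part~\eqref{lem120719a1'}, and specifically the vanishing $\Comp Ra \otimes_R T(\m) = 0$ when $\fa \not\subseteq \m$; the complementary case $\fa \subseteq \m$ is immediate from Fact~\ref{xpara0'''}\eqref{xpara0e}, since $T(\m)$ is then $\fa$-torsion. For the harder case, maximality of $\m$ gives $\fa + \m = R$, so $\Comp Ra = \fa\Comp Ra + \m\Comp Ra$; since $\fa\Comp Ra$ lies in the Jacobson radical of $\Comp Ra$ by Fact~\ref{fact120719a}, Nakayama's lemma applied to the cyclic $\Comp Ra$-module $\Comp Ra/\m\Comp Ra$ yields $\m\Comp Ra = \Comp Ra$, hence $\m^n\Comp Ra = \Comp Ra$ for all $n \geq 1$. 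Writing $T(\m) = \varinjlim_n (0 :_{T(\m)} \m^n)$, using that tensor commutes with direct limits, and exploiting that each $(0 :_{T(\m)} \m^n)$ is an $R/\m^n$-module, one obtains
\[
\Comp Ra \otimes_R T(\m) \cong \varinjlim_n \bigl(\Comp Ra/\m^n\Comp Ra\bigr) \otimes_{R/\m^n} (0 :_{T(\m)} \m^n) = 0.
\]
The $\Comp Rb$-linearity assertions in parts~\eqref{lem120719a1} and~\eqref{lem120719a1'} then follow from Lemma~\ref{disc120720a}\eqref{xlem100206c1}, since the hypothesis $\fb \subseteq \fa$ ensures that every module in sight is $\fb$-torsion (or is a $\Comp Ra$-module, hence a $\Comp Rb$-module via the natural map $\Comp Rb \to \Comp Ra$).
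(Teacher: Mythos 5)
Your proof is correct and follows essentially the same summand-by-summand decomposition strategy as the paper, using Lemma~\ref{lem120818a} for parts (a)--(b) and Lemmas~\ref{lem110517a}\eqref{item120718d}, \ref{lem120719b}, and~\ref{disc120720a}\eqref{xlem100206c1} for the support and linearity claims exactly as the authors do. The one place you go beyond the paper is in part~\eqref{lem120719a1'}: where the paper merely cites Facts~\ref{fact120719a} and~\ref{xpara0'''}\eqref{xpara0e} and asserts the case split, you supply the Nakayama-plus-colimit argument showing $\Comp Ra\otimes_R T(\m)=0$ when $\fa\not\subseteq\m$, which is precisely the computation the paper is leaving to the reader.
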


\begin{proof}
\eqref{lem120719a1}
Since each module $T(\m)$ is $\m$-torsion and $\m$ is maximal, Lemma~\ref{lem120818a} can be used to show that
$$\Ga a{T(\m)}=\begin{cases} T(\m)&\text{if $\fa\subseteq\m$}\\
0&\text{if $\fa\not\subseteq\m$.}\end{cases}$$
This explains the  $R$-module isomorphism $\textstyle\Ga aT\cong\coprod_{\m\in\mcf\cap V(\fa)}T(\m)$; 
Lemma~\ref{disc120720a}\eqref{xlem100206c1}
shows that it is also $\Comp Rb$-linear.
The description of $\supp_R(\Ga aT)$ follows from Lemma~\ref{lem110517a}\eqref{item120718d}, 
with a small amount of work.

\eqref{lem120719a2}
We claim that
\begin{equation}\label{lem120719a3} 
U^{-1}T(\m)\cong\begin{cases} T(\m)&\text{if $U\cap\m=\emptyset$}\\
0&\text{if $U\cap\m\neq\emptyset$.}\end{cases}
\end{equation}
If $U\cap\m\neq\emptyset$, say with $u\in U\cap\m$, then $T(\m)$ is $u$-torsion so $U^{-1}T(\m)=0$.
In the case where $U\cap\m=\emptyset$,  the isomorphism $U^{-1}T(\m)\cong T(\m)$
follows from Lemma~\ref{lem120818a}.

The  isomorphism $\textstyle U^{-1}T\cong\coprod_{\m\in\mcf_U}T(\m)$
follows from~\eqref{lem120719a3} as in part~\eqref{lem120719a1}, using 
Lemma~\ref{lem120719b}.
The description of $\supp_R(U^{-1}T)$ follows from Lemma~\ref{lem110517a}\eqref{item120718d}, 
with a little work.

\eqref{lem120719a1'}
Using Facts~\ref{fact120719a} and~\ref{xpara0'''}\eqref{xpara0e}, one see that
$$\Otimes{\Comp Ra}{T(\m)}\cong\begin{cases} T(\m)&\text{if $\fa\subseteq\m$}\\
0&\text{if $\fa\not\subseteq\m$.}\end{cases}$$
This explains the  $\Comp Rb$-isomorphism
$\textstyle \Otimes{\Comp Ra}T\cong\coprod_{\m\in\mcf\cap V(\fa)}T(\m)$, as in part~\eqref{lem120719a1}.
\end{proof}

\begin{lem}\label{wxlem110113a}
Let $\fc$ be an intersection of finitely many maximal ideals of $R$.
Let $U\subseteq R$ be a multiplicatively closed set, and let $T$ be a $\fc$-torsion 
$R$-module. Let 
$\mathcal{F}=\{\m\in\supp_R(T)\mid \m\cap U=\emptyset\}$, and set $V=R\ssm\cup_{\m\in\mathcal{F}}\m$ 
and $\fb=\cap_{\m\in\mathcal{F}}\m$. 
Then there are $R$-module isomorphisms 
$$\textstyle U^{-1}T\cong V^{-1}T\cong \Gamma_{\fb}(T)\cong\coprod_{\m\in\mathcal{F}}T_\m$$
and  $\supp_R(U^{-1}T)=\mcf$.
\end{lem}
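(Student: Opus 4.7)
The plan is to reduce everything to the coproduct decomposition for torsion modules supplied by Lemma~\ref{lem120727a}, and then read off each of the three isomorphisms as an instance of Lemma~\ref{lem120719a}. Writing $\fc=\cap_{\m\in\mathcal{F}_0}\m$ for some finite $\mathcal{F}_0\subseteq\mspec(R)$, Lemma~\ref{lem120727a} yields an $R$-module isomorphism $T\cong\coprod_{\m\in\mathcal{F}_0}T(\m)$ with each $T(\m)$ being $\m$-torsion. Lemma~\ref{lem110517a}\eqref{item120718d} gives $\supp_R(T)\subseteq\mathcal{F}_0$, so $\mathcal{F}\subseteq\mathcal{F}_0$ is finite; in particular Lemma~\ref{lem110517a}\eqref{item120718c} identifies $T(\m)\cong T_\m$ for each $\m\in\mathcal{F}$.

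With this setup in hand, I would handle the three isomorphisms in parallel by inspecting which summands $T(\m)$ survive each operation. For $U^{-1}T$, Lemma~\ref{lem120719a}\eqref{lem120719a2} gives $U^{-1}T\cong\coprod_{\m\in(\mathcal{F}_0)_U}T(\m)$, and since $T(\m)=0$ whenever $\m\notin\supp_R(T)$, the surviving summands are precisely those indexed by $\mathcal{F}=\supp_R(T)\cap(\mathcal{F}_0)_U$; the support equality $\supp_R(U^{-1}T)=\mathcal{F}$ also comes from that lemma. For $\Gamma_{\fb}(T)$, finiteness of $\mathcal{F}$ lets me write $V(\fb)=V(\cap_{\m\in\mathcal{F}}\m)=\cup_{\m\in\mathcal{F}}V(\m)$, so for maximal $\m_0\in\mathcal{F}_0$ the condition $\m_0\in V(\fb)$ collapses to $\m_0\in\mathcal{F}$, and Lemma~\ref{lem120719a}\eqref{lem120719a1} produces the same coproduct. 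For $V^{-1}T$, I would again apply Lemma~\ref{lem120719a}\eqref{lem120719a2} and show $(\mathcal{F}_0)_V=\mathcal{F}$: if $\m_0\in\mathcal{F}_0$ has $V\cap\m_0=\emptyset$, then $\m_0\subseteq\cup_{\m\in\mathcal{F}}\m$, and since $\mathcal{F}$ is finite prime avoidance forces $\m_0\subseteq\m$, hence $\m_0=\m\in\mathcal{F}$ by maximality; the reverse containment is immediate. Putting the three computations together delivers all the stated isomorphisms.

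The step that requires the most care is the identification $(\mathcal{F}_0)_V=\mathcal{F}$, because prime avoidance is valid only when $\mathcal{F}$ is finite; this is precisely the place where the hypothesis that $\fc$ is an intersection of \emph{finitely} many maximal ideals gets used in an essential way (via $\mathcal{F}\subseteq\mathcal{F}_0$). Beyond that, the only other subtlety is bookkeeping to confirm that the isomorphisms assembled from Lemmas~\ref{lem110517a} and~\ref{lem120719a} are genuinely $R$-linear and compatible with one another, which is direct from the fact that each of the three operations preserves the given direct-sum decomposition of $T$ on the nose.
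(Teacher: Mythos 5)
Your proof is correct and follows essentially the same route as the paper's: decompose $T$ via Lemma~\ref{lem120727a}, then read off each of the three localizations/torsion submodules from Lemma~\ref{lem120719a} combined with Lemma~\ref{lem110517a}. The only cosmetic difference is that where you verify $(\mathcal{F}_0)_V=\mathcal{F}$ explicitly by prime avoidance, the paper simply notes $U\subseteq V$ and cites the same two lemmas for both localizations; the underlying computation is identical. One small remark on your concluding paragraph: the finiteness of the set of maximal ideals is already used essentially at the very first step, in invoking Lemma~\ref{lem120727a} to obtain the direct-sum decomposition of $T$, so the prime-avoidance step is not the unique place where that hypothesis enters.
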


\begin{proof}
Note that we have $U\subseteq V$, so
Lemmas~\ref{lem120727a} and~\ref{lem120719a}\eqref{lem120719a2}
provide the isomorphisms $U^{-1}T\cong\coprod_{\m\in\mcf} T_{\m}\cong V^{-1}T$ 
and the equality $\supp_R(U^{-1}T)=\mcf$,
and we have $\Gamma_{\fb}(T)\cong\coprod_{\m\in\mathcal{F}}T_\m$
by Lemmas~\ref{lem120727a} and~\ref{lem120719a}\eqref{lem120719a1}.
\end{proof}

The next two results are proved like Lemma~\ref{wxlem110113a} and~\cite[Lemma 3.7]{kubik:hamm1}.

\begin{lem}\label{lem110617b}
Let $\fc$ be an intersection of finitely many maximal ideals of $R$.
Let $\fa$ be an ideal of $R$, and let $T$ be a $\fc$-torsion $R$-module.
Set $\mcf=V(\fa)\cap\supp_R(T)$, $\fb=\cap_{\m\in\mcf}\m$, and $U=R\ssm\cup_{\m\in\mcf}\m$.
Then we have
$$\textstyle\sum_{\m\in\mcf}\Ga mT=\Ga aT=\Ga bT\cong
U^{-1}T.$$
The sum is a direct sum,  
and we have $\supp_R(\Ga bT)=\mcf$.
\end{lem}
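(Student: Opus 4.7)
The plan is to reduce the claim to the coproduct decomposition machinery of Lemmas~\ref{lem120727a}, \ref{lem110517a}, and~\ref{lem120719a}. Write $\fc=\cap_{\m\in\mcf_0}\m$ for some finite $\mcf_0\subseteq\mspec(R)$. Because $T$ is $\fc$-torsion, Lemma~\ref{lem120727a} provides a decomposition $T\cong\coprod_{\m\in\mcf_0}T(\m)$ in which each $T(\m)$ is $\m$-torsion, and Lemma~\ref{lem110517a}(b) identifies $\supp_R(T)=\{\m\in\mcf_0\mid T(\m)\neq 0\}$. Consequently $\mcf=V(\fa)\cap\supp_R(T)$ is a finite subset of $\mcf_0\subseteq\mspec(R)$ with $T(\m)\neq 0$ for every $\m\in\mcf$.

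Next I would apply Lemma~\ref{lem120719a} to the decomposition above in order to rewrite each term of the display. Part~(a) of that lemma gives
\[
\textstyle
\Ga aT\cong\coprod_{\m\in\mcf_0\cap V(\fa)}T(\m),\qquad
\Ga bT\cong\coprod_{\m\in\mcf_0\cap V(\fb)}T(\m),
\]
and part~(b) gives $U^{-1}T\cong\coprod_{\m\in(\mcf_0)_U}T(\m)$, where $(\mcf_0)_U=\{\m\in\mcf_0\mid\m\cap U=\emptyset\}$. After discarding the trivial summands (those $\m\in\mcf_0\ssm\supp_R(T)$, for which $T(\m)=0$), the first coproduct is visibly $\coprod_{\m\in\mcf}T(\m)$. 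To collapse the other two to the same thing, I would verify the set-theoretic identities $\mcf_0\cap V(\fb)=\mcf$ and $(\mcf_0)_U=\mcf$. Both reduce to prime-avoidance arguments: for the first, if $\m\in\mcf_0$ contains $\fb=\cap_{\m'\in\mcf}\m'\supseteq\prod_{\m'\in\mcf}\m'$, then primality of $\m$ forces $\m'\subseteq\m$ for some $\m'\in\mcf$, whence $\m=\m'\in\mcf$ by maximality; for the second, prime avoidance applied to the finite union $\cup_{\m'\in\mcf}\m'$ yields $\m\cap U=\emptyset\Leftrightarrow\m\subseteq\m'$ for some $\m'\in\mcf\Leftrightarrow\m\in\mcf$, again using maximality. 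The reverse inclusions are immediate from the definitions.

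Finally, for the sum $\sum_{\m\in\mcf}\Ga mT$, Lemma~\ref{lem110517a}(a) identifies each $\Ga mT$ with the corresponding summand $T(\m)$ inside $T\cong\coprod_{\m\in\mcf_0}T(\m)$, so the sum coincides with $\coprod_{\m\in\mcf}T(\m)\cong\Ga bT$ and is direct (being a sub-coproduct of a coproduct). The support claim $\supp_R(\Ga bT)=\mcf$ follows from Lemma~\ref{lem110517a}(b) applied to the coproduct $\coprod_{\m\in\mcf}T(\m)$, since $T(\m)\neq 0$ for every $\m\in\mcf$. The only genuinely new step in the whole argument is the prime-avoidance identity $\mcf_0\cap V(\fb)=\mcf$; everything else is bookkeeping with lemmas already in place.
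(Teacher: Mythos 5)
Your proposal is correct and follows essentially the same route the paper indicates: the paper's proof is a one-line pointer saying the result is "proved like Lemma~\ref{wxlem110113a}," whose own proof combines Lemma~\ref{lem120727a} with parts~\eqref{lem120719a1} and~\eqref{lem120719a2} of Lemma~\ref{lem120719a}, exactly the ingredients you use. Your additional bookkeeping — the prime-avoidance identities $\mcf_0\cap V(\fb)=\mcf$ and $(\mcf_0)_U=\mcf$, and the observation that the isomorphisms of Lemma~\ref{lem120719a} are in fact identifications of sub-coproducts inside $T$ (needed to upgrade the middle terms to honest equalities) — is precisely the content implicitly delegated by the paper's terse reference.
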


\begin{lem}\label{xlem:tensor}
Let $\fa$  be an ideal of $R$.  Let $L$ and $T$ be $R$-modules such that $T$ is $\fa$-torsion and $\fa^nL=\fa^{n+1}L$ for some $n\geq 0$. 
Then $T\otimes_R(\fa^nL)=0$ and 
$$T\otimes_R L\cong T\otimes_R (L/\fa^nL)\cong (T/\fa^nT)\otimes_R (L/\fa^nL).$$
\end{lem}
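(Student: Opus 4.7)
The plan is to first establish the vanishing $T \otimes_R (\fa^n L) = 0$; once this is in hand, both isomorphisms follow from standard manipulations with right exactness of tensor product and restriction of scalars.

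For the vanishing, I would first iterate the hypothesis $\fa^n L = \fa^{n+1} L$ to obtain $\fa^n L = \fa^{n+k} L$ for every $k \geq 0$. Now fix an arbitrary simple tensor $t \otimes x \in T \otimes_R (\fa^n L)$. Since $T$ is $\fa$-torsion, there exists an integer $m$ with $\fa^m t = 0$. Choose $k$ with $n + k \geq m$, so that $x \in \fa^n L = \fa^{n+k} L$ can be written as a finite sum $x = \sum_j c_j w_j$ with $c_j \in \fa^{n+k}$ and $w_j \in L$. Then
$$t \otimes x = \sum_j t \otimes (c_j w_j) = \sum_j (c_j t) \otimes w_j = 0,$$
since each $c_j t \in \fa^{n+k} t \subseteq \fa^m t = 0$. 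Because simple tensors generate, this gives $T \otimes_R (\fa^n L) = 0$.

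For the first isomorphism, I would apply the right-exact functor $T \otimes_R -$ to the short exact sequence
$$0 \to \fa^n L \to L \to L/\fa^n L \to 0,$$
yielding the exact sequence $T \otimes_R (\fa^n L) \to T \otimes_R L \to T \otimes_R (L/\fa^n L) \to 0$. The vanishing just established forces $T \otimes_R L \cong T \otimes_R (L/\fa^n L)$.

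For the second isomorphism, I would observe that $L/\fa^n L$ is annihilated by $\fa^n$ and is therefore naturally an $R/\fa^n$-module. Applying the standard change-of-rings identity, we obtain
$$T \otimes_R (L/\fa^n L) \cong (T \otimes_R R/\fa^n) \otimes_{R/\fa^n} (L/\fa^n L) \cong (T/\fa^n T) \otimes_{R/\fa^n} (L/\fa^n L),$$
and the latter coincides with $(T/\fa^n T) \otimes_R (L/\fa^n L)$ via restriction of scalars along $R \to R/\fa^n$. The only step requiring real insight is the vanishing $T \otimes_R (\fa^n L) = 0$; the remainder is bookkeeping.
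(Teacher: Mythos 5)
Your overall strategy --- iterate the hypothesis, kill a simple tensor by moving ideal elements across the tensor sign, then invoke right exactness and a change-of-rings isomorphism --- is the standard one and matches the approach the paper refers to (it cites the local version in the earlier paper rather than giving a proof). However, the key vanishing step has a real flaw. In your display
$$t \otimes x = \sum_j t \otimes (c_j w_j) = \sum_j (c_j t) \otimes w_j = 0,$$
the middle equality is not well formed inside $T \otimes_R (\fa^n L)$: the elements $w_j$ lie in $L$ but need not lie in $\fa^n L$, so $(c_j t) \otimes w_j$ is not an element of $T \otimes_R (\fa^n L)$. What your computation actually proves is that the image of $t\otimes x$ under the natural map $T \otimes_R (\fa^n L) \to T \otimes_R L$ is zero. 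That suffices for the first isomorphism (via right exactness applied to $0 \to \fa^n L \to L \to L/\fa^n L \to 0$, one only needs the first map to have zero image), but it is strictly weaker than the lemma's explicit claim that $T \otimes_R (\fa^n L) = 0$.

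The fix is small. The cleanest route is to replace $L$ by $L' := \fa^n L$ and note that $\fa L' = \fa^{n+1}L = \fa^n L = L'$, so that one is reduced to the case $n=0$; then for a simple tensor $t \otimes x'$ with $\fa^m t = 0$, write $x' \in L' = \fa^m L'$ as $\sum_j c_j w_j'$ with $c_j \in \fa^m$ and $w_j' \in L'$, and the identity $t \otimes x' = \sum_j (c_j t)\otimes w_j' = 0$ is now legitimate because the $w_j'$ lie in $L'$. Alternatively, keep your setup but take $k \geq m$ and factor each $c_j \in \fa^{n+k} = \fa^k\cdot\fa^n$ as a finite sum $c_j = \sum_\ell a_{j\ell} b_{j\ell}$ with $a_{j\ell}\in\fa^k$ and $b_{j\ell}\in\fa^n$; then $b_{j\ell} w_j \in \fa^n L$, so $(a_{j\ell} t) \otimes (b_{j\ell} w_j)$ is a valid element of $T\otimes_R(\fa^n L)$ and vanishes since $a_{j\ell} t \in \fa^k t \subseteq \fa^m t = 0$. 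The remainder of your argument (right exactness for the first isomorphism, and the change-of-rings identity over $R/\fa^n$ for the second) is correct.
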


\begin{lem}\label{lem110413a} 
Let $\fa$ be an ideal of $R$,
and let $T$ be an $\fa$-torsion $R$-module.
Then $T$ is a noetherian  (respectively, artinian or mini-max) as an $R$-module if and only if it is noetherian 
(respectively, artinian or mini-max) as an $\Comp Ra$-module.
\end{lem}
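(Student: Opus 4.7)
The plan is to exploit Fact~\ref{xpara0'''}\eqref{xpara0h}, which says that the $R$-submodules of $T$ coincide exactly with its $\Comp Ra$-submodules. This turns the noetherian and artinian cases into purely formal statements about chain conditions, and reduces the mini-max case to the other two.

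First, for the noetherian case: every ascending chain of $R$-submodules of $T$ is the same data as an ascending chain of $\Comp Ra$-submodules of $T$, by Fact~\ref{xpara0'''}\eqref{xpara0h}. Hence the ACC holds for $R$-submodules if and only if it holds for $\Comp Ra$-submodules, giving the equivalence for noetherian modules. The artinian case is identical after replacing ACC with DCC.

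For the mini-max case, suppose $T$ is mini-max as an $R$-module, so there is a noetherian $R$-submodule $N\subseteq T$ with $T/N$ artinian over $R$. Both $N$ and $T/N$ are $\fa$-torsion, being a submodule and quotient of the $\fa$-torsion module $T$, so by the noetherian and artinian cases already established, $N$ is noetherian over $\Comp Ra$ and $T/N$ is artinian over $\Comp Ra$. Moreover $N$ is an $\Comp Ra$-submodule of $T$ by Fact~\ref{xpara0'''}\eqref{xpara0h}, and the $\Comp Ra$-module structure on $T/N$ agrees with the one induced from $T$. Hence $T$ is mini-max over $\Comp Ra$. The converse is the same argument run backwards.

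There is no real obstacle here; the work is entirely absorbed into Fact~\ref{xpara0'''}\eqref{xpara0h} and the observation that submodules and quotients of an $\fa$-torsion module are $\fa$-torsion. The only small point to verify is that the $\Comp Ra$-structure on $N$ and $T/N$ inherited from $T$ is the canonical one (coming from their being $\fa$-torsion), which is immediate from Fact~\ref{xpara0'''}\eqref{xitem101021a}.
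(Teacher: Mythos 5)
Your proof is correct and follows the same approach as the paper: both use Fact~\ref{xpara0'''}\eqref{xpara0h} to equate the chain conditions over $R$ and $\Comp Ra$, then reduce the mini-max case to the noetherian and artinian cases via the short exact sequence, observing that it is simultaneously a sequence of $\Comp Ra$-modules. The only cosmetic difference is that the paper invokes Lemma~\ref{disc120720a}\eqref{xlem100206c1} to see that the maps in the sequence are $\Comp Ra$-linear, whereas you argue via the submodule correspondence, which is an equally valid way to make the same point.
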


\begin{proof}
Fact~\ref{xpara0'''}\eqref{xitem101021a} implies that $T$ is an $\Comp Ra$-module.

Since the $R$-submodules of $T$ and the $\Comp Ra$-submodules of $T$ are the same 
by Fact~\ref{xpara0'''}\eqref{xpara0h},
they satisfy the descending chain condition simultaneously, and the artinian case follows.  
Similarly for the noetherian case.

For the mini-max case, suppose that there is an exact sequence of $R$-module homomorphisms $0\to A\to T\to N\to 0$.
Since $T$ is $\fa$-torsion, so are $A$ and $N$. Lemma~\ref{disc120720a}\eqref{xlem100206c1} implies that the given
sequence consists of $\Comp Ra$-module homomorphisms. Since $A$ is artinian over $R$ if and only if it is artinian over $\Comp Ra$,
and $N$ is noetherian over $R$ if and only if it is noetherian over $\Comp Ra$,
it follows that $T$ is mini-max over $R$ if and only if it is mini-max over $\Comp Ra$.
\end{proof}

\begin{lem}\label{xlem101021a}
Assume that $R$ is noetherian, 
and fix an ideal $\fa\subseteq R$.
For each $\p\in V(\fa)$ we have
\begin{equation}\label{xlem101021a3}
\E_{\Comp Ra}(\Comp Ra/\p\Comp Ra)
\cong
\E_R(R/\p)
\cong
\E_{\Comp Rp}(\Comp Rp/\p\Comp Rp).
\end{equation}
The first isomorphism is $\Comp Ra$-linear, and the second one is $\Comp Rp$-linear.
Also there are $\Comp Ra$-module isomorphisms
\begin{equation}\label{xlem101021a1}
\textstyle E_{\Comp Ra}\cong\coprod_{\m\in\mspec(R)\cap V(\fa)}\E_R(R/\m)\cong\Ga a{E_R}.
\end{equation}
In particular, the module $E_{\Comp Ra}$ is $\fa$-torsion.
\end{lem}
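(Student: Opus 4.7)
The plan for~\eqref{xlem101021a3} is to run the uniqueness characterization of injective envelopes \emph{in reverse}: I exhibit $\E_{\Comp Ra}(\Comp Ra/\p\Comp Ra)$, regarded as an $R$-module, as an $R$-injective module into which $R/\p$ embeds essentially, conclude it is isomorphic to $\E_R(R/\p)$ as an $R$-module, and then promote that isomorphism to one of $\Comp Ra$-modules via Lemma~\ref{disc120720a}\eqref{xlem100206c1}.

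Concretely: Fact~\ref{xfact101221a} applied inside $\Comp Ra$ shows $\E_{\Comp Ra}(\Comp Ra/\p\Comp Ra)$ is $\p\Comp Ra$-torsion, hence $\fa$-torsion as an $R$-module. Fact~\ref{xpara0'''}\eqref{xpara0h} then equates its $R$- and $\Comp Ra$-submodule lattices, so the essentiality of $\Comp Ra/\p\Comp Ra\subseteq\E_{\Comp Ra}(\Comp Ra/\p\Comp Ra)$ over $\Comp Ra$ passes to essentiality over $R$, with the distinguished submodule identified with $R/\p$ via Fact~\ref{fact120719a}. The main obstacle is to check that $\E_{\Comp Ra}(\Comp Ra/\p\Comp Ra)$ is $R$-injective; this falls out of the flatness of $\Comp Ra$ over $R$ together with Hom-tensor adjointness, which more generally shows any $\Comp Ra$-injective module is $R$-injective. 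Uniqueness of the injective envelope over $R$ now yields $\E_{\Comp Ra}(\Comp Ra/\p\Comp Ra)\cong\E_R(R/\p)$ as $R$-modules, and since both sides are $\fa$-torsion, Lemma~\ref{disc120720a}\eqref{xlem100206c1} makes this isomorphism automatically $\Comp Ra$-linear. The second isomorphism of~\eqref{xlem101021a3} is obtained by running the identical argument with $\fa$ replaced by $\p$.

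The assertions in~\eqref{xlem101021a1} are then formal. The bijection $\mspec(R)\cap V(\fa)\leftrightarrow\mspec(\Comp Ra)$ of Fact~\ref{fact120719a} rewrites the defining coproduct $E_{\Comp Ra}=\coprod_{\n\in\mspec(\Comp Ra)}\E_{\Comp Ra}(\Comp Ra/\n)$ as $\coprod_{\m\in\mspec(R)\cap V(\fa)}\E_{\Comp Ra}(\Comp Ra/\m\Comp Ra)$, and the just-established~\eqref{xlem101021a3} identifies each summand with $\E_R(R/\m)$. Distributing $\Gamma_{\fa}$ through $E_R=\coprod_{\m\in\mspec(R)}\E_R(R/\m)$ and applying Fact~\ref{xfact101221a} summand-wise to delete the $\m$ with $\m\not\supseteq\fa$ produces the second isomorphism, and the final $\fa$-torsion claim follows directly from $E_{\Comp Ra}\cong\Gamma_{\fa}(E_R)$.
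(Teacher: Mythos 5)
Your proof is correct and arrives at~\eqref{xlem101021a3} by a genuinely different route. The paper proves a bidirectional claim---that the essential extensions of $R/\p$ over $R$ coincide, as a class of modules, with the essential extensions over $\Comp Ra$---and then deduces that the maximal ones (hence the injective hulls) agree, never invoking flatness of $\Comp Ra$ over $R$ nor checking $R$-injectivity directly. You transfer essentiality only in one direction, from $\Comp Ra$ down to $R$, using the coincidence of the two submodule lattices on the $\fa$-torsion module $\E_{\Comp Ra}(\Comp Ra/\p\Comp Ra)$, and you supply the $R$-injectivity of $\E_{\Comp Ra}(\Comp Ra/\p\Comp Ra)$ separately via flat base change. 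Each has something to recommend it: yours is one-directional and leans on a general fact about flat ring maps, whereas the paper's is self-contained but must make an extra observation (that every essential $R$-extension of $R/\p$ is $\fa$-torsion because it embeds in $\E_R(R/\p)$) and proves a slightly stronger bidirectional statement than is strictly needed. One small point worth making explicit in your write-up: when you invoke Fact~\ref{xpara0'''}\eqref{xpara0h} on $\E_{\Comp Ra}(\Comp Ra/\p\Comp Ra)$, you are tacitly using that the $\Comp Ra$-structure it carries by definition agrees with the canonical one on an $\fa$-torsion $R$-module from Fact~\ref{xpara0'''}\eqref{xitem101021a}; this follows from the uniqueness of a compatible $\Comp Ra$-structure on an $\fa$-torsion module, but it deserves a word. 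Your treatment of~\eqref{xlem101021a1} matches the paper's in substance: where you apply Fact~\ref{xfact101221a} summand-by-summand to $E_R$, the paper cites Lemma~\ref{lem120719a}\eqref{lem120719a1}, which packages the same computation.
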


\begin{proof}
Let $\p\in V(\fa)$.
Since
$R/\p$ and $\E_R(R/\p)$ are $\p$-torsion, they are $\fa$-torsion,
so they have natural $\Comp Ra$-module structures.
Moreover, $R/\p\subseteq \E_R(R/\p)$ is an $\Comp Ra$-submodule by Fact~\ref{xpara0'''}\eqref{xpara0h}, and
Fact~\ref{fact120719a} shows that $\Comp Ra/\p\Comp Ra\cong R/\p$.
Note that this isomorphism is $\Comp Ra$-linear by Lemma~\ref{disc120720a}\eqref{xlem100206c1}
since the modules in question are $\fa$-torsion.

Claim: The essential extensions of $R/\p$ as an $R$-module are exactly the essential extensions of $R/\p$ as an $\Comp Ra$-module.
First, let $L$ be an essential extension of $R/\p$ as an $R$-module. 
Since $\E_R(R/\p)$ is $\fa$-torsion and is a maximal essential extension of $R/\p$ it follows that $L$ is $\fa$-torsion.  
By Fact~\ref{xpara0'''}\eqref{xitem101021a}, $L$ is an $\Comp{R}{\fa}$-module. 
Let $L'\subseteq L$ be a non-zero $\Comp{R}{\fa}$-submodule.
By restriction of scalars, $L'$ is an $R$-module. Since $L$ is essential as an $R$-module, we have $L'\cap R/\p\neq 0$. 
Thus $L$ is an essential extension of $R/\p$ as an $\Comp{R}{\fa}$-module.  
A similar argument shows that any essential extension of $R/\p\cong \Comp{R}{\fa}/\p\Comp{R}{\fa}$ as an 
$\Comp{R}{\fa}$-module is also an essential extension as an $R$-module.  

From the claim, it follows that the maximal 
essential extensions of $R/\p$ as an $R$-module are exactly the maximal essential extensions of $R/\p$ as an $\Comp Ra$-module,
so $\E_{\Comp{R}{\fa}}(R/\p)\cong \E_R(R/\p)$. 
This isomorphism is $\Comp{R}{\fa}$-linear by Lemma~\ref{disc120720a}\eqref{xlem100206c1}.

Since $\p$ is an arbitrary element of $V(\fa)$, the special case $\fa=\p$ shows that
$\E_R(R/\p)
\cong
\E_{\Comp Rp}(\Comp Rp/\p\Comp Rp)$
so we have the second isomorphism in~\eqref{xlem101021a3}.
The first isomorphism in~\eqref{xlem101021a1} now follows from Fact~\ref{fact120719a} and~\eqref{xlem101021a3}.
Lemma~\ref{lem120719a}\eqref{lem120719a1} explains the second isomorphism in~\eqref{xlem101021a1}.
\end{proof}

The final result of this section compares to part of~\cite[Lemma 1.5(a)]{kubik:hamm1}.

\begin{lem} \label{lem110413b} 
Assume that $R$ is noetherian.
Let $\fa$ be an ideal of $R$, and let $T$  be   an $\fa$-torsion
$R$-module.
Then there is an  isomorphism
$\MD TR\cong\MD{T}{\Comp Ra}$ that is $\Comp Rb$-linear for all ideals $\fb\subseteq\fa$.
\end{lem}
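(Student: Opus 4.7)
The plan is to build the isomorphism as a short chain of natural identifications that each use results from the excerpt, and then verify $\Comp Rb$-linearity by observing that every module involved is $\fb$-torsion (because $\fb\subseteq\fa$ forces any $\fa$-torsion module to be $\fb$-torsion as well).

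First, I would use Lemma~\ref{disc120720a}\eqref{xlem100206c2} to see that since $T$ is $\fa$-torsion, the inclusion $\Ga a{E_R}\into E_R$ induces an isomorphism
\[
\Hom[R]{T}{E_R}\xrightarrow{\cong}\Hom[R]{T}{\Ga a{E_R}}
\]
(running the direction opposite to the one stated, via the inverse of $i_*$). Then I would apply Lemma~\ref{xlem101021a}, which gives a natural $\Comp Ra$-linear isomorphism $\Ga a{E_R}\cong E_{\Comp Ra}$, yielding $\Hom[R]{T}{\Ga a{E_R}}\cong\Hom[R]{T}{E_{\Comp Ra}}$. Finally, since $E_{\Comp Ra}$ is an $\Comp Ra$-module, Lemma~\ref{disc120720a}\eqref{xlem100206c1} gives the equality $\Hom[R]{T}{E_{\Comp Ra}}=\Hom[\Comp Ra]{T}{E_{\Comp Ra}}=\MD{T}{\Comp Ra}$. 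Composing produces the desired $R$-module isomorphism $\MD TR\cong\MD T{\Comp Ra}$.

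For the $\Comp Rb$-linearity assertion, I would first note that $\fb\subseteq\fa$ together with $T$ being $\fa$-torsion implies $T$ is $\fb$-torsion, so by Fact~\ref{xpara0'''}\eqref{xitem101021a} the module $T$ carries a natural $\Comp Rb$-action. This induces a $\Comp Rb$-module structure on each Hom-module in the chain via $(\hat r\cdot\psi)(t):=\psi(\hat r t)$. The first isomorphism is $\Comp Rb$-linear by the $\fb$-torsion version of Lemma~\ref{disc120720a}\eqref{xlem100206c2} (the assumptions of that lemma apply to $T$ with the ideal $\fb$, and the submodule $\Ga a{E_R}\subseteq\Ga b{E_R}$ is carried to itself). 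The middle isomorphism is $\Comp Rb$-linear because the isomorphism $\Ga a{E_R}\cong E_{\Comp Ra}$ of Lemma~\ref{xlem101021a} is between $\fb$-torsion modules, hence is automatically $\Comp Rb$-linear by Lemma~\ref{disc120720a}\eqref{xlem100206c1}. The final equality is $\Comp Ra$-linear by Lemma~\ref{disc120720a}\eqref{xlem100206c1}, hence $\Comp Rb$-linear after restricting the action through the $\Comp Rb$-module structure induced on each side by $T$.

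The only mild obstacle is ensuring that all three $\Comp Rb$-actions in sight coincide: the $\Comp Rb$-action inherited from viewing Hom via the source $T$, and the $\Comp Rb$-actions obtained by restricting an $\Comp Ra$-action (which exists on the target for the last two terms in the chain). Both actions agree because Lemma~\ref{disc120720a}\eqref{xlem100206c1} identifies $\Hom[R]{T}{L}=\Hom[\Comp Ra]{T}{L}$ whenever $L$ is an $\Comp Ra$-module with $T$ being $\fa$-torsion, and the target $\Comp Ra$-action there is defined precisely by the source $T$'s $\Comp Ra$-action. So the verification is essentially bookkeeping and not a real obstruction.
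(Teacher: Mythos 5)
Your proof is correct and takes essentially the same route as the paper's, which also obtains $\Hom{T}{E}\cong\Hom[\Comp Ra]{T}{\Gamma_{\fa}(E)}\cong\Hom[\Comp Ra]{T}{E_{\Comp Ra}}$ via Lemma~\ref{disc120720a}\eqref{xlem100206c2} and Lemma~\ref{xlem101021a}. The only difference is that you spell out the $\Comp Rb$-linearity (via the observation that $\fb\subseteq\fa$ makes $T$ $\fb$-torsion and the inclusion-induced map commutes with the source $\Comp Rb$-action), whereas the paper leaves this implicit through the $\Comp Ra$-linearity of the intermediate isomorphisms and the natural ring map $\Comp Rb\to\Comp Ra$.
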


\begin{proof}
This is a consequence of the next display
\begin{align*}
\Hom{T}{E}
\cong\Hom[\Comp Ra]{T}{\Gamma_{\fa}(E)}
\cong\Hom[\Comp Ra]{T}{E_{\Comp Ra}}
\end{align*}
which follows from Lemma~\ref{disc120720a}\eqref{xlem100206c2} with Lemma~\ref{xlem101021a}.
\end{proof}

\section{Artinian and Mini-Max Modules}\label{sec120713b}

We begin this section with an important observation of Sharp~\cite{sharp:msamaab}.

\begin{fact}\label{fact120720a}
Let $A$ be an artinian $R$-module.
By~\cite[Proposition 1.4]{sharp:msamaab}, there is a finite set $\mcf$ of maximal ideals of $R$ such that $A$ is the internal direct sum
$A=\sum_{\m\in\mcf}\Ga{m}{A}$. 
Consequently, Lemmas~\ref{lem110517a} and~\ref{lem120719a} apply to the module $T=A$.
In particular, any localization $U^{-1}A$ is naturally a submodule of $A$
by Lemma~\ref{lem120719a}\eqref{lem120719a2}
so it is artinian over $R$ and hence over $V^{-1}R$ for each multiplicatively closed subset $V\subseteq U$.
Furthermore, any torsion submodule $\Ga bA$ is naturally a submodule of $A$
by Lemma~\ref{lem110517a}\eqref{item120718e} and~\ref{lem120719a}\eqref{lem120719a1}
so it is artinian over $R$ and hence over $\Comp Ra$ for each ideal $\fa\subseteq\fb$.
If $R$ is noetherian, then any torsion submodule $\Ga bA\cong\Otimes{\Comp Rb}{A}$ is naturally a submodule of $A$
by parts~\eqref{lem120719a1} and~\eqref{lem120719a1'} of Lemma~\ref{lem120719a}
so it is artinian over $R$ and hence over $\Comp Ra$ for each ideal $\fa\subseteq\fb$.
\end{fact}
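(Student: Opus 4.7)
The plan is to invoke Sharp's result directly and then assemble the consequences from the lemmas already developed in Section~\ref{sec120713a}. First I would cite \cite[Proposition 1.4]{sharp:msamaab} verbatim to produce the finite set $\mcf$ of maximal ideals and the internal direct sum decomposition $A=\sum_{\m\in\mcf}\Ga m A$. This immediately places $A$ into the framework of Lemma~\ref{lem110517a} by taking $T=A$ and $T(\m)=\Ga m A$, since each $\Ga m A$ is $\m$-torsion by definition.

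Next, for the localization statement, I would apply Lemma~\ref{lem120719a}\eqref{lem120719a2} to realize $U^{-1}A\cong\coprod_{\m\in\mcf_U}\Ga m A$ as a subsum of the internal direct sum decomposition of $A$, so that it is literally an $R$-submodule of $A$. Since submodules of artinian modules are artinian by Fact~\ref{xlem:extensions}, the module $U^{-1}A$ is artinian over $R$. Restriction of scalars along $V^{-1}R\to R$ then transports the DCC from $R$-submodules to $V^{-1}R$-submodules for any multiplicatively closed set $V\subseteq U$, giving artinianness over $V^{-1}R$.

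For the torsion-submodule claim, Lemma~\ref{lem110517a}\eqref{item120718e} combined with Lemma~\ref{lem120719a}\eqref{lem120719a1} yields $\Ga b A\cong\coprod_{\m\in\mcf\cap V(\fb)}\Ga m A$ again as a subsum of the decomposition, hence as an $R$-submodule of $A$ and therefore artinian over $R$. To pass to $\Comp Ra$ for any ideal $\fa\subseteq\fb$, I would use that $\Ga b A$ is $\fb$-torsion (hence $\fa$-torsion) and invoke Lemma~\ref{lem110413a} to transfer the artinian property. Finally, in the noetherian case I would apply Lemma~\ref{lem120719a}\eqref{lem120719a1'} to identify $\Otimes{\Comp Rb}{A}\cong\coprod_{\m\in\mcf\cap V(\fb)}\Ga m A\cong\Ga b A$ and conclude as before.

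The main obstacle is purely bookkeeping rather than mathematics: one must keep track of which isomorphisms are genuine equalities of submodules of $A$ versus abstract isomorphisms, and one must cite the correct linearity statements from Section~\ref{sec120713a} to justify the passage between $R$-module, $\Comp Ra$-module, and $V^{-1}R$-module structures. Once Sharp's decomposition is in hand, all the substance of the assertion is already contained in Lemmas~\ref{lem110517a}, \ref{lem120719a}, and~\ref{lem110413a}.
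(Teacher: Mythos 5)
Your proposal is correct and follows essentially the same argument the paper gives (the Fact is self-justifying, and you have reconstructed exactly the chain of references: Sharp's decomposition, Lemmas~\ref{lem110517a} and~\ref{lem120719a} with $T=A$, closure of artinian modules under submodules from Fact~\ref{xlem:extensions}, and Lemma~\ref{lem110413a} to pass to $\Comp Ra$). One small slip: there is no ring map $V^{-1}R\to R$; the restriction of scalars you want is along the natural map $R\to V^{-1}R$, so that a $V^{-1}R$-submodule of $U^{-1}A$ is in particular an $R$-submodule and the descending chain condition transfers --- your implication direction is right, only the arrow label is reversed.
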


\begin{lem} \label{zlem100207a}
Let $L$ be an $R$-module.
Then $L$ is artinian  if and only if 
$\supp_R(L)$ is a finite set 
and $L_{\p}$ is artinian over $R_{\p}$
for each $\p\in\supp_R(L)$.
\end{lem}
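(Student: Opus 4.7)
For the forward direction, I plan to invoke Sharp's decomposition (Fact~\ref{fact120720a}): if $L$ is artinian, then $L=\sum_{\m\in\mcf}\Gamma_\m(L)$ for some finite $\mcf\subseteq\mspec(R)$, so Lemma~\ref{lem110517a}\eqref{item120718d} identifies $\supp_R(L)$ with the set of $\m\in\mcf$ for which $\Gamma_\m(L)\neq 0$, giving a finite support. For each $\p\in\supp_R(L)$, Lemma~\ref{lem120719a}\eqref{lem120719a2} realizes $L_\p$ as an $R$-submodule of $L$, hence artinian over $R$, and the final sentence of Fact~\ref{fact120720a} promotes this to artinian-ness over $R_\p$ (every $R_\p$-submodule of $L_\p$ is an $R$-submodule, so DCC transfers).

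For the converse, I would verify the descending chain condition directly. Write $\supp_R(L)=\{\p_1,\ldots,\p_n\}$ and take any descending chain $L\supseteq L_1\supseteq L_2\supseteq\cdots$ of $R$-submodules. For each index $i$, the localized chain $(L_j)_{\p_i}$ is a descending chain of $R_{\p_i}$-submodules of $L_{\p_i}$, which must stabilize at some index $N_i$ by the artinian hypothesis on $L_{\p_i}$. Since $\supp_R(L)$ is finite, I can set $N=\max_i N_i$, and for $j\geq N$ I will then have $(L_N/L_j)_{\p_i}=0$ for every $i$ by exactness of localization. Because $L_N/L_j$ is a subquotient of $L$ and localization is exact, $\supp_R(L_N/L_j)\subseteq\supp_R(L)=\{\p_1,\ldots,\p_n\}$; combining this with the previous vanishing forces $\supp_R(L_N/L_j)=\emptyset$ and hence $L_N=L_j$. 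Thus the chain stabilizes, so $L$ is artinian.

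This plan has no real obstacle: the key observation is that one does not need the primes of $\supp_R(L)$ to be maximal or pairwise comaximal, nor any decomposition of $L$ itself. Exactness of localization, the containment of supports under subquotients, and the finiteness of $\supp_R(L)$ are together enough to transfer the artinian condition from each localization back to $L$, so no appeal to Sharp's decomposition is required on this side of the equivalence.
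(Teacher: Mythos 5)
Your proof is correct and matches the paper's argument in both directions: the forward implication invokes exactly Fact~\ref{fact120720a} and Lemma~\ref{lem110517a}\eqref{item120718d}, and the converse is the same localize-the-chain argument, with your passage through $\supp_R(L_N/L_j)$ being only a cosmetic rephrasing of the paper's direct check that $(L_j)_\p=(L_{j+n})_\p$ for every prime $\p$. Your closing observation that no maximality or comaximality of the support primes is needed for the converse is accurate and consistent with how the paper proceeds.
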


\begin{proof}
The forward implication follows from  
Lemma~\ref{lem110517a}\eqref{item120718d} and Fact~\ref{fact120720a}.

For the reverse implication, assume that   $\supp_R(L)=\{\p_1,\ldots,\p_h\}$,
and that $L_{\p_i}$ is artinian over $R_{\p_i}$
for $i=1,\ldots,h$.
Let $L=L_0\supseteq L_1\supseteq L_2\supseteq \cdots$ be a descending chain of $R$-modules. 
Since
$L_{\p_i}=(L_0)_{\p_i}\supseteq (L_1)_{\p_i}\supseteq (L_2)_{\p_i}\supseteq \cdots$
stabilizes for $i=1,\hdots, h$, we may choose $j\in\mathbb{N}$ so that
$(L_{j})_{\p_i}=(L_{j+n})_{\p_i}$ for $i=1,\ldots,h$ and for all $n\in\mathbb{N}$.
For each $\p\in\spec(R)\smallsetminus\supp_R(L)$, we have
$L_{\p}=0$, so $(L_{j})_{\p}=(L_{j+n})_{\p}$ for all $n\in\mathbb{N}$.
Hence, we have $L_{j}=L_{j+n}$ for all $n\in\mathbb{N}$, and $L$ is artinian.
\end{proof}

Now we talk about another class of modules, motivated by Fact~\ref{xpara1}.

\begin{lem} \label{xlem100409a1}
Fix  an $R$-module $L$ such that $R/\ann_R(L)$
is semi-local and complete. 
\begin{enumerate}[\rm(a)]
\item\label{xlem100409a1a}
The set $\mspec(R)\cap\supp_R(L)=\mspec(R)\cap V(\ann_R(L))$ is finite
and naturally in bijection with $\mspec(R/\ann_R(L))$.
\item\label{xlem100409a1b}
If $R$ is noetherian, then 
$\mspec(R)\cap\supp_R(L)=\mspec(R)\cap\supp_R(\md L)$.
\end{enumerate}
\end{lem}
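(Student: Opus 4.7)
The plan is to handle part (a) in two pieces---finiteness plus the canonical bijection, then the support equality---and then to reduce (b) to (a) via an annihilator identity. Throughout, write $\bar{R}:=R/\ann_R(L)$, so that $L$ is tautologically a faithful $\bar{R}$-module.

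For part (a), the standard ideal correspondence yields a bijection between $\mspec(R)\cap V(\ann_R(L))$ and $\mspec(\bar{R})$ via $\m\mapsto\m/\ann_R(L)$, and finiteness follows at once from $\bar{R}$ being semi-local. The equality $\mspec(R)\cap V(\ann_R(L))=\mspec(R)\cap\supp_R(L)$ reduces to showing $L_\m\neq 0$ for each $\m\in\mspec(R)\cap V(\ann_R(L))$, since $\supp_R(L)\subseteq V(\ann_R(L))$ is automatic. Here the completeness hypothesis is essential. Let $\bar\m_1,\ldots,\bar\m_n$ be the maximal ideals of $\bar{R}$ and $J:=\bar\m_1\cap\cdots\cap\bar\m_n$. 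Pairwise comaximality gives $J^N=\bigcap_i\bar\m_i^N=\bar\m_1^N\cdots\bar\m_n^N$ for every $N$, and CRT yields $\bar{R}/J^N\cong\prod_i\bar{R}/\bar\m_i^N$. Passing to inverse limits (using $J$-adic completeness on the left and commutation of limits with finite products on the right) produces a ring decomposition $\bar{R}\cong\prod_i R_i$ with each $R_i$ local. The resulting orthogonal idempotents $e_1,\ldots,e_n\in\bar{R}$ decompose $L=\bigoplus_i e_iL$ as $\bar{R}$-modules; faithfulness forces every $e_iL\neq 0$, and localizing at $\bar\m_i$ kills $e_jL$ for $j\neq i$ (since $e_ie_j=0$ and $e_i\notin\bar\m_i$), leaving $L_{\bar\m_i}\cong e_iL\neq 0$, which translates back to $L_\m\neq 0$ under the natural map $R\to\bar{R}$.

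For part (b), the noetherian hypothesis permits an appeal to Fact~\ref{xfact100909a}, which gives $\ann_R(L)=\ann_R(\md L)$. Thus $R/\ann_R(\md L)=\bar{R}$ is also semi-local and complete, so part (a) applies to both $L$ and $\md L$, producing
\[
\mspec(R)\cap\supp_R(L)=\mspec(R)\cap V(\ann_R(L))=\mspec(R)\cap V(\ann_R(\md L))=\mspec(R)\cap\supp_R(\md L).
\]

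The main obstacle is the product decomposition of $\bar{R}$ in part (a): because $R$ is not assumed noetherian, we cannot appeal to Fact~\ref{fact120719a} or to standard completion machinery for $R$ itself, and must reassemble the CRT-plus-completeness argument intrinsically inside $\bar{R}$. Once that decomposition is in hand, the rest of (a) is a short faithfulness computation, and (b) drops out immediately.
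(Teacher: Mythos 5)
Your proof is correct and follows essentially the same route as the paper: both reduce to the fact that the semi-local complete ring $\bar R$ decomposes as a finite product of local rings, use faithfulness of $L$ over $\bar R$ to get each factor of $L$ nonzero, and deduce part (b) from part (a) via $\ann_R(L)=\ann_R(\md L)$. The paper simply cites the product decomposition as standard, whereas you reconstruct it via CRT and inverse limits; this is extra detail, not a different argument.
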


\begin{proof}
\eqref{xlem100409a1a}
Set $\ol R=R/\ann_R(L)$. We  
assume $L\neq 0$. Let $\pi\colon R\to  \ol R$ be the natural surjection and $\pi^{*}:\spec(\ol R)\to\spec(R)$ the induced map given by $\pi^*(\p)=\pi^{-1}(\p)$.
Since $L_\p=0$ for all $\p$ not containing $\ann_R(L)$ we get  $\supp_R(L)=\pi^*(\supp_{\ol R}(L))$. Therefore,  
$\mspec(R)\cap\supp_R(L)={\pi^*(\mspec(\ol R)\cap\supp_{\ol R}(L))}$.

The ring $\ol R\neq 0$ is semi-local and complete, so it is a finite product of non-zero complete local rings,
say $\ol R\cong\prod_{i=1}^nR_i$. Since $L$ is an $\ol R$-module we have $L=\prod_{i=1}^n L_i$  where $L_i$ is an $R_i$-module. By construction $\ann_{\ol R}(L)=0$, so $L_i\neq 0$ for all $i$. Thus $\mspec(\ol R)\subseteq \supp_{\ol R}(L)$.  This explains the second equality in the following display. The last equality is standard.  
\begin{align*}
\mspec(R)\cap\supp_R(L)&=\pi^*(\mspec(\ol R)\cap\supp_{\ol R}(L))\\
&=\pi^*(\mspec(\ol R))\\
&=\mspec(R)\cap V(\ann_R(L))
\end{align*}
As $\ol R$ is semi-local, this set is finite.

\eqref{xlem100409a1b}
Assume that $R$ is noetherian.  Fact~\ref{xfact100909a} implies that the ring
$R/\ann_R(\md L)=R/\ann_R(L)$ is semi-local and complete,
so part~\eqref{xlem100409a1a} implies that
\begin{align*}
\mspec(R)\cap\supp_R(\md L)
&=\mspec(R)\cap V(\ann_R(\md L)) \\
&=\mspec(R)\cap V(\ann_R(L))\\
&=\mspec(R)\cap\supp_R(L)
\end{align*}
completing the proof.
\end{proof}

The next result compares directly with Fact~\ref{xpara0'''} and Lemma~\ref{disc120720a}\eqref{xlem100206c1}. 

\begin{lem} \label{xlem100409a}
Assume that $R$ is noetherian.
Let $L$  be an $R$-module such that $R/\ann_R(L)$
is semi-local and complete. Set $\fb=\cap_{\m\in\mspec(R)\cap\supp_R(L)}\m$,
and let $\fa\subseteq\fb$.
\begin{enumerate}[\rm(a)]
\item \label{xlem100409a2}
$L$ has an $\Comp Ra$-module structure that is compatible with its $R$-module structure
via the natural map $R\to\Comp Ra$. 
\item \label{item120721a}
The natural map $L\to \Comp Ra\otimes_RL$ is an isomorphism of $\Comp Ra$-modules.
\item \label{item120721b}
The left and right $\Comp Ra$-module structures on $\Comp Ra\otimes_RL$ 
are the same.
\item \label{xlem100409a3}
A subset $Z\subseteq L$ is an $R$-submodule if and only if it is an $\Comp Ra$-submodule.
\item \label{item120721c}
If $L'$ is an $\Comp Ra$-module (e.g., $L'$ is an $\fa$-torsion $R$-module), then
$\Hom{L}{L'}=\Hom[\Comp{R}a]{L}{L'}$.
\end{enumerate}
\end{lem}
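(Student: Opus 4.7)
The plan is to reduce everything to the key non-trivial assertion that $\ol R := R/\ann_R(L)$ is complete in its $\fa\ol R$-adic topology, not merely in its Jacobson-radical topology. Once this is known, the standard flat-base-change identification $\Comp Ra / \ann_R(L)\Comp Ra \cong \Comp{\ol R}{\fa\ol R}$ yields a natural ring surjection $\Comp Ra \onto \ol R$, and every part of the lemma will follow by transport of structure along this map.

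To establish $\Comp{\ol R}{\fa\ol R} \cong \ol R$, I invoke Lemma~\ref{xlem100409a1}\eqref{xlem100409a1a} to identify $\fb\ol R$ with $\Jac(\ol R)$, and note that $\fa\ol R \subseteq \fb\ol R$, so $(\fa\ol R)^n \subseteq (\fb\ol R)^n$. Any $\fa$-Cauchy sequence in $\ol R$ is therefore $\fb$-Cauchy and converges in the $\fb$-topology to some $x \in \ol R$ by hypothesis. Since $\ol R$ is noetherian and each $\fa^k\ol R$ is a finitely generated ideal, Krull's intersection theorem applied inside $\ol R/\fa^k\ol R$ (with $\fb\ol R$ sitting in the Jacobson radical of the quotient) shows that $\fa^k\ol R$ is $\fb\ol R$-closed, which promotes $x$ to the $\fa$-limit of the sequence. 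Injectivity of $\ol R \to \Comp{\ol R}{\fa\ol R}$ is immediate from $\bigcap_n (\fa\ol R)^n \subseteq \bigcap_n (\fb\ol R)^n = 0$.

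Given this, part (a) follows by composing the surjection $\Comp Ra \onto \ol R$ with the $\ol R$-action on $L$; compatibility with the $R$-structure is automatic since $R \to \Comp Ra \to \ol R$ is the quotient map. Part (b) follows from the chain
\[
\Otimes{\Comp Ra}{L} \cong (\Otimes{\Comp Ra}{\ol R}) \otimes_{\ol R} L \cong \Otimes[\ol R]{\ol R}{L} \cong L,
\]
which inverts the natural map $l \mapsto 1 \otimes l$; and (c) is immediate from (b), since both $\Comp Ra$-actions on $\Otimes{\Comp Ra}{L}$ correspond to the unique $\Comp Ra$-action on $L$ under this isomorphism. For (d), any $R$-submodule of $L$ is automatically an $\ol R$-submodule (because $\ann_R(L)$ annihilates $L$) and hence stable under the $\Comp Ra$-action factoring through $\ol R$. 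Part (e) follows the pattern of Lemma~\ref{disc120720a}\eqref{xlem100206c1}: by Hom-cancellation, Hom-tensor adjointness, and part (b),
\[
\Hom{L}{L'} \cong \Hom{L}{\Hom[\Comp Ra]{\Comp Ra}{L'}} \cong \Hom[\Comp Ra]{\Otimes{\Comp Ra}{L}}{L'} \cong \Hom[\Comp Ra]{L}{L'}.
\]
The main obstacle, and the only step demanding real work, is the completeness claim in part (a); once that is in hand, the remaining parts reduce to formal diagram-chasing and adjunction manipulation.
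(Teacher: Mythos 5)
Your proof is correct and follows essentially the same route as the paper's: both reduce the lemma to the commutative square identifying $\Comp Ra/\ann_R(L)\Comp Ra$ with $R/\ann_R(L)$ (via the $\fa$-adic completion of $\ol R$ being $\ol R$ itself), and then transport structure along the resulting surjection $\Comp Ra\onto\ol R$. The only difference is that you spell out the Artin--Rees/Krull-intersection argument for why a noetherian semi-local ring that is complete in its Jacobson-radical topology is also complete in the $\fa$-adic topology for any $\fa$ contained in the Jacobson radical, whereas the paper asserts this step more tersely; your elaboration is correct.
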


\begin{proof}
Assume without loss of generality that $L\neq 0$.
The fact that $R$ is noetherian implies that
$\Comp Ra/\ann_R(L)\Comp Ra$ is isomorphic to the $\fa$-adic completion of $R/\ann_R(L)$.

\eqref{xlem100409a2}
There is a commutative diagram of ring homomorphisms
\begin{equation}
\label{eq120721a}
\begin{split}
\xymatrix{
R\ar[r]\ar[d]
&\Comp Ra\ar[d]\\
R/\ann_R(L)\ar[r]^-\cong
&\Comp Ra/\ann_R(L)\Comp Ra.}
\end{split}
\end{equation}
The map in the bottom row is an isomorphism because
$R/\ann_R(L)$ is semi-local and complete with Jacobson radical
$\fb/\ann_R(L)$; this uses Lemma~\ref{xlem100409a1}\eqref{xlem100409a1a}.
Since $L$ has an $R/\ann_R(L)$-module structure that is compatible with its $R$-module structure
via the natural map $R\to R/\ann_R(L)$, the isomorphism in the bottom row shows that
$L$ has a compatible $\Comp Ra/\ann_R(L)\Comp Ra$-module structure.
It follows that $L$ has a compatible $\Comp Ra$-module structure.

\eqref{item120721a}--\eqref{item120721b}
Diagram~\eqref{eq120721a}
shows that 
$$L\cong \Otimes{(R/\ann_R(L))}{L}\cong \Otimes{(\Comp Ra/\ann_R(L)\Comp Ra)}{L}
\cong \Otimes{\Comp Ra}{L}$$
and the desired conclusions follow readily.

\eqref{xlem100409a3}
The subset $Z\subseteq L$ is an $R$-submodule if and only if it is an $R/\ann_R(L)$-submodule.
The isomorphism in  diagram~\eqref{eq120721a}
shows that $Z$ is an $R/\ann_R(L)$-submodule
if and only if it is an $\Comp Ra/\ann_R(L)\Comp Ra$-submodule,
that is, if and only if it is an $\Comp Ra$-submodule.

\eqref{item120721c}
This is proved like Lemma~\ref{disc120720a}\eqref{xlem100206c1} using part~\eqref{item120721a}.
\end{proof}

The next two results compare directly with  Lemma~\ref{lem110413a} and~\cite[Lemma 1.20]{kubik:hamm1}. 

\begin{lem} \label{lem120721a}
Assume that $R$ is noetherian.
Let $L$ be an $R$-module such that $R/\ann_R(L)$
is semi-local and complete. Set $\fb=\cap_{\m\in\mspec(R)\cap\supp_R(L)}\m$,
and let $\fa\subseteq\fb$.
Then $L$ is  noetherian (respectively, artinian) over $R$ if and only if it is noetherian 
(respectively, artinian) over $\Comp Ra$.
\end{lem}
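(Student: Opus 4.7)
The plan is to reduce this directly to the previously established fact that $R$-submodules and $\Comp Ra$-submodules of $L$ coincide. Since the noetherian (respectively, artinian) property is defined purely in terms of the ACC (respectively, DCC) on submodules, once the two lattices of submodules agree, the equivalence is immediate.

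First I would invoke Lemma~\ref{xlem100409a}\eqref{xlem100409a2} to equip $L$ with an $\Comp Ra$-module structure that extends its $R$-module structure via the natural map $R\to\Comp Ra$; this requires precisely the hypotheses that $R/\ann_R(L)$ is semi-local and complete and that $\fa\subseteq\fb=\cap_{\m\in\mspec(R)\cap\supp_R(L)}\m$, which is exactly how $\fb$ was set up. So both module structures on $L$ are simultaneously in play.

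Next I would apply Lemma~\ref{xlem100409a}\eqref{xlem100409a3}, which says that a subset $Z\subseteq L$ is an $R$-submodule if and only if it is an $\Comp Ra$-submodule. Consequently, every ascending or descending chain of $R$-submodules of $L$ is the same as a chain of $\Comp Ra$-submodules, and vice versa. It follows that $L$ satisfies the ascending chain condition as an $R$-module if and only if it does as an $\Comp Ra$-module, giving the noetherian equivalence; and similarly $L$ satisfies the descending chain condition over $R$ if and only if over $\Comp Ra$, giving the artinian equivalence.

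There is no real obstacle here: the work has already been done in Lemma~\ref{xlem100409a}, which mirrors the torsion-module situation in Fact~\ref{xpara0'''}\eqref{xpara0h}. The proof runs parallel to Lemma~\ref{lem110413a}, just replacing the reference to Fact~\ref{xpara0'''}\eqref{xpara0h} with Lemma~\ref{xlem100409a}\eqref{xlem100409a3}. No mini-max statement is required here (unlike in Lemma~\ref{lem110413a}), so one does not even need to invoke an extension argument across a short exact sequence.
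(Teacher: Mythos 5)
Your proof is correct and follows the same route as the paper's: both rest entirely on Lemma~\ref{xlem100409a}\eqref{xlem100409a3}, which identifies the $R$-submodules of $L$ with its $\Comp Ra$-submodules, after which the ACC/DCC equivalence is immediate. Your extra preliminary step (invoking Lemma~\ref{xlem100409a}\eqref{xlem100409a2} to set up the $\Comp Ra$-module structure) is implicit in the paper's proof but harmless to state.
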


\begin{proof}
From Lemma~\ref{xlem100409a}\eqref{xlem100409a3} we have $\{R$-submodules of $L\}=\{ \Comp Ra$-submodules of $L\}$.
Thus, the first set satisfies the ascending (respectively, descending) chain condition  if and only if the second one does. 
\end{proof}

\begin{lem} \label{xlem100409a'}
Assume that $R$ is noetherian, and let $L$ be an $R$-module such that $R/\ann_R(L)$
is semi-local and complete. Set $\fb=\cap_{\m\in\mspec(R)\cap\supp_R(L)}\m$,
and let $\fa\subseteq\fb$.
Then the following conditions are equivalent:
\begin{enumerate}[\rm(i)]
\item \label{xlem100409a'6}
$L$ is mini-max as an $R$-module;
\item \label{xlem100409a'7}
$L$ is mini-max as an $\Comp Ra$-module;
\item \label{xlem100409a'8}
$L$ is Matlis reflexive as an $R$-module; and
\item \label{xlem100409a'9}
$L$ is Matlis reflexive as an $\Comp Ra$-module.
\end{enumerate}
\end{lem}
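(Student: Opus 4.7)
The plan is to prove the equivalences in two pairs: (i)$\Leftrightarrow$(iii) and (ii)$\Leftrightarrow$(iv) by two applications of Fact~\ref{xpara1} (one over $R$ and one over $\Comp Ra$), and (i)$\Leftrightarrow$(ii) directly from the submodule correspondence of Lemma~\ref{xlem100409a}\eqref{xlem100409a3}.

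The implication (i)$\Leftrightarrow$(iii) is immediate from Fact~\ref{xpara1}, since the hypothesis assumes $R/\ann_R(L)$ is semi-local and complete. For (i)$\Leftrightarrow$(ii), suppose $L$ is mini-max over $R$ via a noetherian submodule $N\subseteq L$ with artinian quotient $L/N$. By Lemma~\ref{xlem100409a}\eqref{xlem100409a2}, $L$ carries a compatible $\Comp Ra$-module structure, so Lemma~\ref{xlem100409a}\eqref{xlem100409a3} identifies the lattice of $R$-submodules of $L$ with the lattice of $\Comp Ra$-submodules of $L$. Under the standard bijection between submodules of the quotient and intermediate submodules, the same identification holds for $L/N$. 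Therefore ascending chains in $N$ and descending chains in $L/N$ look identical over either ring, showing that $N$ is noetherian over $\Comp Ra$ and $L/N$ is artinian over $\Comp Ra$. Reversing the argument gives the converse.

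For (ii)$\Leftrightarrow$(iv), we apply Fact~\ref{xpara1} over $\Comp Ra$, which requires checking that $\Comp Ra/\ann_{\Comp Ra}(L)$ is semi-local and complete. The key algebraic observation is that $\ann_{\Comp Ra}(L)=\ann_R(L)\Comp Ra$: the containment $\supseteq$ is clear from the compatibility of the two module structures via $R\to\Comp Ra$, and the reverse containment holds because $L$ is a faithful module over $R/\ann_R(L)$, which by diagram~\eqref{eq120721a} is isomorphic to $\Comp Ra/\ann_R(L)\Comp Ra$, forcing any element of $\Comp Ra$ that annihilates $L$ to lie in $\ann_R(L)\Comp Ra$. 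Consequently
\[
\Comp Ra/\ann_{\Comp Ra}(L)=\Comp Ra/\ann_R(L)\Comp Ra\cong R/\ann_R(L),
\]
which is semi-local and complete by hypothesis. Fact~\ref{xpara1} applied over $\Comp Ra$ then yields (ii)$\Leftrightarrow$(iv).

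The main obstacle is the identification $\ann_{\Comp Ra}(L)=\ann_R(L)\Comp Ra$ needed to invoke Fact~\ref{xpara1} over $\Comp Ra$; once this is in hand, everything else is a repackaging of the submodule correspondence and Fact~\ref{xpara1}. The four assertions then connect as (iii)$\Leftrightarrow$(i)$\Leftrightarrow$(ii)$\Leftrightarrow$(iv).
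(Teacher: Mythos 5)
Your proof is correct and follows essentially the same route as the paper: (i)$\Leftrightarrow$(iii) is Fact~\ref{xpara1} over $R$, (ii)$\Leftrightarrow$(iv) is Fact~\ref{xpara1} over $\Comp Ra$ after checking $\Comp Ra/\ann_{\Comp Ra}(L)$ is semi-local and complete, and (i)$\Leftrightarrow$(ii) comes from the submodule correspondence of Lemma~\ref{xlem100409a}\eqref{xlem100409a3}. The only minor differences are cosmetic: the paper routes (i)$\Leftrightarrow$(ii) through Lemma~\ref{lem120721a} rather than tracing the submodule lattices of $N$ and $L/N$ directly as you do, and for (ii)$\Leftrightarrow$(iv) the paper contents itself with the containment $\ann_R(L)\Comp Ra\subseteq\ann_{\Comp Ra}(L)$ (yielding a surjection $R/\ann_R(L)\onto\Comp Ra/\ann_{\Comp Ra}(L)$, which already implies the target ring is semi-local and complete), whereas you prove the sharper equality $\ann_{\Comp Ra}(L)=\ann_R(L)\Comp Ra$ via faithfulness---a correct but slightly stronger observation than needed.
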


\begin{proof}
Assume without loss of generality that $L\neq 0$.

\eqref{xlem100409a'6}$\iff$\eqref{xlem100409a'7}
Let $Z\subset L$ be a subset. Lemma~\ref{xlem100409a}\eqref{xlem100409a3} says that 
$Z$ is an $R$-submodule
if and only if it is an $\Comp Ra$-submodule.
Assume that $Z$ is an $R$-submodule of $L$.
Lemma~\ref{lem120721a} shows that
$Z$ is noetherian as an $R$-module if and only if it is noetherian as an $\Comp Ra$-module,
and  the quotient $L/Z$ is artinian over $R$ if and only
if it is artinian over $\Comp Ra$. 

\eqref{xlem100409a'6}$\iff$\eqref{xlem100409a'8}
This is an immediate consequence of Fact~\ref{xpara1}.

\eqref{xlem100409a'7}$\iff$\eqref{xlem100409a'9}
The fact that $R$ is noetherian and $R/\ann_R(L)$ is complete explains the isomorphism in the next display
$$R/\ann_R(L)\cong \comp R^{\fa}/\ann_R(L)\comp R^{\fa}\onto \comp R^{\fa}/\ann_{\comp R^{\fa}}(L).$$
The epimorphism comes from the containment $\ann_R(L)\comp R^{\fa}\subseteq\ann_{\comp R^{\fa}}(L)$. 
Thus, the fact that $R/\ann_R(L)$ is semi-local and complete
implies that  
$\comp R^{\fa}/\ann_{\comp R^{\fa}}(L)$  is semi-local and complete. 
Hence, the equivalence \eqref{xlem100409a'7}$\iff$\eqref{xlem100409a'9} is a consequence of Fact~\ref{xpara1}.
\end{proof}

\begin{lem} \label{xfact100317b}
Assume that $R$ is noetherian.
Let $M$ be a mini-max $R$-module and let $U\subseteq R$ be multiplicatively closed.
Then $U^{-1}M$ is a mini-max $U^{-1}R$-module and 
the quantities $\mu^i_R(\p,M),\beta_i^R(\p,M)$ are finite for all $i\geq 0$ and all $\p\in\spec(R)$.  
\end{lem}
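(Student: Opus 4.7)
The plan is to handle the two assertions in sequence, using the mini-max statement for the localization as a stepping stone to the Bass/Betti finiteness.

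First, I would establish that $U^{-1}M$ is mini-max over $U^{-1}R$. Choose a noetherian $R$-submodule $N\subseteq M$ with artinian quotient $A:=M/N$, then localize to obtain a short exact sequence $0\to U^{-1}N\to U^{-1}M\to U^{-1}A\to 0$ of $U^{-1}R$-modules. The submodule $U^{-1}N$ is noetherian over $U^{-1}R$ by a standard property of localizations. For the quotient, Fact~\ref{fact120720a} tells us that $U^{-1}A$ sits naturally as an $R$-submodule of $A$, hence it is artinian over $R$, and therefore artinian over $U^{-1}R$ by restriction of scalars along $R\to U^{-1}R$. Closure of the mini-max class under extensions from Fact~\ref{xlem:extensions} then yields the claim.

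Next, I would deduce the finiteness of $\mu^i_R(\p,M)$ and $\beta_i^R(\p,M)$ for each $\p\in\spec(R)$. By Remark~\ref{xdisc101110a} these numbers coincide with $\mu^i_{R_\p}(M_\p)$ and $\beta_i^{R_\p}(M_\p)$, so the question reduces to the local ring $R_\p$. Applying the first part with $U=R\ssm\p$ shows that $M_\p$ is mini-max over $R_\p$, and $\kappa(\p)$ is a noetherian $R_\p$-module. Since the class of mini-max $R_\p$-modules is closed under submodules, quotients, and extensions by Fact~\ref{xlem:extensions}, part~\eqref{xlem100430a4} of Fact~\ref{xlem2of3} implies that the $R_\p$-modules $\Ext[R_\p]{i}{\kappa(\p)}{M_\p}$ and $\Tor[R_\p]{i}{\kappa(\p)}{M_\p}$ are themselves mini-max over $R_\p$.

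To finish, I would observe that any mini-max module which happens to be a vector space over a field $k$ is automatically finite-dimensional over $k$: a noetherian $k$-subspace is finite-dimensional, any artinian $k$-quotient is finite-dimensional, and the extension of two finite-dimensional $k$-spaces is finite-dimensional. Applying this observation to the $\kappa(\p)$-vector spaces $\Ext[R_\p]{i}{\kappa(\p)}{M_\p}$ and $\Tor[R_\p]{i}{\kappa(\p)}{M_\p}$ gives the desired finiteness of $\mu^i_R(\p,M)$ and $\beta_i^R(\p,M)$. The only genuinely delicate step is seeing that a localization of an artinian $R$-module remains artinian, which is possible only because Sharp's decomposition (Fact~\ref{fact120720a}) realizes $U^{-1}A$ as a submodule of $A$ rather than just a quotient of a localized module; after that, everything else is mechanical.
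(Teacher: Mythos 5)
Your proof is correct and follows the same overall strategy as the paper. For the mini-max assertion, both you and the paper localize the short exact sequence $0\to N\to M\to M/N\to 0$, invoke Fact~\ref{fact120720a} to see that a localization of an artinian module remains artinian (because it is realized as a direct summand), and close up under extensions; this is identical. For the Bass and Betti number finiteness, the paper reduces to the local case via Remark~\ref{xdisc101110a} and then simply cites the companion paper, whereas you supply a self-contained local argument: apply Fact~\ref{xlem2of3}\eqref{xlem100430a4} to conclude that $\Ext[R_\p]{i}{\kappa(\p)}{M_\p}$ and $\Tor[R_\p]{i}{\kappa(\p)}{M_\p}$ are mini-max over $R_\p$, then observe that a mini-max module annihilated by $\p R_\p$ is a $\kappa(\p)$-vector space sitting in an extension of a noetherian subspace by an artinian quotient space, hence finite-dimensional. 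That last observation is in effect Lemma~\ref{xlem100213b} specialized to the artinian ring $\kappa(\p)$; you could cite it to shorten the argument without changing its substance. One small quibble: the phrase ``artinian over $U^{-1}R$ by restriction of scalars along $R\to U^{-1}R$'' is backwards as worded --- restriction of scalars produces $R$-modules from $U^{-1}R$-modules, not the other way around. The correct justification is simply that every $U^{-1}R$-submodule of $U^{-1}A$ is an $R$-submodule, so the descending chain condition passes down; your intent is evidently this, and it matches the paper's reasoning.
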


\begin{proof}
The claim that $U^{-1}M$ is a mini-max $U^{-1}R$-module follows from the fact that localization is exact and localizing a noetherian (artinian) $R$-module at $U$ yields a noetherian (artinian) 
$U^{-1}R$-module; see Fact~\ref{fact120720a}. Therefore, the remaining conclusions follow from the local case,
using the localization behavior of Bass and Betti numbers from Remark~\ref{xdisc101110a}; see \cite[Lemma 1.19]{kubik:hamm1}.  
\end{proof}

Our next result compares to part of~\cite[Lemma 1.21]{kubik:hamm1}.

\begin{lem}\label{xlem100213b}
Assume that $R$ is noetherian.
Let $L$ be an $R$-module such that $R/\ann_R(L)$ is artinian. Then the following conditions are equivalent:
\begin{enumerate}[\rm(i)]
\item\label{xlem100213b1}
$L$ is mini-max over $R$;
\item\label{xlem100213b2}
$L$ is Matlis reflexive over $R$;
\item\label{xlem100213b5}
$L$ has finite length over $R$.
\end{enumerate}
\end{lem}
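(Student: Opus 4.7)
The plan is to exploit the strong finiteness of $S = R/\ann_R(L)$. Because any artinian ring is semi-local with nilpotent Jacobson radical, $S$ is automatically semi-local and complete with respect to its Jacobson radical. Thus Fact~\ref{xpara1} immediately yields the equivalence \eqref{xlem100213b1}$\iff$\eqref{xlem100213b2}, and the implication \eqref{xlem100213b5}$\Rightarrow$\eqref{xlem100213b1} is trivial since a finite length module is noetherian.

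For \eqref{xlem100213b1}$\Rightarrow$\eqref{xlem100213b5}, I would view $L$ as an $S$-module, noting that $R$-submodules of any subquotient of $L$ coincide with $S$-submodules because $\ann_R(L)$ annihilates them. Choose a noetherian $R$-submodule $N \subseteq L$ with $L/N$ artinian, as in Definition~\ref{xdefn100317a}. Then $N$ is finitely generated over the artinian ring $S$, hence has finite length. The module $L/N$ is artinian over $S$; granting the claim below that every artinian module over a commutative artinian ring has finite length, $L/N$ also has finite length, and then Fact~\ref{xlem:extensions} (closure of finite length modules under extensions) forces $L$ itself to have finite length over $S$, hence over $R$.

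The key lemma to verify is the claim that over a commutative artinian ring $S$, every artinian $S$-module $M$ has finite length. I would prove this by decomposing $S$ as a finite product of local artinian rings $S \cong \prod_{i=1}^k S_i$ with nilpotent maximal ideals $\m_i^{n_i} = 0$; the accompanying idempotents split $M$ compatibly as $M \cong \prod_{i=1}^k M_i$, reducing to the local case. In the local case, the $\m_i$-adic filtration $M_i \supseteq \m_i M_i \supseteq \cdots \supseteq \m_i^{n_i} M_i = 0$ has only finitely many terms, and each successive quotient is an artinian vector space over the residue field $S_i/\m_i$, hence finite-dimensional. This forces $M_i$ to have finite length. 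The main obstacle anywhere in the argument is merely the bookkeeping between the $R$-module and $S$-module structures, which is routine since $\ann_R(L)$ annihilates every subquotient of $L$.
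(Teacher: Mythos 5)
Your proof is correct, but it takes a genuinely different route from the paper's. Both treatments handle $(i) \iff (ii)$ via Fact~\ref{xpara1} (an artinian ring is semi-local with nilpotent, hence complete, Jacobson radical) and dismiss $(iii) \Rightarrow (i)$ as routine. For the main implication $(i) \Rightarrow (iii)$, you decompose $L$ via Definition~\ref{xdefn100317a} into a noetherian submodule $N$ and an artinian quotient $L/N$, reduce each to finite length over $S = R/\ann_R(L)$, and conclude via closure under extensions; the driving lemma is the elementary fact that artinian modules over a commutative artinian ring have finite length, which you establish by splitting $S$ into local factors and filtering by powers of the nilpotent maximal ideal. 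The paper does not decompose $L$ at all: it appeals to Lemma~\ref{xfact100317b} to get finiteness of the Bass numbers of the mini-max module $L$, and then to Remark~\ref{xdisc100414a} to conclude that the injective hull of $L$ over $S$ is a finite coproduct of indecomposable injectives, each of finite length over the artinian ring $S$; so $L$ embeds in a finite-length module. Your approach is more self-contained, avoiding the injective resolution and Bass number machinery; the paper's is shorter given that that machinery has already been assembled in Section~\ref{xsec1}. The "routine bookkeeping" you mention (that $R$-submodules of any subquotient of $L$ coincide with $S$-submodules) is indeed immediate, since $\ann_R(L)$ annihilates every subquotient of $L$.
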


\begin{proof}
The implication
$\eqref{xlem100213b5}\implies\eqref{xlem100213b1}$
is routine, and the equivalence $\eqref{xlem100213b1}\iff\eqref{xlem100213b2}$ is from Fact~\ref{xpara1}.

$\eqref{xlem100213b1}\implies\eqref{xlem100213b5}$
Assume that $L$ is mini-max. Then 
$L$ is mini-max as an $R/\ann_R(L)$-module. Over an artinian ring every indecomposable injective module has finite 
length and the prime spectrum is a finite set. By Remark~\ref{xdisc100414a}\eqref{xdisc100414a2}
and Lemma~\ref{xfact100317b} the injective hull of $L$ as an $R/\ann_R(L)$-module is a finite direct sum 
of indecomposable injective $R/\ann_R(L)$-modules. Thus, $L$ injects into a finite length module. Hence $L$ has finite length.
\end{proof}

\begin{lem}\label{xlem101202a}
Assume that $R$ is noetherian, and let $\fa$ be an ideal of $R$.
If $M$ is a mini-max $R$-module, then $\Otimes{\Comp Ra}{M}$ is a mini-max $\Comp Ra$-module.
\end{lem}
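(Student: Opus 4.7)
The plan is to exhibit $\Otimes{\Comp Ra}{M}$ as an extension of a noetherian $\Comp Ra$-module by an artinian $\Comp Ra$-module, and then invoke Fact~\ref{xlem:extensions}.

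First, since $M$ is mini-max over $R$, there is a noetherian $R$-submodule $N\subseteq M$ with $A:=M/N$ artinian over $R$. Because $R$ is noetherian, the $\fa$-adic completion $\Comp Ra$ is flat over $R$, so applying $\Otimes{\Comp Ra}{-}$ to the short exact sequence $0\to N\to M\to A\to 0$ yields an exact sequence
\begin{equation*}
0\to\Otimes{\Comp Ra}{N}\to\Otimes{\Comp Ra}{M}\to\Otimes{\Comp Ra}{A}\to 0
\end{equation*}
of $\Comp Ra$-modules.

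Next I would verify the two endpoints have the desired finiteness properties over $\Comp Ra$. Since $N$ is a finitely generated $R$-module, the base change $\Otimes{\Comp Ra}{N}$ is a finitely generated, hence noetherian, $\Comp Ra$-module. For the other end, Fact~\ref{fact120720a} provides an identification $\Otimes{\Comp Ra}{A}\cong\Gamma_{\fa}(A)$ that realizes $\Otimes{\Comp Ra}{A}$ as an $R$-submodule of the artinian module $A$; hence it is artinian over $R$, and by the same fact artinian over $\Comp Ra$.

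Finally, the class of mini-max $\Comp Ra$-modules is closed under extensions (Fact~\ref{xlem:extensions}, applied to the noetherian ring $\Comp Ra$), so the exact sequence above shows that $\Otimes{\Comp Ra}{M}$ is mini-max over $\Comp Ra$. No step here is really an obstacle: the only point that might require a brief justification is the artinianness of $\Otimes{\Comp Ra}{A}$ over $\Comp Ra$, but this is precisely the content of Fact~\ref{fact120720a}.
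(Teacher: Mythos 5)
Your proof is correct and follows essentially the same approach as the paper: apply the flat base change $\Comp Ra\otimes_R-$ to a short exact sequence $0\to N\to M\to A\to 0$ with $N$ noetherian and $A$ artinian, and observe that the resulting sequence exhibits $\Comp Ra\otimes_R M$ as mini-max over $\Comp Ra$ via Fact~\ref{fact120720a}. (The paper concludes directly from the definition of mini-max rather than citing Fact~\ref{xlem:extensions}, but that is an inessential stylistic difference.)
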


\begin{proof}
Let $M$ be mini-max over $R$, and fix an exact sequence of $R$-module homomorphisms
$0\to N\to M\to A\to 0$
where $N$ is noetherian over $R$ and $A$ is artinian over $R$.
The ring $\Comp Ra$ is flat over $R$ since $R$ is noetherian, 
so the base-changed sequence
$$0\to \Otimes{\Comp Ra}{N}\to \Otimes{\Comp Ra}{M}\to \Otimes{\Comp Ra}{A}\to 0$$
is an exact sequence of $\Comp Ra$-module homomorphisms.
The $\Comp Ra$-module $\Otimes{\Comp Ra}{N}$ is noetherian.
Fact~\ref{fact120720a} implies that the $\Comp Ra$-module $\Otimes{\Comp Ra}{A}$ is artinian,
so $\Otimes{\Comp Ra}{M}$ is  mini-max over $\Comp Ra$.
\end{proof}

\section{Isomorphisms for $\Ext{i}{T}{L}$}
\label{sec120807a}

This section contains the proof of Theorem~\ref{xlem100312a} (in~\ref{para120807f})
and other  isomorphism results that are used in later sections.

\begin{lem} \label{xlem101120a}
Assume that $R$ is noetherian.
Let $I$ be an injective $R$-module, and $\mcg$  a finite subset of $\mspec(R)$.
Set $\fb=\cap_{\m\in \mathcal{G}}\m$ and
$V=R\ssm\cup_{\m\in \mathcal{G}}\m$, and let $U\subseteq V$ be  multiplicatively closed.
Then the natural map $\Ga bI\to\Ga{b}{U^{-1}I}$ is bijective.
\end{lem}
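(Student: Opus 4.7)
The plan is to exploit the structural decomposition of injective modules over a noetherian ring (Remark~\ref{xdisc100414a}\eqref{xdisc100414a2}) together with the behaviors of $\Gamma_{\fb}$ and $U^{-1}(-)$ on indecomposable injective hulls (Facts~\ref{xfact101221a} and~\ref{fact110426a}), and then verify that under the hypothesis $U\subseteq V$ the relevant summands on each side coincide via the identity.

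First I would write $I\cong\coprod_{\p\in\spec(R)}\E_R(R/\p)^{(\alpha_{\p})}$ for some cardinals $\alpha_\p$. Since both $\Gamma_{\fb}(-)$ and $U^{-1}(-)$ commute with coproducts, and since the natural transformation $(-)\to U^{-1}(-)$ is compatible with these decompositions, it suffices to check bijectivity on a single summand $\E_R(R/\p)$. By Fact~\ref{xfact101221a},
\[
\Gamma_{\fb}(\E_R(R/\p))=\begin{cases}\E_R(R/\p)&\text{if }\fb\subseteq\p,\\ 0&\text{otherwise,}\end{cases}
\]
so $\Gamma_{\fb}(I)$ is built from the summands with $\fb\subseteq\p$. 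Similarly, by Fact~\ref{fact110426a},
\[
U^{-1}\E_R(R/\p)=\begin{cases}\E_R(R/\p)&\text{if }\p\cap U=\emptyset,\\ 0&\text{otherwise,}\end{cases}
\]
and applying $\Gamma_{\fb}$ once more shows that $\Gamma_{\fb}(U^{-1}I)$ is built from the summands with both $\fb\subseteq\p$ and $\p\cap U=\emptyset$.

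The main step is the combinatorial observation that these two conditions coincide: if $\fb=\cap_{\m\in\mcg}\m\subseteq\p$, then since $\mcg$ is finite and $\p$ is prime, prime avoidance forces $\m\subseteq\p$ for some $\m\in\mcg$, and maximality of $\m$ gives $\p=\m\in\mcg$. Consequently $\p\subseteq\cup_{\m\in\mcg}\m$, so $U\subseteq V=R\ssm\cup_{\m\in\mcg}\m\subseteq R\ssm\p$, i.e.\ $\p\cap U=\emptyset$. Thus every summand contributing to $\Gamma_{\fb}(I)$ also contributes to $\Gamma_{\fb}(U^{-1}I)$, and on such a summand the localization map $\E_R(R/\p)\to U^{-1}\E_R(R/\p)=\E_R(R/\p)$ is an isomorphism (this is part of Fact~\ref{fact110426a}, and also follows from Lemma~\ref{lem120818a} since units in $R\ssm\p$ act invertibly on $\E_R(R/\p)$).

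The only potential obstacle is keeping the naturality straight—specifically, that assembling these per-summand isomorphisms yields the natural map $\Gamma_{\fb}(I)\to\Gamma_{\fb}(U^{-1}I)$ induced by localization. This is a routine check from the functoriality of $\Gamma_{\fb}$ and of localization on the coproduct decomposition, so no real difficulty is expected.
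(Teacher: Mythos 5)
Your proof is correct and follows essentially the same route as the paper: both decompose $I$ into indecomposable injective hulls, invoke Facts~\ref{xfact101221a} and~\ref{fact110426a} to describe $\Gamma_{\fb}$ and $U^{-1}(-)$ on each summand, and reduce to the same combinatorial observation linking $\fb\subseteq\p$, $\p\in\mcg$, and $\p\cap U=\emptyset$ (the paper states it in contrapositive form and packages the conclusion via a split-surjection argument, but the content is identical). One small terminological slip: the step ``$\fb=\cap_{\m\in\mcg}\m\subseteq\p$ forces $\m\subseteq\p$ for some $\m\in\mcg$'' is not prime avoidance but the elementary fact that a prime containing a finite intersection (hence the product) of ideals must contain one of them; the fact itself is of course true, so the proof stands.
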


\begin{proof}
Write $I=\coprod_{\p\in\spec(R)}\E_R(R/\p)^{(\mu_p)}$.
By Fact~\ref{fact110426a} and Remark~\ref{xdisc100414a}\eqref{xdisc100414a2}, 
the natural map $\rho\colon I\to U^{-1}I$ is a split surjection with
$\ker(\rho)=\coprod_{\p\cap U\neq\emptyset}\E_R(R/\p)^{(\mu_p)}$.
Since $\rho$ is a split surjection, it follows that
$\Ga{b}{\rho}\colon \Ga{b}{I}\to \Ga{b}{U^{-1}I}$ is a split surjection with
$\ker(\Ga{b}{\rho})=\coprod_{\p\cap U\neq\emptyset}\Ga{b}{\E_R(R/\p)}^{(\mu_p)}$.
Thus, it remains to show that $\Ga{b}{\E_R(R/\p)}=0$ when $\p\cap U\neq\emptyset$.

Assume that $\p\cap U\neq\emptyset$. Then $\p\cap V\neq\emptyset$, so $\p\nsubseteq\m$ for all $\m\in\mcg$. 
Since $\mcg$ is a set of maximal ideals it follows that $\m\nsubseteq\p$ for all $\m\in\mcg$.
Hence, we have $\fb=\cap_{\m\in \mathcal{G}}\m\nsubseteq\p$
since $\mcg$ is finite. 
Fact~\ref{xfact101221a} implies that
$\Ga{b}{\E_R(R/\p)}=0$ and the result follows.
\end{proof}

\begin{para}[Proof of Theorem~\ref{xlem100312a}]\label{para120807f}
Let $I$ be a minimal injective resolution of $L$. If $\p\in\spec(R)\ssm\supp_R(L)$, then
the condition $L_{\p}=0$ implies that $\E_R(R/\p)$ does not occur as a summand of any $I^j$; see 
Remarks~\ref{xdisc101110a} and~\ref{xdisc100414a}\eqref{xdisc100414a2}.
For all $\m\in\mspec(R)\ssm\mcf$, either 
$\m\notin\supp_R(T)$ or $\m\notin\supp_R(L)$, so either
$T_\m=0$ or $\Gamma_\m(I)=0$ by the above remark.  
Note that $T_{\m}$ is $\m$-torsion since either $T_{\m}=0$ or $\fc R_{\m}=\m R_{\m}$.
Thus, Lemma~\ref{disc120720a}\eqref{xlem100206c2} implies that 
$\Hom{T_\m}{I}\cong\Hom{T_\m}{\Gamma_\m(I)}=0$ for all $\m\notin\mcf$. Since $\supp_R(T)$ and $\mcg$ both contain $\mathcal{F}$, this explains step (3) in the next display
\begin{align*}
\hom{T}{I}
&\textstyle\stackrel{(1)}\cong \hom{\coprod_{\m\in\supp_R(T)}T_\m}{I}\\
&\textstyle\stackrel{(2)}\cong \coprod_{\m\in\supp_R(T)}\hom{T_\m}{I}\\
&\textstyle\stackrel{(3)}\cong \coprod_{\m\in\mcg}\hom{T_\m}{I}\\
&\textstyle\stackrel{(4)}\cong \coprod_{\m\in\mcg}\hom{T_\m}{\Gamma_{\m}(I)}\\
&\textstyle\stackrel{(5)}\cong \coprod_{\m\in\mcg}\hom{T_\m}{\Gamma_{\m}(I_\m)}\\
&\textstyle\stackrel{(6)}= \coprod_{\m\in\mcg}\hom{T_\m}{I_\m} \\
&\textstyle\stackrel{(7)}= \coprod_{\m\in\mcg}\hom[R_\m]{T_\m}{I_\m}.
\end{align*}
Step (1) comes from Lemmas~\ref{lem110517a}\eqref{item120718e} and~\ref{lem120727a},
and (5) is from Lemma~\ref{xlem101120a}.
Step (2) is standard, since $\supp_R(T)$ is finite. 
Lemma~\ref{disc120720a}\eqref{xlem100206c2} and
Facts~\ref{xfact101221a}--\ref{fact110426a} explain steps (4) and (6), respectively, and
step (7) is from Lemma~\ref{lem120719b}.
Since $I_{\m}$ is an $R_{\m}$-injective resolution of $L_{\m}$,
it follows that $\Ext{i}{T}{L}\cong\coprod_{\mathfrak{m}\in \mcg}\Ext[R_\m]{i}{T_\m}{L_\m}$.  

Set $\fb=\cap_{\m\in\mcg}\m$.
For each $\m\in\mspec(R)$, the module $T_\m\cong\Ga mT$ is $\m$-torsion by
Lemmas~\ref{lem110517a}\eqref{item120718c} and~\ref{lem120727a}.
Thus $T_\m$ is an
$\Comp Rm$-module, and so is $\Ext[R_{\m}]{i}{T_{\m}}{L_{\m}}$.
Thus, the coproduct 
$\Ext{i}{T}{L}
\cong\coprod_{\mathfrak{m}\in \mcg}\Ext[R_{\m}]{i}{\Gamma_\m(T)}{L_\m}$
is  a module over the product
$\Comp Rb=\prod_{\m\in\mcg}\Comp Rm$ using componentwise multiplication.
By restriction of scalars, this is also a module over $\Comp Ra$ for each $\fa\subseteq\fb$.

The first 
$\Comp Rm$-module isomorphism in the next display is from~\cite[Lemma 4.2]{kubik:hamm1}
$$\Ext[R_{\m}]{i}{T_{\m}}{L_{\m}}
\cong \Ext[\comp R^{\m}]{i}{T_{\m}}{\comp R^{\m}\otimes_{R_\m}L_{\m}}
\cong\Ext[\comp R^{\m}]{i}{\Gamma_\m(T)}{\comp R^{\m}\otimes_{R}L}.$$ 
The second isomorphism is from Lemmas~\ref{lem110517a}\eqref{item120718c} and~\ref{lem120727a}, and
using the standard isomorphism $\comp R^{\m}\otimes_{R_\m}L_{\m}\cong \comp R^{\m}\otimes_{R}L$. Since these isomorphisms are $\Comp Rm$-linear for each $\m$,
the induced isomorphism on coproducts
$\coprod_{\mathfrak{m}\in \mcg}\Ext[\comp R^{\m}]{i}{\Gamma_\m(T)}{\comp R^{\m}\otimes_{R}L}
\cong\coprod_{\mathfrak{m}\in \mcg}\Ext[R_\m]{i}{T_{\mathfrak{m}}}{L_{\mathfrak{m}}}$
is linear over the product
$\Comp Rb=\prod_{\m\in\mcg}\Comp Rm$ hence over $\Comp Ra$ for each $\fa\subseteq\fb$.
\qed
\end{para}

In the next result, one can take $\fa=\cap_{\m\in\mcg}\m$, for instance.

\begin{thm} \label{xlem100312az}
Assume that $R$ is noetherian, and let $\fc$ be an intersection of finitely many maximal ideals of $R$.
Let $T$ and $L$ be $R$-modules such that $T$ is $\fc$-torsion, and set $\mcf=\supp_R(T)\cap\supp_R(L)$. 
Let 
$\fa$ be an ideal of $R$ such that $\mcf\subseteq V(\fa)$, and
let $U\subseteq R\ssm \cup_{\m\in\mcf}\m$ be a multiplicatively closed set. Then for all $i\geq 0$ there are $R$-module isomorphisms
\begin{align*}
\Ext[\comp R^{\fa}]{i}{\Gamma_{\fa}(T)}{\comp R^{\fa}\otimes_{R}L}
\cong\textstyle\Ext{i}{T}{L}
&\textstyle
\cong\Ext[U^{-1}R]{i}{U^{-1}T}{U^{-1}L}.
\end{align*}
The first isomorphism is $\Comp Ra$-linear.
\end{thm}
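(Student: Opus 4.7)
\emph{Strategy.} I plan to match both sides to the common coproduct furnished by Theorem~\ref{xlem100312a} applied to $\Ext{i}{T}{L}$ with $\mcg=\mcf$. Since $T$ is $\fc$-torsion, Lemma~\ref{lem120727a} combined with Lemma~\ref{lem110517a}\eqref{item120718d} shows that $\supp_R(T)$ is a finite subset of $\mspec(R)$, so $\mcf$ is a finite set of maximal ideals and Theorem~\ref{xlem100312a} gives
\begin{equation*}
\textstyle\Ext{i}{T}{L}\cong\coprod_{\m\in\mcf}\Ext[\Comp Rm]{i}{\Gamma_\m(T)}{\Comp Rm\otimes_R L}\cong\coprod_{\m\in\mcf}\Ext[R_\m]{i}{T_\m}{L_\m}.
\end{equation*}
The hypothesis $\mcf\subseteq V(\fa)$ gives $\fa\subseteq\cap_{\m\in\mcf}\m$, so the final clause of Theorem~\ref{xlem100312a} makes this decomposition $\Comp Ra$-linear.

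\emph{The $U^{-1}R$ side.} To reach $\Ext[U^{-1}R]{i}{U^{-1}T}{U^{-1}L}$ I apply Theorem~\ref{xlem100312a} over $U^{-1}R$. The module $U^{-1}T$ is $\fc U^{-1}R$-torsion, and since localization preserves finite intersections of ideals, $\fc U^{-1}R$ is an intersection of finitely many maximal ideals of $U^{-1}R$. The hypothesis $U\cap\m=\emptyset$ for each $\m\in\mcf$ makes $\{\m U^{-1}R:\m\in\mcf\}$ a finite set of maximal ideals of $U^{-1}R$ containing the relevant support-intersection, and the standard identifications $(U^{-1}R)_{\m U^{-1}R}\cong R_\m$, $(U^{-1}T)_{\m U^{-1}R}\cong T_\m$, and $(U^{-1}L)_{\m U^{-1}R}\cong L_\m$ reproduce the target coproduct.

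\emph{The $\Comp Ra$ side, and the main obstacle.} Lemma~\ref{lem120719a}\eqref{lem120719a1} yields the finite decomposition $\Gamma_\fa(T)\cong\coprod_{\m\in V(\fa)\cap\supp_R(T)}\Gamma_\m(T)$ of $\Comp Ra$-modules, and pulling this finite direct sum out of $\Hom[\Comp Ra]{-}{J}$ for an $\Comp Ra$-injective resolution $J$ of $\Comp Ra\otimes_R L$ yields, on cohomology, $\Ext[\Comp Ra]{i}{\Gamma_\fa(T)}{\Comp Ra\otimes_R L}\cong\coprod\Ext[\Comp Ra]{i}{\Gamma_\m(T)}{\Comp Ra\otimes_R L}$. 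I then apply Theorem~\ref{xlem100312a} over $\Comp Ra$ to each summand with the singleton $\{\m\Comp Ra\}$; summands with $\m\notin\mcf$ vanish because $L_\m=0$ forces $\Comp Rm\otimes_R L\cong\Comp Rm\otimes_{R_\m}L_\m=0$, and the survivors assemble into the target coproduct. The essential technical point---which I expect to be the main obstacle---is identifying the $\m\Comp Ra$-adic completion of $\Comp Ra$ with $\Comp Rm$. I obtain this from the chain $\Comp Ra/\m^n\Comp Ra\cong\Comp Ra\otimes_R R/\m^n\cong R/\m^n$, where the last isomorphism is Fact~\ref{xpara0'''}\eqref{xpara0e} (since $R/\m^n$ is $\fa$-torsion), and then I pass to the inverse limit. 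The companion identities $\Gamma_{\m\Comp Ra}(\Gamma_\m(T))=\Gamma_\m(T)$ and $\Comp Rm\otimes_{\Comp Ra}(\Comp Ra\otimes_R L)\cong\Comp Rm\otimes_R L$ are routine. Finally, the asserted $\Comp Ra$-linearity of the first isomorphism follows from applying Theorem~\ref{xlem100312a} over $\Comp Ra$ (which yields linearity over the $\fa\Comp Ra$-adic completion of $\Comp Ra$) combined with the fact that $\Comp Ra$ is already $\fa\Comp Ra$-adically complete.
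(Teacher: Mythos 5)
Your proof is correct, and it diverges from the paper's in a couple of places worth noting. For the first isomorphism, the paper sets $\fb=\cap_{\m\in\mcf}\m$ and relates $\Ext[\Comp Ra]{i}{\Gamma_\fa(T)}{\Comp Ra\otimes_RL}$ to $\Ext[\Comp Rb]{i}{\Gamma_\fb(T)}{\Comp Rb\otimes_RL}$ by applying Theorem~\ref{xlem100312a} over $\Comp Ra$ once (taking $\mcg$ indexed by $\mcf$), then applies Theorem~\ref{xlem100312a} over $\Comp Rb$ to finish. You instead decompose $\Gamma_\fa(T)$ via Lemma~\ref{lem120719a}\eqref{lem120719a1} into $\m$-torsion summands and apply Theorem~\ref{xlem100312a} over $\Comp Ra$ to each singleton $\{\m\Comp Ra\}$, discarding the summands with $\m\notin\mcf$ because $L_\m=0$ forces $\Comp Rm\otimes_RL=0$; this is essentially an unwound version of the same argument, and both hinge on the same identification $(\Comp Ra)^{\wedge\m\Comp Ra}\cong\Comp Rm$, which you justify correctly via $\Comp Ra\otimes_RR/\m^n\cong R/\m^n$ and a limit. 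For the second isomorphism your route is genuinely more economical: the paper reproves it from scratch with an eight-step injective-resolution computation (using Lemmas~\ref{wxlem110113a}, \ref{disc120720a}, \ref{xlem101120a}, \ref{lem120719b}), whereas you simply re-apply Theorem~\ref{xlem100312a} over $U^{-1}R$ and use the standard identifications $(U^{-1}R)_{\m U^{-1}R}\cong R_\m$, etc. The only small point to be careful about is that $\fc U^{-1}R$ could be the unit ideal (if every maximal ideal over $\fc$ meets $U$), in which case $U^{-1}T=0$ and both sides vanish trivially; otherwise it is, as you say, a finite intersection of maximal ideals of $U^{-1}R$, and your check that $\{\m U^{-1}R:\m\in\mcf\}$ contains (indeed equals) $\supp_{U^{-1}R}(U^{-1}T)\cap\supp_{U^{-1}R}(U^{-1}L)$ is right. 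Your reuse of Theorem~\ref{xlem100312a} buys brevity and avoids repeating the injective-module bookkeeping; the paper's injective-resolution argument is more self-contained at the chain-complex level.
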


\begin{proof}
For the first isomorphism, 
we first set $\fb=\cap_{\m\in\mcf}\m\supseteq\fa$.
Note that there
is a bijection $\mcf\to\mspec(\Comp R{b})$ given by
$\m\mapsto\m\Comp Rb$; see Fact~\ref{fact120719a}. Also, the $\m\Comp Rb$-adic completion of
$\Comp Rb$ is naturally isomorphic to $\Comp Rm$,
and we have $\Gamma_{\m\Comp Rb}(\Ga bT)=\Ga mT$.
Thus, Theorem~\ref{xlem100312a} explains the following
$\Comp Rb$-module isomorphisms
\begin{align*}
\Ext[\comp R^{\fb}]{i}{\Gamma_{\fb}(T)}{\comp R^{\fb}\otimes_{R}L}
& \textstyle
\cong\coprod_{\mathfrak{m}\in \mcf}\Ext[\comp R^{\m}]{i}{\Gamma_{\m\Comp Rb}(\Ga bT)}{\comp R^{\m}\otimes_{\comp R^{\fb}}(\comp R^{\fb}\otimes_{R}L)}\\
& \textstyle
\cong\coprod_{\mathfrak{m}\in \mcf}\Ext[\comp R^{\m}]{i}{\Gamma_\m(T)}{\comp R^{\m}\otimes_{R}L}\\
&\cong\Ext iTL.
\end{align*}
The condition $\fa\subseteq\fb$ implies that there is a natural ring homomorphism $\Comp Ra\to\Comp Rb$ that is compatible
with the maps $R\to\Comp Ra$ and $R\to \Comp Rb$.
Thus, the above isomorphisms are $\Comp Ra$-linear.
Furthermore, the same logic explains the first $\Comp Ra$-module isomorphism in the next sequence.
\begin{align*}
\Ext[\comp R^{\fa}]{i}{\Gamma_{\fa}(T)}{\comp R^{\fa}\otimes_{R}L}
& \cong\Ext[\comp R^{\fb}]{i}{\Gamma_{\fb\Comp Ra}(\Gamma_{\fa}(T))}{\comp R^{\fb}\otimes_{\Comp Ra}(\comp R^{\fa}\otimes_{R}L)}\\
& \cong\Ext[\comp R^{\fb}]{i}{\Gamma_{\fb}(\Gamma_{\fa}(T))}{\comp R^{\fb}\otimes_{R}L}\\
& \cong\Ext[\comp R^{\fb}]{i}{\Gamma_{\fb}(T)}{\comp R^{\fb}\otimes_{R}L}
\end{align*}
Combining the two sequences 
of isomorphisms, we conclude that $\Ext{i}{T}{L}\cong\Ext[\comp R^{\fa}]{i}{\Gamma_{\fa}(T)}{\comp R^{\fa}\otimes_{R}L}$.

For the second isomorphism, let $I$ be a minimal injective resolution of $L$. 
Using prime avoidance, one shows readily that $\{\m\in\supp_R(T)\mid \m\cap U=\emptyset\}=\mcf$. 
The logic of steps (1)--(3) from the proof of Theorem~\ref{xlem100312a} explains
step~(1) in the next display:
\begin{align*}
\hom{T}{I}
&\textstyle\stackrel{(1)}\cong \coprod_{\m\in\mcf}\hom{T_\m}{I}\\
&\textstyle\stackrel{(2)}\cong \hom{\coprod_{\m\in\mcf}T_\m}{I}\\
&\textstyle\stackrel{(3)}\cong\hom{\Gamma_{\fb}(T)}{I}\\
&\textstyle\stackrel{(4)}\cong\hom{\Gamma_{\fb}(T)}{\Gamma_{\fb}(I)}\\
&\textstyle\stackrel{(5)}\cong \hom{\Gamma_{\fb}(T)}{\Gamma_{\fb}(U^{-1}I)}\\
&\textstyle\stackrel{(6)}\cong\hom{\Gamma_{\fb}(T)}{U^{-1}I}\\
&\textstyle\stackrel{(7)}\cong\hom{U^{-1}T}{U^{-1}I}\\
&\textstyle\stackrel{(8)}=\hom[U^{-1}R]{U^{-1}T}{U^{-1}I}.
\end{align*}
Step (2) is standard, as $\mcf$ is finite.
Steps (3) and (7) are by Lemma~\ref{wxlem110113a}. Steps~(4) and (6) 
are from Lemma~\ref{disc120720a}\eqref{xlem100206c2}. 
Step (5) is by Lemma~\ref{xlem101120a}.
Step (8) is Lemma~\ref{lem120719b}.
Taking cohomology, one has $\Ext{i}{T}{L}\cong\Ext[U^{-1}R]{i}{U^{-1}T}{U^{-1}L}$.
\end{proof}

\begin{cor} \label{cor110517a}
Assume that $R$ is noetherian, and let $\fc$ be an intersection of finitely many maximal
ideals of $R$.
Let $T$ and $L$ be $R$-modules such that $T$ is $\fc$-torsion. Let $U\subseteq R$ be a multiplicatively closed set and let $\fa$ be an ideal of $R$. 
Then there are isomorphisms
$\Ext{i}{U^{-1}T}{L}\cong\Ext[U^{-1}R]{i}{U^{-1}T}{U^{-1}L}$ and 
$\Ext{i}{\Gamma_{\fa}(T)}{L}\cong\Ext[\comp R^{\fa}]{i}{\Gamma_{\fa}(T)}{\comp R^{\fa}\otimes_{R}L}$
for all $i$.
The first isomorphism is $U^{-1}R$-linear, and the second one is $\Comp Ra$-linear.
\end{cor}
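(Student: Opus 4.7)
The plan is to deduce both isomorphisms from Theorem~\ref{xlem100312az} by applying it to the $\fc$-torsion modules $\Gamma_{\fa}(T)$ and $U^{-1}T$. The essential preliminary point is that each of these modules is itself $\fc$-torsion: for $\Gamma_{\fa}(T)$ this is immediate since it is an $R$-submodule of $T$, and for $U^{-1}T$ this follows from Lemmas~\ref{lem120727a} and~\ref{wxlem110113a}, which identify $U^{-1}T$ with an $R$-direct summand of $T$.

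For the second isomorphism, I apply Theorem~\ref{xlem100312az} with the torsion module $\Gamma_{\fa}(T)$, the given ideal $\fa$, and the trivial multiplicative set $U'=\{1\}$. The support hypothesis $\supp_R(\Gamma_{\fa}(T))\cap\supp_R(L)\subseteq V(\fa)$ holds automatically since $\Gamma_{\fa}(T)$ is $\fa$-torsion, and the disjointness of $U'=\{1\}$ from maximal ideals is vacuous. The first isomorphism in that theorem then reads
$$\Ext[\comp R^{\fa}]{i}{\Gamma_{\fa}(\Gamma_{\fa}(T))}{\comp R^{\fa}\otimes_R L}\cong\Ext{i}{\Gamma_{\fa}(T)}{L},$$
and the identity $\Gamma_{\fa}(\Gamma_{\fa}(T))=\Gamma_{\fa}(T)$ yields the claimed isomorphism, with $\Comp Ra$-linearity inherited directly from the theorem.

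For the first isomorphism, I apply Theorem~\ref{xlem100312az} with the torsion module $U^{-1}T$, the zero ideal (so that $V(0)=\spec(R)$ trivially contains every finite subset), and the given multiplicative set $U$. The point requiring verification is the disjointness hypothesis $U\cap\m=\emptyset$ for every $\m\in\supp_R(U^{-1}T)\cap\supp_R(L)$, which holds because Lemma~\ref{wxlem110113a} computes $\supp_R(U^{-1}T)=\{\m\in\supp_R(T)\mid\m\cap U=\emptyset\}$. The second isomorphism in the theorem then delivers
$$\Ext{i}{U^{-1}T}{L}\cong\Ext[U^{-1}R]{i}{U^{-1}(U^{-1}T)}{U^{-1}L},$$
which collapses to the desired form via the standard identification $U^{-1}(U^{-1}T)\cong U^{-1}T$; the $U^{-1}R$-linearity is immediate from the theorem.

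There is no serious obstacle: Theorem~\ref{xlem100312az} is flexible enough to be re-applied to the torsion modules built out of $T$, and the remaining bookkeeping reduces to the elementary identities $\Gamma_{\fa}\circ\Gamma_{\fa}=\Gamma_{\fa}$ and $U^{-1}\circ U^{-1}=U^{-1}$ at the module level. The only step requiring a moment of care is the recognition that $U^{-1}T$ inherits $\fc$-torsion from $T$, which is precisely the content of Lemma~\ref{wxlem110113a}.
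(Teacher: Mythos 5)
Your proof is correct and follows the same route as the paper's: apply Theorem~\ref{xlem100312az} to the $\fc$-torsion modules $\Gamma_{\fa}(T)$ and $U^{-1}T$, then collapse via the idempotency of $\Gamma_{\fa}$ and of localization. The hypothesis checks are fine, and the $\Comp Ra$-linearity of the second isomorphism is indeed inherited from the theorem. The one imprecision is your closing claim that the $U^{-1}R$-linearity of the first isomorphism is ``immediate from the theorem'': Theorem~\ref{xlem100312az} asserts $\Comp Ra$-linearity only of its \emph{first} isomorphism, and produces merely an $R$-module isomorphism for its second. The upgrade to $U^{-1}R$-linearity requires the additional observation, supplied by Lemma~\ref{lem120719b}, that an $R$-linear map between $U^{-1}R$-modules is automatically $U^{-1}R$-linear, together with the fact that $\Ext{i}{U^{-1}T}{L}$ carries a $U^{-1}R$-module structure through the $U^{-1}R$-action on $U^{-1}T$ in the first variable. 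The paper's proof cites Lemma~\ref{lem120719b} for exactly this step; with that citation your argument is complete.
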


\begin{proof}
In the next sequence, the first isomorphism is from Theoerem~\ref{xlem100312az}:
\begin{align*}
\Ext{i}{U^{-1}T}{L}
&\cong\Ext[U^{-1}R]{i}{U^{-1}(U^{-1}T)}{U^{-1}L}\cong\Ext[U^{-1}R]{i}{U^{-1}T}{U^{-1}L}.
\end{align*}
This uses the fact that $U^{-1}T$ is  $\fc$-torsion over $R$
with the equality $\supp_R(U^{-1}T)=\{\m\in\supp_R(T)\mid \m\cap U=\emptyset\}$ from 
Lemmas~\ref{lem120727a} 
and~\ref{wxlem110113a}.
These isomorphisms are $U^{-1}R$-linear by Lemma~\ref{lem120719b}.

Similarly, we have the next $\Comp Ra$-module isomorphisms
by Theoerem~\ref{xlem100312az}
\begin{align*}
\Ext{i}{\Gamma_{\fa}(T)}{L}
&\cong\Ext[\Comp Ra]{i}{\Gamma_{\fa}(\Gamma_{\fa}(T))}{\Otimes{\Comp Ra}L}\cong\Ext[\Comp Ra]{i}{\Gamma_{\fa}(T)}{\Otimes{\Comp Ra}L}
\end{align*}
using the torsionness of $\Gamma_{\fa}(T)$ and the equality $\supp_R(\Gamma_{\fa}(T))=\supp_R(T)\cap V(\fa)$ from 
Lemmas~\ref{lem120727a} 
and~\ref{lem110617b}.
\end{proof}

Our next result compares to~\cite[Theorem 4.3]{kubik:hamm1}.

\begin{thm} \label{thm110519a}
Assume that $R$ is noetherian, and let $\fc$ be an intersection of finitely many maximal
ideals of $R$.
Let $T$ be a $\fc$-torsion $R$-module, and let $M$ be a mini-max $R$-module. Let $\mathcal{F}$ be a finite subset of $\mspec(R)$ containing $\supp_R(T)\cap\supp_R(M)$, and set $\fb=\cap_{\m\in \mathcal{F}}\m$. Then for all $i$ there are $\Comp Rb$-module isomorphisms
\begin{align*}
\textstyle\Ext{i}{T}{M}
&\cong\Ext[\comp R^{\fb}]{i}{\Hom{M}{E_{\Comp Rb}}}{\md{\Ga bT}}\\
&\textstyle\cong\coprod_{\mathfrak{m}\in \mathcal{F}}
\Ext[\comp R^{\m}]{i}{\Hom{M}{\E_{R}(R/\m)}}{\md{\Ga mT}}.
\end{align*}
\end{thm}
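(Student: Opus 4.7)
The plan is to reduce to the complete local case via Theorem~\ref{xlem100312a}, apply the local analog of our statement from~\cite[Theorem~4.3]{kubik:hamm1} summand by summand, and then reassemble over the product ring $\Comp Rb\cong\prod_{\m\in\mcf}\comp R^{\m}$. This product decomposition is available because the ideals $\m\in\mcf$ are pairwise comaximal, so $R/\fb^n\cong\prod_{\m\in\mcf}R/\m^n$ for every $n\geq 1$, and passing to the inverse limit gives the claim (compare Fact~\ref{fact120719a}).

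For the second isomorphism, I would first invoke Theorem~\ref{xlem100312a} with $\mcg=\mcf$ and $\fa=\fb$ to get the $\Comp Rb$-linear isomorphism
$$\textstyle\Ext{i}{T}{M}\cong\coprod_{\m\in\mcf}\Ext[\comp R^{\m}]{i}{\Ga mT}{\comp R^{\m}\otimes_RM};$$
its hypotheses hold because $T$ is $\fc$-torsion, so $\supp_R(T)$ is a finite subset of $\mspec(R)$ by Lemmas~\ref{lem120727a} and~\ref{lem110517a}\eqref{item120718d}, and $\mcf\supseteq\supp_R(T)\cap\supp_R(M)$. Now fix $\m\in\mcf$. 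By Lemma~\ref{xlem101202a}, $\comp R^{\m}\otimes_RM$ is mini-max over the complete local ring $\comp R^{\m}$, while $\Ga mT$ is $\m\comp R^{\m}$-torsion, so the local result~\cite[Theorem~4.3]{kubik:hamm1} yields
$$\Ext[\comp R^{\m}]{i}{\Ga mT}{\comp R^{\m}\otimes_RM}\cong\Ext[\comp R^{\m}]{i}{\MD{\comp R^{\m}\otimes_RM}{\comp R^{\m}}}{\MD{\Ga mT}{\comp R^{\m}}}.$$
Using Lemma~\ref{xlem101021a} to identify $\E_{\comp R^{\m}}(\comp R^{\m}/\m\comp R^{\m})\cong\E_R(R/\m)$, Hom--tensor adjointness then gives $\MD{\comp R^{\m}\otimes_RM}{\comp R^{\m}}\cong\Hom{M}{\E_R(R/\m)}$, and Lemma~\ref{lem110413b} gives $\MD{\Ga mT}{\comp R^{\m}}\cong\md{\Ga mT}$. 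Coproducting over $\mcf$ produces the second isomorphism.

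For the first isomorphism, I would work over $\Comp Rb\cong\prod_{\m\in\mcf}\comp R^{\m}$. Lemma~\ref{xlem101021a} supplies $E_{\Comp Rb}\cong\coprod_{\m\in\mcf}\E_R(R/\m)$; because $\mcf$ is finite, this coproduct equals a product, and $\Hom{M}{-}$ commutes with finite products, yielding $\Hom{M}{E_{\Comp Rb}}\cong\coprod_{\m\in\mcf}\Hom{M}{\E_R(R/\m)}$ as $\Comp Rb$-modules. Parallel reasoning with Lemmas~\ref{lem120727a}, \ref{lem110517a}\eqref{item120718e}, and~\ref{lem110413b} gives $\Ga bT\cong\coprod_{\m\in\mcf}\Ga mT$ and $\md{\Ga bT}\cong\coprod_{\m\in\mcf}\md{\Ga mT}$. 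Each summand $\Hom{M}{\E_R(R/\m)}$ (resp.\ $\md{\Ga mT}$) is a $\comp R^{\m}$-module, hence annihilated by the idempotents of $\Comp Rb$ corresponding to the other factors. Since $\comp R^{\m}$ is simultaneously a quotient and a direct summand of $\Comp Rb$, every $\comp R^{\m}$-projective resolution is $\Comp Rb$-projective; this orthogonality kills the cross-terms in
$$\Ext[\Comp Rb]{i}{\textstyle\coprod_{\m\in\mcf}\Hom{M}{\E_R(R/\m)}}{\textstyle\coprod_{\m\in\mcf}\md{\Ga mT}},$$
and the surviving diagonal terms collapse to $\Ext[\comp R^{\m}]{i}{\Hom{M}{\E_R(R/\m)}}{\md{\Ga mT}}$, which assemble into the second isomorphism already established.

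The main obstacle is the bookkeeping of $\Comp Rb$-linearity: one must trace through the Hom--tensor adjointness, the injective-hull identification of Lemma~\ref{xlem101021a}, and the local duality from~\cite[Theorem~4.3]{kubik:hamm1} to confirm compatibility with the product decomposition of $\Comp Rb$. Once that linearity is secured, the orthogonality of modules concentrated on distinct factors of $\Comp Rb$ — together with the projectivity of each $\comp R^{\m}$ as a summand — handles the decomposition of Ext cleanly, and the remaining steps are routine.
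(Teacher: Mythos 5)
Your proof is correct, but it takes a genuinely different route from the paper's. The paper's proof works \emph{directly} over the semi-local complete ring $\Comp Rb$: it applies Theorem~\ref{xlem100312az} to get the single isomorphism $\Ext{i}{T}{M}\cong\Ext[\comp R^{\fb}]{i}{\Gamma_{\fb}(T)}{\comp R^{\fb}\otimes_{R}M}$, observes via Lemma~\ref{xlem101202a} and Fact~\ref{xpara1} that $\comp R^{\fb}\otimes_RM$ is Matlis reflexive over $\comp R^{\fb}$, inserts the biduality isomorphism $\comp R^{\fb}\otimes_RM\cong(\comp R^{\fb}\otimes_RM)^{\vee(\Comp Rb)\vee(\Comp Rb)}$, and then applies the standard duality/adjointness chain (Remark~\ref{xdisc100602b}) to swap the Matlis dual across the $\Ext$; the coproduct form follows by the same argument starting from Theorem~\ref{xlem100312a} instead. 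In contrast, you begin with the coproduct decomposition of Theorem~\ref{xlem100312a}, invoke the complete-local Theorem~4.3 of \cite{kubik:hamm1} summand by summand (which the paper does \emph{not} cite in its proof — it essentially re-derives that result semi-locally), and then reassemble the first displayed isomorphism by observing that $\Comp Rb\cong\prod_{\m\in\mcf}\comp R^{\m}$ and that Ext over a finite product ring splits orthogonally along the idempotents. Both routes are valid. The paper's version avoids your final "orthogonality over a product ring" bookkeeping by never leaving the semi-local setting; yours is more modular in that it explicitly reduces the semi-local statement to the already-available local one, at the cost of an extra reassembly step. One small point worth making explicit in your write-up: when you invoke \cite[Theorem~4.3]{kubik:hamm1} you are implicitly using that $\comp R^{\m}\otimes_RM$ is mini-max (hence Matlis reflexive) over the complete local ring $\comp R^{\m}$ — you cite Lemma~\ref{xlem101202a}, but it is worth also pointing to Fact~\ref{xpara1} to justify reflexivity, since that is what the local theorem actually requires.
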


\begin{proof} 
Lemma~\ref{lem110413b} provides an $\Comp Rb$-module isomorphism 
$(\Gamma_{\fb}(T))^{\vee(\comp R^{\fb})}\cong\md{\Gamma_{\fb}(T)}$. 
Lemma~\ref{xlem101202a} implies that $\comp R^{\fb}\otimes_R M$ is  mini-max
over $\comp R^{\fb}$.  Since $\Comp Rb$ is semi-local and complete, Fact~\ref{xpara1} shows that  $\comp R^{\fb}\otimes_R M$ is  Matlis reflexive 
over $\comp R^{\fb}$. 

Theorem~\ref{xlem100312az} provides the first 
$\Comp Rb$-module isomorphism in the next sequence:
\begin{align*}
\Ext{i}{T}{M}
&\cong\Ext[\comp R^{\fb}]{i}{\Gamma_{\fb}(T)}{\comp R^{\fb}\otimes_{R}M}\\
&\cong\Ext[\comp R^{\fb}]{i}{\Gamma_{\fb}(T)}{(\comp R^{\fb}\otimes_{R}M)^{\vee(\Comp Rb)\vee(\Comp Rb)}}\\
&\cong\Ext[\comp R^{\fb}]{i}{(\comp R^{\fb}\otimes_{R}M)^{\vee(\Comp Rb)}}{\Gamma_{\fb}(T)^{\vee(\Comp Rb)}}\\
&\cong\Ext[\comp R^{\fb}]{i}{\Hom{M}{E_{\Comp Rb}}}{\Gamma_{\fb}(T)^{\vee}}.
\end{align*}
The fact that $\comp R^{\fb}\otimes_RM$ is  Matlis reflexive over $\comp R^{\fb}$ explains the second isomorphism.
The third and fourth isomorphisms are from  adjointness.
This explains the first isomorphism in the statement of the theorem.
To verify the second  isomorphism in the statement of the theorem, argue similarly, using the isomorphism
$$\textstyle\Ext{i}{T}{M}
\textstyle
\cong\coprod_{\mathfrak{m}\in \mathcal{F}}\Ext[\comp R^{\m}]{i}{\Gamma_\m(T)}{\comp R^{\m}\otimes_{R}M}$$
from 
Theorem~\ref{xlem100312a}.
\end{proof}

\begin{disc}
The previous result shows that if $R$ is noetherian, $A$ is artinian, and $M$ is mini-max, 
then $\Ext{i}{A}{M}$ can be computed as an extension module over a complete semi-local ring with a Matlis reflexive module in the first component and a noetherian module in the second component.  Alternatively,  it can be computed as a finite coproduct of extension modules over complete local rings with Matlis reflexive modules in the first component and noetherian modules in the second component.
Specifically: 

(a) The $\comp R^{\fb}$-module $\Hom{M}{E_{\Comp Rb}}\cong (\comp R^{\fb}\otimes_RM)^{\vee(\comp R^{\fb})}$ is Matlis reflexive.
Indeed, the proof of Theorem~\ref{thm110519a} shows that  $\comp R^{\fb}\otimes_R M$ is  Matlis reflexive 
over $\comp R^{\fb}$, hence,
so is $(\comp R^{\fb}\otimes_RM)^{\vee(\comp R^{\fb})}\cong\Hom{M}{E_{\Comp Rb}}$; the isomorphism is from Hom-tensor adjointness.

(b) The $\Comp Rb$-module $\Ga bA$ is  artinian by Fact~\ref{fact120720a},
hence Matlis reflexive by Fact~\ref{xpara1} since $\Comp Rb$ is semi-local and complete.

(c) The $\comp R^{\fb}$-module
$\md{\Ga bA}\cong (\Ga bA)^{\vee(\comp R^{\fb})}$ is  noetherian (hence Matlis reflexive).
Indeed, as $\Comp Rb$ is semi-local and complete and $\Gamma_{\fb}(A)$ is artinian over $\Comp Rb$ by 
Fact~\ref{fact120720a}, the fact that $(\Gamma_{\fb}(A))^{\vee(\comp R^{\fb})}\cong\md{\Gamma_{\fb}(A)}$ 
is noetherian over $\Comp Rb$
follows from~\cite[Theorem 1.6(3)]{ooishi:mdwm}; see Lemma~\ref{lem110413b}.

Similarly, 
$\Hom{M}{\E_{R}(R/\m)}\cong (\comp R^{\fm}\otimes_RM)^{\vee(\comp R^{\fm})}$ is a Matlis reflexive $\comp R^{\fm}$-module,
$\Ga mA$ is an artinian (hence Matlis reflexive) $\Comp Rm$-module,
and $\md{\Ga mA}\cong (\Ga mA)^{\vee(\comp R^{\fm})}$ is a noetherian (hence Matlis reflexive) $\comp R^{\fm}$-module.

The following result shows, e.g., that extension functors applied to two artinian modules over arbitrary  noetherian rings can be computed as a finite 
coproduct of extension functors applied to pairs of noetherian modules over complete local rings. 
\end{disc}

\begin{cor} \label{xcor:extSwap}
Assume that $R$ is noetherian, and let $\fc$ be an intersection of finitely many
maximal ideals of $R$.
Let $T$ be a $\fc$-torsion $R$-module, and let $A$ be an artinian $R$-module.
Let $\mathcal{F}$ be a finite subset of $\mspec(R)$ containing 
$\supp_R(T)\cap\supp_R(A)$.
Setting $\fb=\cap_{\m\in \mathcal{F}}\m$, we have $\Comp Rb$-module isomorphisms
\begin{align*}
\textstyle\Ext{i}{T}{A}
&\cong\Ext[\comp R^{\fb}]{i}{{\Ga b{A}}^{\vee}}{{\Ga b{T}}^{\vee}}
\cong\coprod_{\mathfrak{m}\in \mathcal{F}}
\Ext[\comp R^{\m}]{i}{{\Ga m{A}}^{\vee}}{{\Ga m{T}}^{\vee}}\\
&\!\!\!\!\!\!\!\!\!\!\!\!\!\!\!\!\!\!\!\!\cong\Ext[\comp R^{\fb}]{i}{{\Ga b{A}}^{\vee(\Comp Rb)}}{{\Ga b{T}}^{\vee(\Comp Rb)}}
\cong\coprod_{\mathfrak{m}\in \mathcal{F}}
\Ext[\comp R^{\m}]{i}{{\Ga m{A}}^{\vee(\Comp Rm)}}{{\Ga m{T}}^{\vee(\Comp Rm)}}.
\end{align*} 
\end{cor}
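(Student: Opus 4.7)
The plan is to apply Theorem~\ref{thm110519a} with $M = A$ (valid since every artinian module is mini-max) and then rewrite the first argument of the resulting Ext groups. That theorem immediately yields $\Comp Rb$-module isomorphisms
\begin{align*}
\Ext{i}{T}{A} &\cong \Ext[\Comp Rb]{i}{\Hom{A}{E_{\Comp Rb}}}{(\Ga b T)^\vee}, \\
\Ext{i}{T}{A} &\cong \coprod_{\m\in\mcf}\Ext[\Comp Rm]{i}{\Hom{A}{\E_R(R/\m)}}{(\Ga m T)^\vee},
\end{align*}
so the corollary reduces to identifying $\Hom{A}{E_{\Comp Rb}} \cong (\Ga b A)^\vee$ and $\Hom{A}{\E_R(R/\m)} \cong (\Ga m A)^\vee$. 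Once this is done, Lemma~\ref{lem110413b} applied to the $\fb$-torsion (respectively $\m$-torsion) modules $\Ga b A$, $\Ga b T$ (respectively $\Ga m A$, $\Ga m T$) converts each instance of $(-)^\vee$ to $(-)^{\vee(\Comp Rb)}$ (respectively $(-)^{\vee(\Comp Rm)}$) via $\Comp Rb$-linear (respectively $\Comp Rm$-linear) isomorphisms, giving the four displayed isomorphisms.

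For the main identification $\Hom{A}{E_{\Comp Rb}} \cong (\Ga b A)^\vee$, I would first invoke Lemma~\ref{xlem101021a} to rewrite $E_{\Comp Rb} \cong \Ga b{E_R} \cong \coprod_{\m\in\mcf}\E_R(R/\m)$, which is $R$-injective as a finite coproduct of injectives. Applying $\Hom{-}{\Ga b{E_R}}$ to the short exact sequence $0 \to \Ga b A \to A \to A/\Ga b A \to 0$ then yields an exact sequence in which the crux is the vanishing $\Hom{A/\Ga b A}{\Ga b{E_R}} = 0$. To see this, use Sharp's decomposition (Fact~\ref{fact120720a}) to write $A$ as an internal direct sum of $\m_j$-torsion submodules $\Ga{\m_j}{A}$ over a finite set of maximal ideals; since $V(\fb)\cap\mspec(R) = \mcf$, the submodule $\Ga b A$ picks up exactly the summands with $\m_j\in\mcf$, so $A/\Ga b A$ is a finite direct sum of $\m_j$-torsion modules with $\m_j\notin\mcf$. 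For such $\m_j$, comaximality $\m_j + \fb = R$ gives $\m_j^k + \fb^n = R$ for all $k,n\geq 1$, so any element simultaneously $\m_j$-torsion and $\fb$-torsion must vanish; hence $\Hom{\Ga{\m_j}{A}}{\Ga b{E_R}} = 0$. Combining this vanishing with Lemma~\ref{disc120720a}\eqref{xlem100206c2}, which gives $\Hom{\Ga b A}{\Ga b{E_R}} \cong \Hom{\Ga b A}{E_R} = (\Ga b A)^\vee$, yields the desired identification.

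The coproduct version $\Hom{A}{\E_R(R/\m)} \cong (\Ga m A)^\vee$ follows from the single-prime analogue of the same argument, since $\E_R(R/\m) = \Ga m{E_R}$ is $\m$-torsion and $R$-injective, and only the $\m$-torsion summand of $A$ admits nonzero maps into it. The main technical obstacle is the vanishing $\Hom{A/\Ga b A}{\Ga b{E_R}} = 0$, whose proof rests on the interplay between Sharp's decomposition of artinian modules and the comaximality of $\m_j$ with $\fb$ whenever $\m_j\notin\mcf$; everything else is a direct application of Theorem~\ref{thm110519a} and the dualization lemmas already established in Section~\ref{sec120713a}.
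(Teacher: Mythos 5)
Your proof is correct and shares the paper's overall strategy of applying Theorem~\ref{thm110519a} with $M = A$; where you diverge is in the identification $\Hom{A}{E_{\Comp Rb}}\cong(\Ga b A)^\vee$. The paper gets this in one line by Hom-tensor adjointness, $\Hom{A}{E_{\Comp Rb}}\cong\Hom[\Comp Rb]{\Comp Rb\otimes_R A}{E_{\Comp Rb}}\cong(\Ga b A)^{\vee(\Comp Rb)}\cong(\Ga b A)^\vee$, using $\Comp Rb\otimes_RA\cong\Ga b A$ from Fact~\ref{fact120720a} (flatness of the completion plus Lemma~\ref{lem120719a}\eqref{lem120719a1'}) and then Lemma~\ref{lem110413b}. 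You instead write $E_{\Comp Rb}\cong\Ga b{E_R}$ via Lemma~\ref{xlem101021a}, apply the exact contravariant functor $\Hom{-}{\Ga b{E_R}}$ to $0\to\Ga bA\to A\to A/\Ga bA\to 0$, and prove the vanishing $\Hom{A/\Ga bA}{\Ga b{E_R}}=0$ by decomposing $A$ via Sharp and invoking the comaximality of $\m_j$ with $\fb$ for $\m_j\notin\mcf$, then finish with Lemma~\ref{disc120720a}\eqref{xlem100206c2}. Both are valid; the paper's adjointness route is shorter and sidesteps the vanishing argument entirely by pushing the work onto the already-established base-change isomorphism $\Comp Rb\otimes_RA\cong\Ga bA$, whereas your exact-sequence route is more elementary and self-contained, making the role of Sharp's decomposition and the comaximality of ideals outside $\mcf$ explicit, at the cost of a bit more bookkeeping.
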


\begin{proof}
The first isomorphism in the next sequence is from adjointness and is $\Comp Rb$-linear by general principles:
$$\Hom{A}{E_{\Comp Rb}}\cong (\comp R^{\fb}\otimes_RA)^{\vee(\comp R^{\fb})}\cong \Gamma_{\fb}(A)^{\vee(\comp R^{\fb})}
\cong \Gamma_{\fb}(A)^{\vee}.$$
The second isomorphism is from Fact~\ref{fact120720a},
and the third one is from Lemma~\ref{lem110413b}.
This explains the second  isomorphism in the next sequence:
\begin{align*}
\textstyle\Ext{i}{T}{A}
&\cong\Ext[\comp R^{\fb}]{i}{\Hom{A}{E_{\Comp Rb}}}{\Ga b{T}^{\vee}}
\cong\Ext[\comp R^{\fb}]{i}{{\Ga b{A}}^{\vee}}{{\Ga b{T}}^{\vee}}.
\end{align*} 
The first step is from Theorem~\ref{thm110519a}.
The other isomorphisms from the statement of the corollary are verified similarly.
\end{proof}

\section{Properties of $\Ext{i}{M}{-}$ and $\Tor{i}{M}{-}$}\label{xsec2}

This section and the next one contain the proof of Theorem~\ref{intthm120807a}.

\subsection*{Ext}
This subsection contains non-local versions of results from~\cite[Section 2]{kubik:hamm1}.

\begin{thm}\label{xthm100308a}
Assume that $R$ is noetherian.
Let $A$ and $B$ be $R$-modules such that $A$ is artinian. Let $\mathcal{F}$ be a finite subset of $\mspec(R)$ containing $\supp_R(A)\cap\supp_R(B)$. 
Let $\fa=\cap_{\m\in \mathcal{F}}\m$, and assume that $i\geq 0$ 
is such that $\mu^i_R(\m,B)$ is finite for all $\m\in\supp_R(A)\cap\supp_R(B)$. Then
$\Ext{i}{A}{B}$ is a noetherian $\comp R^{\fa}$-module. 
\end{thm}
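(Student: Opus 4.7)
The plan is to decompose $\Ext{i}{A}{B}$ as a finite coproduct via Theorem~\ref{xlem100312a} and then invoke the corresponding local result from~\cite{kubik:hamm1} summand by summand. Since $A$ is artinian, Fact~\ref{fact120720a} shows that $A$ is $\fc$-torsion for $\fc=\cap_{\m\in\supp_R(A)}\m$, and by hypothesis $\mathcal F$ is a finite set of maximal ideals containing $\supp_R(A)\cap\supp_R(B)$. Applying Theorem~\ref{xlem100312a} with $T=A$, $L=B$, and $\mcg=\mathcal F$ therefore yields a $\comp R^{\fa}$-linear isomorphism
$$\Ext{i}{A}{B}\cong\coprod_{\m\in\mathcal F}\Ext[\comp R^{\m}]{i}{\Gamma_{\m}(A)}{\comp R^{\m}\otimes_R B}.$$

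Next I would show that only the summands indexed by $\m\in\supp_R(A)\cap\supp_R(B)$ contribute. For $\m\in\mathcal F$ with $\m\notin\supp_R(A)$, Lemma~\ref{lem110517a}\eqref{item120718c} gives $\Gamma_{\m}(A)\cong A_{\m}=0$, while if $\m\notin\supp_R(B)$ then $\comp R^{\m}\otimes_R B\cong\comp R^{\m}\otimes_{R_{\m}}B_{\m}=0$; either way the corresponding summand vanishes.

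For each $\m\in\supp_R(A)\cap\supp_R(B)$ the ring $\comp R^{\m}$ is complete, local, and noetherian, and Fact~\ref{fact120720a} together with Lemma~\ref{lem110413a} shows that $\Gamma_{\m}(A)$ is an artinian $\comp R^{\m}$-module. The faithfully flat local map $R_{\m}\to\comp R^{\m}$ has residue field as closed fibre, so the standard flat base change isomorphism $\Ext[\comp R^{\m}]{j}{\kappa(\m)}{\comp R^{\m}\otimes_{R_\m}B_\m}\cong\kappa(\m)\otimes_{R_\m}\Ext[R_\m]{j}{\kappa(\m)}{B_\m}$ yields $\mu^{i}_{\comp R^{\m}}(\comp R^{\m}\otimes_R B)=\mu^{i}_R(\m,B)$, which is finite by hypothesis. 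The local version of the present theorem, which is what is proved in~\cite{kubik:hamm1}, then applies and shows that $\Ext[\comp R^{\m}]{i}{\Gamma_{\m}(A)}{\comp R^{\m}\otimes_R B}$ is a noetherian $\comp R^{\m}$-module.

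Finally, Fact~\ref{fact120719a} together with the finiteness of $\mathcal F$ provides the ring decomposition $\comp R^{\fa}\cong\prod_{\m\in\mathcal F}\comp R^{\m}$, and the coproduct above is then a finite product of modules each noetherian over the corresponding factor; this makes it noetherian over $\comp R^{\fa}$. The main obstacle I anticipate is cleanly justifying that finiteness of the $i$th Bass number passes from $B$ over $R$ to $\comp R^{\m}\otimes_R B$ over $\comp R^{\m}$, so that the reduction to the complete local setting is rigorous; however, this is a standard consequence of the flat base change formula for $\Ext$ at the residue field, combined with the localization identity $\mu^i_R(\m,B)=\mu^i_{R_\m}(B_\m)$ recorded in Remark~\ref{xdisc101110a}.
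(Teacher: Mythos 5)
Your proposal is correct and follows essentially the same route as the paper: decompose $\Ext iAB$ via Theorem~\ref{xlem100312a}, observe that the summands indexed by $\m\in\mcf\ssm(\supp_R(A)\cap\supp_R(B))$ vanish, and apply \cite[Theorem 2.2]{kubik:hamm1} to each remaining summand after checking that $\Ga mA$ is artinian over $\Comp Rm$ and $\mu^i_{\Comp Rm}(\Otimes{\Comp Rm}B)=\mu^i_R(\m,B)$ is finite. The paper's write-up is slightly terser (it defers the vanishing claim to the proof of Theorem~\ref{xlem100312a} and calls the Bass-number identification a ``straightforward computation''), but the substance is identical.
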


\begin{proof}
Theorem~\ref{xlem100312a}
provides an $\Comp Ra$-module isomorphism
\begin{align*}
\textstyle\Ext{i}{A}{M}
& \textstyle
\cong\coprod_{\mathfrak{m}\in \mcf}\Ext[\comp R^{\m}]{i}{\Gamma_\m(A)}{\comp R^{\m}\otimes_{R}M}.
\end{align*}
The proof of Theorem~\ref{xlem100312a}
also shows that $\Ext[\comp R^{\m}]{i}{\Gamma_\m(A)}{\comp R^{\m}\otimes_{R}M}=0$ for all
$\m\in\mcf\ssm(\supp_R(A)\cap\supp_R(B))$.

Since the set $\mcf$ is finite, it suffices to show that 
$\Ext[\comp R^{\m}]{i}{\Gamma_\m(A)}{\comp R^{\m}\otimes_{R}M}$
is noetherian over $\Comp Rm$ for each $\m\in\mcf$.
(See the discussion of the $\Comp Ra$-module structure in the proof of
Theorem~\ref{xlem100312a}.)
From the previous paragraph, it suffices to consider 
$\m\in\supp_R(A)\cap\supp_R(B)$.
To this end, we invoke~\cite[Theorem 2.2]{kubik:hamm1}.
To apply this result, note that
Fact~\ref{fact120720a} implies that $\Ga mA$ is artinian over $\Comp Rm$,
and a straightforward computation shows that
$\mu^i_{\Comp Rm}(\Otimes{\Comp Rm}{B})=\mu^i_R(\m,B)$, which is finite.
\end{proof}

\begin{para}[Proof of Theorem~\ref{intthm120807a}\eqref{intthm120807a1}]
\label{para120807a}
Combine
Lemma~\ref{xfact100317b}
and Theorem~\ref{xthm100308a}. \qed
\end{para}

Given that so many of our previous results are for torsion modules
(not just for artinian ones) we include the following example to show that
torsionness is not enough, even in the local case.
Similar examples show the need for finiteness conditions in other similar results.

\begin{ex}\label{ex120728a}
Let $k$ be a field, and let $k^{(\mu)}$ be a $k$-vector space of infinite rank $\mu$.
Then $k^{(\mu)}$ is $\m$-torsion where $\m=0$ is the maximal ideal of $k$.
However, the module
$\Hom[k]{k^{(\mu)}}{k}\cong k^{\mu}$
is not noetherian (or artinian or mini-max) over $\comp k=k$.
\end{ex}

\begin{thm} \label{xthm:ExtMR}
Assume that $R$ is noetherian. 
Let $M$ and $M'$ be mini-max $R$-modules, and let $i\geq 0$. 
\begin{enumerate}[\rm(a)]
\item \label{xthm:ExtMR1}
If the quotient ring $R/(\ann_R{(M)}+\ann_R(M'))$ is semi-local and complete, then $\Ext{i}{M}{M'}$ is  a Matlis reflexive $R$-module.
\item \label{xthm:ExtMR2} If $R/(\ann_R{(M)}+\ann_R(M'))$ is artinian, then $\Ext{i}{M}{M'}$ has finite length.
\end{enumerate}
\end{thm}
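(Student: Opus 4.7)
The plan is to deduce both (a) and (b) from the single intermediate claim that $\Ext{i}{M}{M'}$ is mini-max over $R$. Setting $\fc=\ann_R(M)+\ann_R(M')$, I observe that $\fc$ annihilates $\Ext{i}{M}{M'}$, so $R/\ann_R(\Ext{i}{M}{M'})$ is a quotient of $R/\fc$ and thus inherits being semi-local and complete. Fact~\ref{xpara1} will then yield (a), and in the setting of (b) the same quotient is instead a quotient of an artinian ring, so Lemma~\ref{xlem100213b} delivers finite length.

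For the mini-max claim, I fix short exact sequences $0\to N\to M\to A\to 0$ and $0\to N'\to M'\to A'\to 0$ with $N,N'$ noetherian and $A,A'$ artinian. Two applications of Fact~\ref{xlem100614a}(a) and~(b) to the associated long exact Ext-sequences, combined with the closure of the mini-max class under submodules, quotients, and extensions (Fact~\ref{xlem:extensions}), reduce the problem to showing that each of $\Ext{i}{N}{N'}$, $\Ext{i}{N}{A'}$, $\Ext{i}{A}{N'}$, and $\Ext{i}{A}{A'}$ is mini-max. The first two are immediate from Fact~\ref{xlem2of3}(c) with $\catc$ taken to be the class of noetherian (respectively, artinian) $R$-modules.

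For $\Ext{i}{A}{N'}$, Lemma~\ref{xfact100317b} supplies finiteness of the Bass numbers of $N'$, so Theorem~\ref{xthm100308a} shows that $\Ext{i}{A}{N'}$ is noetherian, hence mini-max, over $\comp R^{\fb}$ where $\fb=\cap_{\m\in\mcf}\m$ and $\mcf=\supp_R(A)\cap\supp_R(N')$ (a finite set of maximal ideals by Lemma~\ref{zlem100207a}). Since $\ann_R(M)\subseteq\ann_R(A)$ and $\ann_R(M')\subseteq\ann_R(N')$, the ring $R/\ann_R(\Ext{i}{A}{N'})$ is a further quotient of $R/\fc$, hence semi-local and complete, and the containment $\supp_R(\Ext{i}{A}{N'})\cap\mspec(R)\subseteq\mcf$ yields $\fb\subseteq\cap_{\m\in\mspec(R)\cap\supp_R(\Ext{i}{A}{N'})}\m$, so Lemma~\ref{xlem100409a'} transfers the mini-max property from $\comp R^{\fb}$ back to $R$. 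The case $\Ext{i}{A}{A'}$ is entirely analogous, using Corollary~\ref{xcor:extSwap} in place of Theorem~\ref{xthm100308a}: each summand $\Ext[\comp R^{\m}]{i}{\md{\Gamma_\m(A')}}{\md{\Gamma_\m(A)}}$ is the Ext of two modules noetherian over $\comp R^{\m}$ (being Matlis duals of artinian modules over complete local rings), hence noetherian; the finite coproduct is noetherian over $\comp R^{\fb}$; and Lemma~\ref{xlem100409a'} again applies.

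The main obstacle is the two cases with $A$ in the first slot, where the Ext-module is in general not $\fb$-torsion (as one sees already when $R$ is complete local of positive dimension with $A=\E_R(R/\m)$ and $N'=R$), so Lemma~\ref{lem110413a} is unavailable. The whole point of the semi-local complete hypothesis on $R/\fc$ is to activate Lemma~\ref{xlem100409a'}, which supplies the needed transfer of the mini-max property from $\comp R^{\fb}$ back to $R$ without any torsion assumption on the Ext-module itself.
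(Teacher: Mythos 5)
Your overall strategy (prove mini-max first, then feed the annihilator of the Ext-module into Fact~\ref{xpara1} or Lemma~\ref{xlem100213b}) differs in shape from the paper's, which decomposes only $M$ and invokes Theorem~\ref{intthm120807a}\eqref{intthm120807a1} directly on the artinian-versus-mini-max case $\Ext{i}{M/N}{M'}$. Your double decomposition into four cases is perfectly workable in principle, and the reduction via Fact~\ref{xlem100614a}\eqref{xlem100614a1},\eqref{xlem100614a2} and closure properties is sound.

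However, there is a genuine gap in the two cases with $A$ in the first slot. After obtaining from Theorem~\ref{xthm100308a} that $\Ext{i}{A}{N'}$ is noetherian over $\comp R^{\fb}$ with $\fb=\cap_{\m\in\mcf}\m$ and $\mcf=\supp_R(A)\cap\supp_R(N')$, you invoke Lemma~\ref{xlem100409a'} under the hypothesis $\fb\subseteq\cap_{\m\in\mspec(R)\cap\supp_R(\Ext{i}{A}{N'})}\m$, which (since $\mcf$ consists of maximal ideals) is equivalent to $\mspec(R)\cap\supp_R(\Ext{i}{A}{N'})\subseteq\mcf$. You assert this containment without argument, and it is not obvious: the decomposition $\Ext{i}{A}{N'}\cong\coprod_{\m\in\mcf}\Ext[\comp R^\m]{i}{\Gamma_\m(A)}{\comp R^\m\otimes_RN'}$ has summands that are noetherian $\comp R^\m$-modules, and such modules can have free $\comp R^\m$-summands (for instance $\Ext[R_\m]{1}{\E_R(R/\m)}{R_\m}\cong\comp R^\m$ when $R_\m$ is a one-dimensional regular local ring), whose $R$-module support can spill over to other maximal ideals of $R$. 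Without the hypothesis on $R/\fc$, the containment is genuinely false --- over $R=\bbz$, $\Ext{1}{\bbz(p^\infty)}{\bbz}\cong\bbz_p$ has support at every maximal ideal.

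The containment does turn out to be true under the standing hypothesis, but establishing it takes work: one must observe that each summand $X_\m$ is annihilated by $\ann_R(A)+\ann_R(N')$, hence is a module over $\comp R^\m/(\ann_R(A)+\ann_R(N'))\comp R^\m$, which is the local factor of the complete semi-local ring $R/(\ann_R(A)+\ann_R(N'))$ corresponding to $\m$; being a quotient of $R$ with a unique maximal ideal, the latter has $R$-support meeting $\mspec(R)$ only in $\{\m\}$. The paper circumvents this entirely by \emph{enlarging} the finite set before applying Theorem~\ref{intthm120807a}\eqref{intthm120807a1}: it takes $\mcf:=(\supp_R(M/N)\cap\supp_R(M'))\cup(\mspec(R)\cap\supp_R(\Ext{i}{M/N}{M'}))$, which is permitted since the theorem allows any finite superset, and then the hypothesis of Lemma~\ref{xlem100409a'} holds by construction. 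You should either give the structural argument above or, more simply, enlarge $\mcf$ as the paper does; the same remark applies to your $\Ext{i}{A}{A'}$ case.
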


\begin{proof}
\eqref{xthm:ExtMR1} 
Fix a noetherian submodule $N\subseteq M$ such that $M/N$ is artinian.
The containments 
$$\ann_R(M)+\ann_R(M')\subseteq\ann_R(N)+\ann_R(M')\subseteq\ann_R(\Ext{i}{N}{M'})$$ 
provide an epimorphism:
$$R/(\ann_R(M)+\ann_R(M'))\onto R/\ann_R(\Ext{i}{N}{M'}).$$
Therefore, $R/\ann_R(\Ext{i}{N}{M'})$ is semi-local and complete. Thus, Facts~\ref{xpara1}, \ref{xlem:extensions} and~\ref{xlem2of3}\eqref{xlem100430a4}
imply that $\Ext{i}{N}{M'}$ is Matlis reflexive over $R$.

Since $M/N$ is artinian, the set 
$\supp_R(M/N)\cap\supp_R(M')\subseteq\supp_R(M/N)$ is finite.
As above, the ring
$R/\ann_R(\Ext{i}{M/N}{M'})$ is semi-local and complete,
so 
Lemma~\ref{xlem100409a1}\eqref{xlem100409a1a} implies that the set 
$\mspec(R)\cap\supp_R(\Ext{i}{M/N}{M'})$ is finite.
Thus, the union
$$\mcf:=(\supp_R(M/N)\cap\supp_R(M'))\cup(\mspec(R)\cap\supp_R(\Ext{i}{M/N}{M'}))$$
is finite.
Set $\fa:=\cap_{\m\in\mcf}\m$.
Theorem~\ref{intthm120807a}\eqref{intthm120807a1} 
implies that $\Ext{i}{M/N}{M'}$ is mini-max as an $\comp R^{\fa}$-module, so it is Matlis reflexive over $R$
by Lemma~\ref{xlem100409a'}.  
Thus, Fact~\ref{xlem100614a}\eqref{xlem100614a2} implies that $\Ext{i}{M}{M'}$ is also Matlis reflexive over $R$.

\eqref{xthm:ExtMR2}  
Lemma~\ref{xlem100213b} implies that $\Ext{i}{M}{M'}$ has finite length, because of~\eqref{xthm:ExtMR1}.
\end{proof}

\begin{para}[Proof of Ext-portion of Theorem~\ref{intthm120807a}\eqref{intthm120807a3}]
\label{para120807b}
Fact~\ref{xpara1} implies that $R/\ann_R(M')$ is semi-local and complete,
hence so is $R/(\ann_R{(M)}+\ann_R(M'))$.
Theorem~\ref{xthm:ExtMR}\eqref{xthm:ExtMR1}
implies that $\Ext{i}{M}{M'}$ and $\Ext{i}{M'}{M}$ are Matlis reflexive over $R$. 
\qed
\end{para}

\begin{cor} \label{xcor:ExtMR'}
Assume that $R$ is noetherian. 
Let $M$ be a mini-max  $R$-module, and let $M'$ be a finite length  $R$-module. 
Then $\Ext{i}{M}{M'}$ and 
$\Ext{i}{M'}{M}$ have finite length over $R$ for all $i\geq 0$.
\end{cor}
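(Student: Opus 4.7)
The plan is to reduce this corollary directly to Theorem~\ref{xthm:ExtMR}\eqref{xthm:ExtMR2} by verifying that its hypotheses are satisfied. First I note that the finite length module $M'$ is automatically mini-max (indeed, it is both noetherian and artinian), so both $M$ and $M'$ fit the framework of Theorem~\ref{xthm:ExtMR}.

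Next I would verify that $R/(\ann_R(M)+\ann_R(M'))$ is artinian. Since $M'$ has finite length, it admits a composition series whose simple factors have the form $R/\m_j$ for finitely many maximal ideals $\m_1,\ldots,\m_n$ (with repetition). Consequently $\m_1\cdots\m_n\subseteq\ann_R(M')$, so $R/\ann_R(M')$ is a quotient of the artinian ring $R/(\m_1\cdots\m_n)$ and therefore is artinian. The quotient
\[
R/(\ann_R(M)+\ann_R(M'))
\]
is then a further quotient of $R/\ann_R(M')$, hence also artinian.

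With both hypotheses in hand, Theorem~\ref{xthm:ExtMR}\eqref{xthm:ExtMR2} applied to the pair $(M,M')$ shows $\Ext{i}{M}{M'}$ has finite length, and applied to the pair $(M',M)$ (noting that $\ann_R(M)+\ann_R(M')$ is symmetric in $M$ and $M'$, so the hypothesis on the quotient ring is unchanged) shows $\Ext{i}{M'}{M}$ has finite length.

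There is no real obstacle here; the only thing to be careful about is the observation that a finite length $R$-module has an annihilator whose quotient ring is artinian, which is a standard consequence of the existence of a composition series. Everything else is a direct appeal to the previously established Theorem~\ref{xthm:ExtMR}\eqref{xthm:ExtMR2}.
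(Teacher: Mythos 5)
Your proposal is correct and takes essentially the same route as the paper: the paper's own proof is simply ``argue as in~\ref{para120807b}, using Theorem~\ref{xthm:ExtMR}\eqref{xthm:ExtMR2}'', which amounts to observing that $R/\ann_R(M')$ is artinian because $M'$ has finite length, hence so is $R/(\ann_R(M)+\ann_R(M'))$, and then invoking Theorem~\ref{xthm:ExtMR}\eqref{xthm:ExtMR2}. Your composition-series argument for the artinianness of $R/\ann_R(M')$ is a correct elaboration of the step the paper leaves implicit.
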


\begin{proof}
Argue as in~\ref{para120807b}, using Theorem~\ref{xthm:ExtMR}\eqref{xthm:ExtMR2}.
\end{proof}

\begin{prop}\label{prop110510b}
Assume that $R$ is noetherian. 
Let $A$ be an artinian $R$-module and $M$ a mini-max $R$-module.  Let $\mathcal{F}$ be a finite subset of $\mspec(R)$ containing $\supp_R(A)\cap\supp_R(M)$,
and set $\fb=\cap_{\m\in \mathcal{F}}\m$. Then
$\Ext{i}{M}{A}$ is a Matlis reflexive $\comp R^{\fb}$-module for all $i\geq 0$.
\end{prop}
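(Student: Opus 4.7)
The plan is to reduce the computation of $\Ext{i}{M}{A}$ to one over the semi-local noetherian complete ring $\Comp Rb$, and then invoke Theorem~\ref{xthm:ExtMR}\eqref{xthm:ExtMR1}.

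The key reduction is to replace $A$ by $\Ga bA$. By Fact~\ref{fact120720a}, $A=\bigoplus_{\m\in\supp_R(A)}\Ga mA$ is a finite direct sum of $\m$-torsion modules, so
\[
\Ga bA=\bigoplus_{\m\in\supp_R(A)\cap V(\fb)}\Ga mA
\qquad\text{and}\qquad
A/\Ga bA\cong\bigoplus_{\m\in\supp_R(A)\ssm V(\fb)}\Ga mA.
\]
For each $\m$ indexing the second sum, the inclusion $\mcf\subseteq V(\fb)$ forces $\m\notin\mcf$; since $\m\in\supp_R(A)$ and $\mcf\supseteq\supp_R(A)\cap\supp_R(M)$, it follows that $\m\notin\supp_R(M)$, whence $M_\m=0$. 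Using that $\Ga mA$ is naturally an $R_\m$-module, flat base change along $R\to R_\m$ gives $\Ext{i}{M}{\Ga mA}\cong\Ext[R_\m]{i}{M_\m}{\Ga mA}=0$. Since the decomposition is finite, $\Ext{i}{M}{A/\Ga bA}=0$ for all $i\geq 0$, and the long exact sequence of $0\to\Ga bA\to A\to A/\Ga bA\to 0$ supplies an isomorphism $\Ext{i}{M}{\Ga bA}\cong\Ext{i}{M}{A}$ for every $i\geq 0$.

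Next I would pass to $\Comp Rb$. Fact~\ref{fact120720a} and Lemma~\ref{lem110413a} show that $\Ga bA$ is artinian (hence mini-max) over $\Comp Rb$, and Lemma~\ref{xlem101202a} shows that $\Otimes{\Comp Rb}{M}$ is mini-max over $\Comp Rb$. Flatness of $\Comp Rb$ over $R$ supplies a $\Comp Rb$-linear isomorphism
\[
\Ext{i}{M}{\Ga bA}\cong\Ext[\Comp Rb]{i}{\Otimes{\Comp Rb}{M}}{\Ga bA}.
\]

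Finally, since $\Comp Rb$ is semi-local, noetherian, and complete with respect to its Jacobson radical, every quotient of $\Comp Rb$ is semi-local and complete; in particular this holds for $\Comp Rb/(\ann_{\Comp Rb}(\Otimes{\Comp Rb}{M})+\ann_{\Comp Rb}(\Ga bA))$. Theorem~\ref{xthm:ExtMR}\eqref{xthm:ExtMR1} applied over $\Comp Rb$ therefore shows that $\Ext[\Comp Rb]{i}{\Otimes{\Comp Rb}{M}}{\Ga bA}$ is Matlis reflexive over $\Comp Rb$, and combining with the isomorphism from the second paragraph gives the claim. The main technical point is the support-theoretic reduction to $\Ga bA$; once it is in hand, the remaining steps assemble previously established results.
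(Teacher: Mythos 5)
Your proof is correct, but it takes a genuinely different route from the paper. The paper decomposes $M$ via a short exact sequence $0\to N\to M\to M/N\to 0$ with $N$ noetherian and $M/N$ artinian; it then shows directly that $\Ext iNA$ is a $\fb$-torsion artinian $R$-module (because its support lands in $\mcf$), hence artinian over $\Comp Rb$, while $\Ext i{M/N}{A}$ is noetherian over $\Comp Rb$ by Theorem~\ref{intthm120807a}\eqref{intthm120807a1}. Both pieces are then Matlis reflexive over the complete semi-local ring $\Comp Rb$, and the long exact sequence together with Facts~\ref{xlem:extensions} and~\ref{xlem100614a}\eqref{xlem100614a2} finishes the proof. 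You instead decompose $A$: using Sharp's decomposition $A=\bigoplus_{\m\in\supp_R(A)}\Ga mA$, you kill the summands $\Ga mA$ with $\m\notin\mcf$ (since $M_\m=0$ there), reduce to $\Ext iM{\Ga bA}$, change rings to $\Comp Rb$, and then invoke Theorem~\ref{xthm:ExtMR}\eqref{xthm:ExtMR1} once, over $\Comp Rb$. One small caveat on terminology: the isomorphism $\Ext{i}{M}{\Ga mA}\cong\Ext[R_\m]{i}{M_\m}{\Ga mA}$ is not ``flat base change'' in the usual sense (that formula requires the first argument to be finitely generated); it is the change-of-rings isomorphism obtained from a projective resolution $P$ of $M$ via $\Hom{P}{\Ga mA}\cong\Hom[R_\m]{P_\m}{\Ga mA}$, valid because $\Ga mA$ is an $R_\m$-module and $R_\m$ is $R$-flat. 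With that reading your argument is sound, and it has the virtue of making the $\Comp Rb$-module structure on $\Ext iMA$ entirely explicit through a chain of $\Comp Rb$-linear isomorphisms, whereas the paper's route relies on the $\Comp Rb$-linearity of the connecting maps in the long exact sequence, which is a subtler point since $A$ itself need not be an $\Comp Rb$-module. The trade-off is that your approach routes through Theorem~\ref{xthm:ExtMR}, which internally performs the same $N$-plus-$M/N$ decomposition; the paper carries it out directly.
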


\begin{proof}
Fix a noetherian submodule $N\subseteq M$ such that $M/N$ is artinian.   Fact~\ref{xlem2of3}\eqref{xlem100430a4} implies that
$\Ext{i}{N}{A}$ is an artinian $R$-module. Since $N$ is noetherian, we have
$$\supp_R(\Ext{i}{N}{A})\subseteq\supp_R(N)\cap\supp_R(A)\subseteq\supp_R(M)\cap\supp_R(A)\subseteq\mathcal{F}.$$
Lemma~\ref{lem110517a}\eqref{item120718e} and Fact~\ref{fact120720a}
imply that $\Ext{i}{N}{A}$ is $\fb$-torsion, so Lemma~\ref{lem110413a} implies that $\Ext{i}{N}{A}$ is
an artinian $\comp R^{\fb}$-module. By Theorem~\ref{intthm120807a}\eqref{intthm120807a1} 
we have that  $\Ext{i}{M/N}{A}$ is a noetherian $\Comp Rb$-module. Since $\mcf$ is a finite set of maximal ideals, the ring $\Comp Rb$ is 
semi-local and complete. Hence, Fact~\ref{xpara1} 
implies that the $\Comp Rb$-modules $\Ext{i}{N}{A}$ and $\Ext{i}{M/N}{A}$ are Matlis reflexive.
Therefore, $\Ext{i}{M}{A}$ is a Matlis reflexive $\comp R^{\fb}$-module by 
Facts~\ref{xlem:extensions} and~\ref{xlem100614a}\eqref{xlem100614a2}.
\end{proof}

\begin{prop}\label{prop110510a}
Assume that $R$ is noetherian.
Let $M$ be a mini-max $R$-module and $N'$ a noetherian $R$-module such that  
$R/(\ann_R(M)+\ann_R(N'))$ is semi-local and complete.  Let $\mathcal{F}$ be a finite subset of $\mspec(R)$ containing  $\mspec(R)\cap V(\ann_R(M))\cap\supp_R(N')$,
and set $\fb=\cap_{\m\in \mathcal{F}}\m$. Then $\Ext{i}{M}{N'}$ is noetherian over $R$ and over $\comp R^{\fb}$ for all $i\geq 0$.
\end{prop}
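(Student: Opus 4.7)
The plan is to reduce the problem to the two ends of the mini-max structure of $M$. Choose a noetherian submodule $N\subseteq M$ with $M/N$ artinian, and apply $\Hom{-}{N'}$ to the short exact sequence $0\to N\to M\to M/N\to 0$ to obtain a long exact sequence in which $\Ext{i}{M}{N'}$ is an extension of a submodule of $\Ext{i}{N}{N'}$ by a quotient of $\Ext{i}{M/N}{N'}$. The strategy is to establish the noetherian conclusion over both $R$ and $\Comp Rb$ for each of the flanking Ext-modules, then close the loop for $\Ext{i}{M}{N'}$ using closure of the noetherian class under submodules, quotients, and extensions (Facts~\ref{xlem:extensions} and~\ref{xlem2of3}).

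The main technical device is Lemma~\ref{lem120721a}, which will let me toggle between $R$-noetherianness and $\Comp Rb$-noetherianness for each of the three Ext-modules in question. The required hypotheses follow from a single uniform chase applied to any $L\in\{\Ext{i}{N}{N'},\,\Ext{i}{M/N}{N'},\,\Ext{i}{M}{N'}\}$: each such $L$ has $\ann_R(L)\supseteq\ann_R(M)+\ann_R(N')$, so $R/\ann_R(L)$ is a quotient of the semi-local complete ring $R/(\ann_R(M)+\ann_R(N'))$ and is therefore itself semi-local and complete; and each such $L$ has $\supp_R(L)\subseteq V(\ann_R(M))\cap V(\ann_R(N'))=V(\ann_R(M))\cap\supp_R(N')$, using noetherianness of $N'$, whence $\mspec(R)\cap\supp_R(L)\subseteq\mathcal{F}$ and thus $\fb\subseteq\cap_{\m\in\mspec(R)\cap\supp_R(L)}\m$. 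These two facts are precisely what is needed to invoke Lemma~\ref{lem120721a} with $\fa=\fb$.

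The endpoint inputs are now routine. The module $\Ext{i}{N}{N'}$ is noetherian over $R$ by Fact~\ref{xlem2of3}\eqref{xlem100430a4}, and then noetherian over $\Comp Rb$ via Lemma~\ref{lem120721a}. The module $\Ext{i}{M/N}{N'}$ is noetherian over $\Comp Rb$ by Theorem~\ref{xthm100308a}, applicable because the Bass numbers $\mu^i_R(\m,N')$ are finite by Lemma~\ref{xfact100317b} and $\supp_R(M/N)\cap\supp_R(N')\subseteq\mathcal{F}$ (the artinianness of $M/N$ forces $\supp_R(M/N)\subseteq\mspec(R)\cap V(\ann_R(M))$); Lemma~\ref{lem120721a} then promotes this to noetherianness over $R$. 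The long exact sequence now forces $\Ext{i}{M}{N'}$ to be noetherian over $R$, and one final application of Lemma~\ref{lem120721a} finishes the $\Comp Rb$-side. I expect the only real obstacle to be the ideal/support bookkeeping needed to verify the hypotheses of Lemma~\ref{lem120721a} uniformly for all three Ext-modules; the substantive Ext-computation is concentrated entirely in the single invocation of Theorem~\ref{xthm100308a}, and everything else is a closure-and-transfer formality.
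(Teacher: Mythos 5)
Your proposal is correct and follows essentially the same route as the paper: reduce via the short exact sequence $0\to N\to M\to M/N\to 0$, handle $\Ext{i}{M/N}{N'}$ with Theorem~\ref{xthm100308a}, handle $\Ext{i}{N}{N'}$ with Fact~\ref{xlem2of3}\eqref{xlem100430a4}, glue with the long exact sequence, and shuttle between $R$- and $\Comp Rb$-noetherianness via Lemma~\ref{lem120721a}. Your unified annihilator/support chase is a clean reorganization of the paper's bookkeeping (and in fact only uses the easy containment $\supp_R(L)\subseteq V(\ann_R(L))$, so you avoid the appeal to Lemma~\ref{xlem100409a1}\eqref{xlem100409a1a} that the paper makes, which is a slight tightening); the substance is identical.
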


\begin{proof}
Let $N$ be a noetherian submodule of $M$ such that $M/N$ is artinian. 
Because of the containment $\supp_R(M)\subseteq V(\ann_R(M))$, the fact that the quotient $R/(\ann_R(M)+\ann_R(N'))$ is  semi-local
implies that the intersection $\mspec(R)\cap\supp_R(M)\cap\supp_R(N')$ is finite.
Also, the containment $\ann_R(M)+\ann_R(N')\subseteq\ann_R(\Ext i{M}{N'})$ provides a surjection
$$R/(\ann_R(M)+\ann_R(N'))\onto R/\ann_R(\Ext i{M}{N'})$$
so we conclude that $R/\ann_R(\Ext i{M}{N'})$ is semi-local and complete.
From the containment $\ann_R(M)\subseteq\ann_R(M/N)\cap\ann_R(N)$, we also conclude that
the quotients $R/\ann_R(\Ext i{M/N}{N'})$ and $R/\ann_R(\Ext i{N}{N'})$ are semi-local and complete.

Since $M/N$ is artinian, we have $\supp_R(M/N)\subseteq\mspec(R)$, so
\begin{align*}
\mcf
&\supseteq\mspec(R)\cap V(\ann_R(M))\cap\supp_R(N')\\
&\supseteq\mspec(R)\cap\supp_R(M)\cap\supp_R(N')\\
&\supseteq\mspec(R)\cap\supp_R(M/N)\cap\supp_R(N')\\
&=\supp_R(M/N)\cap\supp_R(N').
\end{align*}
It follows by Theorem~\ref{intthm120807a}\eqref{intthm120807a1} that 
$\Ext{i}{M/N}{N'}$ is a noetherian $\comp R^{\fb}$-module. 
Furthermore, since $N'$ is noetherian, we have
\begin{align*}
\mcf
&\supseteq\mspec(R)\cap V(\ann_R(M))\cap\supp_R(N')\\
&\supseteq\mspec(R)\cap V(\ann_R(M/N))\cap V(\ann_R(N'))\\
&\supseteq\mspec(R)\cap V(\ann_R(\Ext i{M/N}{N'}))\\
&=\mspec(R)\cap \supp_R(\Ext i{M/N}{N'})
\end{align*}
where the last equality follows from Lemma~\ref{xlem100409a1}\eqref{xlem100409a1a}
since $R/\ann_R(\Ext i{M/N}{N'})$ is semi-local and complete.
Thus, Lemma~\ref{lem120721a} implies that $\Ext{i}{M/N}{N'}$ is a noetherian $R$-module. 

Since $N$ and $N'$ are noetherian over $R$, so is $\Ext iN{N'}$.
Fact~\ref{xlem100614a}\eqref{xlem100614a2} implies that
$\Ext{i}{M}{N'}$ is also  noetherian over $R$. Arguing as above, we find that
\begin{align*}
\mcf
&\supseteq\mspec(R)\cap \supp_R(\Ext i{M}{N'})
\end{align*}
so Lemma~\ref{lem120721a} implies that $\Ext{i}{M}{N'}$ is a noetherian $\Comp Rb$-module. 
\end{proof}

\subsection*{Tor}
This subsection contains non-local versions of results from~\cite[Section 3]{kubik:hamm1}.
As we see next, it is easier to work with Tor  since we can work locally.

\begin{thm}\label{xthm101104a}
Assume that $R$ is noetherian.
Let $A$ and $B$ be $R$-modules such that $A$ is artinian. 
Let $\fb\subseteq\cap_{\m\in \supp_R(A)\cap\supp_R(B)}\m$, and assume that 
$i\geq 0$ is such that 
$\beta_i^R(\m,B)$ is finite for all $\m\in\supp_R(A)\cap\supp_R(B)$. Then
$\Tor{i}{A}{B}$ is artinian over $R$ and $\fb$-torsion, hence it is an artinian $\Comp Rb$-module.
\end{thm}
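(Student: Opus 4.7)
The plan is to reduce to the local case (which is treated in~\cite{kubik:hamm1}) using Sharp's decomposition of artinian modules. Since $A$ is artinian, Fact~\ref{fact120720a} provides a decomposition
$$A \cong \coprod_{\m\in\supp_R(A)}\Gamma_{\m}(A)$$
indexed by the finite set $\supp_R(A)\subseteq\mspec(R)$, where each $\Gamma_{\m}(A)$ is $\m$-torsion and artinian over $\Comp Rm$. Because Tor commutes with finite coproducts in the first argument, this yields
$$\Tor iAB \cong \coprod_{\m\in\supp_R(A)}\Tor i{\Gamma_{\m}(A)}B.$$

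Next I would argue that each summand is controlled locally at $\m$. Since $\Gamma_{\m}(A)$ is $\m$-torsion with support contained in $\{\m\}$, the natural maps $\Gamma_{\m}(A)\to \Gamma_{\m}(A)_{\m}\to \Comp Rm\otimes_R\Gamma_{\m}(A)$ are isomorphisms (by Lemma~\ref{lem110517a}\eqref{item120718c} together with Fact~\ref{xpara0'''}\eqref{xpara0e}). Using flatness of localization and completion one then obtains
$$\Tor i{\Gamma_{\m}(A)}B \cong \Tor[R_{\m}]i{\Gamma_{\m}(A)}{B_{\m}} \cong \Tor[\Comp Rm]i{\Gamma_{\m}(A)}{\Comp Rm\otimes_R B}.$$
If $\m\notin\supp_R(B)$, then $B_{\m}=0$, so this summand vanishes. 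Therefore the coproduct is supported on the finite set $\mcf:=\supp_R(A)\cap\supp_R(B)$, and is in particular $\fb$-torsion since $\fb\subseteq\cap_{\m\in\mcf}\m$.

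For each $\m\in\mcf$, the module $\Gamma_{\m}(A)$ is artinian over $\Comp Rm$ by Fact~\ref{fact120720a}, and Remark~\ref{xdisc101110a} together with the flat base change isomorphism $\Otimes{\Comp Rm}{B}\cong\Otimes{\Comp Rm}{B_{\m}}$ gives
$$\beta^{\Comp Rm}_i(\Comp Rm\otimes_R B) = \beta^{R_{\m}}_i(B_{\m}) = \beta^R_i(\m,B),$$
which is finite by hypothesis. The local counterpart of this theorem in~\cite{kubik:hamm1} (specifically, the Tor-analogue of the result used in Theorem~\ref{xthm100308a}) then implies that $\Tor[\Comp Rm]i{\Gamma_{\m}(A)}{\Comp Rm\otimes_R B}$ is artinian over $\Comp Rm$, hence artinian over $R$ by restriction of scalars along $R\to \Comp Rm$. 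A finite coproduct of artinian modules is artinian, so $\Tor iAB$ is artinian over $R$; being $\fb$-torsion, Lemma~\ref{lem110413a} upgrades this to the statement that $\Tor iAB$ is artinian over $\Comp Rb$.

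The main technical obstacle is verifying cleanly that the Betti-number hypothesis, which is stated with respect to $R$, transfers across both localization at $\m$ and $\m$-adic completion so that the local result of~\cite{kubik:hamm1} applies to $\Comp Rm\otimes_R B$; once Remark~\ref{xdisc101110a} and flat base change are invoked this is routine, but it is the one point where the noetherian hypothesis on $R$ is essential.
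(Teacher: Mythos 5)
Your argument is correct, but it takes a genuinely different route from the paper's. The paper treats Tor more directly than Ext precisely because, as it remarks at the start of the subsection, one \emph{can} work locally with Tor: it invokes Lemma~\ref{zlem100207a} (a local-to-global artinian criterion), noting that $\supp_R(\Tor iAB)$ is finite and that each localization $\Tor iAB_\p\cong\Tor[R_\p]{i}{A_\p}{B_\p}$ is artinian by the local result \cite[Theorem~3.1]{kubik:hamm1}, and then deduces the $\fb$-torsion and $\Comp Rb$-module statements afterwards. You instead first decompose $A$ via Sharp's decomposition (Fact~\ref{fact120720a}), distribute Tor over the finite coproduct, and reduce each summand to a local computation, mirroring the paper's Ext argument in Theorem~\ref{xthm100308a}; your detour through $\Comp Rm$ is unnecessary, since \cite[Theorem~3.1]{kubik:hamm1} already applies to $\Tor[R_\m]{i}{\Gamma_\m(A)}{B_\m}$ directly. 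The paper's route is slightly more economical; yours has the advantage of producing an explicit isomorphism $\Tor iAB\cong\coprod_{\m\in\supp_R(A)\cap\supp_R(B)}\Tor[R_\m]{i}{\Gamma_\m(A)}{B_\m}$ parallel to Theorem~\ref{xlem100312a}, and of making the $\fb$-torsion property visible from the decomposition itself. One small imprecision: ``artinian over $\Comp Rm$, hence artinian over $R$ by restriction of scalars'' is not valid in general for a ring map $R\to S$; the correct justification is that each summand is $\m$-torsion, so Lemma~\ref{lem110413a} (equivalently, Fact~\ref{xpara0'''}\eqref{xpara0h}) identifies its $R$-submodules with its $\Comp Rm$-submodules.
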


\begin{proof}
To show that $\Tor{i}{A}{B}$ is artinian over $R$, we use Lemma~\ref{zlem100207a}, as follows.
As $A$ is artinian, Lemma~\ref{lem110517a}\eqref{item120718d} and Fact~\ref{fact120720a}
imply that $\supp_R(A)$ is finite.
So, the containment
$\supp_R(\Tor{i}{A}{B})\subseteq\supp_R(A)\cap\supp_R(B)$
implies that
$\supp_R(\Tor{i}{A}{B})$ is finite.
For each $\p\in\supp_R(\Tor{i}{A}{B})$, the $R_{\p}$-module $A_{\p}$ is artinian by Fact~\ref{fact120720a},
and  $\beta_i^{R_{\p}}(B_{\p})=\beta^R_i(\p,B)$
by Lemma~\ref{xfact100317b}.
Hence, the $R_{\p}$-module
$\Tor[R_{\p}]{i}{A_{\p}}{B_{\p}}\cong\Tor{i}{A}{B}_{\p}$ is artinian, by~\cite[Theorem 3.1]{kubik:hamm1}.
Thus, Lemma~\ref{zlem100207a} implies that  $\Tor{i}{A}{B}$ is artinian over $R$.

Lemma~\ref{lem110517a}\eqref{item120718e} and Fact~\ref{fact120720a}
imply that $\Tor{i}{A}{B}$ is $\fb$-torsion.
Lemma~\ref{lem110413a} implies that $\Tor{i}{A}{B}$ is an artinian $\Comp Rb$-module.
\end{proof}

One might be tempted to try to prove the previous result by applying Theorem~\ref{xthm100308a}
to $A$ and $\md B$.
When $R$ is local, this approach works. However, in the non-local case,
the fact that $\beta_i^R(\m,B)$ is finite for all $\m\in\supp_R(A)\cap\supp_R(B)$
does not necessarily imply that $\mu^i_R(\m,\md B)$ is finite for all $\m\in\supp_R(A)\cap\supp_R(\md B)$,
because the sets $\supp_R(A)\cap\supp_R(B)$ and $\supp_R(A)\cap\supp_R(\md B)$ may not be equal,
as we discuss next.

\begin{disc}\label{xlem101104a}
Assume that $R$ is noetherian. Given an $R$-module $L$, one has
$$\supp_R(L)\cap\mspec(R)\subseteq\supp_R(\md{L})\cap\mspec(R).$$
To see this, let $\m\in\supp_R(L)\cap\mspec(R)$.
Since $L_{\m}\neq 0$, there is an element $x\in L$ such that $x/1\neq 0$ in $L_{\m}$.
Thus, the submodule $L'=Rx\subseteq L$ is finitely generated and $L'_{\m}\neq 0$.
It follows that 
$$(L'^{\vee})_{\m}\cong(L'_{\m})^{\vee(R_{\m})}\neq 0.$$
The inclusion $L'\subseteq L$ yields an epimorphism
$(L^{\vee})_{\m}\onto(L'^{\vee})_{\m}\neq 0$, implying that
$(L^{\vee})_{\m}\neq 0$.
This shows that $\m\in\supp_R(\md{L})\cap\mspec(R)$, as desired.

The containment above can be strict.  
(See, however, Lemma~\ref{xlem100409a1}\eqref{xlem100409a1b}.)
If we let $R=k[X]$, $\n=RX$ and $L=\coprod_{\m\in\mspec(R)\ssm\{\n\}}R/\m$,
then the maximal ideal $\n$ is not in $\supp_R(L)$.
We claim, however, that $\n\in\supp_R(\md{L})$.
To see this, note that
$$\textstyle\md{L}\cong\prod_{\m\in\mspec(R)\ssm\{\n\}}\md{(R/\m)}\cong \prod_{\m\in\mspec(R)\ssm\{\n\}}R/\m.$$
The natural map $R\to\prod_{\m\neq\n}R/\m\cong\md L$ given by $1\mapsto\{1+\m\}$
is a monomorphism since its kernel is $\cap_{\m\neq\n}\m=0$.
It follows that $\supp_R(R)\subseteq\supp_R(\md{L})$, so $\n\in\spec(R)=\supp_R(\md{L})$.
\end{disc}

\begin{para}[Proof of Theorem~\ref{intthm120807a}\eqref{intthm120807a2}]
\label{para120807c}
Combine Lemma~\ref{xfact100317b}
and Theorem~\ref{xthm101104a}. \qed
\end{para}

\begin{thm}\label{xthm18}
Assume that $R$ is noetherian.
Let $M$ and $M'$ be mini-max $R$-modules.
Then for all $i\geq 0$, the $R$-module $\tor_i^R(M,M')$ is mini-max.
\end{thm}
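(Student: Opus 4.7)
The plan is straightforward: decompose $M$ via its defining short exact sequence into a noetherian piece and an artinian piece, then apply results already established to each piece.

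Fix a noetherian submodule $N \subseteq M$ such that $A := M/N$ is artinian. The short exact sequence $0 \to N \to M \to A \to 0$ induces, upon deriving $-\otimes_R M'$, the three-term exact sequence
\[
\Tor_i^R(N, M') \longrightarrow \Tor_i^R(M, M') \longrightarrow \Tor_i^R(A, M').
\]

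I claim both flanking terms are mini-max. Since $N$ is noetherian and $M'$ is mini-max, and since Fact~\ref{xlem:extensions} tells us that the class of mini-max modules is closed under submodules, quotients, and extensions, Fact~\ref{xlem2of3}\eqref{xlem100430a4} (applied with $\catc$ equal to this class) gives that $\Tor_i^R(N, M')$ is mini-max. Since $A$ is artinian and $M'$ is mini-max, Theorem~\ref{intthm120807a}\eqref{intthm120807a2}, proved already in~\ref{para120807c} via Theorem~\ref{xthm101104a}, tells us that $\Tor_i^R(A, M')$ is artinian, and hence in particular mini-max.

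Finally, Fact~\ref{xlem2of3}\eqml{xlem100430a2} applied to the displayed three-term exact sequence forces $\Tor_i^R(M, M')$ to be mini-max. I expect no genuine obstacle here, since the real work has been done in establishing the artinianness of $\Tor_i^R(A, M')$ (through the local-to-global reduction of Theorem~\ref{xthm101104a} built on~\cite[Theorem~3.1]{kubik:hamm1}); the step above simply packages that result together with the standard two-out-of-three principle for closed classes.
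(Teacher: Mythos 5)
Your proof is correct and follows essentially the same route as the paper: choose a noetherian submodule $N\subseteq M$ with artinian quotient $A=M/N$, observe that $\Tor{i}{N}{M'}$ is mini-max by Facts~\ref{xlem:extensions} and~\ref{xlem2of3}\eqref{xlem100430a4}, observe that $\Tor{i}{A}{M'}$ is artinian (hence mini-max) by Theorem~\ref{intthm120807a}\eqref{intthm120807a2}, and conclude via the two-out-of-three property. The paper packages the last step as Fact~\ref{xlem100614a}\eqref{xlem100614a3} rather than unwinding the long exact sequence and invoking Fact~\ref{xlem2of3}\eqref{xlem100430a2} directly, but these amount to the same argument.
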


\begin{proof}
Let $N$ be a noetherian submodule of $M$ such that the quotient $M/N$ is artinian. 
Fact~\ref{xlem2of3}\eqref{xlem100430a4} and Theorem~\ref{intthm120807a}\eqref{intthm120807a2} 
imply that $\Tor{i}{N}{M'}$ and  $\Tor{i}{A}{M'}$ are mini-max.  
Thus, $\Tor{i}{M}{M'}$ is mini-max by Fact~\ref{xlem100614a}\eqref{xlem100614a3}.
\end{proof}

\begin{thm} \label{xthm:torMR}
Assume that $R$ is noetherian.
Let $M$ and $M'$ be mini-max $R$-modules, and let $i\geq 0$. 
\begin{enumerate}[\rm(a)]
\item \label{xthm:torMR1}
If the quotient ring $R/(\ann_R{(M)}+\ann_R(M'))$ is  semi-local and complete then $\Tor{i}{M}{M'}$ is  a Matlis reflexive $R$-module.
\item \label{xthm:torMR2} If $R/(\ann_R{(M)}+\ann_R(M'))$ is artinian then $\Tor{i}{M}{M'}$ has finite length.
\end{enumerate}
\end{thm}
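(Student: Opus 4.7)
The proof will parallel Theorem~\ref{xthm:ExtMR}, but the argument should be noticeably shorter because Theorem~\ref{xthm18} already guarantees that $\Tor{i}{M}{M'}$ is mini-max; for Ext we did not have such a starting point, which is why the proof of Theorem~\ref{xthm:ExtMR}\eqref{xthm:ExtMR1} had to split $M$ through a noetherian submodule $N$ and invoke Theorem~\ref{intthm120807a}\eqref{intthm120807a1}. For Tor we can use Fact~\ref{xpara1} directly, so the whole argument reduces to a clean annihilator computation.

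For part~\eqref{xthm:torMR1}, the plan is as follows. First, Theorem~\ref{xthm18} tells us that $\Tor{i}{M}{M'}$ is mini-max over $R$. Next, from the standard containments $\ann_R(M)\subseteq\ann_R(\Tor{i}{M}{M'})$ and $\ann_R(M')\subseteq\ann_R(\Tor{i}{M}{M'})$, we obtain a surjective ring homomorphism
\[
R/(\ann_R(M)+\ann_R(M'))\onto R/\ann_R(\Tor{i}{M}{M'}).
\]
Since quotients of semi-local complete rings are semi-local and complete, the hypothesis on $R/(\ann_R(M)+\ann_R(M'))$ transfers to $R/\ann_R(\Tor{i}{M}{M'})$. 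Fact~\ref{xpara1} then yields Matlis reflexivity.

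For part~\eqref{xthm:torMR2}, I would observe that the same surjection shows $R/\ann_R(\Tor{i}{M}{M'})$ is a quotient of an artinian ring, hence artinian. Combined with the mini-max conclusion from Theorem~\ref{xthm18} (or with part~\eqref{xthm:torMR1}), Lemma~\ref{xlem100213b} immediately gives that $\Tor{i}{M}{M'}$ has finite length.

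I do not expect any serious obstacle here: the genuine technical work was already done in Theorem~\ref{xthm18} (to produce the mini-max property via a decomposition into a noetherian and an artinian piece and an application of Fact~\ref{xlem2of3}\eqref{xlem100430a4} and Theorem~\ref{intthm120807a}\eqref{intthm120807a2}) and in Fact~\ref{xpara1} (to identify Matlis reflexivity with the mini-max plus semi-local-complete-quotient conditions). The only thing one must be careful about is that the annihilator containment one uses for Tor is $\ann_R(M)+\ann_R(M')\subseteq\ann_R(\Tor{i}{M}{M'})$, which is entirely standard, and that quotients preserve the ``semi-local and complete'' condition, which is routine. So the statement really is just a formal corollary of Theorem~\ref{xthm18} together with Fact~\ref{xpara1} and Lemma~\ref{xlem100213b}.
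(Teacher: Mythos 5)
Your proof is correct and takes the same route as the paper: the paper's proof is simply the one-liner ``This follows from Theorem~\ref{xthm18}, using Fact~\ref{xpara1} and Lemma~\ref{xlem100213b},'' and your argument fills in exactly the annihilator-containment and quotient-ring details that justify that sentence.
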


\begin{proof}
This follows from Theorem~\ref{xthm18}, using Fact~\ref{xpara1} and Lemma~\ref{xlem100213b}.
\end{proof}

\begin{para}[Proof of Tor-part of Theorem~\ref{intthm120807a}\eqref{intthm120807a3}]
\label{para120807d}
Fact~\ref{xpara1} implies that $R/\ann_R(M')$ is semi-local and complete,
hence so is $R/(\ann_R{(M)}+\ann_R(M'))$.
Thus, Theorem~\ref{xthm:torMR}\eqref{xthm:torMR1} implies that $\Tor{i}{M}{M'}$ is Matlis reflexive over $R$ for all $i\geq 0$.
\qed
\end{para}

The next  result is  proved like Corollary~\ref{xcor:ExtMR'}.

\begin{cor}\label{cor110510b}
Assume that $R$ is noetherian.
Let $M$ be a mini-max $R$-module, and let $M'$ be a finite-length $R$-module. Then $\Tor{i}{M}{M'}$ has finite length over $R$
for all $i\geq 0$.
\end{cor}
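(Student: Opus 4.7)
The plan is to mirror the proof of Corollary~\ref{xcor:ExtMR'} (\ref{para120807b}), but invoking the Tor-version of the Matlis reflexivity theorem, namely Theorem~\ref{xthm:torMR}\eqref{xthm:torMR2}, instead of its Ext analogue. The key observation is that the hypothesis ``$M'$ has finite length'' is stronger than needed: it forces $R/\ann_R(M')$ to be artinian, which is exactly what the hypothesis of Theorem~\ref{xthm:torMR}\eqref{xthm:torMR2} requires (up to adding $\ann_R(M)$).

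In more detail, first I would observe that since $M'$ has finite length over the noetherian ring $R$, the quotient $R/\ann_R(M')$ is artinian. (A finite-length module is annihilated by a product of maximal ideals from its finite support, so $R/\ann_R(M')$ embeds into a finite product of artinian local quotients of $R$.) Then, since $R/(\ann_R(M)+\ann_R(M'))$ is a homomorphic image of $R/\ann_R(M')$ via the natural surjection
\[
R/\ann_R(M')\onto R/(\ann_R(M)+\ann_R(M')),
\]
and quotients of artinian rings are artinian, I conclude that $R/(\ann_R(M)+\ann_R(M'))$ is artinian.

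With this hypothesis verified, Theorem~\ref{xthm:torMR}\eqref{xthm:torMR2} applies directly and yields that $\Tor{i}{M}{M'}$ has finite length over $R$ for every $i\geq 0$, completing the argument.

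There is essentially no obstacle here: the work was done in Theorem~\ref{xthm18} and Theorem~\ref{xthm:torMR}, and this corollary is just specializing the annihilator hypothesis from ``semi-local and complete'' to ``artinian'' by exploiting the finite-length hypothesis on $M'$.
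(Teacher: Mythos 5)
Your proof is correct and follows essentially the same route as the paper, which proves this result in parallel with Corollary~\ref{xcor:ExtMR'} by observing that the finite-length hypothesis on $M'$ forces $R/(\ann_R(M)+\ann_R(M'))$ to be artinian and then invoking Theorem~\ref{xthm:torMR}\eqref{xthm:torMR2}. Your auxiliary observation that $R/\ann_R(M')$ is artinian when $M'$ has finite length is standard and correctly justified.
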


\section{Matlis Duals of Ext Modules}\label{sec110519a}

This section contains the conclusion of the proof of Theorem~\ref{intthm120807a};
see~\ref{para120807e}.
It is modeled on \cite[Section 4]{kubik:hamm1}. 
However, Lemmas~\ref{lem110405}--\ref{xlem101221a} show that the
the current work is more technically challenging than~\cite{kubik:hamm1}. 

\begin{defn}\label{xdefn100602a}
Let $L$ and $L''$ be $R$-modules, and let $J$ be an $R$-complex.
The \emph{Hom-evaluation} morphism
$$\theta_{LJL''}\colon\Otimes{L}{\Hom{J}{L''}}\to\Hom{\Hom{L}{J}}{L''}$$
is given by $\theta_{LJL''}(l\otimes\psi)(\phi)=\psi(\phi(l))$.
\end{defn}

\begin{disc}\label{xdisc100602a}
Assume that $R$ is noetherian.
Let $L$ and $L'$ be $R$-modules, and let $J$ be an injective resolution of $L'$.
Using $L''=E$ in Definition~\ref{xdefn100602a}, we have 
$\theta_{LJE}\colon\Otimes{L}{\md{J}}\to\md{\Hom{L}{J}}$.
The complex $\md{J}$ is a flat resolution of $\md{L'}$;
see, e.g., \cite[Theorem~3.2.16]{enochs:rha}.
This explains the first isomorphism in the next sequence:
\begin{align*}
\Tor{i}{L}{\md{L'}}
\xra\cong\HH_i(\Otimes{L}{\md{J}})
\xra{\HH_i(\theta_{LJE})}
&\HH_i(\md{\Hom{L}{J}})
\xra\cong\md{\Ext{i}{L}{L'}}.
\end{align*}
The second isomorphism follows from the exactness of $\md{(-)}$.
\end{disc}

\begin{defn}\label{xdefn100602b}
Assume that $R$ is noetherian.
Let $L$ and $L'$ be $R$-modules, and let $J$ be an injective resolution of $L'$.
The $R$-module homomorphism
$$\Theta^{i}_{LL'}\colon\Tor{i}{L}{\md{L'}}\to\md{\Ext{i}{L}{L'}}$$
is defined to be the composition of the the maps displayed in Remark~\ref{xdisc100602a}.
\end{defn}

\begin{disc}\label{xdisc100602b}
Assume that $R$ is noetherian.
Let $L$, $L'$, and $N$ be $R$-modules such that $N$ is noetherian.
It is straightforward to show that the map $\Theta^{i}_{LL'}$ is natural in $L$ and in $L'$.

The injectivity of $E$ implies that
$\Theta^{i}_{NL'}$ is an isomorphism;
see~\cite[Lemma~3.60]{rotman:iha}.
This explains the first of the following isomorphisms:
\begin{align*}
\md{\Ext{i}{N}{L'}}
&\cong\Tor{i}{N}{\md{L'}}&
\md{\Tor{i}{L}{L'}}
&\cong\Ext{i}{L}{\md{L'}}.
\end{align*}
The second isomorphism is a consequence of Hom-tensor adjointness.
Since Tor is commutative, the second isomorphism
implies that $\Ext{i}{L}{\md{L'}}\cong\Ext{i}{L'}{\md{L}}$.
\end{disc}

\begin{fact}\label{xfact100604a}
Assume that $R$ is noetherian.
Let $L$ and $L'$ be $R$-modules, and fix an index $i\geq 0$. Then the following diagram commutes:
$$\xymatrix@C=1.5cm{
\Ext{i}{L'}{L}
\ar[r]^-{\bidual{\Ext{i}{L'}{L}}}
\ar[d]_{\Ext{i}{L'}{\bidual L}}
& \mdd{\Ext{i}{L'}{L}}
\ar[d]^{\md{(\Theta_{L'L}^{i})}}
\\
\Ext{i}{L'}{\mdd L}
\ar[r]^-{\cong}
&\md{\Tor{i}{L'}{\md L}}.
}$$
The unlabeled isomorphism is from Remark~\ref{xdisc100602b}.
\end{fact}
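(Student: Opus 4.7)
The plan is to verify the diagram at the chain level by fixing an injective resolution of $L$, writing out both composites explicitly, and checking they coincide on elements. Let $J$ be an injective resolution of $L$. Since $R$ is noetherian, $\md{J}$ is a flat resolution of $\md{L}$ by \cite[Theorem 3.2.16]{enochs:rha}, and hence $\mdd{J}$ is an injective resolution of $\mdd{L}$. The termwise biduality map $\bidual{J}\colon J\to \mdd{J}$ is a chain map lifting $\bidual{L}\colon L\to\mdd{L}$, so $\Ext{i}{L'}{\bidual{L}}$ is computed as $H_i$ of the induced map $\Hom{L'}{\bidual{J}}\colon \Hom{L'}{J}\to \Hom{L'}{\mdd{J}}$.

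Next I would trace the two paths around the square at the chain level. Unwinding the definition of $\Theta^i_{L'L}$ from Definition~\ref{xdefn100602b}, the top-then-right path is induced by the composite
\begin{equation*}
\Hom{L'}{J}\xra{\bidual{\Hom{L'}{J}}} \mdd{\Hom{L'}{J}} \xra{\md{(\theta_{L'JE})}} \md{(\Otimes{L'}{\md{J}})},
\end{equation*}
while the left-then-bottom path is induced by
\begin{equation*}
\Hom{L'}{J}\xra{\Hom{L'}{\bidual{J}}} \Hom{L'}{\mdd{J}} \xra{\cong} \md{(\Otimes{L'}{\md{J}})},
\end{equation*}
where the final arrow is the Hom-tensor adjointness isomorphism that produces the bottom arrow of the displayed diagram via Remark~\ref{xdisc100602b}.

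I would then verify that these two chain maps coincide by a direct element computation. Given $\phi\in\Hom{L'}{J_k}$, $l'\in L'$, and $\psi\in\md{J_k}$, the formulas for biduality and $\theta_{L'JE}$ give for the top-right composite
\begin{equation*}
\md{(\theta_{L'JE})}\bigl(\bidual{\Hom{L'}{J}}(\phi)\bigr)(l'\otimes\psi)=\theta_{L'JE}(l'\otimes\psi)(\phi)=\psi(\phi(l')),
\end{equation*}
while for the left-bottom composite, since $\Hom{L'}{\bidual{J}}(\phi)(l')=\bidual{J}(\phi(l'))$, the Hom-tensor adjointness produces the functional
\begin{equation*}
(l'\otimes\psi)\mapsto \bidual{J}(\phi(l'))(\psi)=\psi(\phi(l')).
\end{equation*}
Since both composites agree as chain maps $\Hom{L'}{J}\to \md{(\Otimes{L'}{\md{J}})}$, passing to $H_i$ and invoking the definition of $\Theta^i_{L'L}$ from Definition~\ref{xdefn100602b} yields the commutativity of the displayed diagram.

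The main obstacle, as with most such diagram chases involving Matlis duality, is careful bookkeeping: one must track precisely which map is ``$\phi\mapsto(f\mapsto f(\phi))$'' (biduality) and which is ``$(l\otimes\psi)\mapsto(\varphi\mapsto \psi(\varphi(l)))$'' (Hom-evaluation), and check that the Hom-tensor adjointness used here is exactly the one underlying the identification $\md{\Tor{i}{L'}{\md{L}}}\cong\Ext{i}{L'}{\mdd{L}}$ in Remark~\ref{xdisc100602b}, so that the chain-level verification indeed corresponds to the maps appearing in the statement.
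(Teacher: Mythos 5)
The paper states this as a Fact with no proof environment, evidently treating it as a routine naturality computation, so there is no "paper's own proof" to compare against; your job is really just to verify your argument is sound, and it is. Your chain-level verification is correct: you resolve $L$ by an injective complex $J$, realize both composites as maps of complexes $\Hom{L'}{J}\to\md{(\Otimes{L'}{\md{J}})}$, and confirm that each sends $\phi$ to the functional $l'\otimes\psi\mapsto\psi(\phi(l'))$. The bookkeeping of which map is biduality versus Hom-evaluation is handled correctly, and the identifications (naturality of $\delta$ giving $H^i(\bidual{\Hom{L'}{J}})=\bidual{\Ext iL'L}$, $\bidual J$ lifting $\bidual L$ so $H^i(\Hom{L'}{\bidual J})=\Ext{i}{L'}{\bidual L}$, exactness of $\md{(-)}$ commuting with cohomology) are all legitimate.

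The one point you flag in your final paragraph is the genuine one worth pinning down: the paper's Remark~\ref{xdisc100602b} describes its isomorphism $\md{\Tor iL'{\md L}}\cong\Ext{i}{L'}{\mdd L}$ as "a consequence of Hom-tensor adjointness," which in the usual treatment is realized using a projective resolution of the first variable $L'$, whereas you realize it using the flat resolution $\md J$ of $\md L$ (equivalently, the injective resolution $\mdd J$ of $\mdd L$). These give the same derived isomorphism by the standard comparison-of-resolutions argument, since both constructions are natural and agree in degree zero, so the argument is sound; but if you were spelling this out in full you would want to cite or sketch that comparison rather than leave it implicit. With that caveat the proof is complete and correct.
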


\begin{lem}\label{xlem100604a}
Assume that $R$ is noetherian, and let $i\geq 0$.
\begin{enumerate}[\rm(a)]
\item \label{xlem100604a1}
If $N$ is a noetherian $R$-module and $L$ is an $R$-module, 
then the induced map $\Ext{i}{N}{\bidual L}\colon \Ext{i}{N}{L}\to\Ext{i}{N}{\mdd L}$
is an injection. 
\item \label{xlem100604a2}
Let $B$ be an $R$-module.
For each $\m\in\mspec(R)$
such that $\mu^i_R(\m,B)$ is finite, the map 
$\Ext{i}{R/\m}{\bidual B}$
is an isomorphism.
\end{enumerate}
\end{lem}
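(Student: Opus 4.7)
The plan for part~\eqref{xlem100604a1} is to apply Fact~\ref{xfact100604a} with $L'$ replaced by $N$, yielding the commutative square
$$\xymatrix@C=1.5cm{
\Ext{i}{N}{L} \ar[r]^-{\bidual{\Ext{i}{N}{L}}} \ar[d]_{\Ext{i}{N}{\bidual L}} & \mdd{\Ext{i}{N}{L}} \ar[d]^{\md{(\Theta_{NL}^{i})}} \\
\Ext{i}{N}{\mdd L} \ar[r]^-{\cong} & \md{\Tor{i}{N}{\md L}}.
}$$
Since $N$ is noetherian, Remark~\ref{xdisc100602b} says $\Theta_{NL}^{i}$ is an isomorphism, hence so is $\md{(\Theta_{NL}^{i})}$. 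And $\bidual{\Ext iNL}$ is injective because $E$ is a cogenerator (Fact~\ref{xfact100909a}). Commutativity of the diagram then forces $\Ext{i}{N}{\bidual L}$ to be injective.

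For part~\eqref{xlem100604a2} the injectivity follows immediately from part~\eqref{xlem100604a1} applied with $N = R/\m$. The same diagram (now with $N = R/\m$, $L = B$) shows that $\Ext{i}{R/\m}{\bidual B}$ is an isomorphism if and only if the biduality map $\bidual{\Ext{i}{R/\m}{B}}$ is an isomorphism; that is, if and only if $\Ext{i}{R/\m}{B}$ is Matlis reflexive. So the task reduces to verifying this reflexivity.

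The key observation is that $\Ext{i}{R/\m}{B}$ is annihilated by $\m$ (since $\m$ annihilates the first argument), so it is a $\kappa(\m)$-vector space. Theorem~\ref{xlem100312a} applied to the $\m$-torsion module $T = R/\m$ (with $\mcg = \{\m\}$) gives
$$\Ext{i}{R/\m}{B} \cong \Ext[R_\m]{i}{\kappa(\m)}{B_\m},$$
whose $\kappa(\m)$-dimension is $\mu^i_R(\m,B)$, finite by hypothesis. Therefore $\Ext{i}{R/\m}{B}$ has finite length over $R$; in particular it is mini-max, and $R/\ann_R(\Ext{i}{R/\m}{B})$ is a quotient of the field $\kappa(\m)$, hence semi-local and complete. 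Matlis reflexivity then follows from Fact~\ref{xpara1} (or directly from Lemma~\ref{xlem100213b}), completing the argument.

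There is no substantive obstacle: once one writes down the diagram from Fact~\ref{xfact100604a}, the whole proof is essentially a matter of identifying the dimension of $\Ext{i}{R/\m}{B}$ via the localization isomorphism of Theorem~\ref{xlem100312a} and invoking the mini-max characterization of Matlis reflexive modules. The only point to be careful about is that we are working globally over a (possibly non-local) noetherian ring, so the reduction of $\Ext{i}{R/\m}{B}$ to a finite-dimensional $\kappa(\m)$-vector space genuinely requires Theorem~\ref{xlem100312a} rather than a naive localization argument.
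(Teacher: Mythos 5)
Your proposal is correct and follows essentially the same route as the paper: part (a) is read off the commutative square in Fact~\ref{xfact100604a} using the injectivity of biduality and the isomorphism $\Theta^i_{NL}$ from Remark~\ref{xdisc100602b}, and part (b) reduces (via that same square) to showing $\Ext{i}{R/\m}{B}$ is Matlis reflexive, which both you and the paper get from its being a finite-dimensional $R/\m$-vector space together with Lemma~\ref{xlem100213b}. One small remark: invoking Theorem~\ref{xlem100312a} to identify $\Ext{i}{R/\m}{B}$ with $\Ext[R_\m]{i}{\kappa(\m)}{B_\m}$ is more machinery than you need --- since $R/\m$ is finitely presented and $\Ext{i}{R/\m}{B}$ is already an $R/\m$-module, the ordinary flat-base-change/localization identity for Ext suffices, which is presumably what the paper tacitly uses when it writes ``it follows that.''
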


\begin{proof}
\eqref{xlem100604a1}
Remark~\ref{xdisc100602b} implies that
$\Theta^{i}_{NL}$
is an isomorphism. Hence $\md{(\Theta_{NL}^i)}$ is also an isomorphism.
The map
$\bidual{\Ext{i}{N}{L}}$
is an injection by Fact~\ref{xfact100909a}.
Using  Fact~\ref{xfact100604a} 
we conclude that $\Ext{i}{N}{\bidual L}$ is an injection.

\eqref{xlem100604a2}
Assume now that $\m\in\mspec(R)$ is such that $\mu^i_R(\m,B)$ is finite. It follows that 
$\Ext{i}{R/\m}{B}$ is a finite dimensional $R/\m$-vector space, 
so it is Matlis reflexive over $R$ by Lemma~\ref{xlem100213b}. Hence, the map $\bidual{\Ext{i}{R/\m}{B}}$
is an isomorphism. Again, using  Fact~\ref{xfact100604a}
we conclude that $\Ext{i}{R/\m}{\bidual B}$ is an isomorphism, as desired.
\end{proof}

\begin{lem} \label{lem110405}
Assume that $R$ is noetherian.
Let $B$ be an $R$-module, and assume that $\m\in\mspec(R)$ is 
such that $\mu_R^1(\m,B)$ is finite.  Then 
there is an $R$-module $B'$ and an index set $\mathcal{S}$ such that
$B\cong B'\coprod \E_R(R/\m)^{(\mathcal{S})}$ and $\mu_R^0(\m,B')$ is finite. 
\end{lem}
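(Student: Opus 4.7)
The plan is to localize at $\m$, establish a local version of the lemma by a Zorn's Lemma argument combined with a long exact sequence analysis, and transfer the resulting injective summand back to $B$ using that $\E_R(R/\m)^{(\mathcal S)}$ is $R$-injective.

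First, I reduce to a local statement. Since $\mu^1_{R_\m}(B_\m)=\mu^1_R(\m,B)<\infty$ by Remark~\ref{xdisc101110a}, the hypothesis passes to $B_\m$ over $R_\m$. I would then prove the following local version: if $(R,\m)$ is local noetherian and $\mu^1_R(B)<\infty$, then $B\cong C\oplus \E_R(R/\m)^{(\mathcal S)}$ with $\mu^0_R(C)<\infty$. For this, I use Zorn's Lemma to choose a maximal family $\{E_s\}_{s\in\mathcal S}$ of submodules of $B$, each isomorphic to $\E_R(R/\m)$, whose internal sum $A=\bigoplus_{s\in\mathcal S}E_s$ is direct. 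Then $A\cong\E_R(R/\m)^{(\mathcal S)}$ is injective (direct sums of injectives over the noetherian ring $R$ are injective), so $A$ splits off: $B=A\oplus C$. The delicate step is showing $\mu^0_R(C)$ is finite: since $A$ is injective, $\mu^1_R(C)=\mu^1_R(B)<\infty$, and the maximality of $\mathcal S$ forces the socle-restriction map $\Hom{\E_R(R/\m)}{C}\to\Hom{R/\m}{C}$ to vanish (any nonzero class in the image would provide a further injective summand of $C$ isomorphic to $\E_R(R/\m)$, contradicting maximality). The long exact sequence associated with $0\to R/\m\to\E_R(R/\m)\to\E_R(R/\m)/(R/\m)\to 0$ then gives an injection $(0:_C\m)\hookrightarrow\Ext{1}{\E_R(R/\m)/(R/\m)}{C}$, which must be bounded via the finiteness of $\Ext{1}{R/\m}{C}$.

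Next, applying the local version to $B_\m$ yields $B_\m\cong C'\oplus E^{(\mathcal S)}$ with $E=\E_{R_\m}(\kappa(\m))\cong\E_R(R/\m)$ by Fact~\ref{fact110426a} and $\mu^0_{R_\m}(C')<\infty$. The summand $A':=E^{(\mathcal S)}\subseteq B_\m$ is $\m R_\m$-torsion, hence $\m$-torsion as an $R$-module, so it lies in $\Gamma_{\m R_\m}(B_\m)$. Since localization commutes with $\Gamma_\m$ and since $\Ga mB$ is already $\m$-torsion (so by Lemma~\ref{lem120818a} the natural map $\Ga mB\to(\Ga mB)_\m$ is an isomorphism), one identifies $\Gamma_{\m R_\m}(B_\m)=(\Ga mB)_\m=\Ga mB\subseteq B$. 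Thus $A'$ is an $R$-submodule of $B$, and since it is isomorphic to $\E_R(R/\m)^{(\mathcal S)}$, it is $R$-injective, so the inclusion $A'\hookrightarrow B$ splits as $R$-modules: $B=B'\oplus A'$ for some complement $B'$.

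Finally, to verify $\mu^0_R(\m,B')<\infty$, I localize $B=B'\oplus A'$ at $\m$ to get $B_\m=B'_\m\oplus A'$ (using $A'_\m=A'$), and compare with $B_\m=C'\oplus A'$ to see that $B'_\m\cong C'$ as complements of the same summand, hence $\mu^0_R(\m,B')=\mu^0_{R_\m}(B'_\m)=\mu^0_{R_\m}(C')<\infty$. The main obstacle is the local version itself: while Zorn produces $A$ directly, establishing finiteness of $\mu^0_R(C)$ from $\mu^1_R(B)<\infty$ is the genuine nontrivial content, requiring the long exact sequence argument sketched above together with careful use of the maximality of $\mathcal{S}$.
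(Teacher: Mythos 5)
Your overall strategy---reduce to the local case, use Zorn's Lemma to split off a maximal family of copies of $\E_R(R/\m)$, and transfer the injective summand back to $B$ via $\Gamma_{\m R_\m}(B_\m)=\Ga mB\subseteq B$---is sound and genuinely different from the paper's argument, which instead analyzes the differential $\partial_I^0$ of a minimal injective resolution of $B$ and uses the noetherianity of the submodule $\sum_\alpha\Comp Rmv_\alpha\subseteq(\Comp Rm)^n$ to carve out the injective summand directly. The Zorn set-up, the long exact sequence giving $(0:_C\m)\hookrightarrow\Ext 1{Q}{C}$ with $Q=\E_R(R/\m)/(R/\m)$, and the localization/transfer steps all check out.

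However, there is a real gap at the crux: the assertion that the injection $(0:_C\m)\hookrightarrow\Ext 1{Q}{C}$ ``must be bounded via the finiteness of $\Ext{1}{R/\m}{C}$.'' As stated this is not justified, and it is not true that $\Ext{1}{Q}{C}$ is controlled by $\Ext{1}{R/\m}{C}$ in any direct sense (indeed $\Ext{1}{\E_R(R/\m)}{C}$ itself can be infinite-dimensional over $R/\m$, e.g.\ $\Ext{1}{\E_R(k)}{R}\cong R$ for $R=k[[x]]$, so simply ``passing down'' the LES gives nothing). The step can be repaired, but the repair needs the same underlying ingredient as the paper's proof, namely the noetherianity of $\Comp Rm=\End_R(\E_R(R/\m))$: since $Q$ is $\m$-torsion, $\Ext{1}{Q}{C}$ is a subquotient of $\Hom{Q}{\Ga m{I^1_C}}\cong\Hom{Q}{\E_R(R/\m)^n}\cong(\m\Comp Rm)^n$ (where $n=\mu_R^1(\m,C)$), which is a noetherian $\Comp Rm$-module; hence its socle is a finite-dimensional $R/\m$-vector space, and the image of $(0:_C\m)$ lands inside that socle, giving $\mu_R^0(\m,C)<\infty$. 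Without some version of this finiteness input (which is exactly what the paper extracts from $(\Comp Rm)^n$ being noetherian), the long exact sequence alone does not close the argument. So the proposal as written leaves the central finiteness unproved; once the noetherian-subquotient observation is inserted it becomes a valid, and arguably tidier, alternative proof.
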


\begin{proof}
Set $\E(\m)=\E_R(R/\m)$, and let $\mu_R^1(\m,B)=n$. 
Note that any map $\phi\in\Hom{\E(\m)}{\E(\m)}\cong\Comp Rm$ is just multiplication by some element $r\in\Comp Rm$.
Hence, any map in $\phi\in\Hom{\E(\m)}{\E(\m)^n}\cong (\Comp Rm)^n$ is just multiplication by some vector $v\in(\Comp Rm)^n$. 
Given a vector $v\in(\Comp Rm)^n$, let $\phi_v\in\Hom{\E(\m)}{\E(\m)^n}$ denote the map that is multiplication by  $v$. 

Let $I$ be a minimal injective resolution of $B$, and decompose
$I^0=J\coprod \E(\m)^{(\mathcal{T})}=J\coprod(\coprod_{\alpha\in\mathcal{T}}\E(\m)_{\alpha})$ with $\Gamma_\m(J)=0$, where $\mathcal{T}$ is an index set. 
Here $\E(\m)_{\alpha}=\E(\m)$ for every $\alpha$; we use the subscripts to refer to specific summands. 
Let $\partial^0_I\colon I^0\to I^1$ be the first map in the injective resolution $I$. 
Then $\Gamma_\m(\partial^0_I)\colon \coprod_{\alpha\in\mathcal{T}}\E(\m)_{\alpha}\to \E(\m)^n$ 
can be described component-wise as 
$(\phi_{v_{\alpha}})_{\alpha\in \mathcal{T}}$ for  vectors $v_{\alpha}\in(\Comp Rm)^n$. 

Since $(\Comp Rm)^n$ is  noetherian over
$\Comp Rm$, so is the submodule $N:=\sum_{\alpha\in T}\Comp Rmv_{\alpha}\subseteq (\Comp Rm)^n$.  
Thus, we can choose distinct $\alpha_1,\hdots ,\alpha_m\in \mathcal{T}$ such that 
$N=\sum_{j=1}^m\Comp Rmv_{\alpha_j}$. 
Let $\mathcal{S}=\mathcal{T}\ssm\{\alpha_1,\alpha_2,\hdots ,\alpha_m\}$. 
Given $\beta\in \mathcal{S}$ choose $r_{\beta,1},\hdots ,r_{\beta,m}\in\Comp Rm$ such that
$v_{\beta}=\sum_{i=1}^m r_{\beta,i}v_{\alpha_i}$. 
For each $\beta\in\mathcal S$, set 
$$
X_{\beta}:=\left\{(e,-r_{\beta,1}e,\ldots ,-r_{\beta,m}e)\in \E(\m)_{\beta}\coprod\left(\coprod_{i=1}^m\E(\m)_{\alpha_i}\right)\mid\ e\in \E(\m)\right\}\subseteq I^0.$$ 
Then the map from $\E(\m)$ to $X_{\beta}$ defined by $e\mapsto (e,-r_{\beta,1}e,\ldots ,-r_{\beta,m}e)$ is an isomorphism. 
By construction, we have $X_{\beta}\subseteq\ker(\partial^0_I)=B$.  

Consider the submodule $X:=\sum_{\beta\in\mathcal S}X_{\beta}\subseteq B\subseteq I^0$.
It is straightforward to show that the sum defining $X$ is a direct sum.
Hence, we have $X\cong \E(\m)^{(\mathcal S)}$. In particular, $X$ is an injective submodule of $B$, so it is a summand of $B$
and a summand of $I^0$.
It is straightforward to show that $I^0\cong J\coprod X\coprod\left(\coprod_{i=1}^m\E(\m)_{\alpha_i}\right)$.
Moreover, with $B'=B\cap (J\coprod 0\coprod\left(\coprod_{i=1}^m\E(\m)_{\alpha_i}\right))$,
the module $B$ is the internal direct sum $B=B'\oplus X\cong B'\oplus \E(\m)^{(\mathcal S)}$.
Finally, since $B'\subseteq J\coprod 0\coprod\left(\coprod_{i=1}^m\E(\m)_{\alpha_i}\right)$ and $\Gamma_{\m}(J)=0$,
we conclude that $\mu^0_R(\m,B')= m$, which is finite as desired.
\end{proof}

\begin{lem}\label{xlem100604b}
Assume that $R$ is noetherian, and let $\fc$ be an intersection of finitely many 
maximal ideals of $R$.
Let $T$ and $B$ be $R$-modules such that $T$ is $\fc$-torsion.  
Assume that $i\geq 0$ 
is such that $\mu_R^i(\m,B)$ is finite for all $\m\in\supp_R(T)\cap\supp_R(B)$.
Then the induced map
$\Ext{j}{T}{\bidual{B}}\colon \Ext{j}{T}{B}\to\Ext{j}{T}{\mdd{B}}$
is an isomorphism when $j=i$, and it
is an injection when $j=i+1$.
\end{lem}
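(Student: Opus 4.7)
The plan is to decompose $T$ using Sharp's theorem (essentially Lemma~\ref{lem120727a}), reduce to the case where $T$ is $\m$-torsion for a single maximal ideal $\m$, and then combine a careful analysis of the cokernel of the biduality map with a reduction to the complete local case handled in \cite{kubik:hamm1}.

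First, by Lemma~\ref{lem120727a}, $T$ decomposes as $T\cong\coprod_{\m\in\mcf}T(\m)$ where $\mcf$ is the finite set of maximal ideals containing $\fc$ and each $T(\m)=\Gamma_{\m}(T)$ is $\m$-torsion. Since $\mcf$ is finite, this gives compatible finite direct-sum decompositions of $\Ext{j}{T}{B}$, $\Ext{j}{T}{\mdd{B}}$, and the map $\Ext{j}{T}{\bidual{B}}$, so it suffices to establish the claim when $T$ is $\m$-torsion for a single $\m\in\mcf$.

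Next, set $K=\coker(\bidual{B})$. By Fact~\ref{xfact100909a} the map $\bidual{B}$ is injective, producing a short exact sequence $0\to B\to\mdd{B}\to K\to 0$ and an associated long exact sequence of $\Ext{*}{T}{-}$. The conclusions reduce to $\Ext{i}{T}{K}=0$ (which immediately yields surjectivity at $j=i$ and injectivity at $j=i+1$) together with the vanishing of the connecting map $d_{i-1}\colon\Ext{i-1}{T}{K}\to\Ext{i}{T}{B}$ (which completes injectivity at $j=i$). For the first vanishing, apply the LES with $R/\m$ in place of $T$: Lemma~\ref{xlem100604a}(a) gives injectivity of $\Ext{j}{R/\m}{\bidual{B}}$ for all $j$, while (b) gives an isomorphism at $j=i$; a routine diagram chase then yields $\Ext{i}{R/\m}{K}=0$, that is $\mu^{i}_{R}(\m,K)=0$. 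Since $T$ is $\m$-torsion and any prime $\p\neq\m$ satisfies $\m\nsubseteq\p$, every element of $\m\ssm\p$ acts as an automorphism on $\E_R(R/\p)$ (which is an $R_\p$-module) and locally nilpotently on $T$, forcing $\Hom{T}{\E_R(R/\p)}=0$. Combined with the absence of the $\E_R(R/\m)$-summand in the $i$-th term $J^{i}$ of a minimal injective resolution of $K$, this gives $\Hom{T}{J^{i}}=0$ and hence $\Ext{i}{T}{K}=0$.

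For the vanishing of $d_{i-1}$, apply Theorem~\ref{xlem100312a} with $\mcg=\{\m\}$ to obtain identifications $\Ext{j}{T}{L}\cong\Ext[\Comp{R}{\m}]{j}{T}{\Comp{R}{\m}\otimes_R L}$ for $L\in\{B,\mdd{B}\}$. Using the $\m$-torsion of $T$ together with Lemmas~\ref{disc120720a} and~\ref{xlem101021a}, this identification transports $\Ext{j}{T}{\bidual{B}}$ to the map induced by the biduality of $\Comp{R}{\m}\otimes_R B$ over the complete local ring $\Comp{R}{\m}$. Since $\mu^{i}_{\Comp{R}{\m}}(\Comp{R}{\m}\otimes_R B)=\mu^{i}_R(\m,B)$ is finite, the desired injectivity then follows from the corresponding local complete analog proved in \cite{kubik:hamm1}. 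The main technical obstacle is this final identification: although $\Comp{R}{\m}\otimes_R\mdd{B}$ is generally not isomorphic to $(\Comp{R}{\m}\otimes_R B)^{\vee(\Comp{R}{\m})\vee(\Comp{R}{\m})}$, the $\m$-torsion of $T$ renders the discrepancy invisible to $\Hom[\Comp{R}{\m}]{T}{-}$ and its derived functors, so the two maps agree after applying $\Ext[\Comp{R}{\m}]{j}{T}{-}$.
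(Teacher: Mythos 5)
Your first two paragraphs track the paper's actual argument faithfully: the decomposition of $T$ into $\m$-torsion pieces via Lemma~\ref{lem120727a}, the short exact sequence involving $K=\coker(\bidual B)$, and the deduction $\mu^i_R(\m,K)=0\Rightarrow\Gamma_\m(J^i)=0\Rightarrow\Hom{T}{J^i}=0\Rightarrow\Ext iTK=0$ (using Lemma~\ref{disc120720a}\eqref{xlem100206c2} to pass from $\Hom{T}{J^i}$ to $\Hom{T}{\Gamma_\m(J^i)}$, which you should invoke explicitly since $\Hom{T}{-}$ does not commute with coproducts). This correctly yields surjectivity at $j=i$ and injectivity at $j=i+1$.

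The gap is in your third paragraph, i.e., the injectivity at $j=i$ (vanishing of the connecting map $d_{i-1}$). You propose to transport $\Ext jT{\bidual B}$ via Theorem~\ref{xlem100312a} to the $\Comp R\m$-biduality map of $\Comp R\m\otimes_RB$ and invoke the local result. But you never establish the required compatibility: there is no map between $\Comp R\m\otimes_R\mdd B$ and $(\Comp R\m\otimes_RB)^{\vee(\Comp R\m)\vee(\Comp R\m)}$ under which the base-changed biduality corresponds to the $\Comp R\m$-biduality, and matching Bass numbers alone does not produce a naturality square that you can feed into $\Ext[\Comp R\m]jT{-}$. Asserting that "the discrepancy is invisible" is precisely the assertion that needs a proof. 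The paper avoids this entirely: rather than reducing to the local case, it shows $\Ext{i-1}{T}{K}=0$ directly. This is nontrivial because $\mu^{i-1}_R(\m,B)$ is not assumed finite; the paper gets around this via Lemma~\ref{lem110405}, which splits off the $\E_R(R/\m)$-summands of $B$ to reduce to the subcase where both $\mu^{i-1}$ and $\mu^{i}$ are finite (Case~3), together with a dimension-shifting/truncation argument for $i\ge 2$ (Case~4). This splitting-off lemma is exactly the new technical input that the introduction of Section~\ref{sec110519a} flags as going beyond the local case of~\cite{kubik:hamm1}, and your proposal does not supply a substitute for it.
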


\begin{proof} 
Note that for all $\m\in \supp_R(T)\ssm\supp_R({B})$ we have $\mu^j_R(\m,{B})=0$ for all $j$, by 
Remark~\ref{xdisc101110a}.
Thus, the quantity $\mu_R^i(\m,{B})$ is finite for all $\m\in\supp_R(T)$.
As the biduality map $\bidual {B}$ is injective,  we have
an exact sequence
\begin{equation} 
\label{lem100604b1}
0\to {B}\xra{\bidual {B}}\mdd{{B}}\to\coker(\bidual {B})\to 0.
\end{equation}

Case 1:  $i=0$.
Lemma~\ref{xlem100604a} implies that for all $\m\in\supp_R(T)$ the 
induced map
$\Hom{R/\m}{\bidual {B}}$
is an isomorphism and the map $\Ext{1}{R/\m}{\bidual {B}}$ is an injection.
The long exact sequence in $\Ext{}{R/\m}{-}$ associated to~\eqref{lem100604b1} shows that
$\Hom{R/\m}{\coker(\bidual {B}})=0$ for all $\m\in\supp_R(T)$,
so $\Gamma_{\m}(\coker(\bidual {B}))=0$.
Lemmas~\ref{disc120720a}\eqref{xlem100206c2}, \ref{lem110517a}\eqref{item120718e}, 
and~\ref{lem120727a}
imply that
\begin{align*}
\Hom{T}{\coker(\bidual {B})}
&\cong\textstyle\Hom{\coprod_{\m\in\supp_R(T)}T_{\m}}{\coker(\bidual {B})}\\
&\cong \textstyle\coprod_{\m\in\supp_R(T)}\Hom{T_{\m}}{\Gamma_{\m}(\coker(\bidual {B}))}
=0.
\end{align*}
From the long exact sequence associated to $\Ext{}{T}{-}$ with respect to~\eqref{lem100604b1}, it follows that $\Hom{T}{\bidual {B}}$
is an isomorphism and $\Ext{1}{T}{\bidual {B}}$ is an injection.

Case 2: $i=1$ and $\mu^0(\m,{B}),\mu^1(\m,{B})$ are both finite for all $\m\in\supp_R(T)$.
Lemma~\ref{xlem100604a} implies that for $t=0,1$ the map
$\Ext{t}{R/\m}{\bidual {{B}}}$
is an isomorphism, and the map $\Ext{2}{R/\m}{\bidual {{B}}}$ is an injection for all $\m\in\supp_R(T)$.
From the long exact sequence associated to $\Ext{}{R/\m}{-}$ with respect to~\eqref{lem100604b1}
we conclude that for $t=0,1$ we have
$\Ext{t}{R/\m}{\coker(\bidual {{B}})}=0$ for all $\m\in\supp_R(T)$.
In other words, we have $\mu^{t}_R(\m,\coker(\bidual {{B}}))=0$ for all $\m\in\supp_R(T)$.
Let $I$ be a minimal injective resolution of $\coker(\bidual {{B}})$. Then for $t=0,1$ the module $I^{t}$ does not have $\E_R(R/\m)$ as a summand by Remark~\ref{xdisc100414a}\eqref{xdisc100414a2}. That is,
we have $\Gamma_{\m}(I^{t})=0$, so
Lemmas~\ref{disc120720a}\eqref{xlem100206c2}, \ref{lem110517a}\eqref{item120718e}, 
and~\ref{lem120727a} imply that
\begin{align*}
\Hom{T}{I^t}
&\cong\textstyle\Hom{\coprod_{\m\in\supp_R(T)}T_{\m}}{I^t}\\
&\cong \textstyle\coprod_{\m\in\supp_R(T)}\Hom{T_{\m}}{\Gamma_{\m}(I^t)}
=0.
\end{align*}
It follows that
$\Ext{t}{T}{\coker(\bidual {{B}})}=0$ for $t=0,1$. 
From the long exact sequence associated to $\Ext{}{T}{-}$
with respect to~\eqref{lem100604b1}, it follows that $\Ext{1}{T}{\bidual {{B}}}$
is an isomorphism and $\Ext{2}{T}{\bidual {{B}}}$ is an injection, as desired.

Case 3: $i=1$.
Apply Lemma~\ref{lem110405} inductively for the finitely many $\m\in\supp_R(T)$ to write
${B}\cong B'\coprod I$ where
$$\textstyle I=\coprod_{\m\in\supp_R(T)}\E_R(R/\m)^{(\mathcal{S}_\m)}$$ 
such that $\mathcal{S}_\m$ is an index set and 
$\mu^{0}(\m,B')$ is finite for all $\m\in\supp_R(T)$. 
Note that we have $\mu^1_R(\m,B')\leq\mu^1_R(\m,B)$ which is finite for all $\m\in\supp_R(T)$,
since $B'$ is a summand of $B$.
Since $I$ is injective, so is $\mdd{I}$. Hence, the maps
$\Ext{1}{T}{\bidual {I}}$ and $\Ext{2}{T}{\bidual {I}}$
are both just the map from the zero module to the zero module. Case~2 (applied to $B'$) implies that 
$\Ext{1}{T}{\bidual {B'}}$ is an isomorphism and $\Ext{2}{T}{\bidual {B'}}$ is an injection.
Since the desired result holds for $B'$ and $I$, it also holds for $B\cong B'\coprod I$.

Case 4: $i\geq 2$.  Let $J$ be a minimal injective resolution of ${B}$, and let $B''=\ker(J^{i-1}\to J^{i})$.  
As $\mu^1_R(\m,B'')=\mu^i_R(\m,B)$ is finite 
for all $\m\in\supp_R(T)$, Case~3 (applied to $B''$) shows that $\Ext{1}{T}{\bidual{B''}}$
is an isomorphism and $\Ext{2}{T}{\bidual{B''}}$ is an injection. 
A standard long exact sequence argument shows that 
$\Ext{i}{T}{\bidual {B}}$
is an isomorphism and
$\Ext{i+1}{T}{\bidual {B}}$
is an injection.
\end{proof}

\begin{lem} \label{xlem101221c}
Assume that $R$ is noetherian, and let $\fc$ be an intersection of finitely many maximal ideals of $R$.
Let  $I$, $L$, and $T$ be $R$-modules such that $T$ is $\fc$-torsion and $I$ is injective. Let $\fa\subseteq\fb:=\cap_{\m\in\supp_R(T)\cap\supp_R(I)}\m$. Then 
there are $R$-module isomorphisms
\[
T\otimes_R\Hom{I}{L}\cong T\otimes_R\Hom{\Gamma_{\fa}(I)}{L}\cong T\otimes_R\Hom{\Gamma_{\fa}(I)}{\Gamma_{\fa}(L)}.
\]
\end{lem}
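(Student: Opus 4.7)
The plan is to reduce the first isomorphism to a tensor vanishing by splitting off the $\fa$-torsion part of $I$; the second isomorphism is nearly immediate from Lemma~\ref{disc120720a}\eqref{xlem100206c2} applied to the $\fa$-torsion module $\Gamma_{\fa}(I)$, which identifies $\Hom{\Gamma_{\fa}(I)}{L}\cong\Hom{\Gamma_{\fa}(I)}{\Gamma_{\fa}(L)}$ canonically and yields the claim upon applying $T\otimes_R-$.

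For the first isomorphism, since $R$ is noetherian and $I$ is injective, we may write $I=\coprod_{\p\in\spec(R)}\E_R(R/\p)^{(\nu_\p)}$, and Fact~\ref{xfact101221a} regroups this as a direct sum of injective modules $I=\Gamma_{\fa}(I)\oplus I'$ with $I'=\coprod_{\p\not\supseteq\fa}\E_R(R/\p)^{(\nu_\p)}$. Applying $T\otimes_R\Hom{-}{L}$ reduces the claim to $T\otimes_R\Hom{I'}{L}=0$. By Lemmas~\ref{lem120727a} and~\ref{lem110517a}, we have $T=\coprod_{\m\in\supp_R(T)}T(\m)$ with each $T(\m)$ being $\m$-torsion, so it suffices to verify $T(\m)\otimes_R\Hom{I'}{L}=0$ for each $\m\in\supp_R(T)$.

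Fix such $\m$. A short case analysis shows that $\m\neq\p$ for every prime $\p$ occurring in $I'$: if $\m\in\supp_R(I)$ then $\m\supseteq\fb\supseteq\fa$, ruling out $\p=\m$ since $\p\not\supseteq\fa$; if $\m\notin\supp_R(I)$ then $\nu_\m=0$, so $\m$ does not occur in $I'$. Because $\m$ is maximal, this forces $\m\not\subseteq\p$ and hence $\m^{N}\not\subseteq\p$ for every $N\geq 1$. The main obstacle is then to show $\m^{N}\Hom{I'}{L}=\Hom{I'}{L}$ even though $I'$ may involve infinitely many primes. Using that $R$ is noetherian, choose finite generators $s_1,\dots,s_k$ of $\m^{N}$; for each $\p$ occurring in $I'$, some $s_{j(\p)}\notin\p$, and Fact~\ref{xfact101221a} ensures that multiplication by $s_{j(\p)}$ is an automorphism of $\E_R(R/\p)^{(\nu_\p)}$. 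Given $\phi\in\Hom{I'}{L}$, define $\phi_i\colon I'\to L$ summand-by-summand by setting $\phi_i|_{\E_R(R/\p)^{(\nu_\p)}}$ equal to $\phi|_{\E_R(R/\p)^{(\nu_\p)}}$ composed with the inverse of multiplication by $s_{j(\p)}$ when $i=j(\p)$, and zero otherwise; the coproduct universal property of $I'$ makes each $\phi_i$ a well-defined $R$-module homomorphism satisfying $\phi=\sum_{i=1}^{k}s_i\phi_i\in\m^{N}\Hom{I'}{L}$. Lemma~\ref{xlem:tensor}, applied with $\fa=\m$ and $n=1$ to the $\m$-torsion module $T(\m)$, then yields $T(\m)\otimes_R\Hom{I'}{L}\cong T(\m)\otimes_R(\Hom{I'}{L}/\m\Hom{I'}{L})=0$, completing the proof.
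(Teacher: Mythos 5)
Your proof is correct, and the overall strategy matches the paper's: decompose $I\cong\Gamma_\fa(I)\oplus I'$ with $I'$ an injective complement, reduce the first isomorphism to the vanishing $T\otimes_R\Hom{I'}{L}=0$, establish divisibility of $\Hom{I'}{L}$ by the relevant torsion ideal(s), and then invoke Lemma~\ref{xlem:tensor}; the second isomorphism follows from Lemma~\ref{disc120720a}\eqref{xlem100206c2} in both treatments. The difference is in how the divisibility is established. The paper keeps $T$ whole: setting $\fb'=\cap_{\m\in\supp_R(T)}\m$, it notes $\fb'\nsubseteq\p$ for each $\p$ appearing in $I'$, passes through the identification $\Hom{\coprod_\p(-)}{L}\cong\prod_\p\Hom{(-)}{L}$, uses that each factor is an $R_\p$-module with $\fb' R_\p=R_\p$, and then pulls $\fb'$ outside the infinite product using that $\fb'$ is finitely generated, yielding $\fb'\Hom{I'}{L}=\Hom{I'}{L}$ in one step. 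You instead first decompose $T\cong\coprod_{\m\in\supp_R(T)}T(\m)$ via Lemma~\ref{lem120727a}, then for each fixed $\m$ pick generators $s_1,\dots,s_k$ of $\m^N$ and explicitly construct the maps $\phi_i$ witnessing $\phi\in\m^N\Hom{I'}{L}$, using the automorphism structure of $\E_R(R/\p)$ from Fact~\ref{xfact101221a}. Your version essentially unpacks, by hand, the paper's ``finitely generated ideal commutes with products'' observation; the paper's version is a touch more streamlined. Both yield the same conclusion, and the choice between them is purely expository.
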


\begin{proof}
Fix an isomorphism $I\cong\coprod_{\p\in\supp_R(I)}\E_R(R/\p)^{(\mu_{\p})}$. 
Set $\fb'=\cap_{\m\in\supp_R(T)}\m$,
and let $\p\in\supp_R(I)\ssm V(\fa)$.
The assumption $\fa\subseteq\fb$ implies that
$\p\notin\supp_R(T)$. Hence, using the fact that $\supp_R(T)$ is a finite set of maximal ideals,
we conclude that $\fb'\not\subseteq\p$.
Since $\E_R(R/\p)^{(\mu_{\p})}$ is an $R_\p$-module, so is $\Hom{\E_R(R/\p)^{(\mu_{\p})}}{L}$.  The condition $\fb'\nsubseteq\p$,
implies that $\fb' R_{\p}=R_{\p}$, and this explains the second step in the next display:
\begin{align*}
\textstyle\Hom{\coprod_{\p\in\supp_R(I)\ssm V(\fa)}\E_R(R/\p)^{(\mu_{\p})}}{L} \hspace{-2.2cm} \\
&\cong\textstyle\prod_{\p\in\supp_R(I)\ssm V(\fa)}\Hom{\E_R(R/\p)^{(\mu_{\p})}}{L}\\
&=\textstyle\prod_{\p\in\supp_R(I)\ssm V(\fa)}\fb'\Hom{\E_R(R/\p)^{(\mu_{\p})}}{L}\\
&=\textstyle\fb'\prod_{\p\in\supp_R(I)\ssm V(\fa)}\Hom{\E_R(R/\p)^{(\mu_{\p})}}{L}\\
&\cong\textstyle\fb'\Hom{\coprod_{\p\in\supp_R(I)\ssm V(\fa)}\E_R(R/\p)^{(\mu_{\p})}}{L}.
\end{align*}
The third step follows from the fact $\fb'$ is finitely generated, and the remaining steps are standard.   
Set $X:=\coprod_{\p\in\supp_R(I)\ssm V(\fa)}\E_R(R/\p)^{(\mu_{\p})}$,
which satisfies $\Hom X{L}=\fb'\Hom X{L}$ by the previous display. 
Lemmas~\ref{lem110517a}\eqref{item120718e} and~\ref{lem120727a}
imply that $T$ is $\fb'$-torsion, so  
$T\otimes_R\Hom{X}{L}=0$ by Lemma~\ref{xlem:tensor}. Also we have 
\[
\textstyle I\cong(\coprod_{\p\in V(\fa)\cap\supp_R(I)}\E_R(R/\p)^{(\mu_{\p})})
\coprod X\cong\Gamma_{\fa}(I)\coprod X
\]
by Fact~\ref{xfact101221a}, and it follows that
\[
\textstyle\Otimes{T}{\Hom{I}{L}}\cong\Otimes{T}{\Hom{\Gamma_{\fa}(I)\coprod X}{L}}
\cong\Otimes{T}{\Hom{\Gamma_{\fa}(I)}{L}}.
\]
This explains the first  isomorphism from the statement of the lemma, and the second one follows from 
Lemma~\ref{disc120720a}\eqref{xlem100206c2}.
\end{proof}

\begin{lem} \label{xlem101221a}
Assume that $R$ is noetherian, and let $\fc$ be an intersection of
finitely many maximal ideals of $R$.
Let $T$ and $L$ be $R$-modules such that $T$ is $\fc$-torsion. 
Let $\fa$ be an ideal contained in $\cap_{\m\in\supp_R(T)\cap\supp_R(L)}\m$.
For each index $i\geq 0$, there is an $R$-module isomorphism
\[
\Tor{i}{T}{\Hom{L}{E_{\Comp Ra}}}\cong\Tor{i}{T}{\md L}.
\]
\end{lem}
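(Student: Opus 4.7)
Plan: The plan is to use the natural splitting $E_R\cong E_{\Comp Ra}\oplus E''$ (where $E_{\Comp Ra}\cong\Gamma_\fa E_R$ by Lemma~\ref{xlem101021a} and $E''=\coprod_{\m\in\mspec(R)\setminus V(\fa)}E_R(R/\m)$), which gives via $\Hom{L}{-}$ the split short exact sequence
\[
0\to\Hom{L}{E_{\Comp Ra}}\to\md L\to\Hom{L}{E''}\to 0.
\]
The induced map $\Tor{i}{T}{\Hom{L}{E_{\Comp Ra}}}\to\Tor{i}{T}{\md L}$ is a split monomorphism for each $i\ge 0$ with cokernel $\Tor{i}{T}{\Hom{L}{E''}}$, so the lemma is equivalent to showing $\Tor{i}{T}{\Hom{L}{E''}}=0$ for all $i$. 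First I would decompose $T\cong\coprod_{\m\in\supp_R(T)}T(\m)$ via Lemma~\ref{lem120727a}, with each $T(\m)$ being $\m$-torsion, and write $T(\m)=\colim_n(0:_{T(\m)}\m^n)$. Since $\Tor$ commutes with coproducts and filtered colimits, the task reduces to showing $\Tor{i}{N}{\Hom{L}{E''}}=0$ for every $n\ge 1$, every $R/\m^n$-module $N$, and every $\m\in\supp_R(T)$. The standard change-of-rings spectral sequence
\[
E^2_{pq}=\Tor[R/\m^n]{p}{N}{\Tor{q}{R/\m^n}{\Hom{L}{E''}}}\Longrightarrow\Tor{p+q}{N}{\Hom{L}{E''}}
\]
then reduces the problem further to the key vanishing $\Tor{q}{R/\m^n}{\Hom{L}{E''}}=0$ for all $q\ge 0$ and all $\m\in\supp_R(T)$.

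For this key vanishing I would resolve $R/\m^n$ by finitely generated free $R$-modules $F_\bullet\to R/\m^n$ (possible as $R$ is noetherian). Because each $F_j$ is finitely presented and $E''$ is injective, the Hom-evaluation morphism yields a natural isomorphism of chain complexes
\[
F_\bullet\otimes_R\Hom{L}{E''}\;\xra{\cong}\;\Hom{\Hom{F_\bullet}{L}}{E''}.
\]
Taking homology and using that $\Hom(-,E'')$ is exact to pass homology through, one obtains the natural isomorphism
\[
\Tor{q}{R/\m^n}{\Hom{L}{E''}}\cong\Hom{\Ext{q}{R/\m^n}{L}}{E''}.
\]

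It then remains to verify $\Hom{\Ext{q}{R/\m^n}{L}}{E''}=0$ for every $\m\in\supp_R(T)$ by case analysis. If $\m\notin V(\fa)$, the hypothesis $\fa\subseteq\cap_{\m'\in\supp_R(T)\cap\supp_R(L)}\m'$ forces $\m\notin\supp_R(L)$, whence $\supp_R(\Ext{q}{R/\m^n}{L})\subseteq\{\m\}\cap\supp_R(L)=\varnothing$ and $\Ext{q}{R/\m^n}{L}=0$. If $\m\in V(\fa)$, then $\m$ does not index any summand of $E''$; the module $\Ext{q}{R/\m^n}{L}$ is annihilated by $\m^n$ (hence $\m$-torsion), while each summand $E_R(R/\m')$ of $E''$ is $\m'$-torsion with $\m'\ne\m$, and the coprimality $\m+\m'=R$ yields $\Hom{M}{E_R(R/\m')}=0$ for every $\m$-torsion $R$-module $M$; the injection $\Hom{\Ext{q}{R/\m^n}{L}}{E''}\hookrightarrow\prod_{\m'}\Hom{\Ext{q}{R/\m^n}{L}}{E_R(R/\m')}=0$ then delivers the desired vanishing. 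The main delicate point is justifying the Hom-evaluation isomorphism in the needed generality, but since it is invoked only with a finitely presented first argument and an injective third argument, this is a classical case.
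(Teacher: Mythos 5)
Your proof is correct, but it takes a genuinely different route from the paper's. You split $E=E_{\Comp Ra}\oplus E''$ with $E''=\coprod_{\m\notin V(\fa)}\E_R(R/\m)$, reduce the lemma to the vanishing $\Tor{i}{T}{\Hom{L}{E''}}=0$, and obtain that vanishing through a chain of reductions: decompose $T$ into its $\m$-torsion pieces, write each as a filtered colimit of $R/\m^n$-modules, invoke the change-of-rings spectral sequence to reduce to $\Tor{q}{R/\m^n}{\Hom{L}{E''}}=0$, identify this via Hom-evaluation (over a finite free resolution of $R/\m^n$, against the injective $E''$) with $\Hom{\Ext{q}{R/\m^n}{L}}{E''}$, and finally kill the latter by a support argument when $\m\notin V(\fa)$ and a comaximality argument when $\m\in V(\fa)$. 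The paper instead takes a minimal injective resolution $I$ of $L$ and applies Lemma~\ref{xlem101221c}, which says that $T\otimes_R\Hom{I^j}{L'}$ is unchanged when $I^j$ and $L'$ are replaced by their $\fa$-torsion submodules; combined with Lemma~\ref{xlem101021a} (identifying $\Ga aE$ with $E_{\Comp Ra}$) this identifies the flat resolutions $T\otimes_R\Hom{I}{E}$ and $T\otimes_R\Hom{I}{E_{\Comp Ra}}$ directly, and taking homology finishes in a few lines. The paper's route is shorter because the torsion--support bookkeeping is offloaded to the already-established Lemma~\ref{xlem101221c}; your route is self-contained but effectively reproves an analogue of that lemma through the spectral-sequence and Hom-evaluation machinery. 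The engine is the same in both: maximal ideals of $\supp_R(T)$ outside $V(\fa)$ are forced outside $\supp_R(L)$ by the hypothesis on $\fa$, while those inside $V(\fa)$ do not index summands of $E''$, and comaximality kills the remaining Hom.
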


\begin{proof}
Let $I$ be a minimal injective resolution of $L$.
The minimality of $I$ implies that $\supp_R(I^j)\subseteq\supp_R(L)$ for all $j$.
Thus, Lemma \ref{xlem101221c} explains the first and third isomorphisms in the following display:
\begin{align*}
\Otimes{T}{\Hom{I}{E}}
&\cong\Otimes{T}{\Hom{\Ga{a}{I}}{\Ga{a}{E}}}\\
&\cong\Otimes{T}{\Hom{\Ga{a}{I}}{E_{\Comp Ra}}}\\
&\cong\Otimes{T}{\Hom{I}{E_{\Comp Ra}}}.
\end{align*}
The second   isomorphism is from  Lemma~\ref{xlem101021a}.
Since $E$ and $E_{\Comp Ra}$ are injective over $R$,
the complex $\Hom{I}{E}$ is a flat resolution of $\Hom{L}{E}=\md L$,
and $\Hom{I}{E_{\Comp Ra}}$ is a flat resolution of $\Hom{L}{E_{\Comp Ra}}$; see~\cite[Theorem 3.2.16]{enochs:rha}.
By taking homology in the display, we obtain the desired isomorphism.
\end{proof}

Example~\ref{ex120728a} can be used to show that it is not enough to assume that $A$ is
$\fc$-torsion in the next results.

\begin{thm}\label{xprop100601b}
Assume that $R$ is noetherian.
Let $A$ and $B$ be $R$-modules such that $A$ is artinian. 
Let $\mathcal{F}$ be a finite set of maximal ideals of $R$ containing $\supp_R(A)\cap\supp_R(B)$,
and set $\fb=\cap_{\m\in\mathcal{F}}\m$.
Assume that $i\geq 0$ is such that $\mu_R^i(\m,B)$ is finite for all 
$\m\in\supp_R(A)\cap\supp_R(B)$. Then we have the following:
\begin{enumerate}[\rm(a)]
\item\label{xprop100601b2}
There is an $R$-module isomorphism 
$\Ext{i}{A}{B}^{\vee(\Comp R\fb)}\cong\Tor{i}{A}{\md B}$.
\item\label{xprop100601b1}
If $R/(\ann_R(A)+\ann_R(B))$ is semi-local and complete, then $\Theta^{i}_{AB}$
provides an $R$-module isomorphism $\md{\Ext{i}{A}{B}}\cong\Tor{i}{A}{\md{B}}$.
\end{enumerate}
\end{thm}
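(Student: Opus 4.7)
The plan is to reduce to the local complete case via the coproduct decomposition from Theorem~\ref{xlem100312a}. First, applying that theorem with $\mcg = \mcf$ yields the $\Comp Rb$-linear decomposition
\[
\Ext{i}{A}{B} \cong \coprod_{\m \in \mcf} \Ext[\Comp Rm]{i}{\Ga mA}{\Comp Rm \otimes_R B},
\]
and each summand is $\m$-torsion as an $R$-module (since $\Ga mA$ is $\m\Comp Rm$-torsion over $\Comp Rm$). Hence $\Ext{i}{A}{B}$ is $\fb$-torsion, and combining Lemma~\ref{lem110413b} applied globally with the analogous identifications in each summand (via Lemma~\ref{xlem101021a}) produces
\[
\MD{\Ext{i}{A}{B}}{\Comp Rb} \cong \md{\Ext{i}{A}{B}} \cong \coprod_{\m \in \mcf} \MD{\Ext[\Comp Rm]{i}{\Ga mA}{\Comp Rm \otimes_R B}}{\Comp Rm}.
\]

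For the Tor side, writing $A = \coprod_{\m \in \supp_R(A)} \Ga mA$ (Fact~\ref{fact120720a}) and applying Lemma~\ref{xlem101221a} locally to each summand with $\fa = \m$, together with hom-tensor adjointness and Lemma~\ref{xlem101021a} to identify $\Hom{B}{\E_R(R/\m)} \cong \MD{(\Comp Rm \otimes_R B)}{\Comp Rm}$, I would obtain
\[
\Tor{i}{A}{\md B} \cong \coprod_{\m \in \mcf} \Tor[\Comp Rm]{i}{\Ga mA}{\MD{(\Comp Rm \otimes_R B)}{\Comp Rm}};
\]
summands with $\m \in \supp_R(A) \ssm \mcf$ vanish since $B_\m = 0$ there (so the relevant local Tor against $\MD{(\Comp Rm \otimes_R B)}{\Comp Rm}$ is zero).

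In each local summand, $\Ga mA$ is artinian over the complete local noetherian ring $\Comp Rm$ (Fact~\ref{fact120720a}), and the finite Bass number hypothesis descends to $\Comp Rm \otimes_R B$. Thus the local complete case of this theorem,~\cite[Proposition~4.1]{kubik:hamm1}, provides an isomorphism of the corresponding $\m$-summands via the local hom-evaluation $\Theta$ map. Taking coproducts over $\m \in \mcf$ proves~(a). For~(b), since $\Ext{i}{A}{B}$ is $\fb$-torsion, Lemma~\ref{lem110413b} identifies $\md{\Ext{i}{A}{B}} \cong \MD{\Ext{i}{A}{B}}{\Comp Rb}$, so~(b) follows from~(a); the stronger hypothesis $R/(\ann_R(A)+\ann_R(B))$ semi-local complete is used to ensure that the local hom-evaluation maps assemble into the globally defined $\Theta^i_{AB}$, by naturality of hom-evaluation together with the fact that $R/\ann_R(\Ext{i}{A}{B})$ is then also semi-local complete (via $\ann_R(A)+\ann_R(B) \subseteq \ann_R(\Ext{i}{A}{B})$).

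The main obstacle will be tracking naturality: verifying that the coproduct of local hom-evaluation maps on the right-hand side of the decompositions corresponds, through the decomposition isomorphisms, to the global map $\Theta^i_{AB}$ built from $\theta_{AJE}$ for part~(b), and confirming that the various identifications of $\md{(-)}$ with $\MD{(-)}{\Comp Rm}$ (and with $\MD{(-)}{\Comp Rb}$) are compatible across summands.
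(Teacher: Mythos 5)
Your part~(a) argument — decomposing all the way to the complete local rings $\Comp Rm$, applying Lemma~\ref{xlem101221a} and the local result from~\cite{kubik:hamm1} to each $\m$-summand, and checking that the summands for $\m\in\mcf$ outside $\supp_R(A)\cap\supp_R(B)$ vanish — is a viable route to an abstract isomorphism, and it differs from the paper in a genuine way: the paper does \emph{not} decompose to local rings for~(a), but instead passes to the semi-local complete ring $\Comp R{b'}$ with $\fb'=\cap_{\m\in\supp_R(A)\cap\supp_R(B)}\m$ and applies part~(b) over that ring. This makes the logical dependence in the paper the \emph{reverse} of yours: the paper proves~(b) first and derives~(a) from it.

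That reversal matters, because your deduction of~(b) from~(a) has a real gap. Part~(a) produces some isomorphism $\Ext{i}{A}{B}^{\vee(\Comp R\fb)}\cong\Tor{i}{A}{\md B}$; part~(b) asserts that the \emph{specific} hom-evaluation map $\Theta^i_{AB}$ is an isomorphism. Abstractly knowing that $\md{\Ext{i}{A}{B}}$ and $\Tor{i}{A}{\md B}$ are isomorphic (via Lemma~\ref{lem110413b} plus~(a)) does not show that $\Theta^i_{AB}$ is bijective. The ``naturality obstacle'' you flag at the end is not a bookkeeping detail — it is the entire content of~(b), and you have not filled it in. Moreover, your explanation for why the semi-local complete hypothesis is needed in~(b) (``to ensure the local hom-evaluation maps assemble into $\Theta^i_{AB}$'') is not how the hypothesis is actually used: the paper uses it to conclude that $\Ext{i}{A}{B}$ is Matlis reflexive \emph{over $R$}, so that the biduality map $\bidual{\Ext{i}{A}{B}}$ is an isomorphism. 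The paper's proof of~(b) then runs through the commutative square of Fact~\ref{xfact100604a}: $\md{(\Theta^i_{AB})}$ is an isomorphism once one knows that $\bidual{\Ext{i}{A}{B}}$ (top arrow) and $\Ext{i}{A}{\bidual B}$ (left arrow) are isomorphisms; faithful injectivity of $E$ then gives that $\Theta^i_{AB}$ itself is an isomorphism. The second of these inputs is Lemma~\ref{xlem100604b}, a substantial technical result (relying on Lemma~\ref{lem110405}) that your argument never touches. If you want to prove~(b), you should use that commutative-square device rather than trying to match your coproduct of local $\Theta$'s against the global one.
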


\begin{proof}
\eqref{xprop100601b1}  
Assume that $R':=R/(\ann_R(A)+\ann_R(B))$ is semi-local and complete.
From the containment $\ann_R(A)+\ann_R(B)\subseteq\ann_R(\Ext iAB)$, it follows that
$R/\ann_R(\Ext iAB)$ is semi-local and complete.
Theorem~\ref{xthm100308a} implies that $\Ext{i}{A}{B}$ is noetherian over $\Comp Rb$,
so it is noetherian over
$$\Comp Rb/(\ann_R(A)+\ann_R(B))\Comp Rb
\cong \Comp{R'}b\cong \comp{R'}^{\fb R'}.$$
Since $R'$ is semi-local and complete, the ring $\comp{R'}^{\fb R'}$ is a homomorphic image of $R'$,
hence a homomorphic image of $R$. Thus, $\Ext{i}{A}{B}$ is noetherian over $R$, so Fact~\ref{xpara1}
implies that $\Ext{i}{A}{B}$ is Matlis reflexive over $R$, i.e., the 
biduality map $\bidual{\Ext{i}{A}{B}}
\colon\Ext{i}{A}{B}\to\mdd{\Ext{i}{A}{B}}$ is an isomorphism. Lemma~\ref{xlem100604b} shows that the  map
$\Ext{i}{A}{\bidual B}\colon \Ext{i}{A}{B}\to\Ext{i}{A}{\mdd B}$
is an isomorphism, so
Fact~\ref{xfact100604a} implies that
$\md{(\Theta^{i}_{AB})}$ is an isomorphism.
Since $E$ is faithfully injective, the map $\Theta^{i}_{AB}$ is also an isomorphism.

\eqref{xprop100601b2}  
We first verify that
\begin{equation}
\label{xprop100601b3}
\Tor[\Comp R\fb]{i}{\Gamma_{\fb}(A)}{(\Otimes{\Comp R\fb}{B})^{\vee(\Comp R\fb)}}\cong
\Tor{i}{A}{(\Otimes{\Comp Rb}{B})^{\vee(\Comp R\fb)}}.
\end{equation}
For this, let $P$ be a projective resolution of $A$ over $R$.
Since $\Comp R\fb$ is flat over $R$, 
the complex $\Otimes{\Comp R\fb}{P}$ is a 
projective resolution of $\Otimes{\Comp R\fb}{A}\cong \Gamma_{\fb}(A)$ over $\Comp R\fb$;
see Fact~\ref{fact120720a}. 
Using tensor-cancellation, we have
$$\Otimes[\Comp R\fb]{(\Otimes{\Comp R\fb}{P})}{(\Otimes{\Comp R\fb}{B})^{\vee(\Comp R\fb)}}\cong\Otimes{P}{(\Otimes{\Comp R\fb}{B})^{\vee(\Comp R\fb)}}$$
and 
the isomorphism~\eqref{xprop100601b3} follows by taking homology.

Set $\mcf'=\supp_R(A)\cap\supp_R(B)$ and $\fb'=\cap_{\m\in\mathcal{F}'}\m$.
We  next show that
\begin{equation}\label{eq110615a}
\Ext[\Comp R{\fb'}]{i}{\Gamma_{\fb'}(A)}{\Otimes{\Comp R{\fb'}}{B}}^{\vee(\Comp R{\fb'})}
\cong
\Tor[\Comp R{\fb'}]{i}{\Gamma_{\fb'}(A)}{(\Otimes{\Comp R{\fb'}}{B})^{\vee(\Comp R{\fb'})}}.
\end{equation}
Since $\mcf'$ is a finite set of maximal ideals, the ring $\Comp R{b'}$ is semi-local and complete.
Fact~\ref{fact120720a} implies that $\Gamma_{\fb'}(A)$ is artinian over  $\comp R^{\fb'}$.
The maximal ideals of $\Comp R{b'}$ are of the form $\m\Comp R{b'}$ with $\m\in\mcf'$;
see Fact~\ref{fact120719a}.
For each such $\m$, the quantity 
$\mu_{\Comp R{b'}}^i(\m\Comp R{b'},\Otimes{\Comp R{b'}}{B})=\mu_R^i(\m,B)$ is finite,
so the isomorphism~\eqref{eq110615a} follows from part~\eqref{xprop100601b1}.

Note that Theorem~\ref{xthm100308a} implies that $\Ext iAB$ is an $\Comp Rb$-module and an $\Comp R{b'}$-module.
Theorem~\ref{xlem100312az} explains the first isomorphism in the next display:
\begin{align*}
\Ext{i}{A}{B}^{\vee(\Comp R{\fb'})}
&\cong
\Ext[\Comp R{\fb'}]{i}{\Gamma_{\fb'}(A)}{\Otimes{\Comp R{\fb'}}{B}}^{\vee(\Comp R{\fb'})}\\
&\cong
\Tor[\Comp R{\fb'}]{i}{\Gamma_{{\fb'}}(A)}{(\Otimes{\Comp R{\fb'}}{B})^{\vee(\Comp R{\fb'})}}\\
&\cong
\Tor{i}{A}{(\Otimes{\Comp R{b'}}{B})^{\vee(\Comp R{\fb'})}}\\
&\cong
\Tor{i}{A}{\Hom{B}{E_{\Comp R{b'}}}}\\
&\cong\Tor{i}{A}{\md B}.
\end{align*}
The second step is  from~\eqref{eq110615a}. 
The third  step is from~\eqref{xprop100601b3}, in the special case where 
$\mcf=\mcf'$.
The fourth step is from Hom-tensor adjointness.
The fifth step is from Lemma~\ref{xlem101221a}.

To complete the proof, recall that $\Comp Rb\cong\prod_{\m\in\mcf}\Comp Rm$
and $\Comp R{b'}\cong\prod_{\m\in\mcf'}\Comp Rm$.
It follows that $\Comp R{b'}\cong\Comp Rb/\fa$ where $\fa$ is an idempotent ideal of $\Comp Rb$.
Since $\fa$ is idempotent, we have $(\Comp Rb)^{\wedge\fa}\cong\Comp Rb/\fa\cong\Comp R{b'}$.
As $\Ext iAB$ is an $\Comp R{b'}$-module, it is $\fa$-torsion, so Lemma~\ref{lem110413b} provides the first isomorphism
in the next sequence
$$
\Ext{i}{A}{B}^{\vee(\Comp R{\fb})}\cong \Ext{i}{A}{B}^{\vee(\Comp R{\fb'})}\cong\Tor{i}{A}{\md B}.
$$
The second isomorphism is from the previous display.
\end{proof}

\begin{disc}\label{disc120820a}
Lemma~\ref{xlem100604b} and Theorem~\ref{xprop100601b} answer~\cite[Question 4.8]{kubik:hamm1}.
\end{disc}

\begin{cor}\label{xcor100602a}
Assume that $R$ is noetherian.
Let $A$ and $M$ be $R$-modules such that $A$ is artinian and $M$ is mini-max.  Let $\mathcal{F}$ be a finite set of maximal ideals of $R$ containing 
$\supp_R(A)\cap\supp_R(M)$, and set $\fb=\cap_{\m\in\mathcal{F}}\m$.
For each index $i\geq 0$, one has 
an $R$-module isomorphism
$\Ext{i}{A}{M}^{\vee(\Comp R\fb)}\cong\Tor{i}{A}{\md M}$.
\end{cor}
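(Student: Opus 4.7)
The plan is to deduce this corollary directly from Theorem~\ref{xprop100601b}\eqref{xprop100601b2} applied with $B=M$. The hypothesis of that theorem requires $A$ to be artinian (which is given) and $\mu_R^i(\m,M)$ to be finite for all $\m\in\supp_R(A)\cap\supp_R(M)$. To verify this finiteness condition, I would invoke Lemma~\ref{xfact100317b}, which states precisely that $\mu_R^i(\p,M)$ is finite for all $i\geq 0$ and all $\p\in\spec(R)$ whenever $M$ is a mini-max $R$-module.

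With the Bass number hypothesis verified, Theorem~\ref{xprop100601b}\eqref{xprop100601b2} gives the claimed $R$-module isomorphism
\[
\Ext{i}{A}{M}^{\vee(\Comp R\fb)}\cong\Tor{i}{A}{\md M}
\]
where $\fb=\cap_{\m\in\mcf}\m$ is built from the chosen finite set $\mcf\supseteq\supp_R(A)\cap\supp_R(M)$ of maximal ideals. Note that Theorem~\ref{xprop100601b}\eqref{xprop100601b2} is stated for any such $\mcf$, so no additional hypotheses on $\mcf$ (such as semi-locality or completeness of $R/(\ann_R(A)+\ann_R(M))$) are needed here; the stronger hypothesis is only required in part~\eqref{xprop100601b1} of that theorem in order to identify the dual with the Matlis dual over $R$ itself.

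There is no main obstacle: the corollary is an immediate specialization of Theorem~\ref{xprop100601b}\eqref{xprop100601b2} to the mini-max setting, using the standard fact that mini-max modules have finite Bass numbers at every prime. The work of establishing the isomorphism has already been carried out in the proof of Theorem~\ref{xprop100601b}, which in turn rests on Lemmas~\ref{xlem100604b} and~\ref{xlem101221a} together with Theorem~\ref{xlem100312az}.
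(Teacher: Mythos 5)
Your proposal is correct and follows exactly the paper's own argument: the paper proves the corollary by combining Lemma~\ref{xfact100317b} (finiteness of Bass numbers of mini-max modules) with Theorem~\ref{xprop100601b}\eqref{xprop100601b2}, applied with $B=M$. Your observation that the semi-local/complete hypothesis on $R/(\ann_R(A)+\ann_R(M))$ is only needed for part~\eqref{xprop100601b1} is also accurate.
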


\begin{proof}
Combine Lemma~\ref{xfact100317b}
and Theorem~\ref{xprop100601b}\eqref{xprop100601b2}.
\end{proof}

\begin{thm}\label{xprop100601a}
Assume that $R$ is noetherian.
Let $M$ and $B$ be $R$-modules such that $M$ is mini-max and the quotient 
$R/(\ann_R(M)+\ann_R(B))$ is semi-local and complete.  
Assume that $i\geq 0$ is such that 
$\mu_R^i(\m,B)$ and $\mu_R^{i+1}(\m,B)$ are finite for all 
$\m\in\supp_R(M)\cap\supp_R(B)\cap\mspec(R)$. Then
$\Theta^{i}_{MB}$
is an isomorphism, so 
$$\md{\Ext{i}{M}{B}}\cong\Tor{i}{M}{B^\vee}.$$
\end{thm}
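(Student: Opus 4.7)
The plan is to reduce from the mini-max module $M$ to its noetherian and artinian constituents and then finish with the five lemma. First, I would fix a noetherian submodule $N\subseteq M$ with $A:=M/N$ artinian (which exists by the definition of mini-max), giving the short exact sequence $0\to N\to M\to A\to 0$. Applying $\Hom{-}{B}$ followed by the exact functor $\md{(-)}$, and separately applying $\Otimes{-}{\md B}$, produces two long exact sequences, which the naturality of $\Theta_{LL'}^{j}$ in $L$ (Remark~\ref{xdisc100602b}) assembles into the commutative ladder
\[
\xymatrix@C=3mm{
\Tor{i+1}{A}{\md B}\ar[r]\ar[d]_{\Theta^{i+1}_{AB}}
& \Tor{i}{N}{\md B}\ar[r]\ar[d]_{\Theta^{i}_{NB}}
& \Tor{i}{M}{\md B}\ar[r]\ar[d]_{\Theta^{i}_{MB}}
& \Tor{i}{A}{\md B}\ar[r]\ar[d]_{\Theta^{i}_{AB}}
& \Tor{i-1}{N}{\md B}\ar[d]_{\Theta^{i-1}_{NB}} \\
\md{\Ext{i+1}{A}{B}}\ar[r]
& \md{\Ext{i}{N}{B}}\ar[r]
& \md{\Ext{i}{M}{B}}\ar[r]
& \md{\Ext{i}{A}{B}}\ar[r]
& \md{\Ext{i-1}{N}{B}}
}
\]
whose rows are exact because $\md{(-)}$ is exact ($E$ is injective).

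Next I would verify that the four outer vertical maps are isomorphisms. Since $N$ is noetherian, Remark~\ref{xdisc100602b} tells us that $\Theta^{j}_{NB}$ is an isomorphism for every $j$, handling the second and fifth columns. For the first and fourth columns, I would invoke Theorem~\ref{xprop100601b}\eqref{xprop100601b1} applied to the artinian module $A$ at indices $i+1$ and $i$. The hypotheses transfer cleanly: the containment $\ann_R(M)\subseteq\ann_R(A)$ yields a surjection $R/(\ann_R(M)+\ann_R(B))\onto R/(\ann_R(A)+\ann_R(B))$, so the latter quotient is again semi-local and complete; and since $A$ is artinian, $\supp_R(A)\subseteq\mspec(R)\cap\supp_R(M)$, giving
\[
\supp_R(A)\cap\supp_R(B)\subseteq\supp_R(M)\cap\supp_R(B)\cap\mspec(R),
\]
so the assumed finiteness of $\mu^{i}_R(\m,B)$ and $\mu^{i+1}_R(\m,B)$ restricts to the set needed by Theorem~\ref{xprop100601b}\eqref{xprop100601b1}.

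With all four outer vertical maps isomorphisms, the five lemma forces $\Theta^{i}_{MB}$ to be an isomorphism, which gives the desired identification $\md{\Ext iMB}\cong\Tor iM{\md B}$. The delicate point I expect to require the most care is not the five lemma itself but the commutativity of the ladder, i.e., the compatibility of $\Theta^{\bullet}_{(-),B}$ with the connecting homomorphisms of the two long exact sequences. This is a naturality check that I would carry out by fixing an injective resolution $J$ of $B$ and a horseshoe projective resolution of $0\to N\to M\to A\to 0$, and then tracing the Hom-evaluation morphism $\theta_{(-),J,E}$ from Definition~\ref{xdefn100602a} through the snake lemma on both the Ext-side and the Tor-side.
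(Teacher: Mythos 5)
Your proposal is correct and follows the paper's proof almost verbatim: fix $0\to N\to M\to A\to 0$ with $N$ noetherian, $A$ artinian, build the commutative ladder from the long exact sequences in $\Tor{\bullet}{-}{\md B}$ and $\md{\Ext{\bullet}{-}{B}}$ (naturality of $\Theta$), apply Remark~\ref{xdisc100602b} at columns $\Theta^i_{NB},\Theta^{i-1}_{NB}$ and Theorem~\ref{xprop100601b}\eqref{xprop100601b1} at columns $\Theta^{i+1}_{AB},\Theta^i_{AB}$, then invoke the Five Lemma. Your explicit verification that the hypotheses descend to $A$ (annihilator surjection and $\supp_R(A)\cap\supp_R(B)\subseteq\supp_R(M)\cap\supp_R(B)\cap\mspec(R)$) is a detail the paper leaves tacit, and is exactly right.
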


\begin{proof}
Since $M$ is mini-max over $R$,
there is an exact sequence of $R$-modules homomorphisms $0\to N\to M\to A\to 0$ 
such that $N$ is noetherian and $A$ is artinian.  The long 
exact sequences associated to $\Tor{}{-}{\md{B}}$ and $\md{\Ext{}{-}{B}}$ 
fit into the following commutative diagram:
$$\xymatrix{
\cdots\ar[r]
&\Tor{i}{N}{B^\vee}\ar[r]\ar[d]^{\Theta^{i}_{NB}}
&\Tor{i}{M}{B^\vee}\ar[r]\ar[d]^{\Theta^{i}_{MB}}
&\Tor{i}{A}{B^\vee}\ar[d]^{\Theta^{i}_{AB}}\ar[r]
&\cdots\\
\cdots\ar[r]
&\Ext{i}{N}{B}^\vee\ar[r]
&\Ext{i}{M}{B}^\vee\ar[r]
&\Ext{i}{A}{B}^\vee\ar[r]
&\cdots.
}$$
By Remark \ref{xdisc100602b}, the maps $\Theta^{i}_{NB}$ and $\Theta^{i-1}_{NB}$ are  isomorphisms.
Theorem~\ref{xprop100601b}\eqref{xprop100601b1} implies that 
$\Theta^{i}_{AB}$ and $\Theta^{i+1}_{AB}$ are isomorphisms.  
Hence, the map $\Theta^{i}_{MB}$ is an isomorphism by the Five Lemma.
\end{proof}

\begin{cor} \label{cor120729a}
Assume that $R$ is noetherian.
Let $M$ and $B$ be $R$-modules such that $M$ is Matlis reflexive.
Assume that $i\geq 0$ is such that 
$\mu_R^{i}(\m,B)$ and $\mu_R^{i+1}(\m, B)$ are finite for all 
$\m\in\supp_R(M)\cap\supp_R(B)\cap\mspec(R)$. Then
$\Theta^{i}_{MB}$
is an isomorphism, so 
$\md{\Ext{i}{M}{B}}\cong\Tor{i}{M}{B^\vee}$.
\end{cor}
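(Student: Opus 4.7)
The plan is to reduce this corollary directly to Theorem~\ref{xprop100601a}. That theorem has two structural hypotheses on the pair $(M,B)$: that $M$ is mini-max, and that the quotient $R/(\ann_R(M)+\ann_R(B))$ is semi-local and complete. The Bass-number hypotheses assumed here coincide with those of the theorem, so the entire task is to derive these two structural conditions from the single assumption that $M$ is Matlis reflexive.

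For this, Fact~\ref{xpara1} is the key tool: it asserts that $M$ is Matlis reflexive if and only if $M$ is mini-max and $R/\ann_R(M)$ is semi-local and complete. This immediately gives the first hypothesis of Theorem~\ref{xprop100601a}. For the second, I would use the containment $\ann_R(M) \subseteq \ann_R(M)+\ann_R(B)$, which induces a surjection $R/\ann_R(M) \twoheadrightarrow R/(\ann_R(M)+\ann_R(B))$. Since a semi-local complete ring is a finite product of complete local rings, any quotient of it is again a finite product of complete local rings, hence semi-local and complete. Therefore $R/(\ann_R(M)+\ann_R(B))$ is semi-local and complete, as required. With both structural conditions verified, Theorem~\ref{xprop100601a} applies and delivers the conclusion that $\Theta^i_{MB}$ is an isomorphism, and in particular that $(\Ext iMB)^\vee \cong \Tor iM{B^\vee}$.

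There is no substantive obstacle; the only point worth stating explicitly in the write-up is the stability of ``semi-local and complete'' under passage to quotients, since this is used but not isolated as a separate lemma earlier in the paper. Everything else is an invocation of the cited results.
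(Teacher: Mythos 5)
Your proposal is correct and matches the paper's own proof, which is the one-liner ``Combine Fact~\ref{xpara1} and Theorem~\ref{xprop100601a}.'' You have simply spelled out what that combination entails, including the small observation (used repeatedly in the paper, e.g.\ in~\ref{para120807b}) that a quotient of a semi-local complete ring is again semi-local and complete.
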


\begin{proof}
Combine Fact~\ref{xpara1}
and Theorem~\ref{xprop100601a}.
\end{proof}

\begin{para}[Proof  of Theorem~\ref{intthm120807a}\eqref{intthm120807a4}]
\label{para120807e}
Apply Fact~\ref{xpara1}, Lemma~\ref{xfact100317b},
and Theorem~\ref{xprop100601a}.
\qed
\end{para}

\begin{cor}\label{xcor110213a}
Assume that $R$ is noetherian.
Let $M$ and $M'$ be mini-max $R$-modules such that the quotient 
$R/(\ann_R(M)+\ann_R(M'))$ is semi-local and complete.  
Let $\mathcal{F}$ be a finite set of maximal ideals of $R$ containing $V(\ann_R(M))\cap V(\ann_R(M'))\cap\mspec(R)$,
and set $\fb=\cap_{\m\in\mathcal{F}}\m$.
Then for all $i\geq 0$ the map
$\Theta^{i}_{MM'}$
is an isomorphism, so 
$$\Ext{i}{M}{M'}^{\vee(\Comp R\fb)}\cong\md{\Ext{i}{M}{M'}}\cong\Tor{i}{M}{M'^\vee}.$$
\end{cor}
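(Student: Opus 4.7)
The plan is to establish the two displayed isomorphisms separately.  The second will be a direct appeal to Theorem~\ref{xprop100601a}, while the first reduces to comparing Matlis duality over $R$ with that over $\Comp R\fb$ after installing an appropriate $\Comp R\fb$-module structure on $\Ext{i}{M}{M'}$.

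For $\md{\Ext{i}{M}{M'}}\cong\Tor{i}{M}{M'^{\vee}}$, take $B=M'$ in Theorem~\ref{xprop100601a}.  The hypothesis on $R/(\ann_R(M)+\ann_R(M'))$ is assumed, and Lemma~\ref{xfact100317b} (applied to the mini-max module $M'$) supplies the needed finiteness of $\mu_R^{i}(\m,M')$ and $\mu_R^{i+1}(\m,M')$ for every $\m\in\mspec(R)$.  Theorem~\ref{xprop100601a} then gives that $\Theta^{i}_{MM'}$ is an isomorphism, yielding the stated duality.

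For $\Ext{i}{M}{M'}^{\vee(\Comp R\fb)}\cong\md{\Ext{i}{M}{M'}}$, set $X:=\Ext{i}{M}{M'}$.  Theorem~\ref{xthm:ExtMR}\eqref{xthm:ExtMR1} makes $X$ Matlis reflexive over $R$, so $R/\ann_R(X)$ is semi-local and complete; combining $\ann_R(X)\supseteq\ann_R(M)+\ann_R(M')$ with Lemma~\ref{xlem100409a1}\eqref{xlem100409a1a} shows $\mspec(R)\cap\supp_R(X)\subseteq\mcf$, whence $\fb\subseteq\cap_{\m\in\mspec(R)\cap\supp_R(X)}\m$.  Lemma~\ref{xlem100409a}\eqref{xlem100409a2},\eqref{item120721c} then installs on $X$ a compatible $\Comp R\fb$-module structure for which $\Hom{X}{E_{\Comp R\fb}}=\Hom[\Comp R\fb]{X}{E_{\Comp R\fb}}=X^{\vee(\Comp R\fb)}$.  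To connect this with $\md X=\Hom{X}{E_R}$, decompose $E_R=E'\oplus E''$ with $E'=\coprod_{\m\in\mcf}\E_R(R/\m)\cong E_{\Comp R\fb}$ (by Lemma~\ref{xlem101021a}) and $E''=\coprod_{\m\in\mspec(R)\ssm\mcf}\E_R(R/\m)$, and verify $\Hom{X}{E''}=0$ as follows: given $\phi\colon X\to E''$, $x\in X$, and $\m\in\mspec(R)\ssm\mcf$, the vanishing $X_{\m}=0$ supplies $u\in R\ssm\m$ with $ux=0$, so $u$ annihilates the $\m$-component of $\phi(x)\in\E_R(R/\m)$; since that summand is $\m$-torsion, Lemma~\ref{lem120818a} makes multiplication by $u$ an automorphism on it, forcing the component to vanish.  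Consequently $\phi=0$, hence $\md X\cong\Hom{X}{E'}\cong\Hom{X}{E_{\Comp R\fb}}=X^{\vee(\Comp R\fb)}$.

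The main obstacle is precisely this identification $\md X\cong X^{\vee(\Comp R\fb)}$: Lemma~\ref{lem110413b} does not apply directly, since a Matlis reflexive module need not be torsion with respect to the Jacobson radical of $R/\ann_R(X)$ (for example, $R$ itself over a complete local ring is Matlis reflexive but not $\m$-torsion).  The support-restriction trick above, which exploits only that $\mspec(R)\cap\supp_R(X)\subseteq\mcf$, serves as our substitute for the missing torsion hypothesis.
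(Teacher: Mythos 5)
Your proof is correct, and its treatment of the second isomorphism — applying Theorem~\ref{xprop100601a} with $B=M'$, using Lemma~\ref{xfact100317b} to supply the finiteness of Bass numbers — matches the paper's approach. Where you diverge is on the first isomorphism $\Ext{i}{M}{M'}^{\vee(\Comp R\fb)}\cong\md{\Ext{i}{M}{M'}}$. The paper's one-line proof cites Lemma~\ref{lem110413b}, whose hypothesis requires the module to be $\fb$-torsion; as you observe, setting $X=\Ext{i}{M}{M'}$, the hypotheses of the corollary only yield that $X$ is Matlis reflexive with $R/\ann_R(X)$ semi-local and complete, not that $X$ is $\fb$-torsion (your example of $M=M'=R$ over a complete local domain of positive dimension, giving $X=R$, pinpoints the issue precisely — $R$ is Matlis reflexive but not $\m$-torsion, so Lemma~\ref{lem110413b} is simply inapplicable). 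Your substitute argument — invoke Lemma~\ref{xlem100409a1}\eqref{xlem100409a1a} to get $\mspec(R)\cap\supp_R(X)\subseteq\mcf$, use Lemma~\ref{xlem100409a} to install the $\Comp R\fb$-module structure and identify $X^{\vee(\Comp R\fb)}=\Hom{X}{E_{\Comp R\fb}}$, then split $E_R=E'\oplus E''$ with $E'\cong E_{\Comp R\fb}$ by Lemma~\ref{xlem101021a} and show $\Hom{X}{E''}=0$ by the localization/automorphism argument via Lemma~\ref{lem120818a} — is sound and is precisely the kind of Lemma~\ref{xlem100409a}-style substitute that the paper uses elsewhere (compare the discussion preceding Lemma~\ref{xlem100409a}) for modules whose annihilator, but not necessarily the module itself, forces a semi-local complete quotient ring. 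In short, you have not merely proved the result; you have identified that the paper's cited lemma does not apply as stated and supplied the missing bridge.
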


\begin{proof}
Combine Lemma~\ref{lem110413b} and Theorem~\ref{xprop100601a}.
\end{proof}

\section{Length and Vanishing of $\Hom[]{L}{L'}$ and $L\otimes L'$}
\label{sec120807b}

This section includes the proof of Theorem~\ref{intthm120807b}
as well as vanishing results for Ext and Tor, including a description of the associated primes of certain Hom-modules.
Most of the results of this section do not assume that $R$ is noetherian.
Note that, in the next result,  the integers $t$ and $\alpha_\m$ exist, say, when $T$
or $T'$ is artinian.

\begin{lem} \label{xcor28}
Let $\fa$ and $\fa'$ be   intersections of finitely many maximal ideals of $R$. Let $T$ be an $\fa$-torsion
$R$-module, and let $T'$ be an $\fa'$-torsion $R$-module. 
Let $\mathcal{F}$ be a subset of $\mspec(R)$ containing $\supp_R(T)\cap\supp_R(T')$, and
let $\fb$ be an ideal contained in $\cap_{\m\in\mathcal{F}}\m$. 
\begin{enumerate}[\rm(a)]
\item\label{xcor28a}
Then there is a $\Comp Rb$-module isomorphism
$T\otimes_RT'
\textstyle\cong\coprod_{\m\in\mathcal{F}}T_\m\otimes_R T'_\m$.
\item\label{xcor28b}
Assume that for each $\m\in\mathcal{F}$ there is an integer $\alpha_{\m}\geq 0$ such that 
either $\mathfrak{m}^{\alpha_{\m}}T=\mathfrak{m}^{\alpha_{\m}+1}T$
or $\mathfrak{m}^{\alpha_{\m}}T'=\mathfrak{m}^{\alpha_{\m}+1}T'$.
Then there exists a $\Comp Rb$-module isomorphism
$T\otimes_RT'
\textstyle\cong\coprod_{\m\in\mathcal{F}}(T/\m^{\alpha_{\m}}T)
\otimes_R(T'/\m^{\alpha_{\m}}T')$.
\item\label{xcor28c}
Assume that there is an integer $t\geq 0$ such that 
$\mathfrak{b}^tT=\mathfrak{b}^{t+1}T$ or $\mathfrak{b}^tT'=\mathfrak{b}^{t+1}T'$. 
Then there is a $\Comp Rb$-module isomorphism
$T\otimes_RT'
\textstyle\cong (T/\fb^tT)\otimes_R (T'/\fb^tT')$.
\end{enumerate}
\end{lem}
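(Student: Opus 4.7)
The plan is to reduce all three parts to the decomposition of $T$ and $T'$ into their maximal-ideal components, and then use comaximality of distinct maximal ideals to kill cross terms. For part~(a), I apply Lemmas~\ref{lem120727a} and~\ref{lem110517a} to write $T\cong\coprod_{\m\in\supp_R(T)}T_\m$ and $T'\cong\coprod_{\n\in\supp_R(T')}T'_\n$, so the tensor product distributes as $T\otimes_R T'\cong\coprod_{\m,\n}(T_\m\otimes_R T'_\n)$. For $\m\neq\n$ in $\mspec(R)$, any elementary tensor $x\otimes y\in T_\m\otimes_R T'_\n$ vanishes: choose $k$ with $\m^kx=0=\n^ky$, and by comaximality write $1=u+v$ with $u\in\m^k$, $v\in\n^k$, so $x\otimes y=(u+v)(x\otimes y)=0$. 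Only diagonal terms $\m=\n$ survive, and by Lemma~\ref{lem110517a}\eqref{item120718d} these are indexed by $\supp_R(T)\cap\supp_R(T')\subseteq\mcf$; terms for $\m\in\mcf\ssm(\supp_R(T)\cap\supp_R(T'))$ contribute zero since one factor vanishes. Each surviving summand $T_\m\otimes_R T'_\m$ is $\m$-torsion, hence $\fb$-torsion, so the coproduct carries a natural $\Comp Rb$-structure by Fact~\ref{xpara0'''}\eqref{xitem101021a}, and the isomorphism is $\Comp Rb$-linear by the same fact.

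For~(b), I apply~(a) and then invoke Lemma~\ref{xlem:tensor} on each factor $T_\m\otimes_R T'_\m$ with ideal $\m$: the hypothesis $\m^{\alpha_\m}T=\m^{\alpha_\m+1}T$ (or the analog for $T'$) passes to the direct summand $T_\m$ (or $T'_\m$), while the other factor is already $\m$-torsion, yielding $T_\m\otimes_R T'_\m\cong(T_\m/\m^{\alpha_\m}T_\m)\otimes_R(T'_\m/\m^{\alpha_\m}T'_\m)$. To replace $T_\m/\m^{\alpha_\m}T_\m$ with $T/\m^{\alpha_\m}T$, I split $T=T_\m\oplus\coprod_{\n\neq\m}T_\n$ and check that $\m^{\alpha_\m}T_\n=T_\n$ for every other maximal $\n$: for $y\in T_\n$ with $\n^ky=0$, comaximality gives $1=u+v$ with $u\in\m^{\alpha_\m}$, $v\in\n^k$, so $y=uy\in\m^{\alpha_\m}T_\n$. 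For~(c), I first reduce via~(a) to $T''=\coprod_{\m\in\mcf}T_\m$ and $T'''=\coprod_{\m\in\mcf}T'_\m$, both of which are $\fb$-torsion, and the stabilization $\fb^tT=\fb^{t+1}T$ descends to $T''$ as a summand. Lemma~\ref{xlem:tensor} yields $T''\otimes_R T'''\cong(T''/\fb^tT'')\otimes_R(T'''/\fb^tT''')$. Decomposing $T/\fb^tT$ and $T'/\fb^tT'$ and reapplying the comaximality vanishing from~(a) to the quotient summands then identifies this with $(T/\fb^tT)\otimes_R(T'/\fb^tT')$.

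The main subtlety is that $\fb$ need not equal $\cap_{\m\in\mcf}\m$, and $\mcf$ need only contain $\supp_R(T)\cap\supp_R(T')$; thus $T$ (and $T'$) may have components $T_\m$ for $\m$ outside $\mcf$, and in particular $T$ is not necessarily $\fb$-torsion. Part~(a) is the essential reduction that confines the tensor product to the diagonal of $\mcf$ and equips it with the $\Comp Rb$-structure. In~(c), this forces the indirect route through $T''$ and $T'''$ rather than a direct application of Lemma~\ref{xlem:tensor} to $T$ and $T'$, and the final identification with $(T/\fb^tT)\otimes_R(T'/\fb^tT')$ requires the second round of comaximality to absorb the spurious summands in $T/\fb^tT$ and $T'/\fb^tT'$ arising from the decomposition outside $\mcf$.
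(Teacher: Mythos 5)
Your proof is correct and follows the paper's overall strategy: decompose via Lemmas~\ref{lem120727a} and~\ref{lem110517a}, restrict the tensor product to the diagonal over $\mcf$, and apply Lemma~\ref{xlem:tensor} for parts~(b) and~(c). The one genuine variation is how you reach the diagonal: you decompose both factors into a double coproduct and kill the off-diagonal terms $T_\m\otimes_R T'_\n$ by an explicit comaximality argument ($1=u+v$ with $u\in\m^k$, $v\in\n^k$), whereas the paper decomposes only $T$ and uses the one-sided cancellation $T_\m\otimes_R T'\cong T_\m\otimes_R T'_\m$ coming from the $R_\m$-module structure on $T_\m$; your route is more elementary and self-contained. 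In~(c) you also detour through the truncated summands $T''$, $T'''$ rather than applying part~(a) directly to $T/\fb^tT$ and $T'/\fb^tT'$ and then invoking Lemma~\ref{xlem:tensor} componentwise as the paper does --- this works but requires a second pass through the cross-term vanishing. One small citation slip: Fact~\ref{xpara0'''}\eqref{xitem101021a} supplies the $\Comp Rb$-module structure on the coproduct, but not the $\Comp Rb$-linearity of the isomorphism; for the latter you should invoke Lemma~\ref{disc120720a}\eqref{xlem100206c1}, which is what the paper cites.
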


\begin{proof} 
\eqref{xcor28a}
In the following sequence, the first step is from Lemmas~\ref{lem110517a}\eqref{item120718e} and~\ref{lem120727a}:
\begin{align*}
T\otimes_R T'
&
\textstyle \cong\coprod_{\m\in\supp_R(T)}\Otimes[R]{T_{\m}}{T'}
\textstyle \cong\coprod_{\m\in\supp_R(T)}\Otimes[R]{T_{\m}}{T'_{\m}}
\cong\coprod_{\m\in\mathcal{F}}T_\m\otimes_R T'_\m.
\end{align*}
The remaining steps are standard, using the condition
$\mcf\supseteq\supp_R(T)\cap\supp_R(T')$.
Since $T$ is $\fa$-torsion and $\fa$ is a finite intersection of maximal ideals, it follows that $T_{\m}$ is $\m R_{\m}$-torsion for all $\m\in\mspec(R)$, and similarly for $T'_{\m}$.
In particular, for all $\m\in\mcf$, the modules $T_{\m}$ and $T'_{\m}$ are $\fb R_{\m}$-torsion, since $\fb R_{\m}\subseteq\m R_{\m}$, hence $\fb$-torsion. It follows that the modules in the
previous display are $\fb$-torsion. Thus, Lemma~\ref{disc120720a}\eqref{xlem100206c1}
implies that the $R$-module isomorphisms are $\Comp Rb$-linear.

\eqref{xcor28b}
If $\m^{\alpha_\m}T=\m^{\alpha_\m+1}T$, then $\m^{\alpha_\m}T_\m=\m^{\alpha_\m+1}T_\m$; since we have 
$T/\m^{\alpha_\m}T\cong T_\m/\m^{\alpha_\m}T_\m$, in this case Lemma~\ref{xlem:tensor} provides an isomorphism 
$$T_\m\otimes_R T'_\m\cong(T/\m^{\alpha_{\m}}T)
\otimes_R(T'/\m^{\alpha_{\m}}T').$$
Similarly, the same isomorphism holds if $\m^{\alpha_\m}T'=\m^{\alpha_\m+1}T'$, 
and the isomorphism
$\coprod_{\m\in\mcf}\Otimes[R]{T_{\m}}{T'_{\m}}\cong\coprod_{\m\in\mathcal{F}}(T/\m^{\alpha_{\m}}T)
\otimes_R(T'/\m^{\alpha_{\m}}T')$ follows.
This isomorphism is $\Comp Rb$-linear as in part~\eqref{xcor28a}.

\eqref{xcor28c}
If $\fb^t T=\fb^{t+1} T$, then $\fb^t T_{\m}=\fb^{t+1}T_{\m}$, and
Lemma~\ref{xlem:tensor} shows that 
$$T_\m\otimes_R T'_\m\cong(T_\m/\fb^tT_\m)
\otimes_R(T'_\m/\fb^tT'_\m)$$
for all $\m\in\mcf$. This explains the second step in the next display:
\begin{align*}
\textstyle (T/\fb^tT)\otimes_R (T'/\fb^t T')
&\textstyle \cong\textstyle\coprod_{\m\in\mcf}(T_\m/\fb^tT_\m)\otimes_R (T'_{\m}/\fb^t T'_{\m})\\
&\textstyle\cong\coprod_{\m\in\mathcal{F}}T_\m\otimes_R T'_\m\\
&\cong T\otimes_RT'.
\end{align*}
The other steps follow from part~\eqref{xcor28a}.
These isomorphisms are $\Comp Rb$-linear as in part~\eqref{xcor28a}.
The same isomorphisms hold by symmetry if $\fb^t T'=\fb^{t+1} T'$.
\end{proof}

The next result is proved like Lemma~\ref{xcor28}\eqref{xcor28a}.
For the sake of brevity, we leave similar versions of Lemma~\ref{xcor28}\eqref{xcor28b}--\eqref{xcor28c}
for the interested reader.

\begin{lem} \label{xcor28'}
Let $\fa$  be an  intersection of finitely many maximal ideals of $R$. Let $T$
and $L$ be 
$R$-modules such that $T$ is  $\fa$-torsion.
Let $\mathcal{F}$ be a subset of $\mspec(R)$ such that $\mcf\supseteq\supp_R(T)\cap\supp_R(L)$, and
let $\fb$ be an ideal contained in $\cap_{\m\in\mathcal{F}}\m$. 
Then there is a $\Comp Rb$-module isomorphism
$T\otimes_RL
\textstyle\cong\coprod_{\m\in\mathcal{F}}T_\m\otimes_R L_\m$.
\end{lem}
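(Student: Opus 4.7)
The plan is to mirror the proof of Lemma~\ref{xcor28}\eqref{xcor28a}, replacing the second torsion module $T'$ with an arbitrary $R$-module $L$. The key idea is to decompose $T$ as a coproduct indexed by $\supp_R(T)$, tensor with $L$ distributively, and then reduce the index set to $\mcf$ by observing that the summands outside $\mcf$ vanish.

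First I would apply Lemmas~\ref{lem110517a}\eqref{item120718e} and~\ref{lem120727a} to obtain the decomposition $T \cong \coprod_{\m \in \supp_R(T)} T_\m$, which together with distributivity of tensor product over coproducts yields
\[T \otimes_R L \cong \coprod_{\m \in \supp_R(T)} T_\m \otimes_R L.\]
Next, for each $\m \in \supp_R(T)$, Lemma~\ref{lem120818a} shows that every element of $R \ssm \m$ acts as an automorphism on the $\m$-torsion module $T_\m \cong \Ga mT$; this endows $T_\m$ with a natural $R_\m$-module structure and forces $T_\m \otimes_R L \cong T_\m \otimes_R L_\m$ via the identification $T_\m \otimes_R L = T_\m \otimes_{R_\m} R_\m \otimes_R L = T_\m \otimes_{R_\m} L_\m$. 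Finally, when $\m \in \supp_R(T) \ssm \mcf$, the hypothesis $\mcf \supseteq \supp_R(T) \cap \supp_R(L)$ forces $L_\m = 0$, so those summands vanish and the coproduct may be re-indexed over $\mcf$.

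For the $\Comp Rb$-linearity, I would note that each $T_\m$ is $\m$-torsion, hence $\fb$-torsion (since $\fb \subseteq \m$), so both $T \otimes_R L$ and the coproduct are $\fb$-torsion $R$-modules; Lemma~\ref{disc120720a}\eqref{xlem100206c1} then automatically promotes the $R$-isomorphism to a $\Comp Rb$-linear one. I do not expect a genuine obstacle: the proof is a direct transcription of the argument for Lemma~\ref{xcor28}\eqref{xcor28a}, and the only point to double-check is the passage from $L$ to $L_\m$ in the tensor factor, which follows formally from $T_\m$ being an $R_\m$-module.
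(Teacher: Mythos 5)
Your proposal is correct and follows essentially the same route as the paper, which simply remarks that Lemma~\ref{xcor28'} ``is proved like Lemma~\ref{xcor28}\eqref{xcor28a}'': decompose $T\cong\coprod_{\m\in\supp_R(T)}T_\m$ via Lemmas~\ref{lem110517a}\eqref{item120718e} and~\ref{lem120727a}, use that each $T_\m$ is an $R_\m$-module to replace $L$ by $L_\m$ in each summand, reindex over $\mcf$ using $\mcf\supseteq\supp_R(T)\cap\supp_R(L)$, and obtain $\Comp Rb$-linearity from $\fb$-torsionness via Lemma~\ref{disc120720a}\eqref{xlem100206c1}. The only cosmetic remark is that $T_\m$ already carries an $R_\m$-module structure by virtue of being a localization, so the appeal to Lemma~\ref{lem120818a} is not strictly needed for that particular point; it is, however, harmless.
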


\begin{prop} \label{xprop:tensorLength}
Let $\fa$ and $\fa'$ be  finite intersections of maximal ideals of $R$. Let $T$ be an $\fa$-torsion
$R$-module, and let $T'$ be an $\fa'$-torsion $R$-module. 
Let $\mathcal{F}$ be a subset of $\mspec(R)$ containing $\supp_R(T)\cap\supp_R(T')$, and
let $\fb$ be an ideal contained in $\cap_{\m\in\mathcal{F}}\m$. 
Assume that there is an integer $t\geq 0$ such that 
$\mathfrak{b}^tT=\mathfrak{b}^{t+1}T$. 
Assume that for each $\m\in\mathcal{F}$ there is an integer $\alpha_{\m}\geq 0$ such that 
$\mathfrak{m}^{\alpha_{\m}}T=\mathfrak{m}^{\alpha_{\m}+1}T$.
Then there are inequalities
\begin{align*}
\len_R(T\otimes_RT')\hspace{-1.2cm}\\
&\textstyle\leq \sum_{\m\in\mathcal{F}}\min\{\len_{R}(T/\m^{\alpha_{\m}}T)\len_{R}(T'/\m T'), 
\len_{R}(T/\m T)\len_{R}(T'/\m^{\alpha_{\m}}T')\} \\
&\leq  \len_R\left(T/\fb^tT\right)\max\{\len_{R}(T'/\m T')\mid \m\in\mathcal{F}\}\\
&\leq  \len_R\left(T/\fb^tT\right)\len_R(T'/\fb T').
\end{align*}
Here we use the convention $0\cdot\infty=0$.
\end{prop}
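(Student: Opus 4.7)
The plan is to combine the decomposition of Lemma~\ref{xcor28}\eqref{xcor28b} with a standard $\m$-adic filtration argument and two elementary surjections.

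\textbf{Step 1 (decompose and bound each summand).} Because $\m^{\alpha_\m}T=\m^{\alpha_\m+1}T$ for every $\m\in\mcf$, Lemma~\ref{xcor28}\eqref{xcor28b} gives
\[
\textstyle\len_R(T\otimes_RT')\;=\;\sum_{\m\in\mcf}\len_R\!\bigl((T/\m^{\alpha_\m}T)\otimes_R(T'/\m^{\alpha_\m}T')\bigr).
\]
Fix $\m\in\mcf$. The module $T'/\m^{\alpha_\m}T'$ carries the finite $\m$-adic filtration $\{\m^iT'/\m^{\alpha_\m}T'\}_{i=0}^{\alpha_\m}$, whose successive quotients $\m^iT'/\m^{i+1}T'$ are $R/\m$-vector spaces. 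Tensoring with $T/\m^{\alpha_\m}T$ and using right exactness, the $i$-th subquotient of the induced filtration of the tensor product is a quotient of
\[
(T/\m^{\alpha_\m}T)\otimes_R(\m^iT'/\m^{i+1}T')\;\cong\;(T/\m T)\otimes_{R/\m}(\m^iT'/\m^{i+1}T'),
\]
(using that tensoring with an $R/\m$-vector space factors through reduction mod~$\m$), whose $R$-length is $\len_R(T/\m T)\cdot\len_R(\m^iT'/\m^{i+1}T')$. Summing over $i$ and invoking the convention $0\cdot\infty=0$ yields $\len_R((T/\m^{\alpha_\m}T)\otimes_R(T'/\m^{\alpha_\m}T'))\leq\len_R(T/\m T)\cdot\len_R(T'/\m^{\alpha_\m}T')$. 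The identical argument with the roles of $T$ and $T'$ interchanged (using $\m^{\alpha_\m}(T/\m^{\alpha_\m}T)=0$ to get a finite $\m$-adic filtration of $T/\m^{\alpha_\m}T$) yields the other bound. Taking the minimum and summing over $\m\in\mcf$ establishes the first inequality.

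\textbf{Step 2 (compare to $T/\fb^tT$).} The equality $\fb^tT=\fb^{t+1}T$ propagates to $\fb^tT=\fb^{t+\alpha_\m}T$ for every $\alpha_\m\geq 0$, whence $\fb^tT\subseteq\m^{t+\alpha_\m}T\subseteq\m^{\alpha_\m}T$ (using $\fb\subseteq\m$). Lemmas~\ref{lem120727a} and~\ref{lem110517a} produce the direct-sum decomposition $T\cong\coprod_{\n\in\supp_R(T)}T_\n$, and since distinct maximals $\m\neq\n$ in $\mspec(R)$ force $\m T_\n=T_\n$ by Lemma~\ref{lem120818a}, one obtains $T/\m^{\alpha_\m}T\cong T_\m/\m^{\alpha_\m}T_\m$ and $T/\fb^tT\cong\coprod_{\n\in\supp_R(T)}T_\n/\fb^tT_\n$. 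The previous containment therefore yields surjections $T_\m/\fb^tT_\m\onto T_\m/\m^{\alpha_\m}T_\m$ for each $\m\in\mcf$, and summing gives
\[
\textstyle\sum_{\m\in\mcf}\len_R(T/\m^{\alpha_\m}T)\;\leq\;\len_R(T/\fb^tT).
\]
Bounding each minimum in the first inequality by its first alternative and pulling the maximum of $\{\len_R(T'/\m T')\mid\m\in\mcf\}$ outside the sum yields the second inequality. The third is immediate: for each $\m\in\mcf$ the containment $\fb\subseteq\m$ provides a surjection $T'/\fb T'\onto T'/\m T'$, so $\max_{\m}\len_R(T'/\m T')\leq\len_R(T'/\fb T')$.

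The main technical point is the filtration identification in Step~1, which reduces each summand to a product of lengths of $R/\m$-vector spaces; the passage from $\m^{\alpha_\m}T$ to $\fb^tT$ in Step~2 is then purely organizational, depending only on the stability of the $\fb$-adic filtration and the direct-sum decomposition of torsion modules supported on finite sets of maximal ideals.
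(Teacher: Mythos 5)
Your proof is correct and follows the same overall strategy as the paper's: decompose the tensor product over $\m\in\mcf$, bound each local summand, then compare to $\len_R(T/\fb^tT)$ and $\len_R(T'/\fb T')$ via surjections. The one substantive difference is in how you obtain the per-summand bound. The paper simply defers to the proof of~\cite[Theorem~3.8]{kubik:hamm1}, whereas you reprove it from scratch via a finite $\m$-adic filtration of $T'/\m^{\alpha_\m}T'$, right exactness of tensor, and the dimension formula for a tensor product of $R/\m$-vector spaces; correspondingly, you enter through Lemma~\ref{xcor28}\eqref{xcor28b} rather than Lemma~\ref{xcor28}\eqref{xcor28a}, which puts the tensor factors directly in the truncated form that the filtration argument wants. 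This makes your version self-contained at the cost of a bit more length. Your Step~2 matches the paper's: the containment $\fb^tT=\fb^{t+\alpha_\m}T\subseteq\m^{\alpha_\m}T$ plus the direct-sum decomposition of $T/\fb^tT$ over $\supp_R(T)$ yields $\sum_{\m\in\mcf}\len_R(T/\m^{\alpha_\m}T)\leq\len_R(T/\fb^tT)$, and the last inequality comes from the surjection $T'/\fb T'\onto T'/\m T'$, exactly as in the paper's steps (5)--(6). Both are sound.
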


\begin{proof}
Note that for all $\m\in\mspec(R)$ and all $n\geq 0$ we have  
$\len_{R_{\m}}(T_{\m}/\m^{n}T_{\m})=\len_{R}(T/\m^{n}T)$
and $\len_{R_{\m}}(T'_{\m}/\m^{n}T'_{\m})=\len_{R}(T'/\m^{n}T')$.
Thus, the proof of~\cite[Theorem~3.8]{kubik:hamm1} shows that for each $\m\in\mcf$ one has
$$\len_{R}(T_{\m}\otimes_{R}T'_{\m}) 
\!\!\leq\!\min\{\len_{R}(T/\m^{\alpha_{\m}}T)\len_{R}(T'/\m T'),
\len_{R}(T/\m T)\len_{R}(T'/\m^{\alpha_{\m}}T')\}$$
and this explains  step (2) in the next display: 
\begin{align*}
\len_R(T\otimes_RT')\hspace{-1.3cm}\\
&\textstyle\stackrel{(1)}{=}\sum_{\m\in\mathcal{F}}\len_{R}(T_{\m}\otimes_{R}T'_{\m})\\
&\textstyle\stackrel{(2)}{\leq}\sum_{\m\in\mathcal{F}}\min\{\len_{R}(T/\m^{\alpha_{\m}}T)\len_{R}(T'/\m T'),\len_{R}(T/\m T)\len_{R}(T'/\m^{\alpha_{\m}}T')\} \\
&\textstyle\stackrel{(3)}{\leq}\sum_{\m\in\mathcal{F}}\len_{R}(T/\m^{\alpha_{\m}}T)\len_{R}(T'/\m T')\\
&\textstyle\stackrel{(4)}{\leq}\left(\sum_{\m\in\mathcal{F}}\len_{R}(T/\m^{\alpha_{\m}}T)\right)\left(\max\{\len_{R}(T'/\m T')\mid \m\in\mathcal{F}\}\right)\\
&\textstyle\stackrel{(5)}{\leq}\len_R(T/\fb^tT)\max\{\len_{R}(T'/\m T')\mid \m\in\mathcal{F}\})\\
&\textstyle\stackrel{(6)}{\leq}\len_R(T/\fb^tT)\len_{R}(T'/\fb T').
\end{align*}
Step (1)
follows from 
Lemma~\ref{xcor28}\eqref{xcor28a}, and steps (3)--(4) are routine.

For step (5), since $\fb^tT=\fb^{t+1}T$, it follows that $\fb^tT_\m=\fb^{t+1}T_\m$ for all $\m\in\mspec(R)$. We conclude that 
$\fb^t T_\m=\fb^{t+\alpha_\m}T_\m\subseteq \m^{t+\alpha_\m}T_\m=\m^{\alpha_\m}T_\m$
for all $\m\in\mcf$. This explains  step (8) in the next display:
\begin{align*}
\len_R(T/\fb^tT)
&\textstyle\stackrel{(7)}{=}\sum_{\m\in\mathcal{F}}\len_R(T_\m/\fb^tT_\m)\\
&\stackrel{(8)}{\geq}\textstyle\sum_{\m\in\mathcal{F}}\len_R(T_\m/\m^{\alpha_\m}T_\m)\\
&\textstyle\stackrel{(9)}{=}\sum_{\m\in\mathcal{F}}\len_R(T/\m^{\alpha_\m}T)
\end{align*}
Step (7) follows from Lemma~\ref{xcor28'} applied to the tensor product $\Otimes[R]{T}{(R/\fb^t)}$,
and step (9) is standard. This explains step (5).

Since $\fb\subseteq\m$ for each $\m\in\mcf$, we have an epimorphism $T'/\fb T'\onto T'/\m T'$.
This explains step (6), and completes the proof.
\end{proof}

\begin{cor}\label{para120807g}
If $A$ and $A'$ are artinian $R$-modules, then $\Otimes{A}{A'}$ has finite length.
\end{cor}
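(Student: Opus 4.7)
The plan is to apply Proposition~\ref{xprop:tensorLength} with $T = A$ and $T' = A'$. By Fact~\ref{fact120720a}, the sets $\supp_R(A)$ and $\supp_R(A')$ are finite subsets of $\mspec(R)$, and $A$ (respectively, $A'$) is torsion with respect to the intersection $\cap_{\m\in\supp_R(A)}\m$ (respectively, $\cap_{\m\in\supp_R(A')}\m$), each of which is a finite intersection of maximal ideals. Set $\mcf=\supp_R(A)\cap\supp_R(A')$ and $\fb=\cap_{\m\in\mcf}\m$, so $\mcf$ is finite and $\fb\subseteq\m$ for every $\m\in\mcf$.

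Next I would verify the two stabilization hypotheses of Proposition~\ref{xprop:tensorLength} applied to $T = A$. Since $A$ is artinian, the descending chain $\fb A\supseteq\fb^2 A\supseteq\cdots$ stabilizes, producing an integer $t\geq 0$ with $\fb^t A=\fb^{t+1}A$, and similarly, for each $\m\in\mcf$ the chain $\m^n A$ stabilizes at some index $\alpha_\m\geq 0$ with $\m^{\alpha_\m}A=\m^{\alpha_\m+1}A$. Proposition~\ref{xprop:tensorLength} then yields
\begin{equation*}
\textstyle\len_R(A\otimes_R A')\;\leq\;\sum_{\m\in\mcf}\len_R(A/\m^{\alpha_\m}A)\cdot\len_R(A'/\m A').
\end{equation*}

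Since $\mcf$ is finite, the result will follow once I show each factor on the right is finite. The quotient $A'/\m A'$ is artinian as a quotient of $A'$ and is annihilated by $\m$, hence is an artinian vector space over the field $R/\m$, so it is finite dimensional and therefore of finite length. For $A/\m^{\alpha_\m}A$, consider the finite filtration
\begin{equation*}
A/\m^{\alpha_\m}A\supseteq\m A/\m^{\alpha_\m}A\supseteq\cdots\supseteq\m^{\alpha_\m}A/\m^{\alpha_\m}A=0,
\end{equation*}
whose successive quotients $\m^i A/\m^{i+1}A$ are artinian subquotients of $A$ annihilated by $\m$, hence finite-dimensional $R/\m$-vector spaces and thus of finite length. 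A finite filtration with finite-length factors has finite total length, so $A/\m^{\alpha_\m}A$ has finite length, completing the estimate. (If $\mcf=\emptyset$, then by Lemma~\ref{xcor28}\eqref{xcor28a} we have $A\otimes_R A'\cong\coprod_{\m\in\mcf}A_\m\otimes_R A'_\m=0$, so the conclusion is trivial.) I do not foresee a real obstacle once the choices of $\fb$, $t$, and $\alpha_\m$ are in place; everything else is a formal application of Proposition~\ref{xprop:tensorLength} together with the standard observation that an artinian module annihilated by a power of a maximal ideal has finite length.
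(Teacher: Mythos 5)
Your proof is correct and follows the same main route as the paper — reducing to Proposition~\ref{xprop:tensorLength} and then showing that the quantities $\len_R(A/\m^{\alpha_\m}A)$ and $\len_R(A'/\m A')$ are finite. The one genuine difference is how you establish that finiteness. The paper invokes Lemma~\ref{xlem100213b}, which is a general mini-max/Matlis-reflexive/finite-length equivalence stated under a noetherian hypothesis. You instead give a direct filtration argument: $A/\m^{\alpha_\m}A$ filters in $\alpha_\m$ steps with successive quotients $\m^i A/\m^{i+1}A$ that are artinian $R/\m$-vector spaces, hence finite-dimensional, hence of finite length. This is more elementary and, notably, makes no use of the noetherian hypothesis, which is consistent with the fact that the corollary itself carries no noetherian assumption (it is explicitly positioned alongside the non-noetherian Theorem~\ref{intthm120807b} as a complement to Faith--Herbera). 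In that sense your route is not merely a stylistic variant: it shows cleanly that the conclusion holds over arbitrary commutative rings, whereas the paper's citation of Lemma~\ref{xlem100213b} would, read literally, restrict to the noetherian case. Your handling of the degenerate case $\mcf=\emptyset$ via Lemma~\ref{xcor28}\eqref{xcor28a} is also correct, and your verification of the stabilization hypotheses for $t$ and $\alpha_\m$ from the artinian descending chain condition is exactly right.
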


\begin{proof}
Lemma~\ref{xlem100213b}
implies that the quantities
$\len_R(A/\m^\alpha A)$ and $\len_R(A'/\m^\alpha A')$ are finite for all $\m\in\mspec(R)$
and all $\alpha\geq 1$.
Thus, the finiteness of $\len_R(A\otimes_RA')$ follows from Proposition~\ref{xprop:tensorLength}.
\end{proof}

The next  result also applies, e.g., when $T$ and $T'$ are artinian.

\begin{prop}\label{xprop1}
Let $\fa$ and $\fa'$ be  finite intersections of maximal ideals of $R$. Let $T$ be an $\fa$-torsion
$R$-module, and let $T'$ be an $\fa'$-torsion $R$-module. 
Set $\fb=\cap_{\m\in\mathcal{F}}\m$, where
$\mathcal{F}$ is a finite subset of $\mspec(R)$ containing $\supp_R(T)\cap\supp_R(T')$.
Then the following conditions are equivalent:
\begin{enumerate}[\rm(i)]
\item\label{xprop1a} $T\otimes_RT'=0$;
\item\label{xprop1b} $\supp_R(T/\fb T)\cap\supp_R(T'/\fb T')=\emptyset$; 
\item\label{xprop1b'} 
For all $\m\in\mathcal{F}$,
either $T=\mathfrak{m}T$ or $T'=\mathfrak{m}T'$; and
\item\label{xprop1b''} 
For all $\m\in\mspec(R)$,
either $T=\mathfrak{m}T$ or $T'=\mathfrak{m}T'$.
\end{enumerate} 
\end{prop}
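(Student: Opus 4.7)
The plan is to analyze all four conditions via three tools from the preceding sections: the decomposition
$$T \cong \coprod_{\mathfrak{n} \in \supp_R(T)} T(\mathfrak{n})$$
from Lemmas~\ref{lem110517a} and~\ref{lem120727a} (with $T(\mathfrak{n}) \cong T_\mathfrak{n} \cong \Gamma_\mathfrak{n}(T)$ being $\mathfrak{n}$-torsion), the analogous decomposition of $T'$, and the tensor decomposition $T \otimes_R T' \cong \coprod_{\mathfrak{m} \in \mathcal{F}} T_\mathfrak{m} \otimes_R T'_\mathfrak{m}$ of Lemma~\ref{xcor28}\eqref{xcor28a}. The linchpin will be a global-to-local equivalence $T = \mathfrak{m} T \iff T_\mathfrak{m} = \mathfrak{m} T_\mathfrak{m}$. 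One direction is immediate by localization; for the other, Lemma~\ref{lem120818a} applied to any $u \in \mathfrak{m} \setminus \mathfrak{n}$ yields $\mathfrak{m} T(\mathfrak{n}) = T(\mathfrak{n})$ for every maximal $\mathfrak{n} \neq \mathfrak{m}$, so
$$\mathfrak{m} T = \mathfrak{m} T(\mathfrak{m}) \oplus \coprod_{\mathfrak{n} \neq \mathfrak{m}} T(\mathfrak{n})$$
inside the internal direct sum, which equals $T$ precisely when $\mathfrak{m} T(\mathfrak{m}) = T(\mathfrak{m})$.

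With this in hand, \eqref{xprop1a} $\Leftrightarrow$ \eqref{xprop1b'} reduces to a per-maximal-ideal question. If \eqref{xprop1b'} holds, then for each $\mathfrak{m} \in \mathcal{F}$ we have $T_\mathfrak{m} = \mathfrak{m} T_\mathfrak{m}$ or $T'_\mathfrak{m} = \mathfrak{m} T'_\mathfrak{m}$, and Lemma~\ref{xlem:tensor} with $\mathfrak{a} = \mathfrak{m}$ and $n = 0$ kills the corresponding summand $T_\mathfrak{m} \otimes_R T'_\mathfrak{m}$. Conversely, if \eqref{xprop1b'} fails at some $\mathfrak{m} \in \mathcal{F}$, then $T_\mathfrak{m}/\mathfrak{m} T_\mathfrak{m}$ and $T'_\mathfrak{m}/\mathfrak{m} T'_\mathfrak{m}$ are both nonzero $\kappa(\mathfrak{m})$-vector spaces, whose tensor product over $R$ (which coincides with the tensor product over $\kappa(\mathfrak{m})$) is nonzero; right-exactness of tensor produces a surjection $T_\mathfrak{m} \otimes_R T'_\mathfrak{m} \twoheadrightarrow T_\mathfrak{m}/\mathfrak{m} T_\mathfrak{m} \otimes_R T'_\mathfrak{m}/\mathfrak{m} T'_\mathfrak{m}$, and $T \otimes_R T'$ cannot vanish.

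The implication \eqref{xprop1b''} $\Rightarrow$ \eqref{xprop1b'} is trivial. For the converse, given $\mathfrak{m} \in \mspec(R) \setminus \mathcal{F}$, the hypothesis $\mathcal{F} \supseteq \supp_R(T) \cap \supp_R(T')$ forces $\mathfrak{m} \notin \supp_R(T)$ or $\mathfrak{m} \notin \supp_R(T')$; in the first case every $\mathfrak{n}$ appearing in the decomposition of $T$ is distinct from $\mathfrak{m}$, so the recurring identity $\mathfrak{m} T(\mathfrak{n}) = T(\mathfrak{n})$ gives $\mathfrak{m} T = T$, and symmetrically in the other case.

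Finally, for \eqref{xprop1b} $\Leftrightarrow$ \eqref{xprop1b'}, my plan is to verify $\supp_R(T/\mathfrak{b} T) = \{\mathfrak{m} \in \mathcal{F} : T \neq \mathfrak{m} T\}$, with the analogous identity for $T'$; the desired equivalence then follows at once. The quotient $T/\mathfrak{b} T$ is annihilated by $\mathfrak{b}$, and $V(\mathfrak{b}) \cap \mspec(R) = \mathcal{F}$ (since $\mathfrak{b}$ is a finite intersection of maximal ideals), confining the support to $\mathcal{F}$. For $\mathfrak{m} \in \mathcal{F}$, the equality $\mathfrak{b} R_\mathfrak{m} = \mathfrak{m} R_\mathfrak{m}$ (the other factors of $\mathfrak{b}$ become units after localization at $\mathfrak{m}$) gives $(T/\mathfrak{b} T)_\mathfrak{m} \cong T_\mathfrak{m}/\mathfrak{m} T_\mathfrak{m}$, which is nonzero precisely when $T \neq \mathfrak{m} T$. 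The main technical point throughout is the global-to-local translation of the first paragraph; everything else is a bookkeeping exercise once the machinery of Section~\ref{sec120713a} is in hand.
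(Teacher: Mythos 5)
Your proof is correct and rests on the same core tools the paper uses (the decomposition of a torsion module into its $\m$-local pieces, Lemma~\ref{xcor28}\eqref{xcor28a}, Lemma~\ref{xlem:tensor}, and the vector-space detection of nonvanishing over residue fields), so I would call it essentially the same approach with a different organization. One point of contrast worth noting: the paper proves \eqref{xprop1a}$\implies$\eqref{xprop1b''} directly in one step, by tensoring $T\otimes_RT'$ with $R/\m$ over $R$ and observing that $(T/\m T)\otimes_{R/\m}(T'/\m T')$ is a tensor product of $R/\m$-vector spaces; this avoids any appeal to the decomposition or to $\mcf$, and then \eqref{xprop1b''}$\implies$\eqref{xprop1b'} is free. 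You instead prove \eqref{xprop1a}$\iff$\eqref{xprop1b'} via the decomposition and localization, and then need the extra step \eqref{xprop1b'}$\implies$\eqref{xprop1b''}, which you supply correctly using the hypothesis $\mcf\supseteq\supp_R(T)\cap\supp_R(T')$ and the observation $\m T(\n)=T(\n)$ for $\n\neq\m$. Your explicit isolation of the ``global-to-local'' equivalence $T=\m T\iff T_\m=\m T_\m$ (which the paper also uses, implicitly, in its \eqref{xprop1b}$\implies$\eqref{xprop1b'} step) and your identification $\supp_R(T/\fb T)=\{\m\in\mcf : T\neq\m T\}$ are both correct and make the \eqref{xprop1b}$\iff$\eqref{xprop1b'} equivalence particularly transparent.
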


\begin{proof}
The implication
\eqref{xprop1b''}$\implies$\eqref{xprop1b'}
is trivial since $\mcf\subseteq\mspec(R)$.

\eqref{xprop1a}$\implies$\eqref{xprop1b''}:   
Assume that $T\otimes_R T'=0$. For each $\m\in\mspec(R)$, we have
\begin{align*}
0&= R/\m\otimes_R(T\otimes_RT')\\
&\cong (R/\m\otimes_RT)\otimes_{R/\m}(R/\m\otimes_RT')\\
&\cong (T/\m T)\otimes_{R/\m}(T'/\m T').
\end{align*}
The isomorphisms are standard.
Since $T/\m T$ and $T'/\m T'$ are vector spaces over $R/\m$,
it follows that either $T/\m T=0$ or $T'/\m T'=0$, as desired.

\eqref{xprop1b'}$\implies$\eqref{xprop1a} and \eqref{xprop1b'}$\implies$\eqref{xprop1b}:   
Assume that for each $\m\in\mcf$,
either $T=\mathfrak{m}T$ or $T'=\mathfrak{m}T'$.
Then  Lemma~\ref{xcor28}\eqref{xcor28b} implies that
\[
\textstyle T\otimes_RT'
\cong\coprod_{\m\in\mathcal{F}}(T/\m^{0}T)\otimes_R(T'/\m^{0}T')=0.
\]
For each $\m\in\mathcal{F}$ we have $\fb R_{\m}=\m R_{\m}$.
If $T=\m T$, then
this implies that $(T/\fb T)_{\m}=T_{\m}/\fb T_{\m}=T_{\m}/\m T_{\m}=0$, so $\m\not\in\supp_R(T/\fb T)$. 
Similarly, if  $T'=\m T'$, then
$\m\not\in\supp_R(T'/\fb T')$. 
This explains the third step in the next display:
\begin{align*}
\supp_R(T/\fb T)\cap\supp_R(T'/\fb T')\hspace{-2cm}\\
&\subseteq \supp_R(T)\cap\supp_R(T')\\
&\subseteq\mcf\\
&\subseteq(\spec(R)\ssm\supp_R(T/\fb T))\cup(\spec(R)\ssm\supp_R(T'/\fb T'))\\
&=\spec(R)\ssm(\supp_R(T/\fb T)\cap\supp_R(T'/\fb T')).
\end{align*}
The other steps are routine. It follows that the set $\supp_R(T/\fb T)\cap\supp_R(T'/\fb T')$ is contained in its own
compliment, so it must be empty.

\eqref{xprop1b}$\implies$\eqref{xprop1b'}: Assume that $\supp_R(T/\fb T)\cap\supp_R(T'/\fb T')=\emptyset$.
Let $\m\in\mathcal{F}$. Without loss of generality assume $\m\notin\supp_R(T/\fb T)$. Therefore, 
we have $0=(T/\fb T)_{\m}=T_{\m}/\fb T_{\m}=T_{\m}/\m T_{\m}$; hence $T_{\m}=\m T_{\m}$. Since 
$T\cong\coprod_{\n\in\supp_R(T)}T_{\n}$ and $T_{\n}=\m T_{\n}$ for all maximal ideals $\n\neq \m$ it follows that $T=\m T$.
\end{proof}

\begin{prop} \label{xlem:homSwap}
Let $\fc$ be an intersection of finitely many maximal ideals of $R$. 
Let $L$ and $T$ be $R$-modules such that $T$ is $\fc$-torsion, and 
let $\mathcal{F}$ be a subset of $\mspec(R)$ containing
$\supp_R(T)\cap\ass_R(L)$. For each ideal  $\fa\subseteq\cap_{\m\in\mathcal{F}}\m$, one has
\[
\textstyle\Hom{T}{L}\cong\Hom{\Gamma_{\fa}(T)}{\Gamma_{\fa}(L)}
\cong\coprod_{\m\in\mathcal{F}}\Hom{\Ga mT}{\Gamma_{\m}(L)}.
\]
\end{prop}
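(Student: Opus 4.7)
The plan is to reduce everything to the fundamental decomposition $T \cong \coprod_{\m \in \supp_R(T)} \Ga mT$ furnished by Lemmas~\ref{lem120727a} and~\ref{lem110517a}, together with the analogous formula $\Ga aT \cong \coprod_{\m \in \supp_R(T) \cap V(\fa)} \Ga mT$ from Lemma~\ref{lem120719a}\eqref{lem120719a1}. A crucial preliminary observation is that $\supp_R(T)$ is a finite subset of $\mspec(R)$ by Lemma~\ref{lem110517a}\eqref{item120718d}, so all coproducts in sight will have only finitely many nonzero terms.

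For the first isomorphism $\Hom{T}{L} \cong \Hom{\Ga aT}{\Ga aL}$, I would show separately that any $f \in \Hom{T}{L}$ has image inside $\Ga aL$ and factors through the inclusion $\Ga aT \hookrightarrow T$. The image $\im f$ is a $\fc$-torsion submodule of $L$, so Lemma~\ref{lem110517a}\eqref{item120718d} applied to $\im f$ (via its Lemma~\ref{lem120727a}-decomposition) yields $\supp_R(\im f) = \ass_R(\im f) \subseteq \supp_R(T) \cap \ass_R(L) \subseteq \mcf \subseteq V(\fa)$, which via Lemma~\ref{lem110517a}\eqref{item120718e} forces $\im f \subseteq \Ga aL$. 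On the domain side, for $\m' \in \supp_R(T) \ssm V(\fa)$ and $t \in \Ga{m'}T$, the element $f(t)$ is simultaneously $\m'$-torsion and $\fa$-torsion, so comaximality of $\fa$ with $\m'$ forces $f(t)=0$. Thus $f$ vanishes on the complementary summand to $\Ga aT$, and extension by zero supplies the inverse to the restriction map.

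For the second isomorphism, using the coproduct decomposition of $\Ga aT$ gives
\[
\Hom{\Ga aT}{\Ga aL} \cong \prod_{\m \in \supp_R(T) \cap V(\fa)} \Hom{\Ga mT}{\Ga aL}.
\]
For each such $\m$, the image of any $\Ga mT \to \Ga aL$ is $\m$-torsion and lies in $\Ga aL$, hence in $\Ga mL$ (using $\fa \subseteq \m$), giving $\Hom{\Ga mT}{\Ga aL} = \Hom{\Ga mT}{\Ga mL}$. The argument from the previous paragraph forces this Hom to vanish whenever $\m \notin \mcf$, while for $\m \in \mcf \ssm \supp_R(T)$ one automatically has $\Ga mT = 0$. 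Only finitely many factors remain (those indexed by the finite set $\supp_R(T) \cap \ass_R(L)$), so the product agrees with the coproduct over $\mcf$.

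The main obstacle will be the careful bookkeeping of several overlapping index sets and the repeated invocation of two underlying principles: that a nonzero $\m$-torsion submodule of $L$ forces $\m \in \ass_R(L)$, and that comaximality of $\fa$ with any maximal $\m' \notin V(\fa)$ annihilates any element that is both $\m'$-torsion and $\fa$-torsion. These principles, combined with the $\fc$-torsion decomposition, drive every step.
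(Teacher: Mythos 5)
Your proof is correct. The difference from the paper's argument is mainly organizational: the paper first establishes $\Hom{T}{L}\cong\coprod_{\m\in\mcf}\Hom{\Ga mT}{\Ga mL}$ by writing $T\cong\coprod_{\m\in\supp_R(T)}\Ga mT$ and invoking the torsion adjunction Lemma~\ref{disc120720a}\eqref{xlem100206c2} to replace $L$ by $\Ga mL$ in each factor; it then repeats the same computation with $\Ga aT$ in place of $T$ (after using Lemma~\ref{disc120720a}\eqref{xlem100206c2} once more to pass from $\Ga aL$ to $L$), and the two ends of the chain coincide. You instead attack the link $\Hom{T}{L}\cong\Hom{\Ga aT}{\Ga aL}$ head-on, showing by hand that every $f$ has image inside $\Ga aL$ (the associated-prime/support bound on $\im f$) and kills each $\Ga{m'}T$ with $\m'\notin V(\fa)$ (comaximality), and only then decompose $\Ga aT$. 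In effect you re-derive the content of Lemma~\ref{disc120720a}\eqref{xlem100206c2} from scratch in the special case at hand; what this buys is a more self-contained and elementary verification of the first isomorphism, while the paper's route is slightly shorter because it leans on the already-proved adjunction and reuses the identical coproduct computation twice. Two small remarks on your write-up: the phrase ``factors through the inclusion $\Ga aT\hookrightarrow T$'' should really say that $f$ is determined by its restriction to $\Ga aT$ (it kills the complementary summand $\coprod_{\m\in\supp_R(T)\ssm V(\fa)}\Ga mT$), and in the second paragraph the justification that the image of $\Ga mT\to\Ga aL$ lands in $\Ga mL$ is simply that the image is $\m$-torsion as a quotient of $\Ga mT$ — the inclusion $\Ga mL\subseteq\Ga aL$ (from $\fa\subseteq\m$) is what lets you then write $\Hom{\Ga mT}{\Ga aL}=\Hom{\Ga mT}{\Ga mL}$ without changing the ambient Hom. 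Neither affects correctness.
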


\begin{proof}
The first step in the next display follows from Lemmas~\ref{lem110517a}\eqref{item120718e} 
and~\ref{lem120727a}:
\begin{align*}
\Hom[R]{T}{L}&\textstyle\cong\coprod_{\m\in\supp_R(T)}\Hom{\Ga mT}{L}\\
&\textstyle\cong\coprod_{\m\in\supp_R(T)}\Hom[R]{\Ga mT}{\Gamma_{\m}(L)}\\
&\textstyle\cong\coprod_{\m\in\mathcal{F}}\Hom[R]{\Ga mT}{\Gamma_{\m}(L)}.
\end{align*}
The second step is from Lemma~\ref{disc120720a}\eqref{xlem100206c2}.
The third step  follows from the fact that for all maximal ideals $\m\notin\mathcal{F}$ either $T_{\m}\cong\Ga mT=0$ or $\Gamma_\m(L)=0$; 
see Lemma~\ref{lem110517a}\eqref{item120718d}.

Since we have $\mcf\supseteq\supp_R(T)\cap\ass_R(L)\supseteq\supp_R(\Ga aT)\cap\ass_R(L)$, 
the first paragraph of this proof gives the second step in the next sequence:
\begin{align*}
\Hom[R]{\Ga aT}{\Ga aL}
&\cong\Hom[R]{\Ga aT}{L}\\
&\textstyle\cong\coprod_{\m\in\mcf}\Hom{\Ga m{\Ga aT}}{\Ga m{L}}\\
&\textstyle\cong\coprod_{\m\in\mathcal{F}}\Hom[R]{\Ga mT}{\Gamma_{\m}(L)}.
\end{align*}
The first step is from Lemma~\ref{disc120720a}\eqref{xlem100206c2}.
For the third step, note that each $\m\in\mcf$ satisfies $\fa\subseteq\m$, so we have
$\Ga m{\Ga aT}=\Ga {m+a}T=\Ga mT$.
\end{proof}

In the next result, the assumption  ``$\mu^0_R(\m,B)$ is finite for all $\m\in V(\fa)$'' 
is equivalent to the condition $\len_R(0:_B\fa)<\infty$.

\begin{prop} \label{xlem:homSwap2} 
Assume that $R$ is noetherian,
and let $\fc$ be an intersection of finitely many maximal ideals of $R$.
Let $T$ and $B$ be $R$-modules such that $T$ is $\fc$-torsion,
and let $\mathcal{F}$ be a finite subset of $\mspec(R)$ containing $\supp_R(T)\cap\ass_R(B)$. 
Set $\fa=\cap_{\m\in\mathcal{F}}\m$, and
assume that $\mu^0_R(\m,B)$ is finite for all $\m\in \mcf$. Then we have
\[
\textstyle\Hom{T}{B}\cong\Hom[\comp R^{\fa}]{\Gamma_{\fa}(B)^{\vee}}{\Gamma_\fa(T)^{\vee}}
\cong\coprod_{\m\in\mathcal{F}}\Hom[\comp R^{\m}]{\Gamma_{\m}(B)^{\vee}}{\Ga mT^{\vee}}.
\]
\end{prop}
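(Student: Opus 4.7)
The plan is to reduce to $\m$-torsion components via Proposition~\ref{xlem:homSwap}, upgrade the Bass-number hypothesis to Matlis reflexivity of the relevant pieces of $B$, and then swap the Hom via double adjointness over the appropriate completions.

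First, I would apply Proposition~\ref{xlem:homSwap} with the given $\mcf$ and $\fa=\cap_{\m\in\mcf}\m$ to obtain
$$\Hom{T}{B}\cong\Hom{\Ga aT}{\Ga aB}\cong\coprod_{\m\in\mcf}\Hom{\Ga mT}{\Ga mB}.$$
The task then reduces to identifying $\Hom{\Ga aT}{\Ga aB}$ with $\Hom[\Comp Ra]{\Ga aB^\vee}{\Ga aT^\vee}$ and, for each $\m\in\mcf$, identifying $\Hom{\Ga mT}{\Ga mB}$ with $\Hom[\Comp Rm]{\Ga mB^\vee}{\Ga mT^\vee}$.

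Next, I would show each $\Ga mB$ is Matlis reflexive over $\Comp Rm$. Since $\Ga mB$ is $\m$-torsion and $(0:_{\Ga mB}\m)=(0:_B\m)$ has finite $R/\m$-dimension $\mu^0_R(\m,B)$, Remark~\ref{xdisc100414a}\eqref{xdisc100414a2} shows that the injective hull of $\Ga mB$ is $\E_R(R/\m)^{\mu^0_R(\m,B)}$, a finite direct sum of copies of the artinian module $\E_R(R/\m)$ and hence itself artinian. Thus $\Ga mB$ embeds in an artinian $R$-module and is itself artinian over $R$, and so over $\Comp Rm$ by Lemma~\ref{lem110413a}; since $\Comp Rm$ is local and complete, Fact~\ref{xpara1} gives Matlis reflexivity of $\Ga mB$ over $\Comp Rm$. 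Because $\mcf$ is finite, $\Ga aB\cong\coprod_{\m\in\mcf}\Ga mB$ is a finite sum of artinian modules, hence artinian over $R$ and over the semi-local complete ring $\Comp Ra$, and therefore Matlis reflexive over $\Comp Ra$ by Fact~\ref{xpara1}.

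Finally, I would apply double Hom-tensor adjointness: for any commutative ring $S$ with fixed injective $S$-module $E_S$ and $S$-modules $X,Y$, both $\Hom[S]{X}{Y^{\vee(S)\vee(S)}}$ and $\Hom[S]{Y^{\vee(S)}}{X^{\vee(S)}}$ are canonically isomorphic to $(X\otimes_SY^{\vee(S)})^{\vee(S)}$, so they are naturally isomorphic. Taking $S=\Comp Rm$, $X=\Ga mT$, $Y=\Ga mB$, passing from $\Hom$ to $\Hom[\Comp Rm]$ via Lemma~\ref{disc120720a}\eqref{xlem100206c1} (since $\Ga mT$ is $\m$-torsion), invoking Matlis reflexivity of $\Ga mB$, and replacing $(-)^{\vee(\Comp Rm)}$ by $(-)^\vee$ on the $\m$-torsion modules $\Ga mT$ and $\Ga mB$ via Lemma~\ref{lem110413b}, yields the local isomorphism $\Hom{\Ga mT}{\Ga mB}\cong\Hom[\Comp Rm]{\Ga mB^\vee}{\Ga mT^\vee}$. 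The same argument with $S=\Comp Ra$, $X=\Ga aT$, $Y=\Ga aB$ handles the first isomorphism in the statement. The main subtlety lies in the second paragraph, promoting the finiteness of a single Bass number to artinianness of $\Ga mB$; once that is secured, everything else is formal adjunction.
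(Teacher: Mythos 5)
Your proof is correct, and it follows essentially the same route as the paper's: reduce to $\Hom{\Ga aT}{\Ga aB}$ via Proposition~\ref{xlem:homSwap}, deduce artinianness of $\Ga aB$ from the finite Bass numbers, and then swap via Matlis double-adjointness over the completion. The only structural difference is that you carry out the double-adjointness directly at the $\Hom$ level (using Lemma~\ref{disc120720a}, Lemma~\ref{lem110413b}, and Fact~\ref{xpara1}), whereas the paper invokes Corollary~\ref{xcor:extSwap}, which packages the same argument at the level of $\Ext^i$ for all $i$ and so goes through Theorems~\ref{thm110519a} and~\ref{xlem100312az} and their injective-resolution bookkeeping. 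Your specialization to $i=0$ is a mild simplification that avoids the injective-resolution machinery, but the key observations, namely that $\mu^0_R(\m,B)<\infty$ forces $\Ga aB$ to embed in the artinian module $\coprod_{\m\in\mcf}\E_R(R/\m)^{\mu^0_R(\m,B)}$, and that Matlis reflexivity of $\Ga aB$ over the semi-local complete ring $\Comp Ra$ then permits the adjointness swap, are identical to the paper's.
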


\begin{proof}
Since $\mu^0_R(\m,B)$ is finite for all $\m\in \mcf$, we know that 
$$\textstyle
\Ga a{\E_R(B)}\cong\coprod_{\m\in \mcf}\E_R(R/\m)^{\mu^0_R(\m,B)}$$ 
is
an artinian $R$-module containing $\Ga aB$. It follows that $\Ga aB$ is artinian over $R$ with $\supp_R(\Ga aB)\subseteq \mcf$.
Since $T$ is $\fc$-torsion, Lemmas~\ref{lem120727a}
and~\ref{lem120719a}\eqref{lem120719a1} imply that 
$\supp_R(\Ga aT)\subseteq V(\fa)=\mcf$,
so Corollary~\ref{xcor:extSwap} explains the second step in the next sequence:
\begin{align*}
\textstyle\Hom{T}{B}
&\textstyle\cong\Hom{\Ga aT}{\Ga aB}\\
&\textstyle\cong\Hom[\comp R^{\fa}]{\Gamma_{\fa}(\Ga aB)^{\vee}}{\Gamma_\fa(\Ga aT)^{\vee}}\\
&\textstyle=\Hom[\comp R^{\fa}]{\Gamma_{\fa}(B)^{\vee}}{\Gamma_\fa(T)^{\vee}}.
\end{align*}
The first step is from Proposition~\ref{xlem:homSwap}.

By construction, we have $\mcf\supseteq\supp_R(\Ga aT)\cap\supp_R(\Ga aB)$, so another application of Corollary~\ref{xcor:extSwap} and
Proposition~\ref{xlem:homSwap}   explains the first and second steps in the next sequence:
\begin{align*}
\textstyle\Hom{T}{B}
&\textstyle\cong\Hom{\Ga aT}{\Ga aB}\\
&\textstyle\cong\coprod_{\m\in\mathcal{F}}\Hom[\comp R^{\m}]{\Gamma_{\m}(\Ga aB)^{\vee}}{\Ga m{\Ga aT}^{\vee}}
\\
&\textstyle\cong\coprod_{\m\in\mathcal{F}}\Hom[\comp R^{\m}]{\Gamma_{\m}(B)^{\vee}}{\Ga mT^{\vee}}.
\end{align*}
The third step follows from the fact that every $\m\in\mcf$ satisfies $\m\supseteq\fa$.
\end{proof}

\begin{disc}\label{disc120729a}
In the previous result,
note that 
$\Gamma_{\fa}(B)^{\vee}$ is a noetherian $\comp R^{\fa}$-module while
$\Gamma_{\m}(B)^{\vee}$ is a noetherian $\comp R^{\m}$-module. 
Indeed, since $\Ga aB$ is artinian over $R$ and $\fa$-torsion, Lemma~\ref{lem110413a} implies that
$\Ga aB$ is artinian over $\Comp Ra$.
As the ring $\Comp Ra$ is semi-local and complete, Lemma~\ref{lem110413b} and Theorem~\ref{xthm100308a} imply that
$\md{\Ga aB}\cong\Ga aB^{\vee(\Comp Ra)}$ is noetherian over $\Comp Ra$.
The noetherianness of $\Gamma_{\m}(B)^{\vee}$ follows similarly.

Similarly, if $T$ is artinian, then
$\Gamma_{\fa}(T)^{\vee}$ is a noetherian $\comp R^{\fa}$-module while
$\Ga mT^{\vee}$ is a noetherian $\comp R^{\m}$-module. 
\end{disc}

\begin{prop}\label{xprop110217a}
Let $\fc$ be an intersection of finitely many maximal ideals of $R$. 
Let $L$ and $T$ be $R$-modules such that $T$ is $\fc$-torsion.
Let $\mathcal{F}$ be a finite subset of $\mspec(R)$ containing 
$\supp_R(T)\cap\ass_R(L)$, and set $\fb=\cap_{\m\in\mathcal{F}}\m$. 
Assume that there is an integer $x\geq 0$ such that $\fb^x\Gamma_{\fb}(L)=0$.
Set $y=\inf\{z\geq 0\mid \fb^zT=\fb^{z+1}T\}$, and let $n\geq\min\{x,y\}$. 
\begin{enumerate}[\rm(a)]
\item\label{xprop110217a1}
For each $\m\in\mathcal{F}$ 
there is an integer $\alpha_{\m}$ with $n\geq\alpha_{\m}\geq 0$ such that  $\m^{\alpha_{\m}}T=\m^{\alpha_{\m}+1}T$ or 
$\m^{\alpha_{\m}}\Gamma_{\m}(L)=0$. 
\item\label{xprop110217a2}
Given any $\alpha_\m$ as in part~\eqref{xprop110217a1}, there are $R$-module isomorphisms
\[
\textstyle\Hom[R]{T}{L}\cong\coprod_{\m\in\mathcal{F}}\Hom[R]{T/\m^{\alpha_{\m}}T}{(0:_L\m^{\alpha_{\m}})}
\cong\Hom[R]{T/\fb^nT}{(0:_L\fb^n)}.
\]
\end{enumerate}
\end{prop}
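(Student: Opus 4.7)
The plan is to first establish two pointwise identities that convert the hypotheses on $\fb$ into hypotheses on each individual $\m\in\mcf$, and then invoke Proposition~\ref{xlem:homSwap} to decompose the Hom summand-by-summand. Specifically, I will show that for each $\m\in\mcf$ one has $\m^x\Ga mL=0$ and $\m^y\Ga mT=\m^{y+1}\Ga mT$. For the first, $\Ga mL\subseteq\Ga bL$ (since $\fb\subseteq\m$), so $\fb^x\Ga mL=0$. Setting $\fb'=\cap_{\n\in\mcf\ssm\{\m\}}\n$ (interpreted as $R$ if $\mcf=\{\m\}$), we have $\fb'\not\subseteq\m$ because $\m$ is prime and distinct from each competing maximal ideal; pick $u\in\fb'\ssm\m$, so that $u^x$ is an automorphism of $\Ga mL$ by Lemma~\ref{lem120818a}. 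For any $a\in\m^x$ the element $au^x\in\m^x(\fb')^x=(\m\fb')^x\subseteq\fb^x$ annihilates $\Ga mL$, and invertibility of $u^x$ yields $a\Ga mL=0$. The second identity uses the decomposition $T=\bigoplus_{\n\in\supp_R(T)}\Ga nT$ from Lemmas~\ref{lem110517a}\eqref{item120718e} and~\ref{lem120727a} together with the symmetric unit trick: $\fb^k\Ga nT=\n^k\Ga nT$ when $\n\in\mcf$ and $\fb^k\Ga nT=\Ga nT$ when $\n\notin\mcf$, so $\fb^yT=\fb^{y+1}T$ forces $\m^y\Ga mT=\m^{y+1}\Ga mT$, equivalently $\m^yT=\m^{y+1}T$. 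Part~(a) then follows at once by taking $\alpha_\m=n$, which satisfies Case~A if $n\geq x$ and Case~B if $n\geq y$.

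For the first isomorphism in part~(b), Proposition~\ref{xlem:homSwap} gives $\Hom{T}{L}\cong\bigoplus_{\m\in\mcf}\Hom{\Ga mT}{\Ga mL}$. The decomposition of $T$, together with $\m\Ga nT=\Ga nT$ for distinct maximals $\n\neq\m$ (Lemma~\ref{lem120818a}), identifies $T/\m^{\alpha_\m}T\cong\Ga mT/\m^{\alpha_\m}\Ga mT$, while $(0:_L\m^{\alpha_\m})=(0:_{\Ga mL}\m^{\alpha_\m})$. Since the source is killed by $\m^{\alpha_\m}$, any hom to $\Ga mL$ lands automatically in $(0:_{\Ga mL}\m^{\alpha_\m})$, so
\[\Hom{T/\m^{\alpha_\m}T}{(0:_L\m^{\alpha_\m})}=\{f\in\Hom{\Ga mT}{\Ga mL}:f(\m^{\alpha_\m}\Ga mT)=0\}.\]
The key claim is that every $f\in\Hom{\Ga mT}{\Ga mL}$ vanishes on $\m^{\alpha_\m}\Ga mT$. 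In Case~A this is immediate, as $f(\m^{\alpha_\m}\Ga mT)\subseteq\m^{\alpha_\m}\Ga mL=0$. In Case~B, setting $M_\infty=\m^{\alpha_\m}\Ga mT$, the divisibility $\m M_\infty=M_\infty$ gives $f(M_\infty)=\m^kf(M_\infty)$ for every $k\geq 0$, and taking $k=x$ yields $f(M_\infty)\subseteq\m^x\Ga mL=0$ by the first identity of part~(a). Summing over $\mcf$ produces the first isomorphism.

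For the second isomorphism, analogous computations give $T/\fb^nT\cong\bigoplus_\m\Ga mT/\m^n\Ga mT$ and $(0:_L\fb^n)\cong\bigoplus_\m(0:_{\Ga mL}\m^n)$, using $\fb^n\Ga mL=\m^n\Ga mL$ and $\fb^n\Ga mT=\m^n\Ga mT$ for $\m\in\mcf$. Since $\mcf$ is finite, $\Hom{T/\fb^nT}{(0:_L\fb^n)}$ splits as a double sum over $(\m,\n)\in\mcf\times\mcf$; cross terms vanish because the source is killed by $\m^n$ while the target is killed by $\n^n$ with $\m^n+\n^n=R$ by comaximality of distinct maximal ideals. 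The diagonal terms reduce to $\Hom{\Ga mT}{\Ga mL}$ by the first-isomorphism argument applied with the valid choice $\alpha_\m=n$, so both right-hand sides equal $\Hom{T}{L}$. The main obstacle is the Case~B situation, where $f(M_\infty)$ could a priori be a nonzero $\m$-divisible submodule of $\Ga mL$; converting the global hypothesis $\fb^x\Ga bL=0$ into the pointwise identity $\m^x\Ga mL=0$ via the unit-element trick of part~(a) is the technical heart of the argument.
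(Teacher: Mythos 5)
Your proof is correct, and the broad strategy — decompose via Proposition~\ref{xlem:homSwap} and use the unit-at-$\m$ trick (equivalently, that $\Ga mL$ and $\Ga mT$ are $R_\m$-modules with $\fb R_\m=\m R_\m$) to convert the $\fb$-adic hypotheses into pointwise ones — coincides with the paper's. Where you diverge is in the order and mechanism of the two isomorphisms in part~(b). The paper proves the \emph{second} isomorphism first, via a chain through the quotients $T/\fb^xT$ and $T/\fb^nT$, handling the case $n<x$ by observing that $\fb^nT=\fb^xT$ (since $y\leq n$), and then derives the first isomorphism per-$\m$ ``by similar reasoning.'' You instead prove the first isomorphism directly at the level of homomorphisms: the Case-B observation that $f(\m^{\alpha_\m}\Ga mT)$ is $\m$-divisible in $\Ga mL$, hence lies in $\m^x\Ga mL=0$, replaces the paper's coincidence of quotients $T/\fb^nT=T/\fb^xT$. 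You then obtain the second isomorphism by splitting $T/\fb^nT$ and $(0:_L\fb^n)$ into their $\m$-components and killing cross-terms by comaximality of $\m^n$ and $\n^n$, bypassing the intermediate quotient $T/\fb^xT$ entirely. Both routes are sound; yours reads off the vanishing directly from the module structure of the image (arguably a cleaner explanation of why homs kill the stabilized submodule), while the paper's chain stays at the level of $\fb$-adic filtrations on the source throughout.
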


\begin{proof}
\eqref{xprop110217a1}
It suffices to show that $\m^{n}T=\m^{n+1}T$ or 
$\m^{n}\Gamma_{\m}(L)=0$ for each $\m\in\mcf$. 
To show this, we argue by cases.
If $n\geq x$, then we have $\m^n\Gamma_{\m}(L)=\fb^n\Gamma_{\m}(L)=0$
since $\Ga mL$ is an $R_{\m}$-module and $\fb R_{\m}=\m R_{\m}$.
In the case $n<x$, the condition $n\geq \min\{x,y\}$  implies that $n\geq y$. 
In particular, this implies that $\fb^nT=\fb^{n+1}T$.
Since $T=\coprod_{\n\in\supp(T)}\Gamma_\n(T)$, this explains the second equality in the next display
in the case $\m\in\supp_R(T)$:
\[
\m^n\Gamma_\m(T)=\fb^n\Gamma_\m(T)=\fb^{n+1}\Gamma_\m(T)=\m^{n+1}\Gamma_\m(T).
\]  
For $\m\neq\n\in\supp(T)$ we have $\m^j\Gamma_\n(T)=\Gamma_\n(T)=\m^{j+1}\Gamma_\n(T)$ for all $j\geq 0$. Thus
\begin{align*}
\m^nT&\textstyle=\left(\coprod_{\m\neq\n\in\supp(T)}\m^n\Gamma_\n(T)\right)\coprod\m^n\Gamma_\m(T)\\
&\textstyle=\left(\coprod_{\m\neq\n\in\supp(T)}\m^{n+1}\Gamma_\n(T)\right)\coprod\m^{n+1}\Gamma_\m(T)\\
&=\m^{n+1}T
\end{align*}
since $\supp_R(T)$ is finite. In the case $\m\notin\supp_R(T)$, we have $\Ga mT\cong T_{\m}=0$, so the displayed
equalities hold in this case as well.

\eqref{xprop110217a2}
For each integer $j\geq 0$, the first step in the following display is from 
Lemma~\ref{xcor28'} applied to $T\otimes_R(R/\fb^j)$:
\begin{align*}
T/\fb^jT
&\textstyle\cong \coprod_{\m\in\mcf}(T_{\m}/\fb^jT_{\m})
\textstyle\cong \coprod_{\m\in\mcf}(\Ga mT/\fb^j\Ga mT)
\cong\Ga bT/\fb^j\Ga bT.
\end{align*}
The second step is from Lemmas~\ref{lem110517a}\eqref{item120718c} and~\ref{lem120727a},
and the third step follows similarly.
This explains the third step in the next display:
\begin{align*}
\Hom[R]{T}{L}
&\cong\Hom[R]{\Gamma_\fb(T)}{\Gamma_\fb(L)}\\
&\cong\Hom[R]{\Gamma_\fb(T)/\fb^x\Gamma_{\fb}(T)}{\Gamma_\fb(L)}\\
&\cong\Hom[R]{T/\fb^xT}{\Gamma_\fb(L)}\\
&\cong\Hom[R]{T/\fb^nT}{\Gamma_\fb(L)}\\
&\cong\Hom[R]{T/\fb^nT}{(0:_L\fb^n)}.
\end{align*}
The first step is from Proposition~\ref{xlem:homSwap}.
The second step follows from the assumption
$\fb^x\Gamma_{\fb}(L)=0$.
The fifth step is due to the equality $(0:_L\fb^n)=(0:_{\Ga bL}\fb^n)$. 

For the fourth step, we argue by cases.
If $n\geq x$, then $\fb^n\Ga bL=0=\fb^x\Ga bL$, so we have
$\Hom[R]{T/\fb^xT}{\Gamma_\fb(L)}\cong\Hom[R]{T}{\Gamma_\fb(L)}\cong\Hom[R]{T/\fb^nT}{\Gamma_\fb(L)}$
as desired.
If $n< x$, then the condition $n\geq \min\{x,y\}$ implies that $y\leq n<x$. From the assumption $\fb^yT=\fb^{y+1}T$ it follows that $\fb^yT=\fb^nT=\fb^xT$.

Note that  for each $\m\in\mcf$ we have
\begin{equation*}
\m^x\Gamma_{\m}(L)=\fb^x\Gamma_{\m}(L)\subseteq\fb^x\Gamma_{\fb}(L)=0.
\end{equation*}
The first step is from the fact that $\Ga mL$ is an $R_{\m}$-module and $\fb R_{\m}=\m R_{\m}$.
The second step is from the fact that $\fb\subseteq\m$, and the vanishing is by the definition of $x$.
Similarly, for each $\m\in\mcf$, we have
$$\m^y T_{\m}=\fb^yT_{\m}=\fb^{y+1}T_{\m}=\m^{y+1}T_{\m}.
$$
Thus, we have the following isomorphisms
$$\Hom[R]{T}{L}
\textstyle\cong\coprod_{\m\in\mathcal{F}}\Hom[R]{\Ga mT}{\Gamma_{\m}(L)}
\cong\coprod_{\m\in\mathcal{F}}\Hom[R]{T/\m^{\alpha_{\m}}T}{(0:_L\m^{\alpha_{\m}})}$$
using similar reasoning as above.
\end{proof}

\begin{prop}\label{xthm29}
Let $\fc$ be an intersection of finitely many maximal ideals of $R$.
Let $L$ and $T$ be $R$-modules such that $T$ is $\fc$-torsion,
Let $\mathcal{F}$ be a finite subset of $\mspec(R)$
containing $\supp_R(T)\cap\ass_R(L)$, and set $\fb=\cap_{\m\in\mathcal{F}}\m$. 
Assume that there is an integer $x\geq 0$ such that $\fb^x\Gamma_{\fb}(L)=0$.
Set $y=\inf\{z\geq 0\mid \fb^zT=\fb^{z+1}T\}$, and let $n\geq\min\{x,y\}$. 
For each $\m\in\mathcal{F}$, 
fix an integer $\alpha_{\m}$ with $n\geq\alpha_{\m}\geq 0$ such that  $\m^{\alpha_{\m}}T=\m^{\alpha_{\m}+1}T$ or 
$\m^{\alpha_{\m}}\Gamma_{\m}(L)=0$. 
Then there are inequalities
\begin{align*}
\len_R(\Hom[R]{T}{L})&\textstyle\leq\sum_{\m\in\mathcal{F}}\len_R(T/\m T)\len_R(0:_L\m^{\alpha_{\m}})\\
&\leq\max\{\len_R(T/\m T)\mid \m\in\mathcal{F}\}\len_R(0:_L\fb^n)\\
&\leq\len_R(T/\fb T)\len_R(0:_L\fb^n).
\end{align*}
Here, we follow the convention $0\cdot\infty=0$.
\end{prop}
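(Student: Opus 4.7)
The plan is to start from the isomorphism
$$\Hom_R(T,L)\cong\coprod_{\m\in\mcf}\Hom_R(T/\m^{\alpha_{\m}}T,(0:_L\m^{\alpha_{\m}}))$$
provided by Proposition~\ref{xprop110217a}\eqref{xprop110217a2}. Taking lengths turns the left-hand side into the sum $\sum_{\m\in\mcf}\len_R\Hom_R(T/\m^{\alpha_{\m}}T,(0:_L\m^{\alpha_{\m}}))$, so it suffices to bound each summand by $\len_R(T/\m T)\cdot\len_R(0:_L\m^{\alpha_{\m}})$ and then assemble the three inequalities.

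For a fixed $\m\in\mcf$, set $\mu_{\m}:=\len_R(T/\m T)$. If $\mu_{\m}=\infty$ the target bound is trivial (using the convention $0\cdot\infty=0$ when $(0:_L\m^{\alpha_{\m}})=0$), so assume $\mu_{\m}<\infty$. Choose $x_1,\dots,x_{\mu_{\m}}\in T$ lifting an $R/\m$-basis of $T/\m T$ and set $N=Rx_1+\cdots+Rx_{\mu_{\m}}$, so that $T=N+\m T$. Iterating gives $\m T\subseteq \m N+\m^{2}T\subseteq N+\m^{2}T$, hence $T=N+\m^{k}T$ for every $k\geq 1$, and in particular $T=N+\m^{\alpha_{\m}}T$. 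Therefore $T/\m^{\alpha_{\m}}T$ is a quotient of $(R/\m^{\alpha_{\m}})^{\mu_{\m}}$, which yields a monomorphism
$$\Hom_R(T/\m^{\alpha_{\m}}T,(0:_L\m^{\alpha_{\m}}))\hookrightarrow\Hom_R((R/\m^{\alpha_{\m}})^{\mu_{\m}},(0:_L\m^{\alpha_{\m}}))\cong(0:_L\m^{\alpha_{\m}})^{\mu_{\m}},$$
where the final step uses that $(0:_L\m^{\alpha_{\m}})$ is already annihilated by $\m^{\alpha_{\m}}$. Summing the resulting inequalities over $\m\in\mcf$ gives the first estimate.

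For the second inequality, I will factor out the maximum, so it remains to prove
$$\sum_{\m\in\mcf}\len_R(0:_L\m^{\alpha_{\m}})\leq\len_R(0:_L\fb^n).$$
Since $\alpha_{\m}\leq n$ and $\fb\subseteq\m$, we have $\fb^n\subseteq\m^n\subseteq\m^{\alpha_{\m}}$, so each $(0:_L\m^{\alpha_{\m}})$ is a submodule of $(0:_L\fb^n)$. Moreover $(0:_L\fb^n)$ is a module over $R/\fb^n$, and because $\mcf$ is a finite set of distinct maximal ideals, the Chinese Remainder Theorem gives $R/\fb^n\cong\prod_{\m\in\mcf}R/\m^n$. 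The corresponding orthogonal idempotents show that the submodules $(0:_L\m^{\alpha_{\m}})\subseteq(0:_L\m^n)$ of $(0:_L\fb^n)$ are independent for distinct $\m$, so their internal sum is direct and its length is bounded above by $\len_R(0:_L\fb^n)$. The third inequality is immediate: for each $\m\in\mcf$ the surjection $T/\fb T\twoheadrightarrow T/\m T$ (coming from $\fb\subseteq\m$) yields $\len_R(T/\m T)\leq\len_R(T/\fb T)$, so the maximum is dominated by $\len_R(T/\fb T)$.

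The main obstacle is the first step: establishing that $T/\m^{\alpha_{\m}}T$ is generated by $\mu_{\m}$ elements without assuming $R$ is noetherian and without assuming the ``torsion-theoretic'' hypothesis $\m^{\alpha_{\m}}T=\m^{\alpha_{\m}+1}T$ (the other option allowed in the definition of $\alpha_{\m}$ is $\m^{\alpha_{\m}}\Gamma_{\m}(L)=0$). The argument above sidesteps this by avoiding Nakayama and instead iterating $T=N+\m T$ directly to reach $\m^{\alpha_{\m}}T$; the remaining work is then routine length arithmetic together with the CRT decomposition of $(0:_L\fb^n)$.
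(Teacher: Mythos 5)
Your proof is correct and follows essentially the same approach as the paper: both start from the decomposition in Proposition~\ref{xprop110217a}\eqref{xprop110217a2}, bound each summand by $\len_R(T/\m T)\len_R(0:_L\m^{\alpha_\m})$, and then use pairwise comaximality of the $\m\in\mcf$ to get the directness of $\sum_\m(0:_L\m^{\alpha_\m})\subseteq(0:_L\fb^n)$. The only (minor) difference is that you establish the summand bound directly by exhibiting $T/\m^{\alpha_\m}T$ as a quotient of $(R/\m^{\alpha_\m})^{\mu_\m}$ via lifts of a basis of $T/\m T$, whereas the paper invokes an induction on $\len_R(0:_L\m^{\alpha_\m})$; both are elementary and equally valid.
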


\begin{proof}
An inductive argument on  $\len_R(0:_L\m^{\alpha_{\m}})$ shows that
\[
\len_{R}(\Hom[R]{T/\m^{\alpha_{\m}}T}{(0:_L\m^{\alpha_{\m}})})\leq \len_R(T/\m T)\len_R(0:_L\m^{\alpha_{\m}}).
\]
Therefore by Proposition \ref{xprop110217a} and the additivity of length we get the first inequality in the proposition. 

The conditions $n\geq\alpha_{\m}$ and $\fb\subseteq\m$ for each $\m\in\mcf$ imply that
$\fb^n\subseteq\m^{\alpha_{\m}}$, so we have
$\textstyle
\sum_{\m\in\mcf}(0:_L\m^{\alpha_{\m}})\subseteq(0:_L\fb^n)$.
As   each $\m\in\mcf$ is maximal, the elements of $\mcf$ are comaximal in pairs, so the sum 
$\sum_{\m\in\mcf}(0:_L\m^{\alpha_{\m}})$ is direct.
It follows that
$$\textstyle\sum_{\m\in\mathcal{F}}\len_R(0:_L\m^{\alpha_{\m}})
=\len_R(\sum_{\m\in\mathcal{F}}(0:_L\m^{\alpha_{\m}}))\leq\len_R(0:_L\fb^n)$$
and the second in inequality in the statement of the proposition follows.
The third inequality in the statement of the proposition follows from the fact that $T/\fb T$ surjects onto $T/\m T$.
\end{proof}

\begin{para}[Proof of  Theorem~\ref{intthm120807b}]
\label{para120807h}
Set $\fb=\cap_{\m\in\mcg}\m$.

First, we show that $\Ga bN$ is annihilated by a power of $\fb$.
Since $N$ is noetherian, so is the submodule $\Ga bN$.
In particular, $\Ga bN$ is finitely generated. Since each generator 
of $\Ga bN$ is annihilated by a power of $\fb$, the same is true of $\Ga bN$.

Proposition~\ref{xprop110217a}\eqref{xprop110217a1} implies that 
for each $\m\in\mathcal{F}$ 
there is an integer $\alpha_{\m}$ with $n\geq\alpha_{\m}\geq 0$ such that  $\m^{\alpha_{\m}}A=\m^{\alpha_{\m}+1}A$ or 
$\m^{\alpha_{\m}}\Gamma_{\m}(N)=0$. 
Proposition~\ref{xprop110217a}\eqref{xprop110217a1} provides the isomorphism
$\textstyle\Hom[R]{A}{N}\cong\coprod_{\m\in\mathcal{G}}\Hom[R]{A/\m^{\alpha_{\m}}A}{(0:_N\m^{\alpha_{\m}})}$.
Since each module
$\Hom[R]{A/\m^{\alpha_{\m}}A}{(0:_N\m^{\alpha_{\m}})}$ is annihilated by $\m^{\alpha_{\m}}$, it follows that
$\Hom AN$ is annihilated by $\cap_{\m\in\mcg}\m^{\alpha_{\m}}$.

Proposition~\ref{xthm29} provides the first step in the next sequence:
$$\len_R(\Hom[R]{A}{N})\textstyle\leq\sum_{\m\in\mathcal{G}}\len_R(A/\m A)\len_R(0:_N\m^{\alpha_{\m}})<\infty.$$
For the second step,  observe that Lemma~\ref{xlem100213b} implies that
$A/\m A$ 
and $(0:_N\m^{\alpha_{\m}})$ both have finite length. 
\qed
\end{para}

\begin{defn}
Given an $R$-module $L$, a prime ideal $\p\in\spec(R)$ is an \emph{attached prime} of $L$ if there exists a
submodule $L'$ of $L$ such that $\p=\ann_R(L/L')$. The set of attached
primes of $L$ is denoted $\att_R(L)$.
\end{defn}

\begin{prop} \label{xprop:assHom}
Assume that $R$ is noetherian.
Let $A$ and $B$ be $R$-modules such that $A$ is artinian,
and set $\mathcal{F}=\supp_R(A)\cap\ass_R(B)$ and
$\fb=\cap_{\m\in\mathcal{F}}\m$. Assume that $\mu^0_R(\m,B)$ is finite for all $\m\in\mathcal{F}$. Then we have
\begin{align*}
\ass_{\comp R^{\fb}}(\Hom{A}{B})
&=\ass_{\comp R^\fb}(\Gamma_\fb(A)^{\vee})\cap\supp_{\comp R^\fb}(\Gamma_{\fb}(B)^{\vee})\\
&=\att_{\comp R^\fb}(\Gamma_\fb(A))\cap\supp_{\comp R^\fb}(\Gamma_{\fb}(B)^{\vee}).
\end{align*}
\end{prop}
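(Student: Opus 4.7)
The plan is to apply Proposition~\ref{xlem:homSwap2} with $\fa=\fb$, whose hypotheses are exactly the standing assumption that $\mu^0_R(\m,B)$ is finite on $\mcf\supseteq\supp_R(A)\cap\ass_R(B)$. This produces a $\Comp Rb$-module isomorphism
\[
\Hom{A}{B}\cong\Hom[\Comp Rb]{\Ga bB^\vee}{\Ga bA^\vee},
\]
and by Remark~\ref{disc120729a} both arguments on the right are noetherian $\Comp Rb$-modules. Since $\Comp Rb$ is noetherian and $\Ga bB^\vee$ is finitely generated, the classical Bourbaki formula $\ass_S(\Hom_S(M,N))=\supp_S(M)\cap\ass_S(N)$ (for $M$ finitely generated over a noetherian ring $S$) applied with $S=\Comp Rb$ immediately yields the first equality of the proposition.

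For the second equality, it suffices to show $\ass_{\Comp Rb}(\Ga bA^\vee)=\att_{\Comp Rb}(\Ga bA)$. Fact~\ref{fact120720a} together with Lemma~\ref{lem110413a} shows that $\Ga bA$ is artinian over $\Comp Rb$, and since $\Comp Rb$ is noetherian, semi-local, and complete, Fact~\ref{xpara1} makes $\Ga bA$ Matlis reflexive over $\Comp Rb$; Lemma~\ref{lem110413b} also identifies $\Ga bA^\vee$ with its Matlis dual $\Ga bA^{\vee(\Comp Rb)}$. For the $\supseteq$ direction: given $\p=\ann_{\Comp Rb}(\Ga bA/A')\in\att_{\Comp Rb}(\Ga bA)$, dualize the surjection $\Ga bA\onto\Ga bA/A'$ to embed $(\Ga bA/A')^\vee$ as a noetherian $\Comp Rb$-submodule of $\Ga bA^\vee$ with annihilator $\p$ (Fact~\ref{xfact100909a}); minimality of $\p$ in $V(\p)=\supp_{\Comp Rb}((\Ga bA/A')^\vee)$ then forces $\p\in\ass_{\Comp Rb}(\Ga bA^\vee)$. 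The $\subseteq$ direction dualizes an embedding $\Comp Rb/\p\hookrightarrow\Ga bA^\vee$, via Matlis reflexivity $\Ga bA^{\vee\vee}\cong\Ga bA$, to a surjection $\Ga bA\onto(\Comp Rb/\p)^\vee$ whose nonzero target has annihilator $\p$, exhibiting $\p\in\att_{\Comp Rb}(\Ga bA)$.

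The principal obstacle is bookkeeping: one must verify that the $\Comp Rb$-module structure on $\Hom[R]{A}{B}$ induced by the isomorphism of Proposition~\ref{xlem:homSwap2} is consistent with the two duality functors $(-)^\vee=(-)^{\vee(R)}$ and $(-)^{\vee(\Comp Rb)}$ appearing in the dualization arguments, which is exactly the content of Lemmas~\ref{disc120720a}\eqref{xlem100206c1} and~\ref{lem110413b}. Once these identifications are in place, the proof reduces to two well-known commutative-algebra facts (the Bourbaki formula and the $\ass$--$\att$ bijection under Matlis duality), applied over the complete semi-local ring $\Comp Rb$.
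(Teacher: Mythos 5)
Your proof is correct and follows the paper's structure quite closely: apply Proposition~\ref{xlem:homSwap2} to re-express $\Hom{A}{B}$ as a $\Hom$ between noetherian $\Comp Rb$-modules, invoke Bourbaki's formula for $\ass$ of a $\Hom$ with noetherian first argument, then pass to attached primes via Matlis duality over $\Comp Rb$. The only real divergence is in the last step: the paper cites Ooishi~\cite[Proposition 2.7]{ooishi:mdwm} for $\ass_{\Comp Rb}(\Ga bA^\vee)=\att_{\Comp Rb}((\Ga bA^\vee)^{\vee(\Comp Rb)})$ and then uses reflexivity of $\Ga bA$, whereas you prove the equality $\ass_{\Comp Rb}(\Ga bA^\vee)=\att_{\Comp Rb}(\Ga bA)$ directly by dualizing surjections and injections. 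Your direct argument is correct, self-contained, and slightly more elementary, at the cost of a little extra work; the paper's citation is shorter. Both hinge on the same identifications via Lemmas~\ref{disc120720a}\eqref{xlem100206c1} and~\ref{lem110413b}, which you correctly flag as the bookkeeping needed to switch between $(-)^{\vee(R)}$ and $(-)^{\vee(\Comp Rb)}$.
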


\begin{proof}
Proposition~\ref{xlem:homSwap2} implies that 
$\Hom{A}{B}\cong\Hom[\comp R^\fb]{\Gamma_\fb(B)^{\vee}}{\Gamma_{\fb}(A)^{\vee}}$. Since $\Gamma_\fb(B)^{\vee}$ is a noetherian $\comp R^\fb$-module 
(by Remark~\ref{disc120729a})
we can apply a result of Bourbaki~\cite[IV 1.4 Proposition 10]{bourbaki:ac3-4} to obtain the first equality in the proposition. Also, by 
\cite[Proposition 2.7]{ooishi:mdwm}, we have the first equality in the next sequence:
$$\ass_{\comp R^\fb}(\Gamma_\fb(A)^{\vee})=\att_{\comp R^\fb}(\Gamma_\fb(A)^{\vee})^{\vee(\comp R^\fb)})=\att_{\comp R^\fb}(\Gamma_{\fb}(A)).$$
The second equality follows from the fact that $\Ga bA$ is artinian over the semi-local  ring $\Comp Rb$ by Fact~\ref{fact120720a}, hence Fact~\ref{xpara1}
implies that $\Ga bA$ is Matlis reflexive over $\Comp Rb$; so we have
$(\Gamma_\fb(A)^{\vee})^{\vee(\comp R^\fb)}\cong(\Gamma_\fb(A)^{\vee(\comp R^\fb)})^{\vee(\comp R^\fb)}\cong\Gamma_\fb(A)$ by Lemma~\ref{lem110413b}.
This explains  the
second equality in the proposition.
\end{proof}

\begin{cor}\label{xprop100320a}
Assume that $R$ is noetherian.
Let $A$ and $B$ be $R$-modules such that $A$ is artinian.  Set $\mathcal{F}=\supp_R(A)\cap\ass_R(B)$ 
and $\fb=\cap_{\m\in\mathcal{F}}\m$.
Assume that $\mu^0_R(\m,B)$ is finite for all $\m\in\mathcal{F}$.  Then the following conditions are equivalent:
\begin{enumerate}[\rm(i)]
\item \label{xprop100320a3'} 
$\Hom{A}{B}=0$;
\item \label{xprop100320a3} 
$\Hom{\Gamma_{\fb}(A)}{\Gamma_{\fb}(B)}=0$;
\item \label{xprop100320a7} 
$\Hom[\comp R^\fb]{\md{\Gamma_{\fb}(B)}}{\md{\Gamma_{\fb}(A)}}=0$;
\item \label{xprop100320a5} 
$\ass_{\comp R^\fb}(\Gamma_{\fb}(A)^{\vee})\cap\supp_{\comp R^\fb}(\md{\Gamma_{\fb}(B)})=\emptyset$; and
\item \label{xprop100320a9} 
$\att_{\comp R^\fb}(\Gamma_{\fb}(A))\cap\supp_{\comp R^\fb}(\md{\Gamma_{\fb}(B)})=\emptyset$.
\end{enumerate}
\end{cor}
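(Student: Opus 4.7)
The plan is to derive all four equivalences from the ``Hom-swap'' and ``associated prime'' results already proved in this section, plus the standard fact that over a noetherian ring a module vanishes iff it has empty associated prime set.

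First I would obtain (i)$\iff$(ii) by applying Proposition~\ref{xlem:homSwap} with $T=A$, $L=B$, and $\fa=\fb$. Since $A$ is artinian, it is $\fc$-torsion for $\fc=\cap_{\m\in\supp_R(A)}\m$ (a finite intersection of maximal ideals), and by construction $\mcf\supseteq\supp_R(A)\cap\ass_R(B)$, so the proposition gives $\Hom_R(A,B)\cong\Hom_R(\Ga bA,\Ga bB)$, rendering (i) and (ii) equivalent.

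Next, to get (i)$\iff$(iii), I would apply Proposition~\ref{xlem:homSwap2}, whose hypotheses are met verbatim (we use precisely the finiteness of $\mu^0_R(\m,B)$ for $\m\in\mcf$). This produces an isomorphism
\[
\Hom_R(A,B)\cong\Hom_{\comp R^\fb}\bigl(\Ga bB^{\vee(\comp R^\fb)},\,\Ga bA^{\vee(\comp R^\fb)}\bigr),
\]
and Lemma~\ref{lem110413b} identifies each $(-)^{\vee(\comp R^\fb)}$ on these $\fb$-torsion modules with the ordinary Matlis dual $(-)^\vee$. Hence (i) holds iff (iii) holds.

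For (i)$\iff$(iv), observe that since $R$ is noetherian so is $\Comp Rb$, and an $\Comp Rb$-module is zero if and only if its set of associated primes over $\Comp Rb$ is empty. Proposition~\ref{xprop:assHom} applies (same hypotheses) and gives
\[
\ass_{\comp R^\fb}(\Hom_R(A,B))=\ass_{\comp R^\fb}(\Ga bA^\vee)\cap\supp_{\comp R^\fb}(\md{\Ga bB}),
\]
so $\Hom_R(A,B)=0$ is equivalent to the vanishing of this intersection, which is exactly (iv). Finally, (iv)$\iff$(v) comes from the identification $\ass_{\comp R^\fb}(\Ga bA^\vee)=\att_{\comp R^\fb}(\Ga bA)$, which is proved inside Proposition~\ref{xprop:assHom} using Ooishi's~\cite[Proposition~2.7]{ooishi:mdwm} together with the Matlis reflexivity of $\Ga bA$ over the semi-local complete ring $\Comp Rb$ (Fact~\ref{fact120720a} and Fact~\ref{xpara1}).

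No genuine obstacle remains; the main care is simply to verify at each step that the correct noetherianness and torsion hypotheses transport between $R$- and $\Comp Rb$-structures. In particular, one must note that the Hom-module $\Hom_R(A,B)$ carries a natural $\Comp Rb$-structure (compatible with its $R$-structure) via Lemma~\ref{disc120720a}\eqref{xlem100206c1}, since it is $\fb$-torsion by Proposition~\ref{xlem:homSwap} and Lemma~\ref{disc120720a}\eqref{xlem100206c2}, so that the equivalence ``$\Hom_R(A,B)=0$ iff $\ass_{\Comp Rb}(\Hom_R(A,B))=\emptyset$'' is meaningful.
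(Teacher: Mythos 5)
Your proof is correct and follows essentially the same route as the paper's: Proposition~\ref{xlem:homSwap} gives (i)$\iff$(ii), Proposition~\ref{xlem:homSwap2} gives (i)$\iff$(iii), and Proposition~\ref{xprop:assHom} together with ``a module over a noetherian ring is zero iff its associated-prime set is empty'' gives (i)$\iff$(iv)$\iff$(v). One small note: your invocation of Lemma~\ref{lem110413b} to relate $(-)^{\vee(\comp R^{\fb})}$ to $(-)^\vee$ is unnecessary, since Proposition~\ref{xlem:homSwap2} already states the isomorphism with the ordinary Matlis dual $(-)^\vee$, which is exactly what appears in condition (iii); but this is harmless and the rest of the argument matches the paper's.
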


\begin{proof}
Propositions~\ref{xlem:homSwap} and~\ref{xlem:homSwap2} give the equivalence of (\ref{xprop100320a3'})--(\ref{xprop100320a7}). 
The equivalence of (\ref{xprop100320a7})--(\ref{xprop100320a9}) follows from 
Proposition \ref{xprop:assHom} and the fact that we have $\Hom{A}{B}=0$ if and only if $\ass_{\comp R^{\fb}}(\Hom{A}{B})=\emptyset$.
\end{proof}


\providecommand{\bysame}{\leavevmode\hbox to3em{\hrulefill}\thinspace}
\providecommand{\MR}{\relax\ifhmode\unskip\space\fi MR }
\providecommand{\MRhref}[2]{%
  \href{http://www.ams.org/mathscinet-getitem?mr=#1}{#2}
}
\providecommand{\href}[2]{#2}

\end{document}